\newcommand{\C}{\mathbb{C}}
\newcommand{\Z}{\mathbb{Z}}
\newcommand{\R}{\mathbb{R}}
\newcommand{\s}{\sigma}
\newcommand{\p}{\partial}
\newcommand\numberthis{\addtocounter{equation}{1}\tag{\theequation}}
\numberwithin{equation}{section}
\theoremstyle{plain}
 \newtheorem{thm}{Theorem}[section]
 \newtheorem{cor}[thm]{Corollary}
 \newtheorem{lem}[thm]{Lemma}
 \newtheorem{prop}[thm]{Proposition}
  \newtheorem{rem}[thm]{Remark}
\theoremstyle{definition}
 \newtheorem{defn}[thm]{Definition}
\DeclareMathOperator{\idx}{idx}
\DeclareMathOperator{\ord}{ord}
\DeclareMathOperator{\rank}{rank}
\DeclareMathOperator{\Ad}{Ad}
\title{Generalized hypergeometric functions with several variables}
\author[1]{Saiei-Jaeyeong Matsubara-Heo\fnref{fn1}}
\affiliation[1]{organization={Faculty of Advanced Science and Technology, Kumamoto University},
%%             addressline={},
%%             city={},
%%             postcode={},
%            state={Kumamoto},
            country={Japan}}
\author[2]{Toshio Oshima\fnref{fn2}}
\affiliation[2]{organization={Center for Mathematics and Data Science, Josai University},%Department and Organization
%           addressline={2-3-20, Hirakawacho}, 
%            city={Chiyodaku},
%           postcode={102-0093}, 
%            state={Tokyo},
            country={Japan}}
\date{}
\begin{document}
\begin{abstract}
%% Text of abstract
We introduce a hypergeometirc series with several variables, which generalizes Appell's, Lauricella's and Kemp\'e de F\'eriet's hypergeometric series, and study the system of differential equations that it satisfies.
We determine the singularities, the rank and the condition for the reducibility of the system. 
We give complete local solutions of the system at many singular points of the system and solve the connection problem among these local solutions.
Under some assumptions, the system is written as a KZ-type equation.
We determine its spectral type in the direction of coordinates as well as simultaneous eigenspace decompositions of residue matrices.
The system may or may not be rigid in the sense of N.~Katz viewed as an ordinary differential equation in some direction.
We also show that the system is a special case of Gel'fand-Kapranov-Zelevinsky system.
From this point of view, we discuss multivariate generalizations.
\end{abstract}
\maketitle
%%%%%%%%%%%%%%%%%%%%%%%%%%%%%%%%%%%%%%
\tableofcontents  %<- May be deleted
%%%%%%%%%%%%%%%%%%%%%%%%%%%%%%%%%%%%%%%%%
%%%%%%%       Introduction    %%%%%%%%%%%%
%%%%%%%%%%%%%%%%%%%%%%%%%%%%%%%%%%%%%%%%%%
\section{Introduction}\label{sec:Intro}
In this paper, we study a bivariate hypergeometric series
\begin{equation*}\begin{split}
&F^{p,q,r}_{p',q',r'}\Bigl(
\begin{smallmatrix}
 \boldsymbol \alpha & \boldsymbol \beta & \boldsymbol \gamma\\
 \boldsymbol \alpha'& \boldsymbol \beta'& \boldsymbol \gamma'
\end{smallmatrix}
; x,y\Bigr)
:=\sum_{m=0}^\infty\sum_{n=0}^\infty\frac
{(\boldsymbol \alpha)_m(\boldsymbol \beta)_n(\boldsymbol \gamma)_{m+n}}
{(1-\boldsymbol \alpha')_m(1-\boldsymbol \beta')_n(1-\boldsymbol \gamma')_{m+n}}x^my^n,\\
&\qquad
\boldsymbol\alpha\in\mathbb C^p,\ \boldsymbol\beta\in\mathbb C^q,\ 
\boldsymbol\gamma\in\mathbb C^r,\ 
\boldsymbol\alpha'\in\mathbb C^{p'},\ \boldsymbol\beta'\in\mathbb C^{q'},\ 
\boldsymbol\gamma'\in\mathbb C^{r'} 
\end{split}\end{equation*}
%defined by power series 
in the case when this series satisfies a system of differential equations 
without an irregular singularity, which means $p'-p=q'-q=r-r'$. 
%, namely, we assume $r-r'=p'-p=q'-q$.\textcolor{red}{section 2へ記号は移動して，referする.}
The notation such as $(\boldsymbol\alpha)_m$ is defined in \eqref{eq:notation}.
J.\ Kamp\'e de F\'eriet's hypergeometric  (HG in short) function given in
\cite{AK} is a special case of our hypergeometric function, namely, it is
\begin{align*}
\sum_{m=0}^\infty\sum_{n=0}^\infty\frac
{(\boldsymbol \alpha)_m(\boldsymbol \beta)_n(\boldsymbol \gamma)_{m+n}}
{(\boldsymbol \alpha')_m(\boldsymbol \beta')_n(\boldsymbol \gamma')_{m+n}}
 \frac{x^my^n}{m!n!}
= F^{p,p,r}_{p'+1,p'+1,r'}
\Bigl(
 \begin{smallmatrix}
  \boldsymbol \alpha & \boldsymbol \beta & \boldsymbol \gamma\\
  1-\boldsymbol \alpha',0 & 1-\boldsymbol \beta',0& 1-\boldsymbol \gamma'
 \end{smallmatrix}
 ; x,y\Bigr).
\end{align*}
Classical Appell's hypergeometric functions (cf.~\cite{AK}) are all special cases of $F^{p,q,r}_{p',q',r'}(x,y)$.
Other examples of $F^{p,q,r}_{p',q',r'}(x,y)$ were investigated in \cite{suzuki2018appell} and \cite{tsuda2015fundamental}, which corresponds to the cases $(q,r,q',r')=(1,1,2,0)$ and $(p,q,p',q')=(1,1,1,1)$. 
In \cite{Oi} the second author studies certain integral transformations of convergent power series and introduces $F^{p,q,r}_{p,q,r}(x,y)$ together with some related results as an application of the transformations without proof.
%, which will be proved in this paper.

We investigate $F^{p,q,r}_{p',q',r'}(x,y)$ from two different points of views.
One is the viewpoint of a system of partial differential equations.
We introduce a system $\mathcal{M}^{\boldsymbol{\alpha},\boldsymbol{\beta},\boldsymbol{\gamma}}_{\boldsymbol{\alpha}',\boldsymbol{\beta}',\boldsymbol{\gamma}'}$ satisfied by $F^{p,q,r}_{p',q',r'}(x,y)$.
The system resembles Appell's system, in the sense that we can determine its irreducibility, singular locus, and some of its connection coefficients.
It is the symmetry of the system that is stressed in this paper.
There are distinguished coordinate transformations which exchange the role of parameters.
These transformations naturally act on the blowing-up of $\mathbb P^1\times\mathbb P^1$ at two points $(0,0)$ and $(\infty,\infty)$.
The divisors $\{x=0\},\{x=\infty\},\{y=0\},\{y=\infty\}$ and the exceptional divisors form a hexagon and the transformations act transitively on its vertices.
In the course of the study, it is also convenient to introduce another series 
$$
G^{p,q,r}_{p',q',r'}\Bigl(
\begin{smallmatrix}
 \boldsymbol \alpha & \boldsymbol \beta & \boldsymbol \gamma\\
 \boldsymbol \alpha'& \boldsymbol \beta'& \boldsymbol \gamma'
\end{smallmatrix}
 ; x,y\Bigr)
 =\sum_{m=0}^\infty\sum_{n=0}^\infty\frac
 {(\boldsymbol \alpha)_m(\boldsymbol \beta)_n(\boldsymbol \gamma)_{m-n}}
 {(1-\boldsymbol \alpha')_m(1-\boldsymbol \beta')_n(1-\boldsymbol \gamma')_{m-n}}
 x^my^n.
$$
Then, at the vertices of the hexagon, we can construct a basis of series solutions using $F^{p,q,r}_{p',q',r'}$ and $G^{p,q,r}_{p',q',r'}$.

The connection problem among the vertices of the hexagon can be solved only by using Gamma functions and trigonometric functions.
The idea of the computation is {\it boundary value problem} which goes back to a paper by the second author \cite{KO}.
In fact, the system $\mathcal M
 ^{\boldsymbol \alpha, \boldsymbol \beta, \boldsymbol \gamma}
 _{\boldsymbol \alpha',\boldsymbol \beta',\boldsymbol \gamma'} 
$ has regular singularities along an edge of the hexagon and the boundary value of the solution to the system is defined according to the characteristic exponent.
We reduce the connection problem along an edge to that of the boundary value.
The same idea is also used in the calculation of $c$-function of Heckman-Opdam's  hypergeometric function (\cite{OSh}).

At the end of this paper, we show that our system 
$\mathcal{M}^{\boldsymbol{\alpha},\boldsymbol{\beta},\boldsymbol{\gamma}}_{\boldsymbol{\alpha}',\boldsymbol{\beta}',\boldsymbol{\gamma}'}$ can be seen as a particular case of Gel'fand-Kapranov-Zelevinsky system  (\cite{gel1989hypergeometric}).
From this point of view, a multivariate generalization $\mathcal{M}^{\boldsymbol{\alpha}_1,\dots,\boldsymbol{\alpha}_n,\boldsymbol{\gamma}}_{\boldsymbol{\alpha}'_1,\dots,\boldsymbol{\alpha}'_n,\boldsymbol{\gamma}'}$ is introduced and analyzed. 
Through Gale duality in combinatorics (\cite[\S6.3]{ziegler2012lectures}), the secondary fan of the system $\mathcal{M}^{\boldsymbol{\alpha}_1,\dots,\boldsymbol{\alpha}_n,\boldsymbol{\gamma}}_{\boldsymbol{\alpha}'_1,\dots,\boldsymbol{\alpha}'_n,\boldsymbol{\gamma}'}$ is described.
Although our system is strictly more general than Appell-Lauricella's system, the secondary structure is the same.
From this point of view, some of the results on the system $\mathcal{M}^{\boldsymbol{\alpha},\boldsymbol{\beta},\boldsymbol{\gamma}}_{\boldsymbol{\alpha}',\boldsymbol{\beta}',\boldsymbol{\gamma}'}$ can be generalized to $\mathcal{M}^{\boldsymbol{\alpha}_1,\dots,\boldsymbol{\alpha}_n,\boldsymbol{\gamma}}_{\boldsymbol{\alpha}'_1,\dots,\boldsymbol{\alpha}'_n,\boldsymbol{\gamma}'}$.
Results of this paper can be related to the literature of Gel'fand-Kapranov-Zelevinsky system.
For example, the result on the connection problem can be seen as a generalization of \cite{saito1994restrictions} or a special case of \cite{matsubara2022global}.
When the singular locus of $\mathcal{M}^{\boldsymbol{\alpha},\boldsymbol{\beta},\boldsymbol{\gamma}}_{\boldsymbol{\alpha}',\boldsymbol{\beta}',\boldsymbol{\gamma}'}$ is a braid arrangement, its monodromy representation gives rise to a representation of the pure braid group.
Since the monodromy representation of Lauricella's $F_D$ system corresponds to the so-called Gassner representation (\cite[Example 4 of \S3.2]{birman1974braids}, see also \cite[\S5.2]{kohno2017quantum}), our result constructs a generalization of Gassner representation from the viewpoint of hypergeometric system.

The other point of view is that of Katz's theory.
For an ordinary differential equation on the Riemann sphere, Katz's theory provides a way to understand the global analysis (\cite{katz1996rigid},  \cite{Ow}).
An invariant called {\it rigidity index} and an integral transform called {\it middle convolution} play a fundamental role in Katz's theory.
The local structure of an ordinary differential equation is given by its local behavior called {\it generalized Riemann scheme}.
An ordinary differential equation is said to be {\it rigid} when the Riemann scheme uniquely determines the equation.
Katz \cite{katz1996rigid} proves that an ordinary differential equation is rigid if and only if it is constructed from the trivial equation $\frac{du}{dx}(x)=0$ by a successive application
of middle convolutions and gauge transformations by a power function ({\it addition}).

As the next step to Katz's theory, Y. Haraoka proposes to extend middle convolutions to Knizhnik-Zamolodchikov type equations (KZ-type equations in short) in \cite{Ha} and \cite{HaKZmulti}.
KZ-type equation is a particular class of integrable connections.
Any rigid ordinary differential equation can be extended to a KZ-type equation.
When $r=r'$, the system $\mathcal{M}^{\boldsymbol{\alpha},\boldsymbol{\beta},\boldsymbol{\gamma}}_{\boldsymbol{\alpha}',\boldsymbol{\beta}',\boldsymbol{\gamma}'}$ is isomorphic to a KZ-type equation.
We will show in \S\ref{sec:F1} that no restriction of the system $\mathcal{M}^{\boldsymbol{\alpha},\boldsymbol{\beta},\boldsymbol{\gamma}}_{\boldsymbol{\alpha}',\boldsymbol{\beta}',\boldsymbol{\gamma}'}$ to a line $\{x={\rm constant}\}$ or $\{y={\rm constant}\}$ is rigid if $(p-1)(q-1)(r-1)\neq 0$.
On the other hand, the system $\mathcal{M}^{\boldsymbol{\alpha},\boldsymbol{\beta},\boldsymbol{\gamma}}_{\boldsymbol{\alpha}',\boldsymbol{\beta}',\boldsymbol{\gamma}'}$ is constructed from the trivial KZ-type system $\frac{\partial u}{\partial x}(x,y)=\frac{\partial u}{\partial y}(x,y)=0$ by a successive application
of middle convolutions and additions.
The trick is to use several directions when we perform middle convolutions.
Thus, the restriction of the system $\mathcal{M}^{\boldsymbol{\alpha},\boldsymbol{\beta},\boldsymbol{\gamma}}_{\boldsymbol{\alpha}',\boldsymbol{\beta}',\boldsymbol{\gamma}'}$ to $x=\{{\rm constant}\}$ or $y=\{{\rm constant}\}$ provides a non-trivial example of non-rigid ordinary differential equation whose global monodromy can be determined explicitly.

From the viewpoint of KZ-type equation, it is more natural to study the blowing-up of $\mathbb P^1\times\mathbb P^1$ at three points $(0,0),(1,1)$ and $(\infty,\infty)$.
It has 10 distinguished divisors and at each of 15 normal crossing points, the corresponding pair of residue matrices admits a simultaneous eigenspace decomposition.
We use \cite{Okz} to see the change of this data via middle convolutions. 
The vertices of the hexagon corresponds to the six points where each simultaneous eigenspace is one-dimensional.
Moreover, in some cases, there are more than six singular points where this happens.

The generalized Riemann scheme of the system $\mathcal{M}^{\boldsymbol{\alpha},\boldsymbol{\beta},\boldsymbol{\gamma}}_{\boldsymbol{\alpha}',\boldsymbol{\beta}',\boldsymbol{\gamma}'}$ enjoys symmetry.
Let us write $F_{p,q,r}$ for $F^{p,q,r}_{p,q,r}$, $I_{p,q}$ for $F^{p-1,q-1,1}_{p,q,0}$ and $J_{p,q}$ for $F^{p,q,0}_{p-1,q-1,1}$.
Besides the case $r=r'$, the case when $p'=p+1,q'=q+1,r=1,r'=0$ and the case $p=p'+1,q=q'+1,r=0,r'=1$ both satisfy KZ-type equations.
The following picture shows the symmetry of equations:

\medskip
\begin{tikzpicture}
\node (f) [draw, rounded corners,rectangle] at (0,0) 
 {Gauss' HG : $F(a,b,c;x)$};
\node (g) [draw, rounded corners,rectangle] at (6,0) 
 {${}_pF_{p-1}(\boldsymbol\alpha;\boldsymbol\beta;x)$ (cf.~\S\ref{sec:HG1})};
\node (a) [draw, rounded corners,rectangle] at (0,-1.5)
 {Appell's HG : $F_1(a;b,c;d;x,y)$};
\node (b) [draw, rounded corners,rectangle] at (6,-1.5) 
{$F_{p,q,r}(x,y)\left(
  \begin{smallmatrix}\boldsymbol\alpha&\boldsymbol\beta&\boldsymbol\gamma\\
  \boldsymbol\alpha'&\boldsymbol\beta'&\boldsymbol\gamma'\end{smallmatrix}
  ;x,y\right)$ (cf.~\S\ref{sec:F1})};
\node (c) [draw, rounded corners,rectangle] at (0,-3)
 {Appell's HG : $F_2,\,F_3$};
\node (d) [draw, rounded corners,rectangle] at (6,-3) 
 {$I_{p,q}(x,y)$, $J_{p,q}(x,y)$ (cf.~\S\ref{sec:F2})};
\draw[->] (f)--(g);
\draw[->] (a)--(b);
\draw[->] (c)--(d);
\node at (0.2,-0.55) {$S_{\{0,1,\infty\}}$};
\node at (6.2,-0.55) {$S_{\{0,\infty\}}$};
\node at (0.3,-2.1) {$S_{\{x,y,1,0,\infty\}}$};
\node at (6,-2.2) {$S_{\{x,y,1\}}\times S_{\{0,\infty\}}$};
\node at (6,-3.55) {$S_{\{x,y\}}$};
\node at (0.1,-3.55){$W\!_{B_2}$};

\end{tikzpicture}

\medskip
\noindent
Here, $S_{\{x,y,1,0,\infty\}}$ means the group of permutations of 5 points $\{x,y,1,0,\infty\}$ on the Riemann sphere, which is the permutation group $S_5$.
In the above, $S_{\{x,y,1,0,\infty\}}$ acts on Appell's $F_1$ as coordinate transformations (cf.~\eqref{eq:dynkin}) and gives a symmetry of $F_1$.
On the other hand, 
$F_2$ and $F_3$  have coordinate symmetries which are identified with elements of the group $W\!_{B_2}$, the reflection group of type $B_2$ (cf.~Remark~\ref{rem:symF2}). 
The horizontal arrow means generalization.
Thus, the principal example in this paper $F_{p,q,r}$ is a direct generalization of the well-known function Appell's $F_1$.
We emphasize that any restriction of Appell's $F_1$ system to $x=\{{\rm constant}\}$ or $y=\{{\rm constant}\}$ gives rise to a rigid ordinary differential equation.

Katz's theory for KZ-type equation is still being developed. 
For example, the change of simultaneous eigenspace decomposition under middle convolution will be discribed in a forthcomming paper by the second author as in the case of two variables \cite{Okz}.
When the system $\mathcal{M}^{\boldsymbol{\alpha}_1,\dots,\boldsymbol{\alpha}_n,\boldsymbol{\gamma}}_{\boldsymbol{\alpha}'_1,\dots,\boldsymbol{\alpha}'_n,\boldsymbol{\gamma}'}$ is a KZ-type equation, it should be a good example of a {\it rigid} system, of which no sophisticated definition exists so far.

\medskip
The content of each section is as follows. 

In \S\ref{sec:HG1} we review some basic properties of the generalized hypergeometric function 
${}_pF_{p-1}(\boldsymbol\alpha;\boldsymbol\beta;x)$.
Most results stated in the following sections up to \S\ref{sec:rank-Sing} 
are reduced to the results 
given in \S\ref{sec:HG1}%, which are explained in the corresponding sections
.
In \S\ref{sec:HGEq} 
we introduce a system of differential equations $\mathcal M
 ^{\boldsymbol \alpha, \boldsymbol \beta, \boldsymbol \gamma}
 _{\boldsymbol \alpha',\boldsymbol \beta',\boldsymbol \gamma'} 
$
satisfied by
$F^{p,q,r}_{p',q',r'}\left(
\begin{smallmatrix}
 \boldsymbol \alpha & \boldsymbol \beta & \boldsymbol \gamma\\
 \boldsymbol \alpha'& \boldsymbol \beta'& \boldsymbol \gamma'
\end{smallmatrix};x,y\right)$.
In \S\ref{sec:Cond} we list some conditions on $p,q,r,p',q',r'$ used in this paper.
In \S\ref{sec:Known series} we define another series $G^{p,q,r}_{p',q',r'}$ which will 
appear in an analytic continuation
of $F^{p,q,r}_{p',q',r'}$ and we show some known series including Appell's 
hypergeometric series and some Horn's series are examples of our two series.
In \S\ref{sec:IntegRep} we give an integral representation of $F^{p,q,r}_{p',q',r'}$ using 
integral transformations in \cite{Oi}.
Some of the transformations are essentially middle convolutions.

In \S\ref{sec:coord trans} we study some coordinate transformations and its action on the system $\mathcal{M}^{\boldsymbol{\alpha},\boldsymbol{\beta},\boldsymbol{\gamma}}_{\boldsymbol{\alpha}',\boldsymbol{\beta}',\boldsymbol{\gamma}'}$.
In \S\ref{sec:connection} we solve the connection problem among the vertices of the hexagon. 

In \S\ref{sec:rank} we determine the rank of the system $\mathcal M
 ^{\boldsymbol \alpha, \boldsymbol \beta, \boldsymbol \gamma}
 _{\boldsymbol \alpha',\boldsymbol \beta',\boldsymbol \gamma'} 
$, the number of independent local solutions to the system at a  point, which equals the number of local solution  we have constructed.

In  \S\ref{sec:Sing} we determine  
the singular points of  the  system.
We call the number $L:=r-r'$ the level of the system.
By symmetry, we may assume that $L\geq 0$.
If $L=0$ or $L=1$, the singular points are a union of lines.
If $L>1$, the singular points are a union of lines and a set defined by zeros of an irreducible polynomial $f_L(x,y)$ with degree $L$.

In \S\ref{sec:KZ} we review basic results on KZ-type equations based on \cite{Okz}.
The system 
$\mathcal M
 ^{\boldsymbol \alpha, \boldsymbol \beta, \boldsymbol \gamma}
 _{\boldsymbol \alpha',\boldsymbol \beta',\boldsymbol \gamma'} 
$ becomes a KZ-type equation under some conditions. 
Of particular interest are the system satisfied by $F_{p,q,r}$ in \S\ref{sec:F1} and the one satisfied by $I_{p,q}$ or $J_{p,q}$ in \S\ref{sec:F2}.
In particular, $F_{p,q,1}$, $F_{p,1,r}$ and $I_{p, q}$ satisfy rigid ordinary differential equations for the variable $x$.
We show that the condition of the irreducibility of the system follows from a result given by \cite{Oir}.

Appell's $F_1,F_2,F_3$ systems are special examples of these with more symmetry.
More symmetry produces formulas on the hypergeometric series  like Kummer's formula of Gauss' hypergeometric series.  They are discussed in \S\ref{sec:AppellF1} and in \S\ref{sec:AppellF2}.

In \S\ref{sec:GKZ} we introduce hypergeometric series 
$F^{p_1,\dots,p_n,r}
_{p'_1,\dots,p'_n,r'}(x_1,\dots,x_n)$
with more than two variables.
Lauricella's hypergeometric series (cf.~\cite{lauricella}) is a special case of the series.
In \S\ref{sec:SystemM}, we give the system of differential equations 
$\mathcal{M}^{\boldsymbol{\alpha}_1,\dots,\boldsymbol{\alpha}_n,\boldsymbol{\gamma}}_{\boldsymbol{\alpha}'_1,\dots,\boldsymbol{\alpha}'_n,\boldsymbol{\gamma}'}$
satisfied by the series and the integral representation of the series. 

In \S\ref{sec:Singularset}, we determine the singular set of the system.
The formula generalizes known results for Appell-Lauricella's system \cite{hattori2014singular} and \cite{matsumoto2014monodromy}.
Theorem \ref{thm:Singn} is a generalization of Theorem \ref{thm:Sing}.

In \S\ref{sec:GKZsystem}, we review some basic properties and terminologies on GKZ system.
The exposition here is based on \cite{gel1992general} which is Gale dual to the standard literature on GKZ system.

In \S\ref{sec:GKZM},  
we relate the system
$\mathcal{M}^{\boldsymbol{\alpha}_1,\dots,\boldsymbol{\alpha}_n,\boldsymbol{\gamma}}_{\boldsymbol{\alpha}'_1,\dots,\boldsymbol{\alpha}'_n,\boldsymbol{\gamma}'}$
 to a GKZ system.
 A trick is to introduce some auxilliary variables.
 For later use, we provide a Gr\"obner basis of the associated toric ideal.

In \S\ref{sec:irreducible}, we describe the secondary fan of the system $\mathcal{M}^{\boldsymbol{\alpha}_1,\dots,\boldsymbol{\alpha}_n,\boldsymbol{\gamma}}_{\boldsymbol{\alpha}'_1,\dots,\boldsymbol{\alpha}'_n,\boldsymbol{\gamma}'}$ and determine the condition for 
the irreducibility by applying the criterion of \cite{schulze2012resonance}.
The special points without multiplicity of simultaneous eigenspace decomposition are labeled by a permutation group.
Therefore, the hexagon that appeared in the study of $\mathcal M^{\boldsymbol \alpha, \boldsymbol \beta, \boldsymbol \gamma}_{\boldsymbol \alpha',\boldsymbol \beta',\boldsymbol \gamma'}$ is replaced by a permutohedron.

In \S\ref{subsec:connection}, we derive a connection formula for $\mathcal{M}^{\boldsymbol{\alpha}_1,\dots,\boldsymbol{\alpha}_n,\boldsymbol{\gamma}}_{\boldsymbol{\alpha}'_1,\dots,\boldsymbol{\alpha}'_n,\boldsymbol{\gamma}'}$ which generalizes the same result for the system $\mathcal M
 ^{\boldsymbol \alpha, \boldsymbol \beta, \boldsymbol \gamma}
 _{\boldsymbol \alpha',\boldsymbol \beta',\boldsymbol \gamma'} 
$.
It turns out that a basis of local solutions at a vertex of a permutohedron is given in terms of two kinds of series $F^{p_1,\dots,p_n,r}
_{p'_1,\dots,p'_n,r'}(x_1,\dots,x_n)$ and $G^{p_1,\dots,p_n,r}
_{p'_1,\dots,p'_n,r',\epsilon}(x_1,\dots,x_n)$ where the latter is a multivariate generalization of the series $G^{p,q,r}_{p',q',r'}$.

In \S\ref{sec:braid}, we show that the connection problem discussed in \S\ref{subsec:connection} determines the monodromy representation of the pure braid group when $p_i=p_i'$ for $i=1,\dots,n$ and $r=r'$.

%%%%%%%%%%%%%%%%%%%%%%%%%%%%%%%%%%%%%%%%%%%%%%%%%%%%%%%%%%%%%%%%%%%%%
%%%       Generalized hypergeometric functions of two variables
%%%%%%%%%%%%%%%%%%%%%%%%%%%%%%%%%%%%%%%%%%%%%%%%%%%%%%%%%%%%%%%%%%%%%
\section{Generalized hypergeometric functions of two variables}\label{sec:GHG}
\subsection{Generalized Hypergeometric Functions of one variable}\label{sec:HG1}
We review the generalized hypergeometric function ${}_pF_{p-1}(x)$ of one variable
(for example, see \cite[\S13.4]{Ow}).
We introduce the following notation for $\boldsymbol\alpha=(\alpha_1,\dots,\alpha_p)\in\C^p$, $c\in\C$ and $a\in \C$:
\begin{equation}
\begin{split}
 &(\boldsymbol\alpha)_m
:=\tfrac{\Gamma(\boldsymbol\alpha+m)}{\Gamma(\boldsymbol\alpha)}
= (\alpha_1)_m(\alpha_2)_m\cdots(\alpha_p)_m,\ \ 
 \Gamma(\boldsymbol\alpha):=\Gamma(\alpha_1)\cdots\Gamma(\alpha_p),\ \ \\
 &c-\boldsymbol\alpha=(c-\alpha_1,\dots,c-\alpha_p),\ \ 
|\boldsymbol\alpha|=\alpha_1+\cdots+\alpha_p,\ \ (1-a)_{-m}=\tfrac{(-1)^m}{(a)_m}.
\end{split}
\label{eq:notation}
\end{equation}
In general the generalized hypergeometric series
\begin{equation}
 {}_pF_q(\boldsymbol\alpha,\boldsymbol\beta;x):=\sum_{n=0}^\infty \frac{(\boldsymbol\alpha)_n}{(\boldsymbol\beta)_n}\frac{x^n}{n!}
 \quad(\boldsymbol\alpha\in\mathbb C^p,\ \boldsymbol\beta\in\mathbb C^q)
\end{equation}
is a convergent power series  if $p\le q+1$ and $\beta_i\notin\{0,-1,-2,\dots\}$.
The series satisfies a differential equation without an irregular singularity if $p=q+1$. 
A convergent power series 
\begin{align}\label{eq:Sols1}
 F_p\left(
\begin{smallmatrix} \boldsymbol \alpha\\ \boldsymbol \alpha'\end{smallmatrix};x
\right)&:=\sum_{n=0}^\infty\frac{(\boldsymbol \alpha)_n}{(1-\boldsymbol \alpha')_n}x^n
 \quad\bigl(\boldsymbol \alpha,\,\boldsymbol \alpha'\in\mathbb C^p\bigr)
\end{align}
satisfies an inhomogeneous ordinary differential equation
\begin{align}
\prod_{i=1}^{p}(\vartheta-\alpha'_i)u&\equiv x\prod_{i=1}^p(\vartheta+\alpha_i)u
\quad \mathrm{mod}\ \mathbb C\quad(\vartheta:=x\tfrac d{dx}).
\label{eq:inhomogeneousODE}
\end{align}
In fact, by comparing the terms of $x^n$  in the above equation, we have 
\begin{align*}
u=\sum_{n=0}^\infty c_nx^n&\Rightarrow \displaystyle\prod_{i=1}^{p}(n-\alpha'_i)c_nx^n=x\prod_{i=1}^{p}(n-1+\alpha_i)c_{n-1}x^{n-1}\quad(n>0)\\[-1mm]
 &\Rightarrow c_n=\displaystyle\frac{\prod_{i=1}^p(n-1+\alpha_i)}
{\prod_{i'=1}^{p'}(n-\alpha'_i)}c_{n-1}=\cdots
 =\displaystyle\frac{(\boldsymbol\alpha)_n}{(1-\boldsymbol\alpha')_n}c_0.
\end{align*}

Note that $(1)_n=n!$ and if $(\boldsymbol \alpha')_1\,(=\alpha'_1\cdots\alpha'_{p'})=0$, the modulo notation "$\equiv$" in \eqref{eq:inhomogeneousODE} can be replaced by the true equality "$=$".  
Since 
\begin{equation}
  P(\vartheta)x^\lambda u(x)=x^\lambda P(\vartheta+\lambda) u(x)
\end{equation}
for a polynomial $P(\vartheta)$ of $\vartheta$, 
the functions
\begin{align}\label{eq:HP1sols}
 F_p\left(
\begin{smallmatrix}\boldsymbol\alpha\\ \boldsymbol\alpha'
\end{smallmatrix};\alpha'_i;\pm; x\right)
&:=
 (\pm x)^{\alpha'_i}F_p\left(
\begin{smallmatrix}\boldsymbol\alpha+\alpha'_i\\ \boldsymbol\alpha'-\alpha'_i
\end{smallmatrix};x\right)\quad(i=1,\dots,p)
\\&
=(\pm x)^{\alpha'_i}{}_pF_{p-1}\bigl(\boldsymbol\alpha+\alpha'_i,
 \{1-\alpha'_\nu+\alpha'_i:\nu\ne i\};x\bigr)
\notag
\end{align}
give $p$ solutions to the generalized hypergeometric equation 
\begin{equation}\label{eq:EqHG1}
  \mathcal M^{\mathbf\alpha}_{\mathbf \alpha'}\, : \prod_{i=1}^{p} (\vartheta-\alpha'_i)=x\prod_{i=1}^{p}(\vartheta+\alpha_i)u
\end{equation}
if $\alpha'_i-\alpha'_j\notin\Z$ for any $i\neq j$.
In the sequel, we often use a convention such as
\begin{equation}\label{eq:abbrev}
(\vartheta+\boldsymbol{\alpha}):=\prod_{i=1}^{p}(\vartheta+\alpha_i).    
\end{equation}
The generalized Riemann scheme of the equation \eqref{eq:EqHG1} is
\begin{align*}
&
\begin{Bmatrix}
  x=0 & x=1 & x=\infty\\
  \alpha'_1 & [0]_{(p-1)} & \alpha_1\\
  \vdots    &     & \vdots\\
  \alpha'_p & \delta& \alpha_p
 \end{Bmatrix},\quad [0]_{(p-1)}:=\begin{pmatrix}0\\1\\\vdots\\p-1\end{pmatrix}, 
 \\
&\qquad 
 \sum_{\nu=1}^p \alpha_\nu+\sum_{\nu=1}^p \alpha'_\nu+\delta=p-1\quad\text{(Fuchs condition).}
\end{align*}
Here $[0]_{(p-1)}$ means the existence of $p-1$ linearly independent local holomorphic solutions 
at $x=1$.
The condition for the irreducibility of the equation equals
\[\alpha_i'+\alpha_j\not\in\mathbb Z\qquad(1\le i\le p,\ 1\le j\le p).\]

\noindent
Solutions at $x=\infty$ are given by
\[
F_p\left(
\begin{smallmatrix}\boldsymbol\alpha'\\ \boldsymbol\alpha
\end{smallmatrix};\alpha_{j};\pm;\tfrac1x\right)
 :=(\pm \tfrac1x)^{\alpha_{j}}{}_pF{}_{p-1}(\boldsymbol\alpha'+\alpha_{j},
 \{1-\alpha_\nu+\alpha_{j}:\nu\ne j\};\tfrac1x)
\]
if if $\alpha_i-\alpha_j\notin\Z$ for any $i\neq j$.

\noindent
The connection formula between $x=0$ and $\infty$ is given by
\begin{gather*}
F_p\left(
\begin{smallmatrix}\boldsymbol\alpha\\ \boldsymbol\alpha'
\end{smallmatrix};\alpha'_i;-;x\right)
=\sum_{j=1}^p 
c(0:\alpha'_i\rightsquigarrow \infty:\alpha_j)
% \cdot
F_p\left(
\begin{smallmatrix}\boldsymbol\alpha'\\ \boldsymbol\alpha
\end{smallmatrix};\alpha_{j};-;\tfrac1x\right) 
\quad(x\in\mathbb C\setminus[0,\infty)),\\
c(0:\alpha'_i\rightsquigarrow \infty:\alpha_{j})=\prod_{\substack{\nu\ne i\\ 1\le \nu\le p}}
 \frac{\Gamma(1+\alpha'_i-\alpha'_\nu)}{\Gamma(1-\alpha'_\nu-\alpha_j)}
 \prod_{\substack{\nu\ne j\\ 1\le \nu\le p}}
 \frac{\Gamma(\alpha_\nu-\alpha_j)}{\Gamma(\alpha'_i+\alpha_\nu)}.
\end{gather*}

%%%%%%%%%%%%%%%%%%%%%%%%%%%%%%%%%%%%%%%%%%%%%%%%%%%%%%%%%%%%%%%
%%%%%%%%%%    Eq satisfied by HG series      %%%%%%%%%%%%%%
%%%%%%%%%%%%%%%%%%%%%%%%%%%%%%%%%%%%%%%%%%%%%%%%%%%%%%%%%%%%%%%
%\newpage
\subsection{The system
$\mathcal M
 ^{\boldsymbol \alpha, \boldsymbol \beta, \boldsymbol \gamma}
 _{\boldsymbol \alpha',\boldsymbol \beta',\boldsymbol \gamma'}$
satisfied by Hypergeometric Series}\label{sec:HGEq}

For a given set of parameters $\boldsymbol\alpha\in\mathbb C^p,\ \boldsymbol\beta\in\mathbb C^q,\ 
\boldsymbol\gamma\in\mathbb C^r,\ 
\boldsymbol\alpha'\in\mathbb C^{p'},\ \boldsymbol\beta'\in\mathbb C^{q'},\ 
\boldsymbol\gamma'\in\mathbb C^{r'}$, we set
\begin{equation}
\begin{split}
&F^{p,q,r}_{p',q',r'}\Bigl(
\begin{smallmatrix}
 \boldsymbol \alpha & \boldsymbol \beta & \boldsymbol \gamma\\
 \boldsymbol \alpha'& \boldsymbol \beta'& \boldsymbol \gamma'
\end{smallmatrix}
; x,y\Bigr)
:=\sum_{m=0}^\infty\sum_{n=0}^\infty\frac
{(\boldsymbol \alpha)_m(\boldsymbol \beta)_n(\boldsymbol \gamma)_{m+n}}
{(1-\boldsymbol \alpha')_m(1-\boldsymbol \beta')_n(1-\boldsymbol \gamma')_{m+n}}x^my^n.
\end{split}
\label{eq:seriesF}
\end{equation}
Here, we note that 
\[
F^{p,q,r}_{p',q',r'}\bigl(
 \begin{smallmatrix}
  \boldsymbol \alpha & \boldsymbol \beta & \boldsymbol \gamma\\
  \boldsymbol \alpha'& \boldsymbol \beta'& \boldsymbol \gamma'
 \end{smallmatrix}
 ; x,y\Bigr)
=
F^{p+1,q+1,r}_{p'+1,q'+1,r'}\Bigl(
 \begin{smallmatrix}
  \boldsymbol \alpha,1 & \boldsymbol \beta,1 & \boldsymbol \gamma\\
  \boldsymbol \alpha,0 & \boldsymbol \beta',0& \boldsymbol \gamma'
 \end{smallmatrix}
 ; x,y\Bigr). 
\]
As in the last section, it is easy to see that the
$p'q'$ functions 
\begin{align}\label{eq:Sols}
\begin{split}
&F^{p,q,r}_{p',q',r'}\left(
\begin{smallmatrix}
 \boldsymbol \alpha&\boldsymbol \beta&\boldsymbol \gamma\\ \boldsymbol \alpha'&\boldsymbol \beta'&\boldsymbol \gamma'
\end{smallmatrix}
;\alpha'_i,\beta'_j;x,y\right)\\
&\quad:=
x^{\alpha'_i}y^{\beta'_j} F^{p,q,r}_{p',q',r'}\left(\begin{smallmatrix}
 \boldsymbol \alpha+\alpha'_i&\boldsymbol \beta+\beta'_j&\boldsymbol \gamma+\alpha'_i+\beta'_j\\ \boldsymbol \alpha'-\alpha'_i&\boldsymbol \beta'-\beta'_j&\boldsymbol \gamma'-\alpha'_i-\beta'_j
\end{smallmatrix}
;x,y\right)
\end{split}\end{align}
for $i=1,\dots,p'$ and $j=1,\dots,q'$ are solutions to the system of differential equations
\begin{equation}
 \mathcal M^{\boldsymbol \alpha,\boldsymbol \beta,\boldsymbol \gamma}_{\boldsymbol \alpha',\boldsymbol \beta',\boldsymbol \gamma'} : P_1u=P_2u=P_{12}u=0
\end{equation}
with
\begin{align*}
 P_1&=(\vartheta_x-\boldsymbol\alpha')(\vartheta_x+\vartheta_y-\boldsymbol\gamma')
  - x(\vartheta_x+\boldsymbol\alpha)(\vartheta_x+\vartheta_y+\boldsymbol\gamma),\\
 P_2&=
  (\vartheta_y-\boldsymbol\beta')(\vartheta_x+\vartheta_y-\boldsymbol\gamma')
  -y(\vartheta_y+\boldsymbol\beta)(\vartheta_x+\vartheta_y+\boldsymbol\gamma),\\
 P_{12}&=
  x(\vartheta_x+\boldsymbol\alpha)(\vartheta_y-\boldsymbol\beta')-
  y(\vartheta_y+\boldsymbol\beta)(\vartheta_x-\boldsymbol\alpha').
\end{align*}
Here, we use the notation $\p_x=\tfrac{\partial}{\partial x}$, $\p_y=\tfrac{\partial}{\partial y}$,
$\vartheta_x=x\p_x$ and $\vartheta_y=y\p_y$ and the convention \eqref{eq:abbrev}.
For $\epsilon_1,\epsilon_2=\pm 1$, $i=1,\dots,p'$ and $j=1,\dots,q'$ we set

\begin{align}\label{eq:pmSols}
\begin{split}
&F^{p,q,r}_{p',q',r'}\left(
\begin{smallmatrix}
 \boldsymbol \alpha&\boldsymbol \beta&\boldsymbol \gamma\\ \boldsymbol \alpha'&\boldsymbol \beta'&\boldsymbol \gamma'
\end{smallmatrix}
;\alpha'_i,\beta'_j;\epsilon_1,\epsilon_2;x,y\right)\\
&\quad:=
(\epsilon_1x)^{\alpha'_i}(\epsilon_2y)^{\beta'_j} F^{p,q,r}_{p',q',r'}\left(\begin{smallmatrix}
 \boldsymbol \alpha+\alpha'_i&\boldsymbol \beta+\beta'_j&\boldsymbol \gamma+\alpha'_i+\beta'_j\\ \boldsymbol \alpha'-\alpha'_i&\boldsymbol \beta'-\beta'_j&\boldsymbol \gamma'-\alpha'_i-\beta'_j
\end{smallmatrix}
;x,y\right).
\end{split}\end{align}

\begin{rem}\label{rem:red0}
If $\alpha'_{p'}=\beta'_{q'}=0$, the system
$\mathcal M^{\boldsymbol\alpha,\boldsymbol\beta,\boldsymbol\gamma}
_{\boldsymbol\alpha',\boldsymbol\beta',\boldsymbol\gamma'}$ 
is reduced to 
$\bar{\mathcal M}^{\boldsymbol\alpha,\boldsymbol\beta,\boldsymbol\gamma}
 _{\boldsymbol\alpha',\boldsymbol\beta',\boldsymbol\gamma'}
 : \bar P_1u=\bar P_2u=\bar P_{12}u=0$ with
\begin{align*}
 \bar P_1&=\p_x\prod_{i=1}^{p'-1}(\vartheta_x-\alpha'_i)\prod_{k=1}^{r'}(\vartheta_x+\vartheta_y-\gamma'_k)-
  \prod_{i=1}^p(\vartheta_x+\alpha_i)\prod_{k=1}^r(\vartheta_x+\vartheta_y+\gamma_k),\\
 \bar P_2&=
  \p_y\prod_{j=1}^{q'-1}(\vartheta_y-\beta'_i)\prod_{k=1}^{r'}(\vartheta_x+\vartheta_y-\gamma'_k)
  -
  \prod_{j=1}^q(\vartheta_y+\beta_j)\prod_{k=1}^r(\vartheta_x+\vartheta_y+\gamma_k),\\
\bar P_{12}&=
  \p_y\prod_{i=1}^p(\vartheta_x+\alpha_i) \prod_{j=1}^{q'-1}(\vartheta_y-\beta'_i)-
  \p_x\prod_{j=1}^q(\vartheta_y+\beta_j)\prod_{i=1}^{p'-1}(\vartheta_x-\alpha'_i).
\end{align*}
\end{rem}

We mention that the contiguity relation can be readily seen from the definition of the system $\mathcal M^{\boldsymbol\alpha,\boldsymbol\beta,\boldsymbol\gamma}
_{\boldsymbol\alpha',\boldsymbol\beta',\boldsymbol\gamma'}$.
Let $\mathcal Sol\left(\mathcal M^{\boldsymbol\alpha,\boldsymbol\beta,\boldsymbol\gamma}
_{\boldsymbol\alpha',\boldsymbol\beta',\boldsymbol\gamma'}\right)$ be the space of holomorphic solutions to the system $\mathcal M^{\boldsymbol\alpha,\boldsymbol\beta,\boldsymbol\gamma}
_{\boldsymbol\alpha',\boldsymbol\beta',\boldsymbol\gamma'}$.
We denote by $\mathcal Sol\left(\mathcal M^{\boldsymbol\alpha,\boldsymbol\beta,\boldsymbol\gamma}
_{\boldsymbol\alpha',\boldsymbol\beta',\boldsymbol\gamma'}|_{\alpha_{i}\mapsto\alpha_{i}+1}\right)$ the solution space of the same system with its parameter $\alpha_{i}$ replaced by $\alpha_{i}+1$ and the other parameters unchanged. 
We use similar notation for other parameters.
\begin{prop}\label{prop:shift}
There are induced morphisms
\begin{align*}
\vartheta_x+\alpha_{i} &:    \mathcal Sol\left(\mathcal M^{\boldsymbol\alpha,\boldsymbol\beta,\boldsymbol\gamma}
_{\boldsymbol\alpha',\boldsymbol\beta',\boldsymbol\gamma'}\right)\to \mathcal Sol\left(\mathcal M^{\boldsymbol\alpha,\boldsymbol\beta,\boldsymbol\gamma}
_{\boldsymbol\alpha',\boldsymbol\beta',\boldsymbol\gamma'}|_{\alpha_{i}\mapsto\alpha_{i}+1}\right) &(i=1,\dots,p),\\
\vartheta_x+\beta_{i} &:   \mathcal Sol\left(\mathcal M^{\boldsymbol\alpha,\boldsymbol\beta,\boldsymbol\gamma}
_{\boldsymbol\alpha',\boldsymbol\beta',\boldsymbol\gamma'}\right)\to \mathcal Sol\left(\mathcal M^{\boldsymbol\alpha,\boldsymbol\beta,\boldsymbol\gamma}
_{\boldsymbol\alpha',\boldsymbol\beta',\boldsymbol\gamma'}|_{\beta_{i}\mapsto\beta_{i}+1}\right) &(i=1,\dots,q),\\
\vartheta_x+\vartheta_y+\gamma_k &:   \mathcal Sol\left(\mathcal M^{\boldsymbol\alpha,\boldsymbol\beta,\boldsymbol\gamma}
_{\boldsymbol\alpha',\boldsymbol\beta',\boldsymbol\gamma'}\right)\to \mathcal Sol\left(\mathcal M^{\boldsymbol\alpha,\boldsymbol\beta,\boldsymbol\gamma}
_{\boldsymbol\alpha',\boldsymbol\beta',\boldsymbol\gamma'}|_{\gamma_k\mapsto\gamma_k+1}\right) & (k=1,\dots,r),\\
\vartheta_x-\alpha'_{i} &:   \mathcal Sol\left(\mathcal M^{\boldsymbol\alpha,\boldsymbol\beta,\boldsymbol\gamma}
_{\boldsymbol\alpha',\boldsymbol\beta',\boldsymbol\gamma'}\right)\to \mathcal Sol\left(\mathcal M^{\boldsymbol\alpha,\boldsymbol\beta,\boldsymbol\gamma}
_{\boldsymbol\alpha',\boldsymbol\beta',\boldsymbol\gamma'}|_{\alpha'_{i}\mapsto\alpha'_{i}+1}\right) &(i=1,\dots,p'),\\
\vartheta_x-\beta'_{i} &:   \mathcal Sol\left(\mathcal M^{\boldsymbol\alpha,\boldsymbol\beta,\boldsymbol\gamma}
_{\boldsymbol\alpha',\boldsymbol\beta',\boldsymbol\gamma'}\right)\to \mathcal Sol\left(\mathcal M^{\boldsymbol\alpha,\boldsymbol\beta,\boldsymbol\gamma}
_{\boldsymbol\alpha',\boldsymbol\beta',\boldsymbol\gamma'}|_{\beta'_{i}\mapsto\beta'_{i}+1}\right) &(i=1,\dots,q'),\\
\vartheta_x+\vartheta_y-\gamma'_k &:   \mathcal Sol\left(\mathcal M^{\boldsymbol\alpha,\boldsymbol\beta,\boldsymbol\gamma}
_{\boldsymbol\alpha',\boldsymbol\beta',\boldsymbol\gamma'}\right)\to \mathcal Sol\left(\mathcal M^{\boldsymbol\alpha,\boldsymbol\beta,\boldsymbol\gamma}
_{\boldsymbol\alpha',\boldsymbol\beta',\boldsymbol\gamma'}|_{\gamma'_k\mapsto\gamma'_k+1}\right) & (k=1,\dots,r').
\end{align*}
\end{prop}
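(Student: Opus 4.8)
The plan is to realize each arrow as a \emph{contiguity operator} and to verify it through an identity in the ring of differential operators, after which the statement on solutions is immediate. The only input is the elementary intertwining rule already recorded in the excerpt (the formula $P(\vartheta)x^{\lambda}u=x^{\lambda}P(\vartheta+\lambda)u$): for every polynomial $Q$,
\begin{equation*}
 Q(\vartheta_x,\vartheta_y)\,x=x\,Q(\vartheta_x+1,\vartheta_y),\qquad Q(\vartheta_x,\vartheta_y)\,y=y\,Q(\vartheta_x,\vartheta_y+1),
\end{equation*}
together with the facts that $\vartheta_x$ commutes with $y$, that $\vartheta_y$ commutes with $x$, and that $\vartheta_x,\vartheta_y$ commute with one another. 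For a first-order operator $T$ attached to one of the six parameters, let $P'_1,P'_2,P'_{12}$ denote the defining operators of the system after the corresponding shift. I claim that for each $j\in\{1,2,12\}$ there is an identity $P'_j\,T=T^{\sharp}P_j$ in which $T^{\sharp}$ is either $T$ or $T-1$. Granting these three identities, if $u$ is annihilated by $P_1,P_2,P_{12}$ then $P'_j(Tu)=(P'_jT)u=T^{\sharp}(P_ju)=0$ for $j=1,2,12$, so $Tu$ solves the shifted system; this is exactly the assertion that $T$ induces the displayed morphism.

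First I would do the cleanest case, $T=\vartheta_x+\alpha_i$, which shifts $\alpha_i\mapsto\alpha_i+1$ and changes only the factor $(\vartheta_x+\boldsymbol\alpha)$, occurring in the $x$-terms of $P_1$ and $P_{12}$ (while $P'_2=P_2$). Writing $T^{+}=T+1$ and $(\ )^{\uparrow}$ for the product with $\alpha_i$ raised by $1$, the relation $Tx=xT^{+}$ and the factorisation $(\vartheta_x+\boldsymbol\alpha)=T\prod_{\nu\ne i}(\vartheta_x+\alpha_\nu)$ give $T^{+}(\vartheta_x+\boldsymbol\alpha)=(\vartheta_x+\boldsymbol\alpha)^{\uparrow}\,T$; pushing $T$ rightwards through the factor $x$ in $P_1$ and in $P_{12}$ then yields $TP_1=P'_1T$ and $TP_{12}=P'_{12}T$, while $T$ commutes with $P_2$ outright since $P_2$ involves only $y$ and polynomials in $\vartheta_x,\vartheta_y$. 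Thus $T^{\sharp}=T$ in all three identities. By the symmetry exchanging $x$ and $y$ together with $(\boldsymbol\alpha,\boldsymbol\alpha')\leftrightarrow(\boldsymbol\beta,\boldsymbol\beta')$ — which interchanges $P_1$ and $P_2$ and preserves $P_{12}$ up to sign — the operator attached to $\beta_i$ is treated identically.

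The remaining cases are variations on the same computation, and the only thing that changes is the direction of the residual shift. For $T=\vartheta_x-\alpha'_i$ the modified factor $(\vartheta_x-\boldsymbol\alpha')$ occurs in the bare term of $P_1$ and, since $T$ commutes with $y$, effectively behaves as a bare factor in the $y$-term of $P_{12}$; left multiplication by $T-1$ shifts this factor, and the companion relation $(T-1)x=xT$ keeps the monomial terms consistent, giving $P'_1T=(T-1)P_1$ and $P'_{12}T=(T-1)P_{12}$, while $T$ commutes with the unchanged $P_2$. The one structurally new situation is the coupling shift $T=\vartheta_x+\vartheta_y+\gamma_k$: now $(\vartheta_x+\vartheta_y+\boldsymbol\gamma)$ sits in the $x$-term of $P_1$ and the $y$-term of $P_2$, where it absorbs the shift exactly as in the $\alpha_i$-case, so $TP_1=P'_1T$ and $TP_2=P'_2T$; but $\boldsymbol\gamma$ is absent from $P_{12}$, whose two terms both carry a monomial, and here moving $T$ across $x$ in one term and across $y$ in the other produces the \emph{same} shift $T+1$, whence the uniform identity $P_{12}T=(T-1)P_{12}$. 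The shifts of $\beta'_i$ and $\gamma'_k$ reduce to those of $\alpha'_i$ and $\gamma_k$ under the same symmetry.

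Each of these identities is a one-line manipulation, so the real content is purely organizational: one must record, for every pairing of an operator $T$ with a defining operator $P$, whether the modified factor is absorbed by a companion monomial (giving $T^{\sharp}=T$) or not (giving $T^{\sharp}=T-1$), and one must apply $Tx=xT^{+}$ and $(T-1)x=xT$ consistently. The only place where something must genuinely be checked rather than read off is the coupling case $P_{12}$ for the two $\boldsymbol\gamma$-shifts, where the equality of the shifts coming from crossing $x$ and from crossing $y$ is what allows the single left multiplier $T-1$ to serve for both terms. This is the main, and essentially only, obstacle.
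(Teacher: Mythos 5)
Your proposal is correct and follows essentially the same route as the paper: one verifies intertwining identities of the form $P'_j\circ T=T^{\sharp}\circ P_j$ in the ring of differential operators using $\vartheta_x\circ x=x\circ(\vartheta_x+1)$, and the statement on solution spaces follows immediately. The paper writes out only the $\alpha_i$ case (where $T^{\sharp}=T$ throughout) and declares the rest similar; your explicit bookkeeping of when the left multiplier must be $T-1$ (the primed-parameter shifts and the $P_{12}$ relation for the $\boldsymbol\gamma$-shifts) correctly fills in the detail the paper leaves implicit.
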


\begin{proof}
    Using the identity $\vartheta_x\circ x=x\circ(\vartheta_x+1)$, we obtain relations
    \begin{align*}
P_1|_{\alpha_i\mapsto\alpha_i+1}\circ (\vartheta_x+\alpha_i)&=(\vartheta_x+\alpha_i)\circ P_1\\
P_2|_{\alpha_i\mapsto\alpha_i+1}&=P_2\\
P_{12}|_{\alpha_i\mapsto\alpha_i+1}\circ (\vartheta_x+\alpha_i)&=(\vartheta_x+\alpha_i)\circ P_{12}.
    \end{align*}
    Thus, the first line of the proposition is proved.
    The other lines can be proved in a similar way.
\end{proof}

\begin{rem}
Under the irreducibility condition on Theorem \ref{thm:irreducibility} and the condition \eqref{eq:nontrivial}, the maps above are isomorphisms.
\end{rem}

%%%%%%%%%%%%%%%%%%%%%%%%%%%%%%%%%%%%%%%%%%
%%%%%%%%%%%%%  Some Condition %%%%%%%%%%%
%%%%%%%%%%%%%%%%%%%%%%%%%%%%%%%%%%%%%%%%%%
%\newpage
\subsection{Some Conditions}\label{sec:Cond}
We introduce the following conditions for the system
$\mathcal M^{\boldsymbol \alpha,\boldsymbol \beta,\boldsymbol \gamma}_{\boldsymbol \alpha',\boldsymbol \beta',\boldsymbol \gamma'}$. 
\begin{gather}
p+r=p'+r',\quad q+r=q'+r'
\tag{F}\\
%(\boldsymbol \alpha')_1=(\boldsymbol \beta')_1=0\tag{T}\\
p=p',\ q=q',\ r=r'\tag{F1}\\
p=p'-1,\ q=q'-1,\ r=1,\ r'=0\tag{F2}\\
p'=p-1,\ q'=q-1,\ r=0,\ r'=1\tag{F3}\\
p=p'-2,\ q=q'-2,\ r=2,\ r'=0\tag{F4}\\
(\boldsymbol{\alpha}')_1=(\boldsymbol{\beta}')_1=0\tag{T}\label{eq:terminate}
\end{gather}
Note that the condition $(\boldsymbol\alpha')_1=0$ in (T) is that at least one of $\alpha'_1,\dots,\alpha'_{p'}$ is zero.
We always assume the condition (F) in this paper
and call the number
\begin{equation}
    L:=r-r'
\end{equation}
the level of the system $\mathcal M^{\boldsymbol \alpha,\boldsymbol \beta,\boldsymbol \gamma}_{\boldsymbol \alpha',\boldsymbol \beta',\boldsymbol \gamma'}$.
We set
\begin{equation}
    \epsilon:=(-1)^L.
\end{equation}
In the following, we assume 
\begin{equation}\label{eq:nontrivial}
%(p,p'),\ (q,q'),\ (r,r')\neq (0,0).
p+p'>1,\ q+q'> 1,\ r+r'> 0.
\end{equation}
\color{black}
Under the conditions (T) and \eqref{eq:nontrivial}
the conditions (F1), (F2), (F3) and (F4) with smallest possible $(p,q)$
correspond to Appell's $ F_1$, $F_2$, $F_3$ and $F_4$, respectively. Namely, in these cases, 
$\left\{\begin{smallmatrix}
p &q& r\\ p'& q'&r'
\end{smallmatrix}\right\}$ are as follows:

\medskip
\scalebox{1.05}{\begin{tabular}{|c|c|c|c|c|}\hline
& (F1) &(F2) & (F3) & (F4)\\ \hline
$\left\{\begin{smallmatrix}
p &q& r\\ p'& q'&r'
\end{smallmatrix}\right\}$
&
$\left\{\begin{smallmatrix}
p &q & r \\ p &q &r
\end{smallmatrix}\right\}$
&
$\left\{\begin{smallmatrix}
p-1 &q-1 & 1 \\ p &q &0
\end{smallmatrix}\right\}$
&
$\left\{\begin{smallmatrix}
p&q& 0 \\ p-1 & q-1 &1 
\end{smallmatrix}\right\}$
&
$\left\{\begin{smallmatrix}
p-2&q-2& 2 \\ p&q&0
\end{smallmatrix}\right\}$
\\ \hline
{\small Appell's HG}
 &
$\left\{\begin{smallmatrix}
1 &1 & 1 \\ 1 &1 &1
\end{smallmatrix}\right\}$
&
$\left\{\begin{smallmatrix}
1 &1 & 1 \\ 2 &2 &0
\end{smallmatrix}\right\}$
&
$\left\{\begin{smallmatrix}
2&2& 0 \\ 1 &1 &1
\end{smallmatrix}\right\}$
&
$\left\{\begin{smallmatrix}
0&0& 2 \\ 2&2&0
\end{smallmatrix}\right\}$
\\
\hline
\end{tabular}}

%%%%%%%%%%%%%%%%%%%%%%%%%%%%%%%%%%%%%%%%%%%%
%%%%%%%%%%%%%   Examples %%%%%%%%%%%%%
%%%%%%%%%%%%%%%%%%%%%%%%%%%%%%%%%%%%%%%%%%%%
%\newpage
\subsection{Examples: Known Series}\label{sec:Known series}
We introduce series
\begin{align*}
 G^{p,q,r}_{p',q',r'}\Bigl(
\begin{smallmatrix}
 \boldsymbol \alpha & \boldsymbol \beta & \boldsymbol \gamma\\
 \boldsymbol \alpha'& \boldsymbol \beta'& \boldsymbol \gamma'
\end{smallmatrix}
 ; x,y\Bigr)
 &:=\sum_{m=0}^\infty\sum_{n=0}^\infty\frac
 {(\boldsymbol \alpha)_m(\boldsymbol \beta)_n(\boldsymbol \gamma)_{m-n}}
 {(1-\boldsymbol \alpha')_m(1-\boldsymbol \beta')_n(1-\boldsymbol \gamma')_{m-n}}
 x^my^n
\intertext{and}
 G^{p,q,r,r'}_{p',q'}\Bigl(
\begin{smallmatrix}
 \boldsymbol \alpha & \boldsymbol \beta & \boldsymbol \gamma& \boldsymbol \gamma'\\
 \boldsymbol \alpha'& \boldsymbol \beta'
\end{smallmatrix}
 ; x,y\Bigr)&:= G^{p,q,r}_{p',q',r'}\Bigl(
\begin{smallmatrix}
 \boldsymbol \alpha & \boldsymbol \beta & \boldsymbol \gamma\\
 \boldsymbol \alpha'& \boldsymbol \beta'& \boldsymbol \gamma'
\end{smallmatrix}
 ; x,(-1)^{r'}y\Bigr)\\
&\ =\sum_{m=0}^\infty\sum_{n=0}^\infty\frac
 {(\boldsymbol \alpha)_m(\boldsymbol \beta)_n(\boldsymbol \gamma)_{m-n}(\boldsymbol \gamma')_{n-m}}
 {(1-\boldsymbol \alpha')_m(1-\boldsymbol \beta')}
 x^my^n.
\end{align*}

\noindent
Appell's hypergeometric functions and some Horn's hypergeometric functions (cf.~\cite{Er}) are 
examples of the functions we have introduced:
\begin{align}
F_1(\alpha;\beta,\beta';\gamma;x,y)&=
F^{1,1,1}_{1,1,1}\left(\begin{smallmatrix}
 \beta&\beta'&\alpha\\ 0&0&1-\gamma 
\end{smallmatrix};x,y\right),\\
F_2(\alpha;\beta,\beta';\gamma,\gamma';x,y)&=
F^{1,1,1}_{2,2,0}\left(\begin{smallmatrix}
 \beta&\beta'&\alpha\\ 0,1-\gamma&0,1-\gamma'& 
\end{smallmatrix};x,y\right),\\
F_3(\alpha,\alpha';\beta,\beta';\gamma;x,y)&=
F^{2,2,0}_{1,1,1}\left(\begin{smallmatrix}
 \alpha,\alpha'&\beta,\beta'&\\ 0&0&1-\gamma
\end{smallmatrix};x,y\right),\\
F_4(\alpha;\beta;\gamma,\gamma';x,y)
&=
F^{0,0,2}_{2,2,0}\left(\begin{smallmatrix}
 \emptyset&\emptyset&\alpha,\beta\\0,1-\gamma&0,1-\gamma'& \emptyset
\end{smallmatrix};x,y\right),\\
G_2(\alpha,\alpha';\beta,\beta';x,y)&=
G^{1,1,1,1}_{1,1}\left(\begin{smallmatrix}
 \alpha&\alpha'&\beta'&\beta\\0&0
\end{smallmatrix};x,y\right),\\
H_2(\alpha,\beta,\gamma,\delta,\epsilon;x,y)&=
G^{1,2,1,0}_{2,1}\left(\begin{smallmatrix}
 \beta&\gamma,\delta&\alpha\\ 0,1-\epsilon&0
\end{smallmatrix};x,y\right).
\end{align}

%%%%%%%%%%%%%%%%%%%%%%%%%%%%%%%%%%%%%
%%        Integral representation  %
%%%%%%%%%%%%%%%%%%%%%%%%%%%%%%%%%%%%
\subsection{Integral Representation}\label{sec:IntegRep}
We recall the following integral transformations in \cite{Oi}.
Put $\mathbf{x}:=(x_1,\dots,x_n)$.
\begin{defn}\label{def:integraltransform}
\begin{align*}
\begin{split}
K_{\mathbf{x}}^{\mu,\boldsymbol\lambda} u
  &:=\displaystyle\frac{\Gamma(|\boldsymbol\lambda|+\mu)}{\Gamma(\mu)\Gamma(\boldsymbol\lambda)}
   \int_{\substack{t_1>0,\dots,t_n>0\\t_1+\cdots+t_n<1}}
   t^{\boldsymbol\lambda-1}(1-|\mathbf t|)^{\mu-1} u(t_1x_1,\dots,t_nx_n)\\
  &\qquad dt_1\dots dt_n,
\end{split}
\allowdisplaybreaks\\
\begin{split}
L_{\mathbf{x}}^{\mu,\boldsymbol\lambda} u &:=\frac{\Gamma(\mu+n)\Gamma(\boldsymbol\lambda)}{(2\pi i)^n\Gamma(|\boldsymbol\lambda|+\mu)}\int_{\tfrac1{n+1}-i\infty}^{\tfrac1{n+1}+i\infty}\!\cdots\!
\int_{\tfrac1{n+1}-i\infty}^{\tfrac1{n+1}+i\infty} t^{1-\boldsymbol\lambda}
(1-|\mathbf t|)^{-\mu-n}
\\&\qquad
u(\tfrac{x_1}{t_1},\dots,\tfrac{x_n}{t_n})
\tfrac{dt_1}{t_1}\!\cdots\!\tfrac{dt_n}{t_n},
\end{split}
\allowdisplaybreaks\\
(\tilde K_{\mathbf{x}}^{\mu,\lambda}u)(x)&:=\displaystyle\frac{\Gamma(\mu+\lambda)}{\Gamma(\mu)\Gamma(\lambda)}\int_0^1t^{\lambda-1}(1-t)^{\mu-1}u(tx_1,\dots,tx_n) dt.
\end{align*}
\end{defn}
\begin{rem}  By meromorphic continuations of convergent integrals with respect to $\boldsymbol\lambda$ and $\mu$,  the integral transformations  $K_{\mathbf{x}}^{\mu,\boldsymbol\lambda}$ and $L_{\mathbf{x}}^{\mu,\boldsymbol\lambda}$ are defined in \cite{Oi}. 
They are same transformations as the above ones up to constant multiples. 
\end{rem}
Putting 
\begin{align*}
(T_{\mathbf{x}\to R(\mathbf{x})}&u)(\mathbf{x}):=u(R(\mathbf{x}))
\end{align*}
for a coordinate transformation $\mathbf{x}\mapsto R(\mathbf{x})$, we have an identity
\begin{align}
\tilde K_{\mathbf{x}}^{\mu,\lambda}=T^{-1}_{\mathbf x \to (x_1,\frac {x_1}{x_2},\dots,\frac {x_1}{x_n})}\circ K_{x_1}^{\mu,\lambda}\circ T_{\mathbf x \to (x_1,\frac {x_1}{x_2},\dots,\frac {x_1}{x_n})}.\end{align}
These transformations have the following properties:

\medskip
\begin{tabular}{|c|c|c|c|c|}\hline
&$K_{x,y}^{\mu-\alpha-\beta,(\alpha,\beta)}$&
 $L_{x,y}^{\mu-\alpha-\beta,(\alpha,\beta)}$&
  $\tilde K_{x,y}^{\mu-\gamma,\gamma}$&
  $K_{x}^{\mu-\alpha,\alpha}$\\ \hline
\!$x^my^n\mapsto$\!&
%$\sum c_{\mathbf m}%{\Gamma(\boldsymbol\lambda)}{\Gamma(|\boldsymbol\lambda|+\mu)}
$\frac{(\alpha)_m(\beta)_n}{(\mu)_{m+n}} x^my^n$&
$%\frac{\Gamma(|\boldsymbol\lambda|+\mu)}{\Gamma(\boldsymbol\lambda)} \sum c_{\mathbf m}
 \frac{(\mu)_{m+n}}{(\alpha)_m(\beta)_n}x^my^n$&  
$%\frac{\Gamma(\lambda)}{\Gamma(\lambda+\mu)}
  \frac{(\gamma)_{m+n}}{(\mu)_{m+n}}x^my^n$&
$\frac{(\alpha)_m}{(\mu)_m}x^my^n$
\\ \hline
\end{tabular}

\medskip
\noindent
Hence, we have the following theorem.
\begin{thm}\label{thm:IntegralRepresentation}
Assume $\alpha'_{p'}=\beta'_{q'}=0$. Then, the following identity holds.
\begin{align*}
&F^{p,q,r}_{p',q',r'}\left(
\begin{smallmatrix}
\boldsymbol\alpha&\boldsymbol\beta&\boldsymbol\gamma \\
\boldsymbol\alpha'&\boldsymbol\beta'&\boldsymbol\gamma'    
\end{smallmatrix}
;x,y
\right)\\
&=\prod_{i=1}^{p'-1}K_x^{1-\alpha'_i-\alpha_i,\alpha_i}
\prod_{j=1}^{q'-1}K_y^{1-\beta'_j-\beta_j,\beta_j}
\prod_{k=1}^r \tilde K_{x,y}^{1-\gamma'_k-\gamma_k,\gamma_k}\\
&\quad\prod_{k=1}^{r'-r}K_{x,y}^{1-\gamma'_{r+k}-\alpha_{p'+k-1}-\beta_{q'+k-1},
(\alpha_{p'+k-1},\beta_{q'+k-1})}(1-x)^{-\alpha_p}(1-y)^{-\beta_q}\\
&\qquad \text{if }r\le r',
\\
&=\prod_{i=1}^{p}K_x^{1-\alpha'_i-\alpha_i,\alpha_i}
\prod_{j=1}^{q}K_y^{1-\beta'_j-\beta_j,\beta_j}
\prod_{k=1}^{r'} \tilde K_{x,y}^{1-\gamma'_k-\gamma_k,\gamma_k}\\
&\quad\prod_{k=1}^{r-r'-1}L_{x,y}^{1-\gamma_{r+k}-\alpha'_{p+k}-\beta'_{q+k},
(\alpha'_{p+k},\beta'_{q+k})}(1-x-y)^{-\gamma_r}\quad \text{ if }r>r'.
\end{align*}
\end{thm}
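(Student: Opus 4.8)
The plan is to prove the identity at the level of power-series coefficients, using the fact that every one of the operators $K_x^{\mu-\alpha,\alpha}$, $K_{x,y}^{\mu-\alpha-\beta,(\alpha,\beta)}$, $L_{x,y}^{\mu-\alpha-\beta,(\alpha,\beta)}$ and $\tilde K_{x,y}^{\mu-\gamma,\gamma}$ is \emph{diagonal} in the monomial basis $\{x^my^n\}$, with eigenvalues recorded in the table preceding the theorem. Since diagonal operators commute, the composite operator on the right-hand side is again diagonal, and its eigenvalue on $x^my^n$ is the product of the individual eigenvalues; in particular the order in which the transformations are composed is immaterial for this formal computation. It therefore suffices to apply this composite eigenvalue to the Taylor coefficients of the base function and to check that the outcome reproduces the coefficient $\frac{(\boldsymbol\alpha)_m(\boldsymbol\beta)_n(\boldsymbol\gamma)_{m+n}}{(1-\boldsymbol\alpha')_m(1-\boldsymbol\beta')_n(1-\boldsymbol\gamma')_{m+n}}$ of the series \eqref{eq:seriesF}.

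First I would expand the base function by the binomial theorem: for $r\le r'$ one has $(1-x)^{-\alpha_p}(1-y)^{-\beta_q}=\sum_{m,n}\frac{(\alpha_p)_m(\beta_q)_n}{m!\,n!}\,x^my^n$, and for $r>r'$ one has $(1-x-y)^{-\gamma_r}=\sum_{m,n}\frac{(\gamma_r)_{m+n}}{m!\,n!}\,x^my^n$. Multiplying the relevant coefficient by the product of the table eigenvalues of the prefactors reduces the theorem to a bookkeeping identity among Pochhammer symbols. In the case $r\le r'$, the factors $K_x$ supply $\prod_{i=1}^{p'-1}\frac{(\alpha_i)_m}{(1-\alpha'_i)_m}$, the factors $K_y$ supply $\prod_{j=1}^{q'-1}\frac{(\beta_j)_n}{(1-\beta'_j)_n}$, the factors $\tilde K_{x,y}$ supply $\prod_{k=1}^{r}\frac{(\gamma_k)_{m+n}}{(1-\gamma'_k)_{m+n}}$, and the factors $K_{x,y}$ supply the remaining $\frac{(\alpha_{p'+k-1})_m(\beta_{q'+k-1})_n}{(1-\gamma'_{r+k})_{m+n}}$. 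Using the level relations $p=p'+(r'-r)$ and $q=q'+(r'-r)$ that follow from condition (F), the $\alpha$- and $\beta$-numerators from $K_x$, $K_{x,y}$ and the base assemble into $(\boldsymbol\alpha)_m$ and $(\boldsymbol\beta)_n$; the base denominators $m!\,n!$ become $(1-\alpha'_{p'})_m(1-\beta'_{q'})_n$ precisely because $\alpha'_{p'}=\beta'_{q'}=0$, completing $(1-\boldsymbol\alpha')_m(1-\boldsymbol\beta')_n$; and the $\tilde K$- and $K_{x,y}$-denominators assemble $(1-\boldsymbol\gamma')_{m+n}$ while the $\tilde K$-numerators give $(\boldsymbol\gamma)_{m+n}$. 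The case $r>r'$ is entirely parallel, with the operators $L_{x,y}$ playing the role dual to $K_{x,y}$: they supply the $\gamma$-numerators and the $(1-\alpha')$- and $(1-\beta')$-denominators missing from $K_x$, $K_y$, $\tilde K_{x,y}$ and the base, the single factor $(\gamma_r)_{m+n}$ again coming from the base. In both cases condition (F) is exactly what makes the counts of numerators and denominators match.

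The one genuinely non-formal point — and the step I expect to be the main obstacle — is the \emph{analytic} justification that the manipulation above is legitimate: that each transformation may be applied term by term to the relevant convergent series, that every intermediate series converges on a common polydisc around the origin, and that the Gamma-factor normalisations together with the meromorphic continuation in the parameters (as in the remark following Definition~\ref{def:integraltransform}) make the composite well defined. I would therefore first establish the identity on an open set of parameters for which all the integrals converge absolutely and the diagonal action recorded in the table is valid term by term; the positive radius of convergence of the defining series of $F^{p,q,r}_{p',q',r'}$ guaranteed under condition (F) reduces the whole argument to the combinatorial coefficient matching above. The general parameter case then follows by analytic continuation in $(\boldsymbol\alpha,\boldsymbol\beta,\boldsymbol\gamma,\boldsymbol\alpha',\boldsymbol\beta',\boldsymbol\gamma')$, and the resulting equality of holomorphic functions near the origin propagates to the stated domain by the identity theorem.
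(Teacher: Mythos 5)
Your proposal is correct and is essentially the paper's own argument: the paper proves the theorem precisely by recording in the table preceding it that each of $K_x$, $K_y$, $\tilde K_{x,y}$, $K_{x,y}$, $L_{x,y}$ acts diagonally on monomials $x^my^n$, and then matching the product of eigenvalues applied to the binomial expansion of the base function against the coefficients of \eqref{eq:seriesF}, with condition (F) and $\alpha'_{p'}=\beta'_{q'}=0$ accounting for the index bookkeeping exactly as you describe. The analytic caveat you raise (term-by-term application, convergence, and meromorphic continuation in the parameters) is handled in the paper only by the remark following Definition~\ref{def:integraltransform} deferring to \cite{Oi}, so your treatment is, if anything, more explicit on that point.
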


%%%%%%%%%%%%%%%%%%%%%%%%%%%%%%%%%%%%%%%%%%%%%%%%%%%%%
%%  Global structure of hypergeometric functions  %%%
%%%  Coordinate transformations  %%%%%%%%%%%%%%%%%%%%
%%%%%%%%%%%%%%%%%%%%%%%%%%%%%%%%%%%%%%%%%%%%%%%%%%%%%
\section{Global structure of hypergeometric functions}\label{sec:Global}
\subsection{Coordinate transformations}\label{sec:coord trans}
By a fractional linear transformation of the Riemann sphere $\mathbb P^1$, 
5 points $x_0,\,x_1,\,x_2,\,x_3$ and $x_4$ in $\mathbb P^1$ 
can be uniquely transformed to 5 points $x,\,y,\,1,\,0$ and $\infty$ .  
Some hypergeometric functions with the variables $(x,y)$
has a symmetry corresponding to the permutation of 5 points $x_0,\,x_1,\,x_2,\,x_3$ and $x_4$
under this correspondence.
This permutation group isomorphic to $S_5$ is
realized by a group of coordinate transformations of $(x,y)$
generated by 4 involutive elements.
\begin{equation}
\label{eq:dynkin}
\begin{split}
&\qquad \mathbb P^1\ni(x_0,x_1,x_2,x_3,x_4)\mapsto (x,y,1,0,\infty)\\
&\begin{tikzpicture}
 [root/.style={draw,circle,inner sep=0mm,minimum size=1.8mm}]
\node[root] (a1) at (0,0) {};
\node[root] (a2) at (1.6,0) {};
\node[root,fill=black] (a3) at (3.2,0) {};
\node[root] (a4) at (4.8,0) {};
\node at (-1.1,-0.35) {$(x,y)\mapsto$};
\node at (-1.1,0.27) {$S_5\ni$};
\node at (0,-0.35)  {$(y,x)$};
\node at (0, 0.25)  {$x_0\leftrightarrow x_1$};
\node at (1.6,-0.35)  {$(\frac xy,\frac 1y)$};
\node at (1.6, 0.25)  {$x_1\leftrightarrow x_2$};
\node at (3.2,-0.35)  {$(1-x,1-y)$};
\node at (3.2, 0.25)  {$x_2\leftrightarrow x_3$};
\node at (4.8,-0.35)  {$(\frac1x,\frac1y)$};
\node at (4.8, 0.25)  {$x_3\leftrightarrow x_4$};
\draw (a1)--(a2);
\draw (a2)--(a3)--(a4);
\end{tikzpicture}
\end{split}
\end{equation}

\noindent 
The subgroup generated by the transformations $(x,y)\mapsto (y,x)$, $(\frac xy,\tfrac 1y)$ and 
$(\frac1x,\frac 1y)$ is isomorphic to $S_3\times S_2$ and defines transformations of the system 
${\mathcal M}^{\boldsymbol\alpha,\boldsymbol\beta,\boldsymbol\gamma}
_{\boldsymbol\alpha',\boldsymbol\beta',\boldsymbol\gamma'}$ 
as follows.
%Note that the subgroup is isomorphic to $S_3\times S_2$ with 12 elements.

In general, we denote by $T_{(x,y)\to (X,Y)}\mathcal M$
the direct image of a system $\mathcal M$ 
by the transformation $(x,y)\mapsto (X,Y)=(y,x)$, $(\frac1x,\frac1y)$ or 
$(\frac xy,\frac1y)$.
We calculate these transformations below.

\begin{prop}\label{prop:transformations}
In this proposition \eqref{eq:nontrivial} is not assumed.
One has the following transformations of the system $\mathcal M
 ^{\boldsymbol \alpha, \boldsymbol \beta, \boldsymbol \gamma}
 _{\boldsymbol \alpha',\boldsymbol \beta',\boldsymbol \gamma'}$.
\begin{itemize}
\item[\rm (1)] $T_{(x,y)\to(y,x)}
\mathcal M
 ^{\boldsymbol \alpha, \boldsymbol \beta, \boldsymbol \gamma}
 _{\boldsymbol \alpha',\boldsymbol \beta',\boldsymbol \gamma'}
=
\mathcal M
 ^{\boldsymbol \beta, \boldsymbol \alpha, \boldsymbol \gamma}
 _{\boldsymbol \beta',\boldsymbol \alpha',\boldsymbol \gamma'}
.
$
\item[\rm (2)] $T_{(x,y)\to(\tfrac1x,\tfrac1y)}
\mathcal M
 ^{\boldsymbol \alpha, \boldsymbol \beta, \boldsymbol \gamma}
 _{\boldsymbol \alpha',\boldsymbol \beta',\boldsymbol \gamma'}
=
\mathcal M
 ^{\boldsymbol \alpha', \boldsymbol \beta', \boldsymbol \gamma'}
 _{\boldsymbol \alpha,\boldsymbol \beta,\boldsymbol \gamma}.$
 \item[\rm (3)] $T_{(x,y)\to(\epsilon\tfrac xy,\tfrac1y)}
 \mathcal M
 ^{\boldsymbol \alpha, \boldsymbol \beta, \boldsymbol \gamma}
 _{\boldsymbol \alpha',\boldsymbol \beta',\boldsymbol \gamma'}
=
\mathcal M
 ^{\boldsymbol \alpha,\boldsymbol \gamma',\boldsymbol \beta'}
 _{\boldsymbol \alpha',\boldsymbol \gamma,\boldsymbol \beta}$, where $\epsilon:=(-1)^{L}.$
\end{itemize}
\end{prop}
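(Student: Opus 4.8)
The plan is to prove all three identities by substituting the coordinate change into the three generators $P_1,P_2,P_{12}$ and comparing the resulting left ideal with that of the target system. I would work in the ring of differential operators on the torus $\{xy\neq0\}$, where the coordinate monomials are units; hence left multiplication by a Laurent monomial in $X,Y$ is invertible and preserves the generated left ideal, so it suffices to match each transformed generator with a target generator up to such a monomial and a nonzero scalar. Only two elementary facts are needed: that $\vartheta_x$ and $\vartheta_y$ commute, so the factored blocks $(\vartheta_X-\boldsymbol\alpha')$, $(\vartheta_X+\vartheta_Y-\boldsymbol\beta)$, etc.\ commute with one another; and that reversing the sign of the argument of a product of $d$ linear factors contributes a global sign $(-1)^d$, e.g.\ $(-\vartheta_Y-\boldsymbol\gamma')=(-1)^{r'}(\vartheta_Y+\boldsymbol\gamma')$. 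The monomials are never commuted past an Euler operator in the same variable; they are only cancelled against their inverses, as in $Y\cdot Y^{-1}=1$ and $Y\cdot X/Y=X$. The substitution dictionaries are: for $(x,y)\mapsto(y,x)$, exchange $x\leftrightarrow y$ and $\vartheta_x\leftrightarrow\vartheta_y$; for $(x,y)\mapsto(\tfrac1x,\tfrac1y)$, $\vartheta_x\mapsto-\vartheta_X$, $\vartheta_y\mapsto-\vartheta_Y$, $x\mapsto X^{-1}$, $y\mapsto Y^{-1}$; and for $(x,y)\mapsto(\epsilon\tfrac xy,\tfrac1y)$, a chain-rule computation gives $\vartheta_x\mapsto\vartheta_X$, $\vartheta_x+\vartheta_y\mapsto-\vartheta_Y$ (hence $\vartheta_y\mapsto-(\vartheta_X+\vartheta_Y)$), $x\mapsto\epsilon X/Y$ and $y\mapsto Y^{-1}$.

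Case (1) is immediate relabeling: the dictionary sends $P_1$ to the $P_2$-generator of $\mathcal M^{\boldsymbol\beta,\boldsymbol\alpha,\boldsymbol\gamma}_{\boldsymbol\beta',\boldsymbol\alpha',\boldsymbol\gamma'}$, sends $P_2$ to its $P_1$, and sends $P_{12}$ to $-1$ times its $P_{12}$; no condition is used. For case (2), substituting into $P_1$ yields its two summands with global signs $(-1)^{p'+r'}$ and $(-1)^{p+r}$; condition (F) gives $p+r=p'+r'$, so these coincide, factor out, and leave $A-X^{-1}B$ with $A=(\vartheta_X+\boldsymbol\alpha')(\vartheta_X+\vartheta_Y+\boldsymbol\gamma')$ and $B=(\vartheta_X-\boldsymbol\alpha)(\vartheta_X+\vartheta_Y-\boldsymbol\gamma)$. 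Left multiplication by $-X$ turns this into $B-XA$, which is exactly the $P_1$-generator of $\mathcal M^{\boldsymbol\alpha',\boldsymbol\beta',\boldsymbol\gamma'}_{\boldsymbol\alpha,\boldsymbol\beta,\boldsymbol\gamma}$. The computations for $P_2$ (multiply by $-Y$, using $q+r=q'+r'$) and for $P_{12}$ (multiply by $-XY$, using the consequence $p+q'=p'+q$ of (F)) are identical in spirit.

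Case (3) is the substantive one, and the choice $\epsilon=(-1)^L$ with $L=r-r'$ is precisely what makes it close up. Substituting the third dictionary into $P_1$ and reversing the linear forms gives
\[
\tilde P_1=(-1)^{r'}(\vartheta_X-\boldsymbol\alpha')(\vartheta_Y+\boldsymbol\gamma')-(-1)^{r}\tfrac{\epsilon X}{Y}(\vartheta_X+\boldsymbol\alpha)(\vartheta_Y-\boldsymbol\gamma).
\]
A priori the two summands carry different signs, but $(-1)^{r}\epsilon=(-1)^{r}(-1)^{r-r'}=(-1)^{r'}$, so the common factor $(-1)^{r'}$ splits off as a unit; left multiplication by $Y$ then produces $-1$ times the $P_{12}$-generator of the target $\mathcal M^{\boldsymbol\alpha,\boldsymbol\gamma',\boldsymbol\beta'}_{\boldsymbol\alpha',\boldsymbol\gamma,\boldsymbol\beta}$, once the commuting blocks are reordered. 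In the same way $P_2$ maps (after multiplying by $-Y$) to the target $P_2$-generator, while $P_{12}$ maps, using $q'-q=L$ so that $(-1)^{q'}\epsilon=(-1)^{q}$, to a unit multiple of the target $P_1$-generator. Thus (3) permutes the three generators, interchanging the $P_1$- and $P_{12}$-roles and fixing $P_2$, up to invertible monomials and signs, and the two left ideals agree. The main obstacle is exactly this sign-and-level bookkeeping: one must check that the factor $\epsilon=(-1)^{r-r'}$ built into the coordinate change, together with the two relations in (F) and their consequence $q'-q=r-r'$, renders each transformed generator sign-homogeneous, so that the spurious sign detaches as a unit and the reciprocal monomial can be cleared. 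Everything else is routine.
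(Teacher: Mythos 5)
Your proposal is correct and follows essentially the same route as the paper: substitute the dictionary $\vartheta_x\mapsto\vartheta_X$, $\vartheta_y\mapsto-\vartheta_X-\vartheta_Y$ (resp.\ $\vartheta_x\mapsto-\vartheta_X$, $\vartheta_y\mapsto-\vartheta_Y$) into $P_1,P_2,P_{12}$ and match the results with the target generators up to invertible monomials, the condition (F) and the factor $\epsilon=(-1)^{L}$ being exactly what makes the signs homogeneous. The paper compresses this to ``a direct computation''; your sign bookkeeping (including the identities $(-1)^{r}\epsilon=(-1)^{r'}$ and $(-1)^{q'}\epsilon=(-1)^{q}$, and the permutation $P_1\leftrightarrow P_{12}$, $P_2\mapsto P_2$ of the generators in case (3)) is the correct filling-in of that computation.
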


\begin{proof}
(1) is obvious.
Let us prove (2).
Putting $(x,y)=(\frac1X,\frac1Y)$, we have
$\vartheta_x=-\vartheta_X,\ \vartheta_y=-\vartheta_Y$.
The equation $P_1u=0$ is transformed into
\[
 \prod_{i=1}^p(\vartheta_X-\alpha_i)\prod_{k=1}^r(\vartheta_X+\vartheta_Y-\gamma_k)u=
 X\prod_{i=1}^{p'}(\vartheta_X+\alpha'_i)\prod_{k=1}^{r'}(\vartheta_X+\vartheta_Y+\gamma'_k)u.
\]
By similar calculations for $P_2$ and $P_{12}$, we obtain a relation
\begin{equation}
T_{(x,y)\to(\tfrac1x,\tfrac1y)}
\mathcal M
 ^{\boldsymbol \alpha, \boldsymbol \beta, \boldsymbol \gamma}
 _{\boldsymbol \alpha',\boldsymbol \beta',\boldsymbol \gamma'}
=
\mathcal M
 _{\boldsymbol \alpha, \boldsymbol \beta, \boldsymbol \gamma}
 ^{\boldsymbol \alpha',\boldsymbol \beta',\boldsymbol \gamma'}.
\end{equation}
As for (3), we use a relation $\vartheta_x=\vartheta_X$, 
$\vartheta_y=-\vartheta_X - \vartheta_Y$ and the proof is a direct computation.
\end{proof}

By successive applications of the above transformations, we obtain the following theorem.
 
%%%%%%%%%%%%
\begin{thm}\label{thm:CoordSym}
One has the following relations.
\begin{gather*}
  T_{(x,y)\to(y,x)}^2=T_{(x,y)\to(\tfrac 1x,\tfrac 1y)}^2=T_{(x,y)\to(\tfrac xy,\tfrac 1y)}^2
  =\textrm{id},\\
  \bigl(T_{(x,y)\to(y,x)}\circ T_{(x,y)\to(\tfrac 1x,\tfrac 1y)}\bigr)^2
  =\bigl(T_{(x,y)\to(\epsilon\tfrac xy,\tfrac 1y)}\circ T_{(x,y)\to(\tfrac1x,\tfrac1y)}\bigr)^2=\textrm{id},\\  
  \Bigl(T_{(x,y)\to(y,x)}\circ T_{(x,y)\to(\epsilon\tfrac xy,\tfrac1y)}\bigr)^3=\textrm{id}.
\end{gather*}
Then the group generated by the involutions $T_{(x,y)\to(y,x)}$, 
$T_{(x,y)\to(\epsilon\tfrac xy,\tfrac 1y)}$ and $T_{(x,y)\to(\tfrac 1x,\tfrac 1y)}$
is isomorphic to $S_3\times S_2$ and the dihedral group of degree 6 with the order 12 which
is the group of symmetries of a regular hexagon. 
This regular hexagon can be realized in a blow-up of $\mathbb P^1\times \mathbb P^1$ as in {\bf Fig.1} (cf. \S \ref{sec:local sol}).
The transformation of 
$\mathcal M
 ^{\boldsymbol \alpha,\boldsymbol \gamma',\boldsymbol \beta'}
 _{\boldsymbol \alpha',\boldsymbol \gamma,\boldsymbol \beta}
$
by the element of this group and the corresponding coordinate transformation is given 
in the following table:

\medskip
Systems satisfied by $(X,Y)$ with $X=X(x,y)$, $Y=Y(x,y)$.
\\
\scalebox{1}
{\begin{tabular}{|c|c|c|c|c|c|c|}\hline
$(X,Y)$&$(x,y)$ & $ (\epsilon\tfrac xy,\tfrac1y)$ & $ (\tfrac1y,\epsilon\tfrac xy)$ & $ 
(\tfrac 1x,\epsilon\tfrac yx)$ & $  (\epsilon\tfrac yx ,\tfrac1x)$ & $ (y,x)$
%&$(\tfrac1x,\tfrac 1y)$
\\ \hline
$(x,y)$&$(X,Y)$ & $(\epsilon\tfrac XY,\tfrac1Y)$ & $(\epsilon\tfrac YX,\tfrac 1X)$ & $
(\tfrac 1X,\epsilon\tfrac YX)$ &
$(\tfrac1Y,\epsilon\tfrac XY)$ & $(Y,X)$ % &$(\tfrac1X,\tfrac 1Y)$
\\ \hline 
\multirow{2}{13mm}
{\!$\mathcal M
 ^{\boldsymbol\alpha,\boldsymbol\beta,\boldsymbol\gamma}
 _{\boldsymbol\alpha',\boldsymbol\beta',\boldsymbol\gamma'}$}
&$\boldsymbol\alpha,\boldsymbol\beta,\boldsymbol\gamma$&
$\boldsymbol\alpha,\boldsymbol\gamma',\boldsymbol\beta'$&
$\boldsymbol\gamma',\boldsymbol\alpha,\boldsymbol\beta'$&
$\boldsymbol\gamma',\boldsymbol\beta,\boldsymbol\alpha'$&
$\boldsymbol\beta,\boldsymbol\gamma',\boldsymbol\alpha'$&
$\boldsymbol\beta,\boldsymbol\alpha,\boldsymbol\gamma$
%&$\boldsymbol\alpha',\boldsymbol\beta',\boldsymbol\gamma'$
\\
&$\boldsymbol\alpha',\boldsymbol\beta',\boldsymbol\gamma'$&
$\boldsymbol\alpha',\boldsymbol\gamma,\boldsymbol\beta$&
$\boldsymbol\gamma,\boldsymbol\alpha',\boldsymbol\beta$&
$\boldsymbol\gamma,\boldsymbol\beta',\boldsymbol\alpha$&
$\boldsymbol\beta',\boldsymbol\gamma,\boldsymbol\alpha$&
$\boldsymbol\beta',\boldsymbol\alpha',\boldsymbol\gamma'$
%&$\boldsymbol\alpha,\boldsymbol\beta,\boldsymbol\gamma$

\\ \hline\hline
$(X,Y)$&$(\tfrac1x,\tfrac1y)$ & $ (\epsilon\tfrac yx,y)$ & $ (y,\epsilon\tfrac yx)$ & $ 
(x,\epsilon\tfrac xy)$ & $  (\epsilon\tfrac xy ,x)$ & $ (\tfrac1y,\tfrac1x)$
%&$(\tfrac1x,\tfrac 1y)$
\\ \hline
$(x,y)$&$(\tfrac1X,\tfrac1Y)$ & $(\epsilon\tfrac YX,Y)$ & $(\epsilon\tfrac XY,X)$ & $
(X,\epsilon\tfrac XY)$ &
$(Y,\epsilon\tfrac YX)$ & $(\tfrac1Y,\tfrac1X)$%
%&$(\tfrac1X,\tfrac 1Y)$
\\ \hline 
\multirow{2}{13mm}
{\!$\mathcal M
 ^{\boldsymbol\alpha,\boldsymbol\beta,\boldsymbol\gamma}
 _{\boldsymbol\alpha',\boldsymbol\beta',\boldsymbol\gamma'}$}
&$\boldsymbol\alpha',\boldsymbol\beta',\boldsymbol\gamma'$&
$\boldsymbol\alpha',\boldsymbol\gamma,\boldsymbol\beta$&
$\boldsymbol\gamma,\boldsymbol\alpha',\boldsymbol\beta$&
$\boldsymbol\gamma,\boldsymbol\beta',\boldsymbol\alpha$&
$\boldsymbol\beta',\boldsymbol\gamma,\boldsymbol\alpha$&
$\boldsymbol\beta',\boldsymbol\alpha',\boldsymbol\gamma'$
%&$\boldsymbol\alpha,\boldsymbol\beta,\boldsymbol\gamma$
\\
&$\boldsymbol\alpha,\boldsymbol\beta,\boldsymbol\gamma$&
$\boldsymbol\alpha,\boldsymbol\gamma',\boldsymbol\beta'$&
$\boldsymbol\gamma',\boldsymbol\alpha,\boldsymbol\beta'$&
$\boldsymbol\gamma',\boldsymbol\beta,\boldsymbol\alpha'$&
$\boldsymbol\beta,\boldsymbol\gamma',\boldsymbol\alpha'$&
$\boldsymbol\beta,\boldsymbol\alpha,\boldsymbol\gamma$
%&$\boldsymbol\alpha',\boldsymbol\beta',\boldsymbol\gamma'$
\\\hline
\end{tabular}}
\end{thm}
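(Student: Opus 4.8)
The plan is to build entirely on Proposition~\ref{prop:transformations}, which records that each generator acts on a system $\mathcal M^{\boldsymbol\alpha,\boldsymbol\beta,\boldsymbol\gamma}_{\boldsymbol\alpha',\boldsymbol\beta',\boldsymbol\gamma'}$ through two independent pieces of data: a coordinate substitution on $(x,y)$ and a permutation of the six parameter blocks $\boldsymbol\alpha,\boldsymbol\beta,\boldsymbol\gamma,\boldsymbol\alpha',\boldsymbol\beta',\boldsymbol\gamma'$. Writing $s_1=T_{(x,y)\to(y,x)}$, $s_2=T_{(x,y)\to(1/x,1/y)}$ and $s_3=T_{(x,y)\to(\epsilon x/y,1/y)}$, the proposition says that $s_1$ swaps the first two columns of the parameter array, $s_2$ swaps its two rows, and $s_3$ fixes $\boldsymbol\alpha,\boldsymbol\alpha'$ while interchanging $\boldsymbol\beta\leftrightarrow\boldsymbol\gamma'$ and $\boldsymbol\gamma\leftrightarrow\boldsymbol\beta'$. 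Each asserted relation then splits into two verifications: that the composite coordinate substitution is the identity map of $(x,y)$, and that the composite permutation of the six blocks is the identity. Since a system in this family is determined by its coordinates together with its parameters, these two checks jointly establish the relation at the level of $\mathcal M^{\boldsymbol\alpha,\boldsymbol\beta,\boldsymbol\gamma}_{\boldsymbol\alpha',\boldsymbol\beta',\boldsymbol\gamma'}$.

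The coordinate computations are short, and the factor $\epsilon=(-1)^L$ is exactly what forces them to close up, since $\epsilon^2=1$. For example $s_3$ sends $(x,y)\mapsto(\epsilon x/y,1/y)\mapsto(\epsilon^2x,y)=(x,y)$, so $s_3^2=\mathrm{id}$, and $s_1^2=s_2^2=\mathrm{id}$ are immediate. For the order-three relation, putting $c=s_1\circ s_3$ one finds $c(x,y)=(1/y,\epsilon x/y)$, then $c^2(x,y)=(\epsilon y/x,1/x)$ and $c^3(x,y)=(x,y)$, each step consuming one factor $\epsilon^2=1$; the two commuting relations $(s_1s_2)^2=(s_3s_2)^2=\mathrm{id}$ are checked in the same mechanical way. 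On the parameter side, one verifies by the identical reasoning that $s_1$ and $s_2$ commute, that $s_2$ and $s_3$ commute, and that $s_1s_3$ has order three. Because $s_1,s_3$ are involutions, the order of $s_1s_3$ is insensitive to the (contravariant) convention used for composing direct images, so no ambiguity enters here.

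To identify the group, note that $s_1$ and $s_3$ are involutions with $(s_1s_3)^3=\mathrm{id}$ and therefore generate a copy of $S_3$ (the dihedral group of order $6$). The involution $s_2$ commutes with both $s_1$ and $s_3$, and it does not lie in $\langle s_1,s_3\rangle$: on the six parameter blocks the subgroup $\langle s_1,s_3\rangle$ preserves the partition $\{\boldsymbol\alpha,\boldsymbol\beta,\boldsymbol\gamma'\}\sqcup\{\boldsymbol\alpha',\boldsymbol\beta',\boldsymbol\gamma\}$ (each of $s_1$ and $s_3$ permutes the two parts internally), whereas $s_2$ interchanges the two parts. Hence the full group is the internal direct product $\langle s_1,s_3\rangle\times\langle s_2\rangle\cong S_3\times S_2$, which has order $12$ and is isomorphic to the dihedral group of symmetries of a regular hexagon.

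Finally, the table is produced by applying the twelve group elements to $\mathcal M^{\boldsymbol\alpha,\boldsymbol\beta,\boldsymbol\gamma}_{\boldsymbol\alpha',\boldsymbol\beta',\boldsymbol\gamma'}$: the six elements of $\langle s_1,s_3\rangle$ fill the first block of rows and their products with $s_2$ fill the second, with each coordinate change and parameter array obtained by composing the data of Proposition~\ref{prop:transformations}. For the geometric statement, one identifies the six coordinate systems appearing in the table as the local charts at the six normal-crossing points cut out by the divisors $\{x=0\},\{x=\infty\},\{y=0\},\{y=\infty\}$ together with the two exceptional divisors of the blow-up of $\mathbb P^1\times\mathbb P^1$ at $(0,0)$ and $(\infty,\infty)$; these six divisors form a hexagonal cycle on which the generators act as reflections, so that the group realizes the full dihedral symmetry and acts transitively on the vertices (cf.\ Fig.~1 and \S\ref{sec:local sol}). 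The only genuine care needed throughout is the consistent tracking of the sign $\epsilon$ and of the composition convention for direct images; once these are fixed every step is a finite, mechanical computation, and the principal labor is the bookkeeping that fills the table rather than any conceptual obstacle.
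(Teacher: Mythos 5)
Your proposal is correct and follows essentially the same route as the paper, which derives the theorem by "successive applications" of the transformations in Proposition~\ref{prop:transformations}; your verification of the relations on both the coordinate side (with $\epsilon^2=1$ closing each cycle) and the parameter-permutation side, and your identification of $\langle s_1,s_3\rangle\times\langle s_2\rangle\cong S_3\times S_2$ via the preserved partition $\{\boldsymbol\alpha,\boldsymbol\beta,\boldsymbol\gamma'\}\sqcup\{\boldsymbol\alpha',\boldsymbol\beta',\boldsymbol\gamma\}$, simply makes explicit what the paper leaves to the reader. No gaps.
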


The transformations in Proposition \ref{prop:transformations} can be used to construct a local solution at a normal crossing point from that at another point.
For example, applying the transformation $(x,y)\mapsto(\tfrac1x,\tfrac1y)$ to local solutions \eqref{eq:Sols} at the origin, we obtain $pq$ solutions 
\[
  F^{p',q',r'}_{p,q,r}\Bigl(
\begin{smallmatrix}
 \boldsymbol \alpha' & \boldsymbol \beta' & \boldsymbol \gamma'\\
 \boldsymbol \alpha & \boldsymbol \beta & \boldsymbol \gamma
\end{smallmatrix};\alpha_i,\beta_j;
 \tfrac1x,\tfrac1y\Bigr)\qquad
(1\le i\le p,\ 1\le j\le q)
\]
in a neighborhood of  $(\infty,\infty)$. 
In the same manner, applying the transformation $(x,y)\mapsto(\epsilon\tfrac xy,\tfrac1y)$ to local solutions \eqref{eq:Sols} at the origin, we obtain $p'r$ solutions
\begin{gather}
  F^{p,r',q'}_{p',r,q}\Bigl(
\begin{smallmatrix}
 \boldsymbol \alpha & \boldsymbol \gamma' & \boldsymbol \beta'\\
 \boldsymbol \alpha' & \boldsymbol\gamma & \boldsymbol \beta
\end{smallmatrix};\alpha'_i,\gamma_k;
 \epsilon\tfrac xy,\tfrac1y\Bigr)\qquad(1\le i\le p',\ 1\le k\le r)
\notag
\end{gather}
which are defined along a line $y=\infty$.

Let us consider another transformation $(x,y)\mapsto(X,Y)=(x,\tfrac1y)$.
Then $\vartheta_x=\vartheta_X$, $\vartheta_y=-\vartheta_Y$ and the system 
${\mathcal M}^{\boldsymbol\alpha,\boldsymbol\beta,\boldsymbol\gamma}
_{\boldsymbol\alpha',\boldsymbol\beta',\boldsymbol\gamma'}$ in $(X,Y)$ coordinate is
\begin{align*}
&\begin{cases}
  \displaystyle
  \prod_{i=1}^{p'}(\vartheta_X-\alpha'_i)\prod_{k=1}^{r'}(\vartheta_X-\vartheta_Y-\gamma'_k)u=
    X\prod_{i=1}^p(\vartheta_X+\alpha_i)\prod_{k=1}^r(\vartheta_X-\vartheta_Y+\gamma_k)u,\\
  \displaystyle\prod_{j=1}^{q}(\vartheta_Y-\beta_j)\prod_{k=1}^{r}(\vartheta_X-\vartheta_Y+\gamma_k)u=\epsilon Y\prod_{j=1}^{q'}(\vartheta_Y+\beta_j')\prod_{k=1}^{r'}(\vartheta_X-\vartheta_Y-\gamma'_k)u,\\
 \displaystyle
  \prod_{i=1}^{p'}(\vartheta_X-\alpha'_i)\prod_{j=1}^{q}(\vartheta_Y-\beta_j)u
  =\epsilon XY
  \prod_{i=1}^p(\vartheta_X+\alpha_i)
  \prod_{j=1}^{q'}(\vartheta_Y+\beta'_j) u.
 \end{cases}
\end{align*}
Hence, we obtain the following proposition.
\begin{prop}
One has a relation
\begin{equation}
T_{(x,y)\to(x,\tfrac \epsilon y)}\mathcal M^{\boldsymbol \alpha, \boldsymbol \beta, \boldsymbol \gamma}
 _{\boldsymbol \alpha',\boldsymbol \beta',\boldsymbol \gamma'}
=\mathcal N^{\boldsymbol \alpha, \boldsymbol \beta', \boldsymbol \gamma}
 _{\boldsymbol \alpha',\boldsymbol \beta,\boldsymbol \gamma'},\label{eq:M2N}
\end{equation}
where the system $\mathcal N^{\boldsymbol \alpha, \boldsymbol \beta', \boldsymbol \gamma}
 _{\boldsymbol \alpha',\boldsymbol \beta,\boldsymbol \gamma'}$ is defined as follows:
\begin{align*}
&\begin{cases}
  (\vartheta_x-\boldsymbol{\alpha}')(\vartheta_x-\vartheta_y-\boldsymbol{\gamma}')u=
    x(\vartheta_x+\boldsymbol{\alpha})(\vartheta_x-\vartheta_y+\boldsymbol{\gamma})u,\\
  (\vartheta_y-\boldsymbol{\beta})(\vartheta_x-\vartheta_y-\boldsymbol{\gamma})u=y (\vartheta_y-\boldsymbol{\beta}')(\vartheta_x-\vartheta_y-\boldsymbol{\gamma}')u,\\
(\vartheta_x-\boldsymbol{\alpha}')(\vartheta_y-\boldsymbol{\beta})u
  =xy
  (\vartheta_x+\boldsymbol{\alpha})
  (\vartheta_y-\boldsymbol{\beta}')u.
 \end{cases}
\end{align*}
\end{prop}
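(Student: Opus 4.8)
The plan is to prove the identity by direct substitution, conjugating each of the three generators $P_1,P_2,P_{12}$ of $\mathcal M^{\boldsymbol\alpha,\boldsymbol\beta,\boldsymbol\gamma}_{\boldsymbol\alpha',\boldsymbol\beta',\boldsymbol\gamma'}$ under the coordinate change and matching the outcome term by term with the three defining operators of $\mathcal N^{\boldsymbol\alpha,\boldsymbol\beta',\boldsymbol\gamma}_{\boldsymbol\alpha',\boldsymbol\beta,\boldsymbol\gamma'}$. First I would record the effect of the map $(X,Y)=(x,\tfrac{\epsilon}{y})$ on the Euler operators. Since $\epsilon=\pm1$ is a constant it does not affect logarithmic derivatives, so exactly as in the case $(x,\tfrac1y)$ treated immediately above one gets $\vartheta_x=\vartheta_X$ and $\vartheta_y=-\vartheta_Y$, hence $\vartheta_x+\vartheta_y=\vartheta_X-\vartheta_Y$; moreover $x=X$ while $y=\tfrac{\epsilon}{Y}$. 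These three substitution rules are all that is needed.

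Equipped with them I would transform the operators in turn. The operator $P_1$ involves only $\vartheta_x$, the combination $\vartheta_x+\vartheta_y$, and the prefactor $x$, none of which picks up a sign, so $P_1u=0$ passes directly to the first equation of $\mathcal N$, namely $(\vartheta_x-\boldsymbol\alpha')(\vartheta_x-\vartheta_y-\boldsymbol\gamma')u=x(\vartheta_x+\boldsymbol\alpha)(\vartheta_x-\vartheta_y+\boldsymbol\gamma)u$. For $P_2$ and $P_{12}$ the $\vartheta_y$-factors must be rewritten: each factor $\vartheta_y+\beta_j$ becomes $-(\vartheta_Y-\beta_j)$ and each $\vartheta_y-\beta'_j$ becomes $-(\vartheta_Y+\beta'_j)$, producing global signs $(-1)^{q}$ and $(-1)^{q'}$ respectively, while the prefactor $y$ becomes $\tfrac{\epsilon}{Y}$. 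Clearing the resulting $\tfrac1Y$ by multiplying through by $Y$ yields, from $P_2$ and after dividing by $(-1)^{q'}$, an equation of the shape $Y(\vartheta_Y+\boldsymbol\beta')(\vartheta_X-\vartheta_Y-\boldsymbol\gamma')u=\epsilon(-1)^{q-q'}(\vartheta_Y-\boldsymbol\beta)(\vartheta_X-\vartheta_Y+\boldsymbol\gamma)u$, and a wholly parallel computation from $P_{12}$.

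The single point that needs care — and the reason the constant $\epsilon$ was inserted into the transformation in the first place — is the bookkeeping of these signs. The accumulated constant is $\epsilon\cdot(-1)^{q-q'}$, and here I would invoke condition (F): from $q+r=q'+r'$ one gets $q'-q=r-r'=L$, hence $(-1)^{q-q'}=(-1)^{-L}=(-1)^{L}=\epsilon$, and since $\epsilon^2=1$ the constant collapses to $\epsilon\cdot\epsilon=1$ and disappears. Relabelling $(X,Y)$ back to $(x,y)$ then leaves precisely the second and third equations of $\mathcal N^{\boldsymbol\alpha,\boldsymbol\beta',\boldsymbol\gamma}_{\boldsymbol\alpha',\boldsymbol\beta,\boldsymbol\gamma'}$, with the roles of $\boldsymbol\beta$ and $\boldsymbol\beta'$ interchanged exactly as dictated by the sub/superscript pattern of $\mathcal N$ (the upper parameter $\boldsymbol\beta'$ entering the $\vartheta_y$-part and the lower parameter $\boldsymbol\beta$ entering the $\vartheta_y$-part on the opposite side). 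I expect no further obstacle: once this sign cancellation is in place the remaining work is purely the mechanical matching of products of factors, and the equality of the two systems — as left ideals in the Weyl algebra, equivalently as solution sheaves under the direct image $T_{(x,y)\to(x,\epsilon/y)}$ — follows at once.
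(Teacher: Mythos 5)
Your proof is correct and follows essentially the same route as the paper: the paper also just substitutes $\vartheta_x=\vartheta_X$, $\vartheta_y=-\vartheta_Y$ (doing the case $(x,\tfrac1y)$ first and then absorbing the resulting factor $\epsilon$ into the coordinate, whereas you build $\epsilon$ in from the start), and the key cancellation $\epsilon\cdot(-1)^{q-q'}=\epsilon^2=1$ via condition (F) is exactly the point of the displayed computation preceding the proposition. Note that your derived second and third equations carry $+\boldsymbol\gamma$ and $+\boldsymbol\beta'$ where the printed definition of $\mathcal N$ has $-\boldsymbol\gamma$ and $-\boldsymbol\beta'$; your signs agree with the paper's own intermediate display (and with the $G$-series solutions given afterwards), so the discrepancy is a typo in the statement of $\mathcal N$, not an error in your argument.
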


The local solution to this system in a neighborhood of the origin in $(X,Y)$ coordinate corresponds
to a local solution in a neighborhood of 
$(0,\infty)\in \mathbb P^1\times \mathbb P^1$.  
In fact, we obtain the following $p'q$ solutions to $\mathcal M^{\boldsymbol \alpha,\boldsymbol \beta,\boldsymbol \gamma}_{\boldsymbol \alpha',\boldsymbol \beta',\boldsymbol \gamma'}$:
%%%
\begin{align*}
&G^{p,q',r}_{p',q,r'}\left(\begin{smallmatrix}
\boldsymbol\alpha&\boldsymbol\beta'&\boldsymbol\gamma\\
 \boldsymbol\alpha'&\boldsymbol\beta&\boldsymbol\gamma'
\end{smallmatrix};\alpha'_i,\beta_j;X,\epsilon Y\right)\\
&\qquad:=x^{\alpha'_i}(\tfrac\epsilon y)^{\beta_j}
G^{p,q',r}_{p',q,r'}\left(\begin{smallmatrix}
\boldsymbol\alpha+\alpha'_i&\boldsymbol\beta'+\beta_j&\boldsymbol\gamma+\alpha'_i-\beta_j\\
 \boldsymbol\alpha'-\alpha'_i&\boldsymbol\beta-\beta_j&\boldsymbol\gamma'-\alpha'_i+\beta_j
\end{smallmatrix};x,\tfrac \epsilon y\right).
\end{align*}
For later use, we set
\begin{align*}
    &G^{p,q',r}_{p',q,r'}\left(\begin{smallmatrix}
\boldsymbol\alpha&\boldsymbol\beta'&\boldsymbol\gamma\\
 \boldsymbol\alpha'&\boldsymbol\beta&\boldsymbol\gamma'
\end{smallmatrix};\alpha'_i,\beta_j;\epsilon_1,\epsilon_2;X,\epsilon Y\right)\\
&\qquad:=
(\epsilon_1x)^{\alpha'_i}(\epsilon_2\tfrac\epsilon y)^{\beta_j}
G^{p,q',r}_{p',q,r'}\left(\begin{smallmatrix}
\boldsymbol\alpha+\alpha'_i&\boldsymbol\beta'+\beta_j&\boldsymbol\gamma+\alpha'_i-\beta_j\\
 \boldsymbol\alpha'-\alpha'_i&\boldsymbol\beta-\beta_j&\boldsymbol\gamma'-\alpha'_i+\beta_j
\end{smallmatrix};x,\tfrac \epsilon y\right)
\end{align*}
for $1\le i\le p',\ 1\le j\le q$ and $\epsilon_1,\epsilon_2=\pm 1$.
%%%%%%%%%%%%%%%%%%%%%%%%%%%%%%%%%%%%%%%%%%
%%%%%%   8.Solutions %%%%%%%%%%%%%%%
%%%%%%%%%%%%%%%%%%%%%%%%%%%%%%%%%%%%%%%%%%
%\newpage
\subsection{Local Solutions}\label{sec:local sol}
We consider 
$\mathcal M^{\boldsymbol \alpha,\boldsymbol \beta,\boldsymbol \gamma}_{\boldsymbol \alpha',\boldsymbol \beta',\boldsymbol \gamma'}$
on the space $\tilde X$ which is constructed from 
$\mathbb P^1\times\mathbb P^1$  by blowing up two points $(0,0)$ and 
$(\infty,\infty)$.
The local coordinates are $(\frac xy,y)$ and $(x,\frac yx)$ at $(0,0)$ 
and they are $(\frac yx,\frac1y)$ and $(\frac1x,\frac xy)$ at $(\infty,\infty)$.
Then for example, $(0^2,0)$ represents the point of $\tilde X$ corresponding to the origin 
under the coordinate $(\frac xy,y)$. 
Regarding {\bf Fig.1} below, the transformations 
$(x,y)\mapsto (y,\epsilon\tfrac yx)$  and $(x,y)\mapsto(y,x)$ 
correspond to the clockwise rotation of the hexagon 
by $\frac \pi 3$ and the reflection of the hexagon keeping the edge $(0,0)$.

\smallskip
\centerline{\begin{tikzpicture}
\draw
node (A) at (1,0) {$(0,0^2)$}
node (B) at (0,1) {$(0^2,0)$}
node (C) at (0,4) {$(0,\infty)$}
node (D) at (4,4) {$(\infty,\infty^2)$}
node (E) at (6,3) {$(\infty^2,\infty)$}
node (F) at (6,0) {$(\infty,0)$}
(A)--(B)--(C)--(D)--(E)--(F)--(A)
node at (-0.2,0.4) {\scriptsize$x_2=x_4$\ \eqref{eq:F0-0}}
node at (0.8,1) {\scriptsize\eqref{eq:G00-0}}
node at (1.1,0.4) {\scriptsize\eqref{eq:G0-00}}
node at (3.4,-0.2) {\scriptsize\eqref{eq:F-0}}
node at (3.5,0.15) {\scriptsize$x_1=x_3$}
node at (5.7,0.3) {\scriptsize\eqref{eq:G4-0}}
node at (-0.3,2.5) {\scriptsize\eqref{eq:F0-}}
node at (0.6,2.5) {\scriptsize$x_0=x_3$}
node at (6.3,1.5) {\scriptsize\eqref{eq:F4-}}
node at (4,3.7) {\scriptsize\eqref{eq:G4-44}} %(11)
node at (5.4,1.5) {\scriptsize$x_0=x_4$}
node at (4.7,3.4) {\scriptsize\eqref{eq:F4-4}} %(2)
node at (5.55,3.6) {\scriptsize$x_2=x_3$}
node at (0.3,3.65) {\scriptsize\eqref{eq:G0-4}} %(7)
node at (1.95,4.2) {\scriptsize\eqref{eq:F-4}}   % (5) 
node at (1.95,3.8) {\scriptsize$x_1=x_4$} % 1=4
node at (5.0,2.95) {\scriptsize\eqref{eq:G44-4}} %12
node at (3,4.8) {local solutions of 12 types}
node at (3,5.3) {\large\bf Fig.1}
;
\end{tikzpicture}}

\noindent
Owing to the coordinate transformations in \S\ref{sec:coord trans}, 
we have the following local solutions to $\mathcal M^{\boldsymbol \alpha,\boldsymbol \beta,\boldsymbol \gamma}_{\boldsymbol \alpha',\boldsymbol \beta',\boldsymbol \gamma'}$:
for simplicity, we sometimes omit the suffices and superfices of $F^{p,q,r}_{p',q',r'}$ and 
$G^{p,q,r}_{p',q',r'}$ and write $\alpha$, $\beta,\ldots$ in place of 
$\boldsymbol\alpha$, $\boldsymbol\beta,\ldots$. 
%%%%%%%%%%%%%%%%%%%%%%%%%%%%%%%%%%%%%%%%%%%%%%%%%%%%%%%%%%%%%%%%%%%%%
\begin{align}
 p'q'&: F\left(\begin{smallmatrix}
  \alpha&\beta&\gamma\\\alpha'&\beta'&\gamma'
\end{smallmatrix};\alpha'_i,\beta'_j;x,y\right)
 %% \text{around }(0,0)\tag{1}\label{eq:F0-0}
 && \text{around $(0,0)$}
 \tag{1}\label{eq:F0-0}
\allowdisplaybreaks\\
pq&:F\left(\begin{smallmatrix}
  \alpha'&\beta'&\gamma'&\\\alpha&\beta&\gamma
 \end{smallmatrix};\alpha_i,\beta_j;\tfrac1x,\tfrac1y\right)
 &&\text{around $(\infty,\infty)$}
\tag{2}\label{eq:F4-4}
\allowdisplaybreaks\\
 qr'&:F\left(\begin{smallmatrix}
  \gamma&\beta'& \alpha\\\gamma'&\beta&\alpha'&
\end{smallmatrix};\gamma'_k,\beta_j;x,\epsilon\tfrac xy\right)
 &&\text{along $x=0$}
\tag{3}\label{eq:F0-}
\allowdisplaybreaks\\
pr'&:F\left(\begin{smallmatrix}
  \alpha'&\gamma& \beta\\ \alpha&\gamma'&\beta'
\end{smallmatrix};\alpha_i,\gamma'_k;\epsilon\tfrac yx,y\right)
  &&\text{along $y=0$}
\tag{4}\label{eq:F-0}
\allowdisplaybreaks\\
p'r&: F\left(\begin{smallmatrix}
  \alpha&\gamma'&\beta'&\\\alpha'&\gamma& \beta
  \end{smallmatrix};\alpha'_i,\gamma_k;\epsilon\tfrac xy,\tfrac1y\right)
 &&\text{along $y=\infty$}\label{eq:F-4}
\tag{5}
\allowdisplaybreaks\\ 
 q'r&: F\left(\begin{smallmatrix}
  \gamma'&\beta&\alpha'\\\gamma&\beta'& \alpha
\end{smallmatrix};\gamma_k,\beta'_j;\tfrac1x,\epsilon\tfrac yx\right)
 && \text{along $x=\infty$}\label{eq:F4-}
\tag{6}
\allowdisplaybreaks\\ 
p'q&: 
 G\left(\begin{smallmatrix}
  \alpha&\beta'&\gamma\\\alpha'&\beta&\gamma'
 \end{smallmatrix};\alpha'_i,\beta_k;x,\epsilon\tfrac1y\right)
 && \text{at }(0,\infty)
 \tag{7}\label{eq:G0-4}
\allowdisplaybreaks\\
 pq'&:G\left(\begin{smallmatrix}
  \alpha'&\beta&\gamma'\\ \alpha&\beta'&\gamma
\end{smallmatrix};\alpha_i,\beta'_j;\tfrac1x,\epsilon y\right)
 &&\text{at }(\infty,0)
\tag{8}\label{eq:G4-0}
\allowdisplaybreaks\\
p'r'&:G\left(\begin{smallmatrix}
  \alpha&\gamma& \beta'\\ \alpha'&\gamma'&\beta
\end{smallmatrix};\alpha'_i,\gamma'_k;\epsilon\tfrac xy,\epsilon y\right)
 &&\text{at }(0^2,0)
 \tag{9}\label{eq:G00-0}
 \allowdisplaybreaks\\
 q'r'&:G\left(\begin{smallmatrix}
  \gamma&\beta&\alpha\\ \gamma'&\beta'&\alpha'
\end{smallmatrix} ;\gamma'_k,\beta'_j;x,\tfrac yx\right)
  &&\text{at }(0,0^2)
\tag{10}\label{eq:G0-00}
\allowdisplaybreaks\\
qr&:G\left(\begin{smallmatrix}
  \gamma'&\beta'&\alpha'\\\gamma&\beta&\alpha
\end{smallmatrix};\gamma_k,\beta_j;\tfrac1x,\tfrac xy\right)
 &&\text{at $(\infty,\infty^2)$}\label{eq:G4-44}
\tag{11}\allowdisplaybreaks\\
 pr&:G\left(\begin{smallmatrix}
  \alpha'&\gamma'&\beta\\ \alpha&\gamma&\beta'
\end{smallmatrix};\alpha_i,\gamma_k;\epsilon\tfrac1x,\epsilon\tfrac xy\right)
  &&\text{at }(\infty^2,\infty)
\tag{12}\label{eq:G44-4}
\end{align}

\noindent
For example, Theorem~\ref{thm:CoordSym} and \eqref{eq:M2N} show a relation
\begin{align*}
T_{(x,y)\to(x,\tfrac y x)}
\mathcal M
^{\boldsymbol \alpha,\boldsymbol \beta,\boldsymbol \gamma}
_{\boldsymbol \alpha',\boldsymbol \beta',\boldsymbol \gamma'}
&=T_{(x,y)\to(x,\tfrac \epsilon y)}
\circ T_{(x,y)\to(x,\epsilon\tfrac yx)}
\mathcal M
^{\boldsymbol \alpha,\boldsymbol \beta,\boldsymbol \gamma}
_{\boldsymbol \alpha',\boldsymbol \beta',\boldsymbol \gamma'}\\
&=T_{(x,y)\to(x,\tfrac \epsilon y)}
\mathcal M
^{\boldsymbol \gamma,\boldsymbol \beta',\boldsymbol \alpha}
_{\boldsymbol \gamma',\boldsymbol \beta,\boldsymbol \alpha'}\\
&=\mathcal N
^{\boldsymbol \gamma,\boldsymbol \beta,\boldsymbol \alpha}
_{\boldsymbol \gamma',\boldsymbol \beta',\boldsymbol \alpha'}
\end{align*}
and then we have the above $q'r'$ solutions \eqref{eq:G0-00}.

\medskip
Each solution in (7)--(12) expressed by $G$ is defined in a neighborhood of a normally crossing
singular point in $\tilde X$, namely, one of the 6 vertices in \textbf{Fig.1} 
in \S\ref{sec:connection}.  The solution in (1)--(6) expressed by $F$ 
is defined in a neighborhood of one of 6 singular lines in $\tilde X$ excluding normally crossing singular points, namely, one of six edges in \textbf{Fig.1}.

We examine the independent 
local solutions at a singular point in $\tilde X$ and calculate the numbers of 
the solutions in the following table:

\medskip\noindent\ 
\begin{tabular}{|c|c|c|c|}\hline
 & $(\infty,\infty^2)$ & $(0,\infty)$ & $(0^2,0)$
\\ \hline
sol.
 & \eqref{eq:F-4}$+$\eqref{eq:G4-44}$+$\eqref{eq:F4-4}
 & \eqref{eq:G0-4}$+$\eqref{eq:F-4}$+$\eqref{eq:F0-}
 & \eqref{eq:F0-0}$+$\eqref{eq:F0-}$+$\eqref{eq:G00-0}
\\ \hline
\# & $p'r+qr+pq$ & $p'q+p'r+qr'$ & $p'q'+qr'+p'r'$ 
\\ \hline
(F1) & $pr+qr+pq$ & $pq+pr+qr$ & $pq+qr+pr$ \\
(F2) & {\small $p+(q\!-\!1)+(p\!-\!1)(q\!-\!1)$} & $p(q\!-\!1)+p+0$ & $pq+0+0$
\\
(F3) & $0+0+pq$ & $(p\!-\!1)q+0+q$ & {\small $\!(p\!-\!1)(q\!-\!1)\!+\!q\!+\!(p\!-\!1)\!$}\\
(F4) & {\small$\!2p\!+\!2(q\!-\!2)\!+\!(p-2)(q-2)\!$} & $p(q\!\!-\!2)+2p+0 $ & $pq+0+0$
\\ \hline
\end{tabular}

\medskip
\noindent
For example, this table indicates that there exist $p'q'$ solutions by (1), $qr'$ solutions by (3) and $p'r'$ solutions by (9) at $(0^2,0)$.
Hence, for example, under the condition (F2), all the solutions we get at $(0^2,0)$ belong to (1) and then such solutions are defined around $(0,0)$, which are called solutions with a simple 
monodromy at $(0,0)$ in 
\cite[Definition 7.3]{Oi}.

For the points $(\infty^2,\infty)$, $(\infty,0)$ and $(0,0^2)$, we obtain the corresponding
table by the map $(x,y)\mapsto(y,x)$.  Namely we change $(p,q,r;p',q',r')$, $(2m)$ and $(2m-1)$ to
$(q,p,r;q',p',r')$, $(2m-1)$, $(2m)$ for $m=2,\dots,6$, respectively, in the above table.

\bigskip
Since $p'-p=q'-q=r-r'$, we have
\begin{align*}
\begin{split}
p'r+qr+pq&=p'q+p'r+qr'=p'q'+qr'+p'r'\\
&=pq+qr+rp+(p+q+r)(r-r')+(r-r')^2,
\end{split}
\end{align*}
which equals $R\left(\begin{smallmatrix}p&q&r\\p'&q'&r'\end{smallmatrix}\right)$ in \eqref{eq:dim}.  
Thus, we obtain the following theorem.

\begin{thm}\label{thm:LocalSols}
In a neighborhood of any one of 
6 points $(0^2,0)$, $(0,\infty)$, $(\infty,\infty^2)$, $(\infty^2,\infty)$, $(\infty,0)$ 
and $(0,0^2)$ of $\tilde X$, one has
\begin{equation}\label{eq:dim}
 R\left(\begin{smallmatrix}p&q&r\\p'&q'&r'\end{smallmatrix}\right):=
\begin{cases}
 \frac{1}{L}(p'q'r-pqr')&(L\ne 0),\\
 pq+qr+rp &(L=0)
 \end{cases}
\end{equation}
local solutions expressed by the series $F^{p,q,r}_{p',q',r'}$ and $G^{p,q,r}_{p',q',r'}$
given in \S\ref{sec:Intro} and \S\ref{sec:Known series}.
\end{thm}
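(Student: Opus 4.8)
The plan is to prove the theorem by enumerating, at each vertex, the solution families already constructed in (1)--(12), and then reducing to an elementary identity via the level relation. First I would read off from \textbf{Fig.1} the incidence structure of the hexagon: each family (1)--(6) built from $F^{p,q,r}_{p',q',r'}$ is supported along one of the six edges (singular lines) of $\tilde X$, while each family (7)--(12) built from $G^{p,q,r}_{p',q',r'}$ is supported at one of the six vertices. At a given vertex precisely three families are defined, namely the $G$-family carried by the vertex itself together with the two $F$-families carried by the two incident edges. Summing the multiplicities printed on the left of (1)--(12) then yields, at the three representative vertices $(\infty,\infty^2)$, $(0,\infty)$ and $(0^2,0)$, the counts $p'r+qr+pq$, $p'q+p'r+qr'$ and $p'q'+qr'+p'r'$ respectively; this is exactly the table preceding the statement, which I take as the bookkeeping step.

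The remaining three vertices require no separate argument. Applying the involution $T_{(x,y)\to(y,x)}$ of Proposition~\ref{prop:transformations}(1), which replaces $(p,q,r;p',q',r')$ by $(q,p,r;q',p',r')$, sends $(\infty,\infty^2)$, $(0,\infty)$, $(0^2,0)$ to $(\infty^2,\infty)$, $(\infty,0)$, $(0,0^2)$ and transforms the three counts accordingly. It therefore suffices to show that the three representative counts coincide and equal $R$. Substituting the level relations $p'=p+L$, $q'=q+L$, $r'=r-L$ (which follow from condition (F), since $p'-p=q'-q=r-r'=L$), I would check that all three expressions collapse to the single value $pq+qr+rp+Lr$; as this value is symmetric under $p\leftrightarrow q$ and $p'\leftrightarrow q'$, the three vertices obtained by the swap carry the same count, and all six vertices agree.

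It then remains to match the common value with $R\left(\begin{smallmatrix}p&q&r\\p'&q'&r'\end{smallmatrix}\right)$ of \eqref{eq:dim}. When $L=0$ the level relations force $p=p'$, $q=q'$, $r=r'$, and $pq+qr+rp+Lr=pq+qr+rp$, which is the second case. When $L\neq 0$, a one-line expansion gives $p'q'r-pqr'=L\,(pq+qr+rp+Lr)$, so that $\tfrac1L(p'q'r-pqr')=pq+qr+rp+Lr$, which is the first case; the two cases are consistent, since the limit $L\to 0$ of $\tfrac1L(p'q'r-pqr')$ is again $pq+qr+rp$. The genuine content of the theorem lies not in this computation but in its input: one must already know, from the coordinate transformations of \S\ref{sec:coord trans} and Theorem~\ref{thm:CoordSym}, that exactly these three families are supported at each vertex and that the series in \eqref{eq:Sols} and \S\ref{sec:coord trans} defining them are well posed. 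Granting that, the only point needing care here is the algebraic reduction above, which is routine once the substitution $p'=p+L$, $q'=q+L$, $r'=r-L$ is made; I would flag the cross-check of the two cases of \eqref{eq:dim} at $L=0$ as the one place to be explicit.
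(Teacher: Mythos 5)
Your proposal is correct and follows essentially the same route as the paper: it tabulates the three solution families at each vertex (the $G$-family carried by the vertex plus the two $F$-families on the incident edges), records the counts $p'r+qr+pq$, $p'q+p'r+qr'$, $p'q'+qr'+p'r'$ at the three representative vertices, transfers to the remaining three by $(x,y)\mapsto(y,x)$, and checks algebraically that the common value equals $R\left(\begin{smallmatrix}p&q&r\\p'&q'&r'\end{smallmatrix}\right)$. Your simplified common value $pq+qr+rp+Lr$ is indeed the correct evaluation of $\tfrac1L(p'q'r-pqr')$ under $p'=p+L$, $q'=q+L$, $r'=r-L$.
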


\begin{rem}
It will be shown in \S\ref{sec:rank} that 
the solutions in this theorem
span the space of all local solutions to $\mathcal M^{\boldsymbol \alpha,\boldsymbol \beta,\boldsymbol \gamma}_{\boldsymbol \alpha',\boldsymbol \beta',\boldsymbol \gamma'}$ 
(cf.~Corollary~\ref{cor:spansol}) for generic parameters.
\end{rem}
%%%%%%%%%%%%%%%%%%%%%%%%%%%%%%%%%%%
%%%%%%  9. Connection problem %%%%%
%%%%%%%%%%%%%%%%%%%%%%%%%%%%%%%%%%%
%\newpage
\subsection{Connection Problem}\label{sec:connection}
We consider the connection problem between the local solutions in Theorem~\ref{thm:LocalSols}.
Since the local solutions are constructed according to the symmetry \eqref{eq:dynkin} and {\bf Fig.1}, it is sufficient only to calculate the connection problem 
from the point $(0^2,0)$ to $(0,\infty)$ along a suitable path 
\begin{equation}
(0,\infty)\ni t\mapsto (c(t),-t)\in(0,\infty)\times (-\infty,0)
\label{eq:path}
\end{equation}
with small $c(t)$ so that the system
$\mathcal M
 ^{\boldsymbol \alpha, \boldsymbol \beta, \boldsymbol \gamma}
 _{\boldsymbol \alpha',\boldsymbol \beta',\boldsymbol \gamma'} 
$
has no singularities in 
\begin{equation}\label{eq:nbd}
 U=\{(x,y)\mid 0<x<2c(-y),\ y\in(-\infty,0)\}.
\end{equation}
It follows from Theorem~\ref{thm:Sing} that 
there exists a connected neighborhood $U$ of 
$\{0\}\times (0,-\infty)$ such that 
$
\mathcal M
 ^{\boldsymbol \alpha, \boldsymbol \beta, \boldsymbol \gamma}
 _{\boldsymbol \alpha',\boldsymbol \beta',\boldsymbol \gamma'} 
$
has no singularities in $U\setminus\{0\}\times (0,-\infty)$.

The system
$\mathcal M
 ^{\boldsymbol \alpha, \boldsymbol \beta, \boldsymbol \gamma}
 _{\boldsymbol \alpha',\boldsymbol \beta',\boldsymbol \gamma'} 
$
has regular singularities along the wall $\{0\}\times (-\infty,0)$ and 
its solution is ideally analytic (cf.~\cite[Definition~5.1, see also Theorem 5.3]{Ob}).
More precisely, the system has regular singularities along the set of 
walls $x=0$ and $L_{(0,0)}$ in $\tilde X$ with the edge $(0^2,0)$.
Here $L_{(0,0)}$ is the exceptional fiber of the blowing up of the origin.
In general, if the edge is a point, the condition (5.3) in 
\cite{Ob} is satisfied and then it follows from 
\cite[Theorem~5.2 and 5.3]{Ob} that any solution at the edge  
is in the spanning subspace of local solutions with series expansion. 
Then the connection problem is reduced to that of the boundary values
of the solution in the sense of \cite{OS} and the study of induced equation
on the boundary in the sense of \cite{Ob}.
A schematic diagram for the connection problem between $(0^2,0)$ and $(0,\infty)$ is the following: we write $\mathcal Sol(\mathcal M
 ^{\boldsymbol \alpha, \boldsymbol \beta, \boldsymbol \gamma}
 _{\boldsymbol \alpha',\boldsymbol \beta',\boldsymbol \gamma'} )|_{x=0}$ for the space of (single-valued) holomorphic solutions to $\mathcal M
 ^{\boldsymbol \alpha, \boldsymbol \beta, \boldsymbol \gamma}
 _{\boldsymbol \alpha',\boldsymbol \beta',\boldsymbol \gamma'}$ near a generic point $(0,y)\in\C^2$ on the divisor $\{ x=0\}$.
 Let $\mathcal M
 ^{\boldsymbol \alpha, \boldsymbol \beta, \boldsymbol \gamma}
 _{\boldsymbol \alpha',\boldsymbol \beta',\boldsymbol \gamma'} |_{x=0}$ be the restriction of the system $\mathcal M
 ^{\boldsymbol \alpha, \boldsymbol \beta, \boldsymbol \gamma}
 _{\boldsymbol \alpha',\boldsymbol \beta',\boldsymbol \gamma'} |_{x=0}$ in the sense of $D$-modules.
 Then, the unique solvability of the boundary value problem can be summarized as the commutativity of the following diagram:

\begin{center}
\begin{tikzpicture}
\node at (0,0){$\mathcal Sol(\mathcal M
 ^{\boldsymbol \alpha, \boldsymbol \beta, \boldsymbol \gamma}
 _{\boldsymbol \alpha',\boldsymbol \beta',\boldsymbol \gamma'} )|_{x=0}$};
\node at (7,0){$\mathcal Sol(\mathcal M
 ^{\boldsymbol \alpha, \boldsymbol \beta, \boldsymbol \gamma}
 _{\boldsymbol \alpha',\boldsymbol \beta',\boldsymbol \gamma'} )|_{x=0}$};
 \draw[->] (2,0) -- (5,0); 
 \node at (3.5,0.3){$y:0\rightsquigarrow\infty$};
 \node at (0,-2){$\mathcal Sol(\mathcal M
 ^{\boldsymbol \alpha, \boldsymbol \beta, \boldsymbol \gamma}
 _{\boldsymbol \alpha',\boldsymbol \beta',\boldsymbol \gamma'} |_{x=0})$};
\node at (7,-2){$\mathcal Sol(\mathcal M
 ^{\boldsymbol \alpha, \boldsymbol \beta, \boldsymbol \gamma}
 _{\boldsymbol \alpha',\boldsymbol \beta',\boldsymbol \gamma'} |_{x=0})$.};
 \draw[->] (2,-2) -- (5,-2); 
 \node at (3.5,-1.7){$y:0\rightsquigarrow\infty$};
 \draw[->] (0,-0.5) -- (0,-1.5);
 \draw[->] (7,-0.5) -- (7,-1.5);
\node at (-0.6,-1){$x\rightsquigarrow 0$};
\node at (7.6,-1){$x\rightsquigarrow 0$};
\end{tikzpicture}
\end{center}
Here, the vertical arrow is the boundary value map onto the divisor $\{ x=0\}$ and the horizontal arrow is the analytic continuation along the path \eqref{eq:path}.
Note that the boundary value map is an isomorphism.
Connection formula for other exponents can be derived in the same manner by multiplying suitable power of $x$ to the unknown function as far as no integral difference appears among exponents.

\medskip
\begin{tikzpicture}
\draw[densely dotted]  (2,-1)--(2,5)  (-1,-1)--(5,5) (-1,2)--(5,2)
(2,2) circle[radius=0.5]
;
\draw [thick %,blue
] (0,-1)--(0,5) ;
\draw (-1,4)--(5,4)  (4,-1)--(4,5)  (-1,0)--(5,0)
node at (5.7,0) {$x_1=x_3$}
node at (5.7,2) {$x_1=x_2$}
node at (5.7,4) {$x_1=x_4$}
node at (0,5.3) {$x_0=x_3$}
node at (2,5.3) {$x_0=x_2$}
node at (4,5.3) {$x_0=x_4$}
node at (1,-0.45) {$x_2=x_4$}
node at (3,1.55) {$x_3=x_4$}
node at (5,3.55) {$x_2=x_3$}
node at (5.1,5) {$x_0=x_1$}
(4,4) circle[radius=0.5]
(0,0) circle[radius=0.5]
node at (4,0)   {$\bullet$}
node at (0,4)   {$\bullet$}
node at (0,0.5) {$\bullet$}
node at (0.5,0) {$\bullet$}
node at (4,3.5) {$\bullet$}
node at (3.5,4) {$\bullet$}
node at (2,0)   {$\circ$}
node at (0,2)   {$\circ$}
node at (2,4)   {$\circ$}
node at (4,2)   {$\circ$}
node at (0.35,0.35) {$\circ$}
node at (3.65,3.65) {$\circ$}
node at (1.5,2) {$\star$}
node at (2,1.5) {$\star$}
node at (1.65,1.65) {$\star$}
node at (-0.5,4.2) {\scriptsize$(0,\infty)$}
node at (-0.45,2.2) {\scriptsize$(0,1)$}
node at (2.4,0.2) {\scriptsize$(1,0)$}
node at (1.5,4.2) {\scriptsize$(1,\infty)$}
node at (4.5,0.2) {\scriptsize$(\infty,0)$}
node at (4.5,2.2) {\scriptsize$(\infty,1)$}
node at (-0.5,0.65) {\scriptsize$(0^2,0)$}
node at (0.95,0.2) {\scriptsize$(0,0^2)$}
node at (2.95,4.2) {\scriptsize$(\infty,\infty^2)$}
node at (4.6,3.3) {\scriptsize$(\infty^2,\infty)$}
node at (-1.8,-0.4) {%\color{blue}
\scriptsize$(1): F^{p,q,r}_{p',q',r'}(x,y)\circlearrowleft$}
node at (-1.3,1.4) {%\color{blue}
\scriptsize$(3):F^{r,q',p}_{r',q,p'}(x,\epsilon\frac xy)\updownarrow$}
node at (-2.3,0.65) {%\color{blue}
\scriptsize$(9):G^{p,r,q'}_{p',r',q}(\epsilon\frac xy,\epsilon y):$}
node at (-2.2,4.2) {%\color{blue}
\scriptsize$(7):G^{p,q',r}_{p',q,r'}(x,\epsilon \frac 1y):$}
node at (1.6,3.75) {%\color{blue}
\scriptsize$(5):F^{p,r',q'}_{p',r,q}(\epsilon \frac xy,\frac 1y)
\leftrightarrow$}
node at (2.4,4.55) {\scriptsize$G^{r',q'.p'}_{r,q,p}(\frac1x, \frac xy):$}
node at (5.4,4.4) {\scriptsize$F^{p',q',r'}_{p,q,r}(\frac 1x,\frac 1y)\circlearrowleft$}
node at (6.2,3.25) {\scriptsize$:G^{p',r',q}_{p,r,q'}(\epsilon\frac 1x,\epsilon\frac xy)$}
node at (2.7,-0.3) {\scriptsize$F^{p',r,q}_{p,r',q'}(\epsilon \frac yx,y)\leftrightarrow$}
node at (5.2,1.4) {\scriptsize$\updownarrow F^{r',q,p'}_{r,q',p}(\frac 1x,\epsilon\frac yx)$}
node at (5.15,-0.3) {\scriptsize$:G^{p',q,r}_{p,q',r'}(\frac1x, \epsilon y)$}
node at (1.35,0.55)  {\scriptsize$G^{r,q,p}_{r',q',r'}(x,\tfrac yx)$}
;
\draw 
node at (2,5.8) {$(x_0,x_1,x_2,x_3,x_4)\mapsto (x,y,1,0,\infty)$};
\end{tikzpicture}

The space of local solutions at $(0^2,0)$ (resp.~$(0,\infty))$
is spanned by the solutions \eqref{eq:F0-0}, \eqref{eq:F0-} 
and \eqref{eq:G00-0}
(resp.~\eqref{eq:G0-4}, \eqref{eq:F-4} and \eqref{eq:F0-})
given in \S\ref{sec:local sol}.
Since the solutions \eqref{eq:F0-} are globally defined along $x=0$, 
the connection problem is reduced to the problem expressing 
a solution in \eqref{eq:F0-0} or \eqref{eq:G00-0} by a linear 
combination of the solutions in \eqref{eq:F-4} and \eqref{eq:G0-4}.
These  solutions are classified by the characteristic exponent 
$\alpha'_i$ along the regular singularity $x=0$.

An ideally analytic solution along $x=0$ with the characteristic exponent
$\alpha'_i$ has the form
\[
  x^{\alpha'_i}\phi(x,y).
\]
Here $\phi(x,y)$ is holomorphic in a neighborhood of $\{0\}\times (-\infty,0)$
and its boundary value with respect to the characteristic exponent $\alpha' _i$
is $\phi(0,y)$.
Those solutions are given as follows under the notation in \S\ref{sec:HG1}.
Here we note that $q+r=q'+r'$ and $-\epsilon$ means $-$ or $+$ according to 
$\epsilon=1$ or $-1$, respectively.
{
%%%%%%%%%%%%%%%% (1) %%%%%%%%%%%%%%%%%%%
\begin{align*}\eqref{eq:F0-0}: \ 
&F\left(\begin{smallmatrix}
  \alpha&\beta&\gamma\\\alpha'&\beta'&\gamma'
\end{smallmatrix};\alpha'_i,\beta'_j;+,-;x,y\right)\quad \text{around }(0,0)
\\
&\quad=x^{\alpha'_i}(-y)^{\beta'_j}F\left(\begin{smallmatrix}
\boldsymbol\alpha+\alpha'_i & \boldsymbol\beta+\beta'_j&\boldsymbol\gamma+\alpha'_i+\beta'_j\\
\boldsymbol\alpha'-\alpha'_i & \boldsymbol\beta'-\beta'_j&\boldsymbol\gamma-\alpha'_i-\beta'_j\end{smallmatrix}
;x,y\right)
\\&\quad
\overset{x\to0}\sim
x^{\alpha'_i}F_{q+r}\left(\begin{smallmatrix}
 \boldsymbol\beta,\boldsymbol\gamma+\alpha'_i\\
 \boldsymbol\beta',\boldsymbol\gamma'-\alpha'_i
\end{smallmatrix};\beta'_j;-;y\right)
\qquad(1\le i\le p',\ 1\le j\le q')
,\allowdisplaybreaks\\
%%%%%%%%%%%%%%% (9) %%%%%%%%%%%%%%%%%%%%%%%%%
\eqref{eq:G00-0}: \ 
&G\left(\begin{smallmatrix}
 \boldsymbol\alpha&\boldsymbol\gamma& \boldsymbol\beta'\\ 
\boldsymbol\alpha'&\boldsymbol\gamma'&\boldsymbol\beta
\end{smallmatrix};\alpha'_i,\gamma'_k;-\epsilon,-;\epsilon\tfrac xy,\epsilon y\right)\quad\text{at }(0^2,0)
\allowdisplaybreaks\\
&\quad=(-\tfrac xy)^{\alpha'_i}(-y)^{\gamma'_k}
G\left(\begin{smallmatrix}
  \boldsymbol\alpha+\alpha'_i&\boldsymbol\gamma+\gamma'_k& \boldsymbol\beta'+\alpha'_i-\gamma'_k\\
 \boldsymbol\alpha'-\alpha'_i&\boldsymbol\gamma'-\gamma'_k&\boldsymbol\beta-\alpha'_i+\gamma'_k
\end{smallmatrix};\epsilon\tfrac xy,\epsilon y\right)\\
%&\quad\sim x^{\alpha'_i}(-y)^{\gamma'_k-\alpha'_i}\quad\bigl((x,y)\to(0^2,0)\bigr)\\
&\quad\overset{x\to0}{\sim} 
x^{\alpha'_i} (-y)^{\gamma'_k-\alpha'_i}F_{q+r}\left(\begin{smallmatrix}
\boldsymbol\gamma+\gamma'_k,\boldsymbol\beta-\alpha'_i+\gamma'_k\\
\boldsymbol\gamma'-\gamma'_k,\boldsymbol\beta'+\alpha'_i-\gamma'_k
\end{smallmatrix};y
\right)\\
&\quad=x^{\alpha'_i}
F_{q+r}\left(\begin{smallmatrix}
\boldsymbol\beta,\boldsymbol\gamma+\alpha'_i\\
\boldsymbol\beta',\boldsymbol\gamma'-\alpha'_i
\end{smallmatrix};\gamma'_k-\alpha'_i;-;y\right)
\qquad(1\le i\le p',\ 1\le k\le r')
,\allowdisplaybreaks\\
%%%%%%%%%%%%   (5) %%%%%%%%%%%%%%%%%%%
\eqref{eq:F-4}: \ &F\left(\begin{smallmatrix}
  \boldsymbol\alpha&\boldsymbol\gamma'&\boldsymbol\beta'&\\
  \alpha'&\gamma& \beta
\end{smallmatrix};\alpha'_i,\gamma_k;-\epsilon,-1;\epsilon\tfrac xy,\tfrac1y\right)
\quad\text{along }y=\infty
\\
&\quad=(-\tfrac xy)^{\alpha'_i}(-\tfrac 1y)^{\gamma_k}
F\left(\begin{smallmatrix}
  \boldsymbol\alpha+\alpha'_i&\boldsymbol\gamma'+\gamma'_k
  &\boldsymbol\beta'+\alpha'_i+\gamma'_k\\
  \boldsymbol\alpha'-\alpha'_i&\boldsymbol\gamma-\gamma'_k
   & \boldsymbol\beta-\alpha'_i-\gamma'_k
\end{smallmatrix};\epsilon\tfrac xy,\tfrac1y\right)\\
%
%&\quad\sim x^{\alpha'_i}(-\tfrac1y)^{\alpha'_i+\gamma_k}\quad(y\to\infty)\\
&\quad\overset{x\to0}\sim
  x^{\alpha'_i}F_{q+r}\left(\begin{smallmatrix}
   \boldsymbol\beta',\boldsymbol\gamma'-\alpha'_i\\
   \boldsymbol\beta,\boldsymbol\gamma+\alpha'_i
\end{smallmatrix};\gamma_k+\alpha'_i;-;\tfrac1y\right)
\qquad(1\le i\le p',\ 1\le k\le r),\allowdisplaybreaks\\
%%%%%%%%%%%%%%%% (7) %%%%%%%%%%%%%%%%%%%%%%%
\eqref{eq:G0-4}: \  & G\left(\begin{smallmatrix}
  \alpha&\beta'&\gamma&\\\alpha'&\beta&\gamma'
 \end{smallmatrix};\alpha'_i,\beta_j;+,-\epsilon;x,\epsilon\tfrac1y\right)
\quad\text{at }(0,\infty)
\\
&\quad 
 =x^{\alpha'_i}(-\tfrac1y)^{\beta_j}G\left(\begin{smallmatrix}
  \alpha+\alpha'_i&\beta'+\beta_j&\gamma+\alpha'_i-\beta_j&\\\alpha'-\alpha'_i&\beta-\beta_j
&\gamma'-\alpha'_i+\beta_j
 \end{smallmatrix};x,\epsilon\tfrac1y\right)\\
%&\quad\sim x^{\alpha'_i}(-\tfrac1y)^{\beta_j}\quad\bigl((x,y)\to(0,\infty)\bigr)\\
&\quad\overset{x\to0}\sim
 x^{\alpha'_i}F_{q+r}
\left(\begin{smallmatrix}
 \boldsymbol\beta',\boldsymbol\gamma'-\alpha'_i\\
 \boldsymbol\beta,\boldsymbol\gamma+\alpha'_i
\end{smallmatrix};\beta_j;-;\tfrac1y\right)\qquad(1\le i\le p',\ 1\le j\le q).
\end{align*}}

%%%%%%%%%%%%%%%%%%%%%%%%%%%%
When we choose one of the the characteristic exponent $\alpha'_i$, the corresponding 
induced equation is $\mathcal M^{\beta',\gamma'-\alpha'_i}_{\beta,\gamma+\alpha'_i}$ 
and the boundary value of the hypergeometric function 
$F\left(\begin{smallmatrix}
  \alpha&\beta&\gamma\\\alpha'&\beta'&\gamma'
\end{smallmatrix};\alpha'_i,\beta'_j;+,-;x,y\right)$ 
is 
$F_{q+r}\left(\begin{smallmatrix}
 \boldsymbol\beta,\boldsymbol\gamma+\alpha'_i\\
 \boldsymbol\beta',\boldsymbol\gamma'-\alpha'_i
\end{smallmatrix};\beta'_j;-;y\right)$.
Note that the boundary value of the function with respect to $\alpha'_{i'}$ with $i\ne i'$
equals 0.
Thus our connection problem is reduced to that of the system $\mathcal M^{\beta',\gamma'-\alpha'_i}_{\beta,\gamma+\alpha'_i}$ explained in \S\ref{sec:HG1} and we have the following theorem from 
the results in \S\ref{sec:HG1}.

\begin{thm}\label{thm:conenct2}
One has the following connection formula
\begin{align*}
&F\left(\begin{smallmatrix}
  \alpha&\beta&\gamma\\\alpha'&\beta'&\gamma'
\end{smallmatrix};\alpha'_i,\beta'_j;+,-;x,y\right)
=
\sum_{k=1}^r a^i_{jk}F\left(\begin{smallmatrix}
  \alpha&\gamma'&\beta'&\\
  \alpha'&\gamma& \beta
\end{smallmatrix};\alpha'_i,\gamma_k;-\epsilon,-1;\epsilon\tfrac xy,\tfrac1y\right)\\
&\hspace{4.7cm}+
\sum_{\ell=1}^qb^i_{j\ell}G\left(\begin{smallmatrix}
  \alpha&\beta'&\gamma&\\\alpha'&\beta&\gamma'
 \end{smallmatrix};\alpha'_i,\beta_\ell;+,-\epsilon;x,\epsilon\tfrac1y\right),
\allowdisplaybreaks\\
&a^i_{jk}=
\text{\small
$
\frac
{ \prod\limits_{\nu\ne j}    \!\Gamma(1+\beta'_j-\beta'_\nu)
  \prod\limits_{\nu=1}^{r'}  \!\Gamma(1+\beta'_j-\gamma'_\nu+\alpha'_i)
  \prod\limits_{\nu\ne k}    \!\Gamma(\gamma_\nu-\gamma_k)
  \prod\limits_{\nu=1}^{q}   \!\Gamma(\beta_\nu-\alpha'_i-\gamma_k)}
{ 
  \prod\limits_{\nu\ne j}    \!\Gamma(1-\gamma_k-\alpha'_i-\beta'_\nu)
  \prod\limits_{\nu=1}^{r'}  \!\Gamma(1-\gamma_k-\gamma'_\nu)
  \prod\limits_{\nu\ne k}    \!\Gamma(\beta'_j+\gamma_\nu+\alpha'_j) 
  \prod\limits_{\nu=1}^{q}   \!\Gamma(\beta'_j+\beta_\nu)
}
$},
\\
&b^i_{j\ell}=
\text{\small
$
  \frac{\prod\limits_{\nu\ne j}  \!\Gamma(1+\beta'_j-\beta'_\nu)
  \prod\limits_{\nu=1}^{r'}     \!\Gamma(1+\beta'_j-\gamma'_\nu+\alpha'_i)
  \prod\limits_{\nu\ne\ell}  \!\Gamma(\beta_\nu-\beta_\ell)
  \prod\limits_{\nu=1}^r     \!\Gamma(\gamma_\nu+\alpha'_i-\beta_\ell)}
 { \prod\limits_{\nu\ne j}  
  \!\Gamma(1-\beta'_\nu-\beta_\ell)
  \prod\limits_{\nu=1}^{r'}     \!\Gamma(1-\gamma'_\nu+\alpha'_i-\beta_\ell)
  \prod\limits_{\nu\ne\ell}  \!\Gamma(\beta'_j+\beta_\nu)
  \prod\limits_{\nu=1}^r     \!\Gamma(\beta'_j+\gamma_\nu+\alpha'_i)}
$}
%%%%%%%%%%%%%%%%%%%%%%%%%%%%%%%%%%%%%%%%%%
\intertext{and}
&G\left(\begin{smallmatrix}
 \boldsymbol\alpha&\boldsymbol\gamma& \boldsymbol\beta'\\ 
\boldsymbol\alpha'&\boldsymbol\gamma'&\boldsymbol\beta
\end{smallmatrix};\alpha'_i,\gamma'_k;-,-;\epsilon\tfrac xy,\epsilon y\right)
=
\sum_{\ell=1}^rc^i_{k\ell}
F\left(\begin{smallmatrix}
  \alpha&\gamma'&\beta'&\\
  \alpha'&\gamma& \beta
\end{smallmatrix};\alpha'_i,\gamma_\ell;-\epsilon,-1;\epsilon\tfrac xy,\tfrac1y\right)\\
&\hspace{5cm}{}+
\sum_{j=1}^q d^i_{kj}G\left(\begin{smallmatrix}
  \alpha&\beta'&\gamma&\\\alpha'&\beta&\gamma'
 \end{smallmatrix};\alpha'_i,\beta_j;+,-\epsilon;x,\epsilon\tfrac1y\right),
\allowdisplaybreaks\\
&c^i_{k\ell}=\text{\small$
\frac
{  \prod\limits_{\nu\ne k}   
  \!\Gamma(1+\gamma'_k-\gamma'_\nu)
  \prod\limits_{\nu=1}^{q'} 
  \!\Gamma(1+\gamma'_k-\alpha'_i-\beta'_\nu)     \prod\limits_{\nu\ne\ell} \!\Gamma(\gamma_\nu-\gamma_\ell) 
   \prod\limits_{\nu=1}^q    \!\Gamma(\beta_\nu-\gamma_\ell-\alpha'_i)
}
{   \prod\limits_{\nu\ne k} \!\Gamma(1- 
  \gamma_\ell-\gamma'_\nu) 
  \prod\limits_{\nu=1}^{q'} \!\Gamma(1-\gamma_\ell-\alpha'_i-\beta'_\nu)
 \prod\limits_{\nu\ne \ell} \!\Gamma(\gamma'_k+\gamma_\nu) \prod\limits_{\nu=1}^q \!\Gamma(\gamma'_k-\alpha'_i+\beta_\nu)
 }
$},
\\
&d^i_{kj}=\text{\small$
\frac
{   \prod\limits_{\nu\ne k}         
    \!\Gamma(1+\gamma'_k-\gamma'_\nu)
    \prod\limits_{\nu=1}^{q'} \!\Gamma(1+\gamma'_k-\alpha'_i-\beta'_\nu)
  \prod\limits_{\nu\ne j}   \!\Gamma(\beta_\nu-\beta_j)
  \prod\limits_{\nu=1}^r    \!\Gamma(\gamma_\nu+\alpha'_i-\beta_j)
}
{  \prod\limits_{\nu\ne k}  \!\Gamma(1- 
   \beta_j- \gamma'_\nu+\alpha'_i)
  \prod\limits_{\nu=1}^{q'} \!\Gamma(1-\beta_j-\beta'_\nu)
 \prod\limits_{\nu\ne j}   \!\Gamma(\gamma'_k-\alpha'_i+\beta_\nu)
  \prod\limits_{\nu=1}^r  \!\Gamma(\gamma'_k+\gamma_\nu)
}
$} 
\end{align*}
for $(x,y)\in U$ given by \eqref{eq:nbd}.
\end{thm}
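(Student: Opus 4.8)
The plan is to carry out exactly the reduction anticipated in the discussion preceding the statement: all four families of connection coefficients will be read off from the single one-variable connection formula of \S\ref{sec:HG1}, specialized to the induced equation on the boundary divisor $\{x=0\}$. The substantive analytic input---that the boundary value map along $\{x=0\}$ is an isomorphism commuting with analytic continuation, so that the two-variable connection coefficients coincide with those of the one-variable boundary values---is already supplied by the commutative diagram together with \cite[Theorems~5.2 and 5.3]{Ob}, whose hypothesis (condition (5.3)) holds because the edge $(0^2,0)$ is a point. Thus the proof amounts to identifying, for each fixed characteristic exponent $\alpha'_i$ along $x=0$, the relevant boundary values as solutions of one and the same rank-$(q+r)$ equation, and then invoking the known formula.

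First I would record that the four boundary values computed before the statement all solve the one-variable equation $\mathcal M^{\boldsymbol A}_{\boldsymbol A'}$ with $\boldsymbol A=(\boldsymbol\beta,\boldsymbol\gamma+\alpha'_i)$ and $\boldsymbol A'=(\boldsymbol\beta',\boldsymbol\gamma'-\alpha'_i)$. The boundary values of \eqref{eq:F0-0} and \eqref{eq:G00-0} are the local solutions at $y=0$, with characteristic exponents $\beta'_j$ and $\gamma'_k-\alpha'_i$ (components of $\boldsymbol A'$); those of \eqref{eq:F-4} and \eqref{eq:G0-4} are the local solutions at $y=\infty$, with exponents $\gamma_k+\alpha'_i$ and $\beta_\ell$ (components of $\boldsymbol A$). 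Along the path \eqref{eq:path} one has $y<0$, so $-y>0$; this matches the cut $y\in\C\setminus[0,\infty)$ required by the one-variable formula and pins down the branch choices encoded by the signs in the $F$- and $G$-notation.

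Next I would apply the connection formula of \S\ref{sec:HG1},
\[
F_p\left(\begin{smallmatrix}\boldsymbol A\\\boldsymbol A'\end{smallmatrix};A'_i;-;y\right)=\sum_j c(0:A'_i\rightsquigarrow\infty:A_j)\,F_p\left(\begin{smallmatrix}\boldsymbol A'\\\boldsymbol A\end{smallmatrix};A_j;-;\tfrac1y\right),
\]
to this equation. Taking the starting exponent $A'_i$ to be $\beta'_j$ yields the first connection formula of the statement, with $a^i_{jk}=c(0:\beta'_j\rightsquigarrow\infty:\gamma_k+\alpha'_i)$ and $b^i_{j\ell}=c(0:\beta'_j\rightsquigarrow\infty:\beta_\ell)$; taking $A'_i=\gamma'_k-\alpha'_i$ yields the second, with $c^i_{k\ell}=c(0:\gamma'_k-\alpha'_i\rightsquigarrow\infty:\gamma_\ell+\alpha'_i)$ and $d^i_{kj}=c(0:\gamma'_k-\alpha'_i\rightsquigarrow\infty:\beta_j)$. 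Here one also uses the vanishing of the boundary value with respect to any $\alpha'_{i'}$ with $i'\ne i$, noted before the statement, which is what lets the connection problem decouple cleanly over the exponents $\alpha'_i$.

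The only remaining work, and the single place demanding care, is the bookkeeping that turns each $c(0:\cdot\rightsquigarrow\infty:\cdot)$ into the displayed products of Gamma factors. In
\[
c(0:A'_i\rightsquigarrow\infty:A_j)=\prod_{\nu\ne i}\frac{\Gamma(1+A'_i-A'_\nu)}{\Gamma(1-A'_\nu-A_j)}\prod_{\nu\ne j}\frac{\Gamma(A_\nu-A_j)}{\Gamma(A'_i+A_\nu)}
\]
I would split the $A'$-product into its $\boldsymbol\beta'$-part and its $(\boldsymbol\gamma'-\alpha'_i)$-part, and the $A$-product into its $\boldsymbol\beta$-part and its $(\boldsymbol\gamma+\alpha'_i)$-part. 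The shift $\pm\alpha'_i$ then cancels in every Gamma-factor pairing two indices of $\gamma$/$\gamma'$-type (including a $\gamma'$ against a $\gamma$), while a single $\alpha'_i$ survives in every factor pairing a $\beta$/$\beta'$-type index against a $\gamma$/$\gamma'$-type index, and nothing survives in the pure $\beta$-$\beta'$ factors. Carrying this out for each of the four choices of exponents reproduces $a^i_{jk}$, $b^i_{j\ell}$, $c^i_{k\ell}$ and $d^i_{kj}$ verbatim. I expect no genuine obstacle beyond this substitution: the delicate analytic point has already been settled in the preceding paragraphs, so the argument is a specialization of the one-variable result rather than a fresh computation.
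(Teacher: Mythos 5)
Your proposal is correct and is exactly the argument the paper intends: the boundary value map along $\{x=0\}$ reduces the problem to the one-variable connection formula for the induced equation with parameters $(\boldsymbol\beta,\boldsymbol\gamma+\alpha'_i)$ over $(\boldsymbol\beta',\boldsymbol\gamma'-\alpha'_i)$, and the four coefficient families are the corresponding specializations of $c(0:\cdot\rightsquigarrow\infty:\cdot)$. Your description of which $\pm\alpha'_i$ shifts cancel and which survive in the Gamma factors matches the displayed formulas, so nothing is missing.
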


%\newpage
Owing to the coordinate transformations in Proposition \ref{prop:transformations}, we have solutions of other connection problems.
%%%%%%%%%%%%%%%%%%%%%%%%%%%%%%%%%%%%%%%%%%%%%%%%%%%%%%%%%%%%%%
%%%         Rank and Singularities                         %%%
%%%%%%%%%%%%%%%%%%%%%%%%%%%%%%%%%%%%%%%%%%%%%%%%%%%%%%%%%%%%%%
%\newpage
\section{Rank and singular set}
%of the system
%$\mathcal M
%^{\boldsymbol \alpha, \boldsymbol \beta, \boldsymbol \gamma}
% _{\boldsymbol \alpha',\boldsymbol \beta',\boldsymbol \gamma'}$}
\label{sec:rank-Sing}
\subsection{Rank of the system
$\mathcal M
 ^{\boldsymbol \alpha, \boldsymbol \beta, \boldsymbol \gamma}
 _{\boldsymbol \alpha',\boldsymbol \beta',\boldsymbol \gamma'}$}\label{sec:rank}
In this section, we prove that the rank of the hypergeometric equation $\mathcal M
 ^{\boldsymbol \alpha, \boldsymbol \beta, \boldsymbol \gamma}
 _{\boldsymbol \alpha',\boldsymbol \beta',\boldsymbol \gamma'}$
equals $R\left(\begin{smallmatrix}p&q&r\\ p'&q'&r'\end{smallmatrix}\right)$.
We prepare some notation used in this section.
Let $K$ be the field of rational functions of the variables $x$ and $y$ over $\C$.
We denote by $W(x,y)$ the ring of differential operators of these variables with coefficients in $K$.
% $K:=\mathbb C(x,y)$, 
% $W(x,y):=\mathbb C(x,y)\otimes W[x,y]=K\langle\p_x,\p_y\rangle$,
For a polynomial $D=\sum_{a,b\geq 0}c_{ab}\xi^a\eta^b\in K[\xi,\eta]$ and a differential operator $P=\sum_{a,b\geq 0}\tilde{c}_{ab}\partial_x^a\partial_y^b\in W(x,y)$, we set
$$
\deg D:=\max\{ a+b\mid c_{ab}\neq 0\}\ \text{  and  }\ \ord P:=\max\{ a+b\mid \tilde{c}_{ab}\neq 0\}.
$$
For a non-negative integer $m$, we put
\begin{align*}
 K[\xi,\eta]_m&:=\{D\in K[\xi,\eta]\mid\deg D\le m\},\\
W(x,y)_m&:=\{P\in W(x,y)\mid \ord P\le m\}. 
\end{align*}
For a differential operator
\[
P=\sum\limits_{m+n\le\ord P}a_{m,n}(x,y)\p_x^m\p_y^n\in W(x,y), 
\]
the principal symbol of $P$ is defined by
\[
 \sigma(P):=\sum\limits_{m+n\le\ord P} a_{m,n}(x,y )\xi^m\eta^n.
\]
We define the rank $\mathcal M^{\boldsymbol\alpha,\boldsymbol\beta,\boldsymbol\gamma}
  _{\boldsymbol\alpha',\boldsymbol\beta',\boldsymbol\gamma'}$ of the system by
\begin{equation}
\rank \mathcal M^{\boldsymbol\alpha,\boldsymbol\beta,\boldsymbol\gamma}
  _{\boldsymbol\alpha',\boldsymbol\beta',\boldsymbol\gamma'} :=
  \dim_K W(x,y)/(W(x,y)P_1+W(x,y)P_2+W(x,y)P_{12}).    
  \label{eq:rank}
\end{equation}
It is well-known that the rank \eqref{eq:rank} is equal to the dimension of the space of holomorphic solutions to the system $\mathcal M^{\boldsymbol\alpha,\boldsymbol\beta,\boldsymbol\gamma}
  _{\boldsymbol\alpha',\boldsymbol\beta',\boldsymbol\gamma'}$ at a generic point (\cite[\S6.2,\S6.3]{hibi2003grobner}).
%%%%%%%%%%%%%%%%%%%%%%%%%%%%%%%%%%%%%%
\begin{thm}\label{thm:rank}
The identity
\begin{equation*}%\label{eq:Rank2}
\begin{split}
 \rank \mathcal M^{\boldsymbol\alpha,\boldsymbol\beta,\boldsymbol\gamma}
  _{\boldsymbol\alpha',\boldsymbol\beta',\boldsymbol\gamma'} &= R\left(\begin{smallmatrix}p&q&r\\ p'&q'&r'\end{smallmatrix}\right)
\end{split}
\end{equation*}
holds for any $\boldsymbol\alpha\in\mathbb C^p,\,\boldsymbol\beta\in\mathbb C^q,\,\boldsymbol\gamma\in\mathbb C^r,\,\boldsymbol\alpha'\in\mathbb C^{p'},\,\boldsymbol\beta'\in\mathbb C^{q'}$ and $
\boldsymbol\gamma'\in\mathbb C^{r'}$.
\end{thm}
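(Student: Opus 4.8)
The plan is to prove the identity by a two-sided estimate: first $\rank\ge R\left(\begin{smallmatrix}p&q&r\\ p'&q'&r'\end{smallmatrix}\right)$ using the explicit local solutions already constructed, and then $\rank\le R\left(\begin{smallmatrix}p&q&r\\ p'&q'&r'\end{smallmatrix}\right)$ via a principal symbol computation; write $R$ for the right-hand side. By the symmetry of Theorem~\ref{thm:CoordSym} we may assume $L=r-r'\ge 0$.

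For the lower bound, recall that near the point $(0^2,0)$ of $\tilde X$ Theorem~\ref{thm:LocalSols} produces exactly $R$ local solutions, expressed by the series $F^{p,q,r}_{p',q',r'}$ and $G^{p,q,r}_{p',q',r'}$ with mutually distinct leading exponents in $(x,y)$ determined by $\alpha'_i,\beta'_j,\gamma'_k$. For generic parameters these exponents are incongruent modulo $\Z^2$, so the solutions are linearly independent as multivalued germs at a generic point adjacent to $(0^2,0)$; since the rank \eqref{eq:rank} equals the dimension of the local solution space at a generic point, this gives $\rank\ge R$ for generic parameters.

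For the upper bound I would compute the principal symbols. Writing $w:=x\xi+y\eta$ and using $p'-p=q'-q=r-r'=L$ together with (F), one finds, up to nonvanishing factors $x^ay^b$ at a generic base point,
\[
\sigma(P_1)\sim \xi^{p}w^{r'}\bigl(x^{L-1}\xi^{L}-w^{L}\bigr),\quad
\sigma(P_2)\sim \eta^{q}w^{r'}\bigl(y^{L-1}\eta^{L}-w^{L}\bigr),\quad
\sigma(P_{12})\sim \xi^{p}\eta^{q}\bigl(y^{L-1}\eta^{L}-x^{L-1}\xi^{L}\bigr),
\]
which are homogeneous in $(\xi,\eta)$ of degrees $p+r$, $q+r$, $p+q+L$ (for $L=0$ the degree-$L$ factors are nonzero constants). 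A direct check using (F) shows that the only common zero of these three forms in the fibre $(\xi,\eta)$ over a generic point is the origin: on $\{\xi=0\}$ one has $\sigma(P_2)\sim\eta^{q+r}(1-y)$, on $\{\eta=0\}$ one has $\sigma(P_1)\sim\xi^{p+r}(1-x)$, and on the line $\{w=0\}$ the first two forms vanish while $\sigma(P_{12})$ reduces to a nonzero multiple of $(x-\epsilon y)\eta^{p+q'}$, all nonzero for generic $x,y$ with $x\ne\epsilon y$, $x,y\ne 0,1$. Hence the characteristic ideal is $(\xi,\eta)$-primary, the system is holonomic, and by the standard initial-ideal estimate (the initial ideal contains $(\sigma(P_1),\sigma(P_2),\sigma(P_{12}))$)
\[
\rank\mathcal M^{\boldsymbol\alpha,\boldsymbol\beta,\boldsymbol\gamma}_{\boldsymbol\alpha',\boldsymbol\beta',\boldsymbol\gamma'}\;\le\;\dim_K K[\xi,\eta]\big/\bigl(\sigma(P_1),\sigma(P_2),\sigma(P_{12})\bigr).
\]

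The heart of the proof, and the step I expect to be the main obstacle, is the purely commutative computation that this colength equals $R$. I would carry it out by evaluating the Hilbert function of the quotient of the homogeneous ideal generated by the three forms above. The difficulty is that they are not a complete intersection: they share the factors $\xi^{p}$, $\eta^{q}$ and, when $r'>0$, the factor $w^{r'}$, so the degrees cannot simply be multiplied and one must track the syzygies through a case distinction on $L$ (and on whether $r'=0$). For instance, when $L=0$ the ideal is $(\xi^{p}w^{r},\eta^{q}w^{r},\xi^{p}\eta^{q})$ and its Hilbert function yields colength $pq+qr+rp$, whereas when $r'=0$ (as in the $F_2$ and $F_4$ specializations) two of the forms already cut out a complete intersection of colength $(p+r)(q+r)=R$ and the third is redundant. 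Once the colength is shown to be $R$ in all admissible cases, the two bounds give $\rank=R$ for generic parameters; since the principal symbols do not involve the parameters, the inequality $\rank\le R$ holds for all parameters, and upper semicontinuity of $\dim_K W(x,y)/\bigl(W(x,y)P_1+W(x,y)P_2+W(x,y)P_{12}\bigr)$ in the parameters then forces equality for every value of $\boldsymbol\alpha,\dots,\boldsymbol\gamma'$. Equivalently, one may check directly that $P_1,P_2,P_{12}$ form a Gr\"obner basis of the left ideal they generate over $K$, which yields the exact equality $\rank=R$ without the semicontinuity step.
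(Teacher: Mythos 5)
Your two-sided strategy (lower bound from the $R$ explicit local solutions of Theorem~\ref{thm:LocalSols}, upper bound from the principal symbols) is exactly the architecture of the paper's proof, your symbol formulas are correct, and your observation that the common characteristic variety over a generic base point is just the origin is sound. The special cases you verify are also correct: for $L=0$ the colength of $(\xi^p\eta^q,\xi^pw^r,\eta^qw^r)$ is indeed $pq+qr+rp$, and for $r'=0$ the first two symbols are coprime of degrees $p+r$ and $q+r$ for generic $(x,y)$, so B\'ezout gives colength $\le(p+r)(q+r)=R$.

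However, the step you yourself flag as ``the main obstacle'' --- showing $\dim_K K[\xi,\eta]/\bigl(\sigma(P_1),\sigma(P_2),\sigma(P_{12})\bigr)\le R$ when $L>0$ and $r'>0$ --- is left unresolved, and this is precisely where the paper's proof does its real work. A direct Hilbert-function computation of the non-reduced homogeneous ideal is awkward because the whole colength is concentrated at a single fat point and the three generators share the factors $\xi^p$, $\eta^q$, $w^{r'}$ in overlapping pairs. The paper sidesteps this by a deformation argument: it introduces the parameter-perturbed symbols $Q_1=D_1D_3D_4$, $Q_2=D_2D_3D_5$, $Q_3=D_1D_2D_6$ (with generic $\boldsymbol\alpha,\boldsymbol\beta,\boldsymbol\gamma,\boldsymbol\alpha',\boldsymbol\beta'$), determines the full syzygy module of $(Q_1,Q_2,Q_3)$ explicitly in Lemma~\ref{lem:res} using only that the $D_i$ are pairwise coprime in the UFD $K[\xi,\eta]$, deduces in Lemma~\ref{lem:dim} that the colength is independent of the parameters, and then for generic parameters counts the common zeros, which are $R=pq+pr'+qr'+pL+qL+r'L+L^2$ reduced points distributed over the seven loci $V_{12},V_{13},V_{23},V_{15},V_{24},V_{36},V_{45}$. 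If you want to complete your proof along your own lines you would need to produce the analogue of Lemma~\ref{lem:res} for the undeformed forms and run the Hilbert-function bookkeeping through it; the deformation route is the cleaner way to organize exactly that syzygy information. One further small point: your closing semicontinuity argument for extending the lower bound from generic to arbitrary parameters needs a justification (the paper instead extends the $R$ independent solutions to all parameters by taking suitable linear combinations that depend holomorphically on the parameters, citing \cite[Proposition~2.21]{OS}), but this is a routine repair compared with the missing colength computation.
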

%%%%%%%%%%%%%%%%%%%%
\begin{proof}
Since the transformation $(x,y)\mapsto (\tfrac1x,\tfrac1y)$ changes 
$\mathcal M^{\boldsymbol\alpha,\boldsymbol\beta,\boldsymbol\gamma}
_{\boldsymbol\alpha',\boldsymbol\beta',\boldsymbol\gamma'}$
into $\mathcal 
M_{\boldsymbol\alpha,\boldsymbol\beta,\boldsymbol\gamma}^{\boldsymbol\alpha',\boldsymbol\beta',\boldsymbol\gamma'}$,
we may assume $L\ge0$ for the proof.
Note that 
\[
\begin{split}
 \rank \mathcal M^{\boldsymbol\alpha,\boldsymbol\beta,\boldsymbol\gamma}
  _{\boldsymbol\alpha',\boldsymbol\beta',\boldsymbol\gamma'}&\le 
   \dim_K K[\xi,\eta]/(K[\xi,\eta]\sigma(P_1) +K[\xi,\eta]\sigma(P_2) + 
   K[\xi,\eta]\sigma(P_{12}))\\
 &=R\left(\begin{smallmatrix}p&q&r\\ p'&q'&r'\end{smallmatrix}\right).
\end{split}
 \]
The above equality is given by Lemma \ref{lem:dim}.

On the other hand,
in Theorem~\ref{thm:LocalSols} we constructed 
$R\left(\begin{smallmatrix}p&q&r\\p'&q'&r'\end{smallmatrix}\right)$
local solutions at $(0^2,0)$.  They meromorphically depend on the parameters and 
are linearly independent when the parameters are generic. By analytic continuation of suitable linear combinations of solutions with respect to a parameter 
\cite[Proposition~2.21]{OS} assures that the dimension of local solutions at $(0^2,0)$ which equals $\rank \mathcal M^{\boldsymbol\alpha,\boldsymbol\beta,\boldsymbol\gamma}
  _{\boldsymbol\alpha',\boldsymbol\beta',\boldsymbol\gamma'}$
  is not smaller than $R\left(\begin{smallmatrix}p&q&r\\ p'&q'&r'\end{smallmatrix}\right)$.
Hence we have the theorem.
\end{proof}
\begin{cor}\label{cor:spansol}
The $R\left(\begin{smallmatrix}p&q&r\\ p'&q'&r'\end{smallmatrix}\right)$ local solutions given in Theorem~\ref{thm:LocalSols} span the space of local solutions to the system
$M^{\boldsymbol\alpha,\boldsymbol\beta,\boldsymbol\gamma}
_{\boldsymbol\alpha',\boldsymbol\beta',\boldsymbol\gamma'}$
 if the parameters are generic. 
 Moreover
\cite[Lemma~6.3]{Ovm} assures that for any fixed parameters we have precisely $R\left(\begin{smallmatrix}p&q&r\\ p'&q'&r'\end{smallmatrix}\right)$
%\left(\begin{smallmatrix}p&q&r\\p'&q'&r'\end{smallmatrix}\right)$ 
linearly independent solutions which holomorphically depend on the parameters 
in a neighborhood of the fixed parameters. 
They are suitable linear combinations of the 
solutions given in Theorem~\ref{thm:LocalSols}.
\end{cor}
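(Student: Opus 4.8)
The plan is to deduce the corollary directly from Theorem~\ref{thm:rank} together with the construction of local solutions in Theorem~\ref{thm:LocalSols}, invoking the cited principle \cite[Lemma~6.3]{Ovm} only for the passage from generic to arbitrary parameters. First I would record that Theorem~\ref{thm:rank} asserts $\rank \mathcal M^{\boldsymbol\alpha,\boldsymbol\beta,\boldsymbol\gamma}_{\boldsymbol\alpha',\boldsymbol\beta',\boldsymbol\gamma'}=R\left(\begin{smallmatrix}p&q&r\\ p'&q'&r'\end{smallmatrix}\right)$ for \emph{every} choice of parameters, and that by the standard identification \cite[\S6.2,\S6.3]{hibi2003grobner} the rank equals the dimension of the space of local holomorphic solutions at a generic base point. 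Thus the solution space has constant dimension $R$ as the parameters vary, which is the structural fact that drives the whole argument.

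For the first assertion (generic parameters) I would argue as follows. Theorem~\ref{thm:LocalSols} produces exactly $R$ local solutions at, say, $(0^2,0)$, built from the series $F^{p,q,r}_{p',q',r'}$ and $G^{p,q,r}_{p',q',r'}$. These solutions depend meromorphically on the parameters, and---as already used in the proof of Theorem~\ref{thm:rank}---they are linearly independent for generic parameters, since their leading characteristic exponents along the two singular walls through $(0^2,0)$ are pairwise distinct. Having $R$ linearly independent elements inside an $R$-dimensional space forces them to span it, which is precisely the first claim.

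For the ``moreover'' part I would pass from generic to fixed parameters using \cite[Lemma~6.3]{Ovm}. The point is that the $R$ explicit solutions form a family $u_1(\lambda),\dots,u_R(\lambda)$ depending meromorphically on the parameter vector $\lambda$, whose leading-coefficient matrix is generically invertible but may acquire poles or drop rank along the resonant hyperplanes where exponents differ by integers or where the Pochhammer denominators vanish. Lemma~6.3 of \cite{Ovm} is exactly the device that, given such a meromorphic family together with the constancy of the rank established in Theorem~\ref{thm:rank}, produces suitable linear combinations $\sum_k c_k(\lambda)u_k(\lambda)$---clearing poles and, where necessary, taking confluent limits of coincident exponents---that remain holomorphic in $\lambda$ near any fixed parameter value and still constitute a basis of the $R$-dimensional solution space there.

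The step I expect to be most delicate is this last one: near special parameters the individual series may blow up, two characteristic exponents may coalesce so that logarithmic solutions appear in the genuine solution space, and the naive determinant of leading coefficients may vanish. Making the extraction of a holomorphic basis rigorous is precisely where one must lean on \cite[Lemma~6.3]{Ovm}, whose mechanism is to renormalize and differentiate the meromorphic family so as to absorb these degeneracies while keeping the total dimension pinned at $R$ by Theorem~\ref{thm:rank}. Every other step is a formal consequence of the rank computation and the explicit count recorded in Theorem~\ref{thm:LocalSols}.
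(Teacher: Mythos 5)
Your proposal is correct and follows essentially the same route as the paper: the corollary is a direct consequence of Theorem~\ref{thm:rank} (which pins the rank at $R\left(\begin{smallmatrix}p&q&r\\ p'&q'&r'\end{smallmatrix}\right)$ for all parameters) combined with the generic linear independence of the $R$ explicit solutions established in the proof of that theorem, with \cite[Lemma~6.3]{Ovm} invoked exactly as you describe to extract a holomorphically parametrized basis at special parameter values. No gaps.
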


%%%%%%%%%%%%%%%%
%\bigskip

Suppose $L\ge0$. 
For $\boldsymbol\alpha\in\mathbb C^p$,  
$\boldsymbol\beta\in\mathbb C^q$, 
$\boldsymbol\gamma\in\mathbb C^{r'}$, 
$\boldsymbol\alpha'\in\mathbb C^{L}$ and 
$\boldsymbol\beta'\in\mathbb C^{L}$ 
%$\mathbf f\in\mathbb C^{k}$ 
we define the following elements in $\mathbb C[x,y,\xi,\eta]$:
\begin{align*}
 D_1&=\prod_{\nu=1}^p(x\xi-\alpha_\nu),\ D_2=\prod_{\nu=1}^q(y\eta-\beta_\nu),\ 
 D_3=\prod_{\nu=1}^{r'}(x\xi+y\eta-\gamma_\nu),
  \\
 D_4&=\prod_{\nu=1}^L(x\xi-\alpha'_\nu)-x(x\xi+y\eta)^L,\ 
 D_5=\prod_{\nu=1}^L(y\eta-\beta'_\nu)-y(x\xi+y\eta)^L,\\ 
 D_6&=xD_5-yD_4,\ 
 Q_1=D_1D_3D_4,\ Q_2=D_2D_3D_5,\ Q_3=D_1D_2D_6.
\end{align*}
In the case when parameters $\alpha_\nu,\,\beta_\nu,\ldots$ are 0, we put 
\begin{align*}
 \bar D_i&=D_i|_{\boldsymbol\alpha=\boldsymbol\beta=\boldsymbol\gamma=\boldsymbol\alpha'=\boldsymbol\beta'=0},
\ \bar Q_j=Q_j|_{\boldsymbol\alpha=\boldsymbol\beta=\boldsymbol\gamma=\boldsymbol\alpha'=\boldsymbol\beta'=0}\quad(1\le i\le 6,\ 1\le j\le 3).
% \\
% \bar D_1&=(x\xi)^p,\ \bar D_2=(y\eta)^q,\ \bar D_3=(x\xi+y\eta)^{r'},\\
% \bar D_4&=(x\xi)^k-x(x\xi+y\eta)^k,\ 
% \bar D_5=(y\eta)^k-y(x\xi+y\eta)^k,\ \bar D_6=x\bar D_5-y\bar D_4,\\
% \bar Q_1&=\bar D_1\bar D_3\bar D_4,\ \bar Q_2=\bar D_2\bar D_3\bar D_5,\ \bar Q_3=\bar D_1\bar D_2\bar D_6.
\end{align*}
Then we note that $\sigma(P_1)=\bar Q_1,\ 
  \sigma(P_2)=\bar Q_2$ and 
  $\sigma(P_{12})=\bar Q_3$.
%%%%%%%%%%%%%%%%%%%%%%%%%%%%%%%%%%%%%%%%%%%%%%%
\begin{lem}\label{lem:res}
Suppose there exist $C_i\in K[\xi,\eta]$ satisfying
\begin{equation}\label{eq:rel}
 C_1 D_1D_3D_4 + C_2 D_2D_3D_5 + C_3 D_1D_2D_6=0.
\end{equation}
Then there exist $A,\,B\in K[\xi,\eta]$ satisfying
\begin{equation}\label{eq:solres}
  \begin{cases}
   C_1=(AD_5+yB)D_2,\\
   C_2=-(AD_4+xB)D_1,\\
   C_3=BD_3.
 \end{cases}
\end{equation}
Here we note that
\eqref{eq:rel} is valid if $C_j$ are given by \eqref{eq:solres}.
\end{lem}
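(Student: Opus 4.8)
The plan is to read the statement as the computation of the first syzygy module of the triple $(D_1D_3D_4,\,D_2D_3D_5,\,D_1D_2D_6)$ over the ring $K[\xi,\eta]$: the asserted form \eqref{eq:solres} says precisely that this module is generated by the two ``obvious'' syzygies $(D_2D_5,-D_1D_4,0)$ and $(yD_2,-xD_1,D_3)$, with $A$ and $B$ the coefficients. That these triples are syzygies, hence that \eqref{eq:rel} holds when $C_j$ are given by \eqref{eq:solres}, is the elementary direction: substituting \eqref{eq:solres} into the left-hand side of \eqref{eq:rel} and factoring out $D_1D_2D_3$ leaves $D_1D_2D_3\,B\,(yD_4-xD_5+D_6)$, which vanishes because $D_6=xD_5-yD_4$ by definition. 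Throughout I would work in the variables $u:=x\xi$ and $v:=y\eta$; since $x,y$ are units of $K$ one has $K[\xi,\eta]=K[u,v]$, a two-variable polynomial ring over a field and hence a UFD. In these variables $D_1,D_2,D_3$ are products of the linear forms $u-\alpha_\nu$, $v-\beta_\nu$, $u+v-\gamma_\nu$, while $D_4=\prod_\nu(u-\alpha'_\nu)-x(u+v)^L$ and $D_5=\prod_\nu(v-\beta'_\nu)-y(u+v)^L$; a one-line computation cancels the $(u+v)^L$ terms and gives $D_6=x\prod_\nu(v-\beta'_\nu)-y\prod_\nu(u-\alpha'_\nu)$.

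Granting the coprimality facts stated below, the forward direction is two divisibility steps. Since $D_3$ divides the first two summands of \eqref{eq:rel}, it divides $C_3D_1D_2D_6$; as $D_3$ is coprime to $D_1D_2D_6$, this forces $D_3\mid C_3$, so $C_3=BD_3$ for some $B\in K[u,v]$. Cancelling the common factor $D_3$ and inserting $D_6=xD_5-yD_4$, I regroup the relation as $D_1D_4(C_1-yBD_2)+D_2D_5(C_2+xBD_1)=0$. Because $D_1D_4$ and $D_2D_5$ are coprime, $D_2D_5$ divides $C_1-yBD_2$; writing $C_1-yBD_2=AD_2D_5$ and substituting back to solve for $C_2$ yields exactly the three identities $C_1=(AD_5+yB)D_2$, $C_2=-(AD_4+xB)D_1$, $C_3=BD_3$ of \eqref{eq:solres}.

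It remains to check the coprimalities in $K[u,v]$, namely that $D_3$ is coprime to each of $D_1,D_2,D_6$ and that $D_1,D_2,D_4,D_5$ are pairwise coprime. The pairs among $D_1,D_2,D_3$ are coprime since they are products of pairwise non-proportional linear forms. For the remaining easy pairs I would restrict to the zero locus of a candidate common linear factor and read off a leading coefficient that is nonzero precisely because $x$ and $y$ are independent over $\C$: thus $D_4|_{v=\beta}$ has leading coefficient $1-x\neq0$ (so $\gcd(D_2,D_4)=1$), $D_5|_{u=\alpha}$ has leading coefficient $1-y\neq0$ (so $\gcd(D_1,D_5)=1$), and $D_6|_{v=\gamma-u}$ has leading coefficient $(-1)^Lx-y\neq0$ (so $\gcd(D_3,D_6)=1$), while $\gcd(D_3,D_1)=\gcd(D_3,D_2)=1$ is immediate.

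The one genuinely nontrivial point, which I expect to be the main obstacle, is $\gcd(D_4,D_5)=1$, and here the independence of $x,y$ is used decisively. Suppose a common irreducible factor $\pi$ existed and set $F:=\operatorname{Frac}\!\big(K[u,v]/(\pi)\big)$, a field of transcendence degree $1$ over $K$, hence $3$ over $\C$. Let $\bar u,\bar v$ be the images of $u,v$ and $\bar w:=\bar u+\bar v$. First one checks $\pi\nmid(u+v)$ (otherwise $u+v\mid D_4$, which is false since $D_4\equiv\prod_\nu(u-\alpha'_\nu)\not\equiv0$ modulo $u+v$), so $\bar w\neq0$; then the relations $D_4=D_5=0$ on $V(\pi)$ give $x=\prod_\nu(\bar u-\alpha'_\nu)/\bar w^{\,L}$ and $y=\prod_\nu(\bar v-\beta'_\nu)/\bar w^{\,L}$, both lying in the subfield $\C(\bar u,\bar v)\subseteq F$. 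But $\C(\bar u,\bar v)$ has transcendence degree at most $2$ over $\C$, so the algebraically independent elements $x,y$ generate it up to an algebraic extension; hence $\bar u,\bar v$ are algebraic over $\C(x,y)\subseteq K$, forcing $F$ to be algebraic over $K$ and contradicting $\operatorname{trdeg}_K F=1$. Therefore $D_4$ and $D_5$ share no factor, which completes the argument.
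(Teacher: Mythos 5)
Your proof is correct and follows essentially the same route as the paper's: exploit that $K[\xi,\eta]$ is a UFD in which the $D_i$ are pairwise coprime to extract the factors $D_3\mid C_3$, $D_2\mid C_1$, $D_1\mid C_2$, and then use $D_6=xD_5-yD_4$ together with the coprimality of $D_4$ and $D_5$ to produce $A$ and $B$ (the paper pulls out all three factors at once before regrouping, while you pull out $D_3$ first, but this is immaterial). The only substantive addition is that you actually verify the pairwise-coprimality assertions — in particular $\gcd(D_4,D_5)=1$ via the transcendence-degree argument — which the paper states without proof; this is a correct filling-in of a detail the authors leave to the reader.
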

%%%%%%%%%%%%%
\begin{proof}
Note that $K[\xi,\eta]$ is a unique factorization domain and $D_i$ and $D_j$ are mutually 
prime for $1\le i<j\le 6$.
Hence $C_1\in K[\xi,\eta]D_2$, $C_2\in K[\xi,\eta]D_1$ and $C_3\in K[\xi,\eta]D_3$
and therefore $C_1=B_1D_2$, $C_2=B_2D_1$ and $C_3=B_3D_3$ with $B_j\in K[\xi,\eta]$. Then
$B_1D_4+B_2D_5+B_3D_6=0$ and therefore
\[
  (B_1-yB_3)D_4+(B_2+xB_3)D_5=0.
\]
Hence $B_1-yB_3=AD_5$ and $B_2+xB_3=-AD_4$ with $A\in K[\xi,\eta]$.  Putting $B=B_3$, we have 
\eqref{eq:solres}.
\end{proof}
%%%%%%%%%%%%%%%%%%%%%%%%%%%%%%%%%%%%%%%%%%
\begin{lem}\label{lem:dim}
Put $I=K[\xi,\eta]Q_1+K[\xi,\eta]Q_2+K[\xi,\eta]Q_3$ and
$\bar I=K[\xi,\eta]\bar Q_1+K[\xi,\eta]\bar Q_2)+K[\xi,\eta]\bar Q_3$.  Then 
$\dim_KK[\xi,\eta]/I$ does not depend on the parameters $\boldsymbol\alpha$, $\boldsymbol\beta$,
$\boldsymbol\gamma$, $\boldsymbol\alpha'$ and $\boldsymbol\beta'$ and 
\begin{equation}\label{eq:Cdim}
\dim_KK[\xi,\eta]/I= \dim_KK[\xi,\eta]/\bar I
=R\left(\begin{smallmatrix}p&q&r\\ p'&q'&r'\end{smallmatrix}\right).
\end{equation}
\end{lem}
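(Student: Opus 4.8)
The plan is to compute $\dim_K K[\xi,\eta]/\bar I$ directly from a graded free resolution, and then transfer the value to the parametrized ideal $I$ by a leading-term argument that simultaneously establishes the claimed independence of the parameters. As in the proof of Theorem~\ref{thm:rank}, the substitution $(x,y)\mapsto(\tfrac1x,\tfrac1y)$ reduces everything to the case $L\ge0$. Throughout write $S:=K[\xi,\eta]$ with its grading by total degree in $\xi,\eta$, and for $P\in S$ let $\mathrm{in}(P)$ denote the top-degree homogeneous component.

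First I would exhibit two syzygies. The identities $D_2D_5\,Q_1-D_1D_4\,Q_2=0$ and $yD_2\,Q_1-xD_1\,Q_2+D_3\,Q_3=0$ hold for all parameters, the second because $D_6=xD_5-yD_4$; write $\mathbf s_1=(D_2D_5,-D_1D_4,0)$ and $\mathbf s_2=(yD_2,-xD_1,D_3)$. Applying Lemma~\ref{lem:res} with all parameters set to $0$ shows that $\bar{\mathbf s}_1,\bar{\mathbf s}_2$ generate the syzygy module of $(\bar Q_1,\bar Q_2,\bar Q_3)$, and they are $S$-linearly independent (compare third coordinates). Hence
\[
0\to S(-a_1)\oplus S(-a_2)\xrightarrow{(\bar{\mathbf s}_1,\bar{\mathbf s}_2)}\bigoplus_{j=1}^{3}S(-d_j)\xrightarrow{(\bar Q_1,\bar Q_2,\bar Q_3)}S\to S/\bar I\to0
\]
is a graded free resolution with $d_1=p+r,\ d_2=q+r,\ d_3=p+q+L$ and $a_1=p+q+r+L,\ a_2=p+q+r$. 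The Hilbert series of $S/\bar I$ is then $H(t)=N(t)/(1-t)^2$ with $N(t)=1-t^{d_1}-t^{d_2}-t^{d_3}+t^{a_1}+t^{a_2}$. Since $\sum_jd_j=\sum_ia_i=2(p+q+r)+L$, one has $N(1)=N'(1)=0$, so $(1-t)^2\mid N(t)$, $H$ is a polynomial (hence $S/\bar I$ is finite-dimensional), and
\[
\dim_KS/\bar I=H(1)=\tfrac12\bigl(a_1^2+a_2^2-d_1^2-d_2^2-d_3^2\bigr)=pq+qr+rp+Lr,
\]
which equals $R\left(\begin{smallmatrix}p&q&r\\p'&q'&r'\end{smallmatrix}\right)$ by \eqref{eq:dim} (using $p'=p+L$, $q'=q+L$, $r'=r-L$).

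Next I would transfer this to $S/I$. Since the parameters enter $Q_1,Q_2,Q_3$ only through terms of strictly lower degree in $(\xi,\eta)$, there is no cancellation in top degree and $\mathrm{in}(Q_j)=\bar Q_j$; in particular $\bar I\subseteq\mathrm{in}(I)$. For the reverse inclusion I would apply the standard-basis lifting criterion: the generators $\bar{\mathbf s}_1,\bar{\mathbf s}_2$ of $\mathrm{Syz}(\mathrm{in}(Q_1),\mathrm{in}(Q_2),\mathrm{in}(Q_3))$ are precisely the leading parts of the genuine syzygies $\mathbf s_1,\mathbf s_2$ of $(Q_1,Q_2,Q_3)$ — the degree identities $a_i-d_j$ recorded above guarantee that no degree drop occurs — so every syzygy of the leading forms lifts. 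Consequently $\{Q_1,Q_2,Q_3\}$ is a standard basis and $\mathrm{in}(I)=\bar I$. Passing to the associated graded preserves $K$-dimension, whence $\dim_KS/I=\dim_KS/\mathrm{in}(I)=\dim_KS/\bar I=R\left(\begin{smallmatrix}p&q&r\\p'&q'&r'\end{smallmatrix}\right)$ for every choice of parameters; this yields both the asserted independence of the parameters and the stated value.

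The main obstacle is verifying the hypothesis of Lemma~\ref{lem:res} in the homogeneous case, namely that $\bar D_1,\dots,\bar D_6$ are pairwise coprime in $S$. The pairs among $\bar D_1=x^p\xi^p$, $\bar D_2=y^q\eta^q$ and $\bar D_3=(x\xi+y\eta)^{r'}$ are immediate. For the pairs involving $\bar D_4=x^L\xi^L-x(x\xi+y\eta)^L$, $\bar D_5=y^L\eta^L-y(x\xi+y\eta)^L$ and $\bar D_6=xy^L\eta^L-yx^L\xi^L$ (when $L\ge1$) I would argue that a common root $[\xi:\eta]$ of two of these binary forms, after setting $w=x+y(\eta/\xi)$, forces relations such as $w^L=x^{L-1}$ together with $(1-x/w)^L=y$; the left-hand side lies in $\overline{\mathbb C(x)}$ while $y$ is transcendental over $\overline{\mathbb C(x)}$, a contradiction. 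Thus no common root exists and the forms are coprime. A secondary point needing care is the bookkeeping of the leading terms in the lifting criterion, but this is exactly what the matching of the twists $d_j$ and $a_i$ in the resolution was arranged to control.
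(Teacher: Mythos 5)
Your proof is correct, and it splits naturally into a part that coincides with the paper's argument and a part that is genuinely different. The transfer step is the same idea as the paper's: the paper also takes $D=\sum_jA_jQ_j$, observes that if $N=\max_j\deg A_jQ_j$ exceeds $\deg D$ then the top-degree forms of the $A_j$ form a syzygy of $(\bar Q_1,\bar Q_2,\bar Q_3)$, lifts it via Lemma~\ref{lem:res} to a genuine syzygy of $(Q_1,Q_2,Q_3)$, and descends on $N$; your formulation of this as the standard-basis statement $\mathrm{in}(I)=\bar I$ is just a cleaner packaging of the identical descent, and it yields the parameter-independence exactly as in the paper. Where you diverge is in evaluating the dimension. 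The paper never computes $\dim_KK[\xi,\eta]/\bar I$ directly: it specializes to generic parameters and generic $(x_0,y_0)$, asserts that the common zero set of $Q_1,Q_2,Q_3$ is then multiplicity-free, and counts its points through the decomposition $V=V_{12}\cup\cdots\cup V_{45}$, obtaining $pq+pr'+qr'+pL+qL+r'L+L^2$. You instead read the answer off the Hilbert series of the explicit graded free resolution of $S/\bar I$, whose exactness in the middle is precisely the homogeneous case of Lemma~\ref{lem:res} (the two syzygies $\bar{\mathbf s}_1,\bar{\mathbf s}_2$ being exactly the $A$- and $B$-parts of \eqref{eq:solres}); the twist bookkeeping $\sum_jd_j=\sum_ia_i$ gives finiteness and $\tfrac12\bigl(\sum_ia_i^2-\sum_jd_j^2\bigr)=pq+qr+rp+Lr$, which agrees with the paper's count since $pr'+pL=pr$, $qr'+qL=qr$ and $r'L+L^2=rL$. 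Your route avoids the paper's unproved reducedness/multiplicity-free assertion for the generic fibre and the slightly delicate passage from $\dim_KK[\xi,\eta]/I$ to a point count at a generic $(x_0,y_0)$; the price is that Lemma~\ref{lem:res} must be applied in the fully degenerate homogeneous case, which forces you to verify the pairwise coprimality of $\bar D_1,\dots,\bar D_6$ explicitly. You correctly identify this as the main obstacle, and your transcendence argument (a common root of $\bar D_4,\bar D_5$ would make $y$ algebraic over $\mathbb C(x)$) disposes of it; this point is silently assumed in the paper.
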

%%%%%%%%%%%%%%%%
\begin{proof}
Let $D\in I$.
Then
\begin{align}\label{eq:DQ}
 D=A_1Q_1+A_2Q_2+A_3Q_3
\end{align}
with suitable $A_j\in K[\xi,\eta]$.
Put $N=\max_{1\le j\le 3}\deg A_jQ_j$.

Suppose $N>\deg D$.  
Let $\bar C_j$ are homogeneous polynomials with $A_j-\bar C_j\in K[\xi,\eta]_{\deg A_j-1}$ if  
$N=\deg A_jQ_j$. Put $\bar C_j=0$ if $N>\deg A_jQ_j$. 
Then $\bar C_1\bar Q_1+\bar C_2\bar Q_2+\bar C_3\bar Q_3=0$ and we have $\bar A,\,\bar B\in K[\xi,\eta]$
satisfying 
\begin{equation*}
  \begin{cases}
   \bar C_1=(A\bar D_5+yB)\bar D_2,\\
   \bar C_2=-(A\bar D_4+xB)\bar D_1,\\
   \bar C_3=B\bar D_3.
 \end{cases}
\end{equation*}
Replacing $A_j$ by $A_j-C_j$ for $1\le j\le 3$ with \eqref{eq:solres}, we have \eqref{eq:DQ} with smaller $N$.
Thus, we may assume $N=\ord D$.  Then 
$D+K[\xi,\eta]_{\deg D-1}\subset \bar I + K[\xi,\eta]_{\deg D-1}$.
Here $\bar I=K[\xi,\eta]\bar Q_1+K[\xi,\eta]\bar Q_2)+K[\xi,\eta]\bar Q_3$. 
Since \[(L\cap K[\xi,\eta]_m)/(I\cap K[\xi,\eta]_{m-1})\simeq 
(\bar I\cap K[\xi,\eta]_m)/(\bar I\cap K[\xi,\eta]_{m-1})\]  
and 
$\dim K[\xi,\eta]/I=\sum\limits_{m=0}^\infty \dim (I\cap K[\xi,\eta]_m)/(I\cap K[\xi,\eta]_{m-1})$,
$\dim K[\xi,\eta]/I$ does not depend on the parameters. 

Note that $\dim  K[\xi,\eta]/I$ is $\dim C[\xi,\eta]/\sum_{i=1}^3\mathbb C[\xi,\eta]Q_i|_{(x,y)=(x_0,y_0)}$ for generic $(x_0,y_0)\in\mathbb C^2$.
Hence we assume $(x,y)$ is a generic point of $\mathbb C^2$.

Put $V:=\{(\xi,\eta)\in\mathbb C^2/\mathbb C^\times \mid Q_1=Q_2=Q_3=0\}$ and 
$V_{ij}:=\{(\xi,\eta)\in\mathbb C^2/\mathbb C^\times \mid D_i=D_j=0\}$. Then
\[V=V_{12}\cup V_{13}\cup V_{23}\cup V_{15}\cup V_{24}\cup V_{36} \cup V_{45}.\]
When $\boldsymbol \alpha,\,\boldsymbol \beta,\,\boldsymbol\gamma,\,\boldsymbol\alpha',\,
\boldsymbol\beta'$ and $(x,y)$ are generic, the points in $V$ defined by $Q_1=Q_2=Q_3=0$ is multiplicity free and the numbers of points of 
$V_{12},\,V_{13},\,V_{23},\,V_{15}$, $V_{24},\,V_{36},\,V_{45}$ are 
$pq,\,pr',\,qr',\,pL,\,qL,\,r'L,\,L^2$, respectively.
Since $pq+pr'+qr'+pL+qL+r'L+L^2=
R\left(\begin{smallmatrix}p&q&r\\p'&q'&r'\end{smallmatrix}\right)$, 
we have Lemma~\ref{lem:dim}.
\end{proof}

The proof of Theorem~\ref{thm:rank} shows the following.
\begin{cor}
The symbol ideal of the equation
$\mathcal M^{\boldsymbol \alpha, \boldsymbol \beta, \boldsymbol \gamma}
 _{\boldsymbol \alpha',\boldsymbol \beta',\boldsymbol \gamma'}$
in $W(x,y)$ is generated by
$\sigma(P_1)$, $\sigma(P_2)$ and $\sigma(P_{12})$, namely,
\begin{align*}\begin{split}
  &\{\sigma(P)\mid P\in W(x,y)P_1+W(x,y)P_2+W(x,y)P_{12}\}\\
  &\quad{}=K[\xi,\eta]\sigma(P_1)+K[\xi,\eta]\sigma(P_2)+K[\xi,\eta]\sigma(P_{12}).
\end{split}\end{align*}
\end{cor}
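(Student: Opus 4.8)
The plan is to upgrade the two inequalities that already appear in the proof of Theorem~\ref{thm:rank} into equalities. Write $J:=W(x,y)P_1+W(x,y)P_2+W(x,y)P_{12}$, and let $\sigma(J)$ denote the symbol ideal of $J$: the homogeneous $K[\xi,\eta]$-ideal generated by the principal symbols of all elements of $J$, whose degree-$m$ part consists precisely of the symbols of the order-$m$ elements of $J$. With this convention the set displayed on the left-hand side of the assertion is exactly the set of homogeneous elements of $\sigma(J)$, so the assertion is equivalent to the equality of graded ideals $\sigma(J)=\bar I$, where
\[
\bar I:=K[\xi,\eta]\sigma(P_1)+K[\xi,\eta]\sigma(P_2)+K[\xi,\eta]\sigma(P_{12}).
\]
Since $\sigma(P_1)=\bar Q_1$, $\sigma(P_2)=\bar Q_2$, $\sigma(P_{12})=\bar Q_3$, this $\bar I$ is the ideal appearing in Lemma~\ref{lem:dim}. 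The inclusion $\bar I\subseteq\sigma(J)$ is trivial: each $P_j$ lies in $J$ and $\sigma(QP_j)=\sigma(Q)\sigma(P_j)$ for every $Q\in W(x,y)$, so $\bar I$ is contained in the symbol ideal. The whole task is therefore to prove the reverse inclusion, and I would obtain it purely by counting $K$-dimensions.

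The key identity is the standard fact that the order filtration on $W(x,y)$ descends to $W(x,y)/J$ with associated graded $\mathrm{gr}\,(W(x,y)/J)\cong K[\xi,\eta]/\sigma(J)$, and that passing to the associated graded leaves the total $K$-dimension unchanged, so that, by the definition \eqref{eq:rank} of the rank,
\[
\rank\mathcal M^{\boldsymbol\alpha,\boldsymbol\beta,\boldsymbol\gamma}_{\boldsymbol\alpha',\boldsymbol\beta',\boldsymbol\gamma'}=\dim_K W(x,y)/J=\dim_K K[\xi,\eta]/\sigma(J).
\]
Theorem~\ref{thm:rank} evaluates the left member as $R\left(\begin{smallmatrix}p&q&r\\p'&q'&r'\end{smallmatrix}\right)$, while Lemma~\ref{lem:dim} gives $\dim_K K[\xi,\eta]/\bar I=R\left(\begin{smallmatrix}p&q&r\\p'&q'&r'\end{smallmatrix}\right)$ as well.

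Combining these with the inclusion $\bar I\subseteq\sigma(J)$ yields
\[
R\left(\begin{smallmatrix}p&q&r\\p'&q'&r'\end{smallmatrix}\right)=\dim_K K[\xi,\eta]/\sigma(J)\le\dim_K K[\xi,\eta]/\bar I=R\left(\begin{smallmatrix}p&q&r\\p'&q'&r'\end{smallmatrix}\right),
\]
so every inequality is an equality. Two homogeneous ideals with $\bar I\subseteq\sigma(J)$ and equal finite codimension must coincide, giving $\sigma(J)=\bar I$, which is the assertion. The argument is essentially formal once Theorem~\ref{thm:rank} and Lemma~\ref{lem:dim} are available, so there is no genuine obstacle; the only step deserving care is the dimension identity $\dim_K W(x,y)/J=\dim_K K[\xi,\eta]/\sigma(J)$, that is, the exactness of $\mathrm{gr}$ with respect to the order filtration, but this is a standard property of filtered modules over the Weyl algebra and requires no new input here.
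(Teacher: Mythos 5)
Your argument is correct and is essentially the paper's own: the authors state the corollary as an immediate consequence of the proof of Theorem~\ref{thm:rank}, where the sandwich $\rank\mathcal M\le\dim_K K[\xi,\eta]/\bar I=R$ together with the lower bound from the constructed local solutions forces all inequalities to be equalities, and hence $\sigma(J)=\bar I$ by the inclusion plus equal finite codimension. Your write-up just makes explicit the standard filtered-module identity $\dim_K W(x,y)/J=\dim_K K[\xi,\eta]/\sigma(J)$ that the paper uses implicitly.
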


%%%%%%%%%%%%%%%%%%%%%%%%%%%%%%%%%%%%
%%%%%%%% Singular points %%%%%%%%%%%
%%%%%%%%%%%%%%%%%%%%%%%%%%%%%%%%%%%%
\subsection{Singular set of the system
$\mathcal M
 ^{\boldsymbol \alpha, \boldsymbol \beta, \boldsymbol \gamma}
 _{\boldsymbol \alpha',\boldsymbol \beta',\boldsymbol \gamma'}$}\label{sec:Sing}
In this section we examine the singularities of the solutions to
$\mathcal M
 ^{\boldsymbol \alpha, \boldsymbol \beta, \boldsymbol \gamma}
 _{\boldsymbol \alpha',\boldsymbol \beta',\boldsymbol \gamma'}$.

\begin{defn}
We define the singular set of 
$\mathcal M^{\boldsymbol \alpha, \boldsymbol \beta, \boldsymbol \gamma}
 _{\boldsymbol \alpha',\boldsymbol \beta',\boldsymbol \gamma'}$
by 
\begin{align*}
 \mathrm{Sing}(\mathcal M
 ^{\boldsymbol \alpha, \boldsymbol \beta, \boldsymbol \gamma}
 _{\boldsymbol \alpha',\boldsymbol \beta',\boldsymbol \gamma'})
 &:= \{(x,y)\in\mathbb C^2\mid \exists (\xi,\eta)\in\mathbb C^2\setminus\{(0,0)\}
 \text{ such that }\\
 &\quad \sigma(P_1)(x,y,\xi,\eta)=\sigma(P_2)(x,y,\xi,\eta)=\sigma(P_{12})(x,y,\xi,\eta)=0\}.
\end{align*}
\end{defn}

We note that any local solution to 
$\mathcal M^{\boldsymbol \alpha, \boldsymbol \beta, \boldsymbol \gamma}
 _{\boldsymbol \alpha',\boldsymbol \beta',\boldsymbol \gamma'}$
has an analytic continuation along any path in $\mathbb C^2\setminus
 \mathrm{Sing}(\mathcal M
 ^{\boldsymbol \alpha, \boldsymbol \beta, \boldsymbol \gamma}
 _{\boldsymbol \alpha',\boldsymbol \beta',\boldsymbol \gamma'})$.
 It is more natural to take the closure of the singular set in a compactification $\mathbb P^1\times \mathbb P^1$ of $\C^2$.
The divisors $\{x=\infty\}$ and $\{ y=\infty\}$ are regarded as a subset of the singular set.

\begin{thm}\label{thm:Sing}
The singular set $\mathrm{Sing}(\mathcal M^{\boldsymbol\alpha,\boldsymbol\beta,\boldsymbol\gamma}
_{\boldsymbol\alpha',\boldsymbol\beta',\boldsymbol\gamma'})$ is given by the union of the following irreducible subvarieties:
\begin{itemize}
    \item[\rm (1)]$L\ne0$.
    Then, the subvarieties are as follows:
    \begin{align*}
        &V_L:=\bigl\{\bigl(\tfrac{1}{(1+t)^L},\tfrac{t^L}{(1+t)^L}\bigr)
   \in\C\times\C\mid t\in (\C\setminus\{-1\})\cup\{\infty\}
   \bigr\},\ \{x=0\},\\ &\quad\{ y=0\},\ 
        \{ x=1\}\ (qq'> 0),\ \ \{ y=1\}\ (pp'> 0),\ \ \{ x=\epsilon y\}\ (r'>0).
    \end{align*} 
    \item[\rm (2)]$L=0$. Then, the subvarieties are all linear:
    \begin{align*}
        &\{x=0\},\ \{ y=0\},\ \{ x=1\},\ \{ y=1\},\ \{ x=y\}.
    \end{align*} 
\end{itemize}
\end{thm}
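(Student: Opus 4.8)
The plan is to read off $\mathrm{Sing}$ directly from its definition as the projection to the $(x,y)$–plane of the common zero locus of the principal symbols. Recall from \S\ref{sec:rank} that $\sigma(P_1)=\bar D_1\bar D_3\bar D_4$, $\sigma(P_2)=\bar D_2\bar D_3\bar D_5$ and $\sigma(P_{12})=\bar D_1\bar D_2\bar D_6$, each homogeneous in $(\xi,\eta)$, so a common nontrivial zero is a point $[\xi:\eta]\in\mathbb P^1$. Since the projection $\mathbb C^2\times\mathbb P^1\to\mathbb C^2$ is proper, $\mathrm{Sing}$ is automatically Zariski closed, and it suffices to compute it on $\{xy\ne0\}$ and on $\{xy=0\}$ and then take closures. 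By the symmetry $(x,y)\mapsto(\tfrac1x,\tfrac1y)$ of Proposition~\ref{prop:transformations}(2) I may assume $L\ge0$. First I would introduce $u:=x\xi$, $v:=y\eta$, $w:=u+v$, record that $\bar D_1=u^p$, $\bar D_2=v^q$, $\bar D_3=w^{r'}$, $\bar D_4=u^L-xw^L$, $\bar D_5=v^L-yw^L$, and note the cancellation $\bar D_6=x\bar D_5-y\bar D_4=xv^L-yu^L$. The three equations then read $u^pw^{r'}(u^L-xw^L)=0$, $v^qw^{r'}(v^L-yw^L)=0$, $u^pv^q(xv^L-yu^L)=0$.

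On $\{xy\ne0\}$ the assignment $[\xi:\eta]\mapsto[u:v]$ is a bijection of $\mathbb P^1$, so I classify a common zero by the position of $[u:v]$. If $u,v,w$ are all nonzero, then $u^p,v^q,w^{r'}\ne0$, so each symbol forces its last factor to vanish, i.e.\ $\bar D_4=\bar D_5=0$ (whence $\bar D_6=0$ automatically); writing $t=v/u$ and dividing by $u^L$ gives exactly $x=\tfrac1{(1+t)^L}$, $y=\tfrac{t^L}{(1+t)^L}$, namely $V_L$, and this branch is a curve only when $L\ge1$. The remaining possibilities are the three special points. At $[u:v]=[0:1]$ one has $w=v$, the second symbol collapses to $v^{q+r}(1-y)$ and the other two vanish through $u^p$, giving $\{y=1\}$; here $u^p=0$ is available only if $p\ge1$, i.e.\ $pp'>0$. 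Symmetrically $[1:0]$ gives $\{x=1\}$, present iff $qq'>0$. At $[1:-1]$ one has $w=0$, so $\sigma(P_1),\sigma(P_2)$ vanish through $w^{r'}$ (requiring $r'\ge1$), while $\sigma(P_{12})$ becomes $u^pv^q\,v^L(x-\epsilon y)$, yielding $\{x=\epsilon y\}$, present iff $r'>0$.

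To finish on $\{xy=0\}$ I would show the whole lines $\{x=0\}$ and $\{y=0\}$ lie in $\mathrm{Sing}$: at $x=0$ the choice $(\xi,\eta)=(1,0)$ forces $u=v=w=0$, and under \eqref{eq:nontrivial} every monomial of each symbol then vanishes; the line $\{y=0\}$ is symmetric via $(\xi,\eta)=(0,1)$. Conversely, a point $(x_0,y_0)\in\mathrm{Sing}$ with $x_0y_0\ne0$ realizes one of the four configurations above, so it lies in $V_L$, $\{x=1\}$, $\{y=1\}$ or $\{x=\epsilon y\}$; assembling the presence conditions reproduces list~(1). For $L=0$ the all-nonzero branch degenerates: $\bar D_4=1-x$ and $\bar D_5=1-y$ force $x=y=1$, so no curve appears and $V_L$ drops out, while $\{x=\epsilon y\}=\{x=y\}$, $\{x=1\}$, $\{y=1\}$ are unconditional because \eqref{eq:nontrivial} forces $p,q\ge1$ and $r'\ge1$ when $L=0$; this is exactly list~(2). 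Irreducibility of the lines is clear, and $V_L$ is irreducible as the image of the irreducible rational curve $t\mapsto\bigl(\tfrac1{(1+t)^L},\tfrac{t^L}{(1+t)^L}\bigr)$.

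The symbol factorizations and substitutions are routine; the one step needing genuine care is exhaustiveness of the case analysis. I would have to check that the ``mixed'' choices of vanishing factors (for instance $\bar D_4=0$ in the first symbol but $v=0$ in the second) never produce anything beyond the four configurations—indeed such mixtures collapse back onto one of the special points $[0:1],[1:0],[1:-1]$—and that the boundary parameter values $p=0$, $q=0$, $r'=0$ together with the degenerate values $t\in\{0,-1,\infty\}$ of the parametrization of $V_L$ are correctly absorbed into the axes or the linear components. I expect this bookkeeping of degenerate cases, rather than any single computation, to be the main obstacle.
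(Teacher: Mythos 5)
Your proposal is correct and follows essentially the same route as the paper: reduce to $L\ge 0$ via $(x,y)\mapsto(\tfrac1x,\tfrac1y)$, factor the principal symbols, and classify common zeros by the position of $[x\xi:y\eta]$ in $\mathbb P^1$ (the cases $[1:0]$, $[0:1]$, $[1:-1]$ and all-nonzero giving $\{x=1\}$, $\{y=1\}$, $\{x=\epsilon y\}$ and $V_L$ respectively), with the axes handled separately. Your worry about "mixed" vanishing patterns is unfounded, since classifying by the point $[u:v]$ rather than by which factor vanishes already makes the case analysis exhaustive.
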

%%%
\begin{proof}
We may assume $L=r-r'\geq0$ as in the proof of the last theorem.
A point $(x,y)\in\C^2$ is in the singular set $\mathrm{Sing}(\mathcal M
 ^{\boldsymbol \alpha, \boldsymbol \beta, \boldsymbol \gamma}
 _{\boldsymbol \alpha',\boldsymbol \beta',\boldsymbol \gamma'})$ if there exists $(\xi,\eta)\in\mathbb C^2$ such that
\begin{equation}\begin{split}
\bigl((x\xi)^L-x(x\xi+y\eta)^L\bigr)(x\xi)^p(x\xi+y\eta)^{r'}&=0,\\
\bigl((y\eta)^L-y(x\xi+y\eta)^L\bigr)(y\eta)^q(x\xi+y\eta)^{r'}&=0,\\
\bigl(y(x\xi)^L-x(y\eta)^L\bigr)(x\xi)^p(y\eta)^q&=0.
\end{split}\label{eq:Vk}\end{equation}

Suppose $L>0$. 
Then, it is clear that $\{x=0\}\cup\{y=0\}\subset \mathrm{Sing}(\mathcal M
 ^{\boldsymbol \alpha, \boldsymbol \beta, \boldsymbol \gamma}
 _{\boldsymbol \alpha',\boldsymbol \beta',\boldsymbol \gamma'})$.
Let us take an element $(x,y,\xi,\eta)\in\C^2\times\C^2\setminus\{ (0,0)\}$ such that the equations
\begin{align}
\begin{split}
   \bigl(\xi^L-x(\xi+\eta)^L\bigr)\xi^p(\xi+\eta)^{r'}
   &=\bigl(\eta^L-y(\xi+\eta)^L\bigr)\eta^q(\xi+\eta)^{r'}\\
   &=(y\xi^L-x\eta^L)\xi^p\eta^q=0
\end{split}
\label{eq:Vkeq}
\end{align}
hold.
Putting $(\xi,\eta)=(1,0)$ in \eqref{eq:Vkeq}, we have
\[
 1-x=y0^q=y0^q=0
\]
and therefore we have  $x=1$ if $y\ne0$ and $q>0$.

Suppose $\xi=-\eta\ne0$ satisfies \eqref{eq:Vkeq}. 
We have $r'>0$ and $y\xi^L-x\eta^L = 0$, namely, $x-(-1)^Ly=0$. 

Suppose $\xi\eta(\xi+\eta)\ne0$ and $xy\ne0$ satisfies \eqref{eq:Vkeq}.
Then \eqref{eq:Vkeq} means
\begin{equation}
 \xi^L-x(\xi+\eta)^L=\eta^L-y(\xi+\eta)^L=y\xi^L-x\eta^L=0.
\label{eq:Vksub} 
\end{equation}
We may assume $(\xi,\tau)=(1,t)$ with $t\in\bar{\mathbb C}$
and then we have the theorem.

Suppose $L=0$ and $p$, $q$ and $r'$ are positive, then \eqref{eq:Vkeq} is reduced to
\begin{align*}
x(1-x)\xi(x\xi+y\eta)&=0,\\
y(1-y)\eta(x\xi+y\eta)&=0,\\
xy(x-y)\xi\eta&=0.
\end{align*}
Hence, it is clear that
\begin{align*}
 \mathrm{Sing}(\mathcal M
 ^{\boldsymbol \alpha, \boldsymbol \beta, \boldsymbol \gamma}
 _{\boldsymbol \alpha',\boldsymbol \beta',\boldsymbol \gamma'})=\{x=0\}\cup\{x=1\}\cup\{y=0\}\cup\{y=1\}\cup\{x=y\}.
\end{align*}

\end{proof}
\begin{rem}
If the parameters take special values, some singularities may vanish. 
For example the system satisfied by 
$(1-x-y)^{-\alpha}=\sum\limits_{m\ge0,\,n\ge0}\frac{(\alpha)_{m+n}x^my^m}{m!n!}=F^{0,0,1}_{1,1,0}\left(
\begin{smallmatrix}
    \emptyset&\emptyset&\alpha\\
    0&0&\emptyset\
\end{smallmatrix}
;x,y\right)$ 
does not have singularities $\{x=0\}\cup\{y=0\}$.  This example follows from the equality $\bigl(y(x\xi)^1-x(y\eta)^1\bigr)=xy(\xi-\eta)$ in \eqref{eq:Vkeq} and does not happen when $|r-r'|>1$
(cf.~Remark~\ref{rem:red0}).
\end{rem}
\begin{rem} 
Below, we list some properties of the defining polynomial of $V_L$.

{\rm i)}\ Let $f_L(x,y)$ be the resultant of the polynomials $x(1+t)^L-1$ and
$y(1+t)^L-t^L$ of $t$ for $L>0$ and put $f_0(x,y)=1$ and 
$f_{-L}(x,y)=f_L(\tfrac1x,\tfrac1y)$. Then
\begin{align*}
 V_L&=\{(x,y)\in\mathbb C^2\mid f_L(x,y)=0\},\\
 f_L(x,y)&=f_L(y,x)\quad(\Leftarrow t\mapsto \tfrac1t),\\
 f_L(x,0)=0&\Rightarrow x=1,\\
% f_L(\tfrac1{(1+t)^L},\tfrac{t^L}{(1+t)^L)&=0,\\
 f_L(\tfrac 1{2^L},\tfrac 1{2^L})&=0\quad(L\ne0),\\
 f_L(x,x)=0&\Leftrightarrow 
 \frac1x=\sum_{\nu=1}^k\binom k\nu \cos\frac{2m\nu}L\pi\quad(m=1,\dots,L)\ \ \text{if \ }L>0.
\end{align*}
Here $(x,y)=\bigl(\tfrac1{(1+t)^L},\frac{t^L}{(1+t)^L}\bigr)$ 
and the last line in the above follows from 
$t=\cos\frac{2m}L\pi+\sqrt{-1}\sin\frac{2m}L\pi$. 
% and $x(1+t)^L=1$.

\medskip
{\rm ii)}\ Direct calculation shows
\begin{align*}
f_1(x,y)&=1-x+y,\\
f_2(x,y)&=(1-x-y)^2-4xy,\\
f_3(x,y)&=(1-x-y)^3-27xy,\\
f_4(x,y)&=(1-x-y)^4-8xy\bigl(x^2+y^2+14(x+y)+17\bigr),\\
f_5(x,y)&=(1-x-y)^5-625xy\bigl(x^2+y^2-3(xy-x-y)+1\bigr),\\
f_6(x,y)&=(1-x-y)^6-xy\cdot g_6(x,y),\\
g_6(x,y)&=12(x^4+y^4)+40x^2y^2+2706(x^3+y^3)+20526xy(x+y)-131637xy\\
		&\quad{}+20526(x+y)+2766.
\end{align*}

{\rm iii)}\ We give a different expression of $f_L(x,y)$ in Theorem~\ref{thm:Singn}.

\medskip
{\rm iv)}\ If $L>0$, $f_{L+1}(x,y)$ is a factor of the discriminant of $f_L(\tfrac xt,\tfrac y{1-t})=0$ 
as a polynomial of $t$, which follows
from the integral transformation $K^{\mu-\alpha-\beta,(\alpha,\beta)}_{x,y}$ in \S\ref{sec:IntegRep},
\end{rem}

For example,  
$f_1(\frac xt,\frac y{1-t})=\frac x{1-t}+\frac y{1-t}=1$ means $t^2-(x-y+1)t+x=0$ and 
the discriminant equals $(x-y+1)^2-4x=(x+y-1)^2-4xy=f_2(x,y)$.
The equation $f_2(\frac xt,\frac y{1-t})
=(\tfrac xt+\tfrac y{1-t}-1)^2-4\tfrac xt\tfrac y{1-t}=0$ means 
\[
 2(x-y+1)t^3-(x^2+2xy+y^2+4x-2y+1)t^2+2x(x+y+1)t-x^2
\]
and the discriminant of this polynomial of $t$ equals $256x^3y^3\bigl((x+y-1)^3+27xy\bigr)$.
\medskip

\scalebox{0.52}{\begin{tikzpicture}
\draw(0,-1) -- (0,9);
\draw(8,0) -- (8,0.1);
\node[below] at(8,0){$4$};
\draw(6,0) -- (6,0.1);
\node[below] at(6,0){$3$};
\draw(4,0) -- (4,0.1);
\node[below] at(4,0){$2$};
\draw(2,0) -- (2,0.1);
\node[below right] at(2,0){$1$};
\node[below right] at(0,0){$0$};
\draw(-1,0) -- (9,0);
\draw(0,8) -- (0.1,8);
\node[left] at(0,8){$4$};
\draw(0,6) -- (0.1,6);
\node[left] at(0,6){$3$};
\draw(0,4) -- (0.1,4);
\node[left] at(0,4){$2$};
\draw(0,2) -- (0.1,2);
\node[left] at(0.05,1.8){$1$};
\draw%[blue] 
(-1,-1)--(9,9) (-1,2)--(9,2) (2,-1)--(2,9)
;
\draw%[red] 
(2.496,8.965) .. controls (2.342,8.673) .. (2.192,8.379) .. controls (2.094,8.188)
 and (1.999,7.999) .. (1.907,7.814) .. controls (1.816,7.628) and (1.728,7.446)
 .. (1.643,7.268) .. controls (1.558,7.089) and (1.476,6.913) .. (1.398,6.741)
 .. controls (1.319,6.569) and (1.244,6.4) .. (1.172,6.235) .. controls (1.101,6.069)
 and (1.032,5.907) .. (0.967,5.748) .. controls (0.902,5.589) and (0.84,5.434)
 .. (0.781,5.281) .. controls (0.723,5.129) and (0.667,4.98) .. (0.615,4.834)
 .. controls (0.563,4.688) and (0.515,4.546) .. (0.469,4.407) .. controls (0.424,4.268)
 and (0.382,4.132) .. (0.343,3.999) .. controls (0.304,3.867) and (0.269,3.737)
 .. (0.236,3.611) .. controls (0.204,3.485) and (0.175,3.363) .. (0.15,3.243)
 .. controls (0.124,3.124) and (0.102,3.008) .. (0.083,2.895) .. controls (0.063,2.782)
 and (0.048,2.673) .. (0.035,2.567) .. controls (0.023,2.46) and (0.014,2.357)
 .. (0.008,2.258) .. controls (0.002,2.158) and (-0.001,2.062) .. (0,1.969)
 .. controls (0.001,1.876) and (0.005,1.786) .. (0.012,1.7) .. controls (0.02,1.613)
 and (0.03,1.53) .. (0.044,1.45) .. controls (0.058,1.371) and (0.075,1.294)
 .. (0.096,1.221) .. controls (0.116,1.148) and (0.14,1.077) .. (0.167,1.011)
 .. controls (0.194,0.944) and (0.225,0.881) .. (0.258,0.821) .. controls (0.292,0.761)
 and (0.329,0.704) .. (0.369,0.651) .. controls (0.409,0.597) and (0.453,0.547)
 .. (0.5,0.5) .. controls (0.547,0.453) and (0.597,0.409) .. (0.651,0.369)
 .. controls (0.704,0.329) and (0.761,0.292) .. (0.821,0.258) .. controls (0.881,0.225)
 and (0.944,0.194) .. (1.011,0.167) .. controls (1.077,0.14) and (1.148,0.116)
 .. (1.221,0.096) .. controls (1.294,0.075) and (1.371,0.058) .. (1.45,0.044)
 .. controls (1.53,0.03) and (1.613,0.02) .. (1.7,0.012) .. controls (1.786,0.005)
 and (1.876,0.001) .. (1.969,0) .. controls (2.062,-0.001) and (2.158,0.002)
 .. (2.258,0.008) .. controls (2.357,0.014) and (2.46,0.023) .. (2.567,0.035)
 .. controls (2.673,0.048) and (2.782,0.063) .. (2.895,0.083) .. controls (3.008,0.102)
 and (3.124,0.124) .. (3.243,0.15) .. controls (3.363,0.175) and (3.485,0.204)
 .. (3.611,0.236) .. controls (3.737,0.269) and (3.867,0.304) .. (3.999,0.343)
 .. controls (4.132,0.382) and (4.268,0.424) .. (4.407,0.469) .. controls (4.546,0.515)
 and (4.688,0.563) .. (4.834,0.615) .. controls (4.98,0.667) and (5.129,0.723)
 .. (5.281,0.781) .. controls (5.434,0.84) and (5.589,0.902) .. (5.748,0.967)
 .. controls (5.907,1.032) and (6.069,1.101) .. (6.235,1.172) .. controls (6.4,1.244)
 and (6.569,1.319) .. (6.741,1.398) .. controls (6.913,1.476) and (7.089,1.558)
 .. (7.268,1.643) .. controls (7.446,1.728) and (7.628,1.816) .. (7.814,1.907)
 .. controls (7.999,1.999) and (8.188,2.094) .. (8.379,2.192) .. controls (8.673,2.342)
 .. (8.965,2.496);
\node at (4.5,7.4){$\begin{pmatrix}p&q&r+2\\ p+2&q+2&r\end{pmatrix}$};
\node at (1,6.5) {$%\color{red}
f_2$};
\node at (5.5,2.2) {$%\color{blue}
p>0$};
\node at (2.5,5.5) {%$\color{blue}q>0$
};
\node at (6.1,5.5) {$%\color{blue}
r>0$};
%\draw[dotted] (-1,3)--(3,-1);
\end{tikzpicture}}\quad\ 
\raisebox{-1.05cm}{\scalebox{0.52}{\begin{tikzpicture}
\draw(0,-3) -- (0,8);
\draw %[blue] 
(2,-3)--(2,8);
\draw(8,0) -- (8,0.1);
\node[below] at(8,0){$4$};
\draw(6,0) -- (6,0.1);
\node[below] at(6,0){$3$};
\draw(4,0) -- (4,0.1);
\node[below] at(4,0){$2$};
\draw(2,0) -- (2,0.1);
\node[below] at(2.1,0){$1$};
\node[below left] at(0,0){$0$};
\draw(-2,0) -- (-2,0.1);
\node[below] at(-2,0){$-1$};
\draw (-3,0) -- (8,0);
\draw%[blue]  
(-3,2)--(8,2);
\draw(0,8) -- (0.1,8);
\node[left] at(0,8){$4$};
\draw(0,6) -- (0.1,6);
\node[left] at(0,6){$3$};
\draw(0,4) -- (0.1,4);
\node[left] at(0,4){$2$};
\draw(0,2) -- (0.1,2);
\node[left] at(0.05,1.8){$1$};
\draw(0,-2) -- (0.1,-2);
\node[left] at(0,-2){$- 1$};
\node at (-2,-2) {%\color{red}
$\bullet$};
\draw%[blue] 
(-3,3)--(3,-3);
\draw[densely dotted%, red
] (-3,5)--(5,-3);
\draw%[red] 
(-0.359,7.656) .. controls (-0.318,7.34) .. (-0.281,7.024) .. controls (-0.256,6.804)
 and (-0.229,6.566) .. (-0.203,6.309) .. controls (-0.178,6.061) and (-0.15,5.775)
 .. (-0.125,5.451) .. controls (-0.106,5.205) and (-0.066,4.807) .. (-0.047,4.255)
 .. controls (-0.005,3.006) and (-0.011,1.868) .. (0.031,0.844) .. controls (0.04,0.625)
 and (0.1,0.508) .. (0.109,0.478) .. controls (0.131,0.406) and (0.166,0.359)
 .. (0.188,0.325) .. controls (0.212,0.287) and (0.241,0.259) .. (0.266,0.235)
 .. controls (0.291,0.211) and (0.318,0.192) .. (0.344,0.175) .. controls (0.37,0.158)
 and (0.396,0.145) .. (0.422,0.133) .. controls (0.448,0.12) and (0.474,0.11)
 .. (0.5,0.101) .. controls (0.526,0.092) and (0.552,0.085) .. (0.578,0.078)
 .. controls (0.604,0.071) and (0.63,0.065) .. (0.656,0.06) .. controls (0.682,0.054)
 and (0.708,0.05) .. (0.734,0.046) .. controls (0.76,0.042) and (0.786,0.038)
 .. (0.813,0.035) .. controls (0.839,0.032) and (0.865,0.029) .. (0.891,0.026)
 .. controls (0.917,0.024) and (0.943,0.022) .. (0.969,0.02) .. controls (0.995,0.018)
 and (1.021,0.016) .. (1.047,0.015) .. controls (1.073,0.013) and (1.099,0.012)
 .. (1.125,0.011) .. controls (1.151,0.009) and (1.177,0.008) .. (1.203,0.008)
 .. controls (1.229,0.007) and (1.255,0.006) .. (1.281,0.005) .. controls (1.307,0.005)
 and (1.333,0.004) .. (1.359,0.004) .. controls (1.385,0.003) and (1.411,0.003)
 .. (1.438,0.002) .. controls (1.464,0.002) and (1.49,0.002) .. (1.516,0.001)
 .. controls (1.542,0.001) and (1.568,0.001) .. (1.594,0.001) .. controls (1.62,0.001)
 and (1.646,0) .. (1.672,0) .. controls (1.698,0) and (1.724,0)
 .. (1.75,0) .. controls (1.776,0) and (1.802,0) .. (1.828,0)
 .. controls (1.854,0) and (1.88,0) .. (1.906,0) .. controls (1.932,0)
 and (1.958,0) .. (1.984,0) .. controls (2.01,0) and (2.036,0)
 .. (2.063,0) .. controls (2.089,0) and (2.115,0) .. (2.141,0)
 .. controls (2.167,0) and (2.193,0) .. (2.219,0) .. controls (2.245,0)
 and (2.271,0) .. (2.297,0) .. controls (2.323,0) and (2.349,0)
 .. (2.375,0) .. controls (2.401,0) and (2.427,-0.001) .. (2.453,-0.001)
 .. controls (2.479,-0.001) and (2.505,-0.001) .. (2.531,-0.001) .. controls (2.557,-0.001)
 and (2.583,-0.001) .. (2.609,-0.002) .. controls (2.635,-0.002) and (2.661,-0.002)
 .. (2.688,-0.002) .. controls (2.714,-0.002) and (2.74,-0.003) .. (2.766,-0.003)
 .. controls (2.792,-0.003) and (2.818,-0.004) .. (2.844,-0.004) .. controls (2.87,-0.004)
 and (2.896,-0.005) .. (2.922,-0.005) .. controls (2.948,-0.005) and (2.974,-0.006)
 .. (3,-0.006) .. controls (3.026,-0.006) and (3.052,-0.007) .. (3.078,-0.007)
 .. controls (3.104,-0.008) and (3.13,-0.008) .. (3.156,-0.009) .. controls (3.182,-0.009)
 and (3.208,-0.01) .. (3.234,-0.01) .. controls (3.26,-0.011) and (3.286,-0.012)
 .. (3.313,-0.012) .. controls (3.339,-0.013) and (3.365,-0.014) .. (3.391,-0.014)
 .. controls (3.417,-0.015) and (3.443,-0.016) .. (3.469,-0.016) .. controls (3.495,-0.017)
 and (3.521,-0.018) .. (3.547,-0.019) .. controls (3.573,-0.019) and (3.599,-0.02)
 .. (3.625,-0.021) .. controls (3.651,-0.022) and (3.677,-0.023) .. (3.703,-0.024)
 .. controls (3.729,-0.025) and (3.755,-0.026) .. (3.781,-0.026) .. controls (3.807,-0.027)
 and (3.833,-0.028) .. (3.859,-0.029) .. controls (3.885,-0.03) and (3.911,-0.031)
 .. (3.938,-0.033) .. controls (3.964,-0.034) and (3.99,-0.035) .. (4.016,-0.036)
 .. controls (4.042,-0.037) and (4.068,-0.038) .. (4.094,-0.039) .. controls (4.12,-0.04)
 and (4.146,-0.042) .. (4.172,-0.043) .. controls (4.198,-0.044) and (4.224,-0.045)
 .. (4.25,-0.047) .. controls (4.276,-0.048) and (4.302,-0.049) .. (4.328,-0.051)
 .. controls (4.354,-0.052) and (4.38,-0.053) .. (4.406,-0.055) .. controls (4.432,-0.056)
 and (4.458,-0.057) .. (4.484,-0.059) .. controls (4.51,-0.06) and (4.536,-0.062)
 .. (4.563,-0.063) .. controls (4.589,-0.065) and (4.615,-0.066) .. (4.641,-0.068)
 .. controls (4.667,-0.07) and (4.693,-0.071) .. (4.719,-0.073) .. controls (4.745,-0.074)
 and (4.771,-0.076) .. (4.797,-0.078) .. controls (4.823,-0.079) and (4.849,-0.081)
 .. (4.875,-0.083) .. controls (4.901,-0.084) and (4.927,-0.086) .. (4.953,-0.088)
 .. controls (4.979,-0.09) and (5.005,-0.092) .. (5.031,-0.093) .. controls (5.057,-0.095)
 and (5.083,-0.097) .. (5.109,-0.099) .. controls (5.135,-0.101) and (5.161,-0.103)
 .. (5.188,-0.105) .. controls (5.214,-0.107) and (5.24,-0.108) .. (5.266,-0.11)
 .. controls (5.292,-0.112) and (5.318,-0.114) .. (5.344,-0.116) .. controls (5.37,-0.119)
 and (5.396,-0.121) .. (5.422,-0.123) .. controls (5.448,-0.125) and (5.474,-0.127)
 .. (5.5,-0.129) .. controls (5.526,-0.131) and (5.552,-0.133) .. (5.578,-0.135)
 .. controls (5.604,-0.138) and (5.63,-0.14) .. (5.656,-0.142) .. controls (5.682,-0.144)
 and (5.708,-0.147) .. (5.734,-0.149) .. controls (5.76,-0.151) and (5.786,-0.153)
 .. (5.813,-0.156) .. controls (5.839,-0.158) and (5.865,-0.16) .. (5.891,-0.163)
 .. controls (5.917,-0.165) and (5.943,-0.168) .. (5.969,-0.17) .. controls (5.995,-0.172)
 and (6.021,-0.175) .. (6.047,-0.177) .. controls (6.073,-0.18 ) and (6.099,-0.182)
 .. (6.125,-0.185) .. controls (6.151,-0.187) and (6.177,-0.19) .. (6.203,-0.193)
 .. controls (6.229,-0.195) and (6.255,-0.198) .. (6.281,-0.2) .. controls (6.307,-0.203)
 and (6.333,-0.206) .. (6.359,-0.208) .. controls (6.385,-0.211) and (6.411,-0.214)
 .. (6.438,-0.216) .. controls (6.464,-0.219) and (6.49,-0.222) .. (6.516,-0.225)
 .. controls (6.542,-0.227) and (6.568,-0.23) .. (6.594,-0.233) .. controls (6.62,-0.236)
 and (6.646,-0.239) .. (6.672,-0.241) .. controls (6.698,-0.244) and (6.724,-0.247)
 .. (6.75,-0.25) .. controls (6.776,-0.253) and (6.802,-0.256) .. (6.828,-0.259)
 .. controls (6.854,-0.262) and (6.88,-0.265) .. (6.906,-0.268) .. controls (6.932,-0.271)
 and (6.958,-0.274) .. (6.984,-0.277) .. controls (7.01,-0.28) and (7.036,-0.283)
 .. (7.063,-0.286) .. controls (7.089,-0.289) and (7.115,-0.292) .. (7.141,-0.295)
 .. controls (7.167,-0.298) and (7.193,-0.301) .. (7.219,-0.304) .. controls (7.245,-0.308)
 and (7.271,-0.311) .. (7.297,-0.314) .. controls (7.323,-0.317) and (7.349,-0.32)
 .. (7.375,-0.324) .. controls (7.401,-0.327) and (7.427,-0.33) .. (7.453,-0.333)
 .. controls (7.479,-0.337) and (7.505,-0.34) .. (7.531,-0.343) .. controls (7.557,-0.347)
 and (7.583,-0.35) .. (7.609,-0.353) .. controls (7.635,-0.357) and (7.661,-0.36)
 .. (7.688,-0.364) .. controls (7.714,-0.367) and (7.74,-0.37) .. (7.766,-0.374)
 .. controls (7.792,-0.377) and (7.818,-0.381) .. (7.844,-0.384) .. controls (7.87,-0.388)
 and (7.896,-0.391) .. (7.922,-0.395) .. controls (7.961,-0.4) .. (8,-0.405);
\node at (5.8,6){$\begin{pmatrix}p&q&r+3\\ p+3&q+3&r\end{pmatrix}$};
\node at (-0.5,7) {$%\color{red}
f_3$};
\node at (-1,3.3) {$%\color{red}
f_1$};
\node at (5.5,2.2) {$%\color{blue}
p>0$};
\node at (2.5,5.5) {$%\color{blue}
q>0$};
\node at (-2.1,2.7) {$%\color{blue}
r>0$};
\end{tikzpicture}}}

%%%%%%%%%%%%%%%%%%%%%%%%%%%%%%%%%%%%%%%%%%
%%%%%%%%%%%%%% KZ equation %%%%%%%%%%%%%%%
%%%%%%%%%%%%%%%%%%%%%%%%%%%%%%%%%%%%%%%%%%
%Edited by S.J. M.H. since November 6th. The version is kept as "section5_Nov6.tex"

\section{KZ-type equation}
Katz (\cite{katz1996rigid}) defines an operation called middle convolution on local systems.  
Dettweiler and Reiter (\cite{DR}) formulate it as an operation on the Fuchsian systems 
\eqref{eq:ODE} and then Haraoka (\cite{Ha}) extends it to an operation 
on KZ-type equations \eqref{eq:KZ}.
The integral transformations 
$K_x^{\mu-\alpha,\alpha}$ and $\tilde K_{x,y}^{\mu-\lambda,\lambda}$
correspond to some transformations of differential equations generated by  
middle convolutions, additions and coordinate transformations
and hence the middle convolutions are useful to study our hypergeometric functions.
In particular, $F^{p,q,r}_{p',q',r'}(x,y)$ satisfying the condition (F1) or (F2) in \S\ref{sec:Cond}
is realized as a component of a solution to a KZ-type equation. 
Studying this function from the viewpoint of KZ-type equation,  
we obtain several results, which are explained in this section.
Some of the results are announced in \cite{Oi} in the case satisfying (F1).

%%%%%%%%%%%%%%%%%%%%%%%%%%%%%%%%%%%%%
%%%%%%%  KZ & middle convolution %%%%
%%%%%%%%%%%%%%%%%%%%%%%%%%%%%%%%%%%%%
\subsection{KZ-type equation and middle convolution}\label{sec:KZ}
Any rigid Fuchsian differential equation on $\mathbb P^1$ with more than 3 singular points
can be extended to a KZ-type equation (Knizhnik-Zamolodchikov)  (cf.~\cite{KZorg})
regarding singular points as new variables.
This is proved by using middle convolutions and additions on  KZ-type equations (cf.~\cite{DR,Ha})
and we obtain  many \textit{hypergeometric systems} and functions with several variables.
When the number of singular points is 4, we get hypergeometric equations of two variables.
Then, the classical Appell's $F_1$ system corresponds to the spectral type $21,21,21,21$, and Appell's $F_2$ system corresponds to $211,22,31,31$ (cf.~\cite[\S3]{Oir}).
This extension is also valid, when the original equation has
unramified irregular singularities (cf.~\cite{Okzv}).

The local structure of the ordinary differential equation \eqref{eq:ODE} is given by the Riemann scheme and the equation is called rigid when the Riemann scheme uniquely determines the equation.
For simplicity, we assume that the equation is non-resonant, namely,
the residue matrices $A_{i,j}$ are semisimple and the difference
of eigenvalues of any matrix $A_{i,j}$ is not a non-zero integer.
The dimension of the moduli space describing the global structure with fixed local structure is given by the rigidity index defined by Katz \cite{katz1996rigid} which can be computed by the spectral data of the Riemann scheme.
The spectral data are described by 
a list of multiplicities of eigenvalues of $A_{i,j}$. 
%which are 
%a table of the $q+1$ partitions of $N$. 
Katz \cite{katz1996rigid} proves that if the equation is rigid, it is constructed from the trivial equation $\frac{du}{dx}(x)=0$ by successive applications
of middle convolutions and additions.

In the case of KZ-type equation, the Riemann scheme is not sufficient to describe 
the local structure of the equation.  
It determines the local structure at a generic point of the singularity defined by $x_i=x_j$. 
The local structure at a generic point defined by $\{x_i=x_j,\,x_k=x_\ell\}$
with $\#\{i,\,j,\,k,\,\ell\}=4$ is also important.
The point is the intersection of the hyperplanes defined by $x_i=x_j$ and $x_k=x_\ell$.  
In this case, the commutator $[A_{i,j},A_{k,\ell}]=A_{i,j}A_{k,\ell}-A_{k,\ell}A_{i,j}$ is zero and the simultaneous eigenspace decomposition of these matrices is important.

Suppose $n =3$.
Then these two kinds of data are sufficient to describe the local structure of the equation.
Moreover 
the second author \cite{Okz} describes the transformation of these data by the middle convolution
as in the case of ordinary differential equation due to \cite{DR}.
Thus we get the Riemann scheme of the differential equation of the hypergeometric equation with (F1) or (F2) and the equation is rigid because these local data uniquely 
determine the equation.
The spectral type of a KZ-type equation corresponding to that of the ordinary differential equation
will be the data consisting of the multiplicities of eigenvalues of $A_{i,j}$ together with
the multiplicities of eigenvalues of $A_{k\ell}$ restricted to the eigenspaces of $A_{ij}$
with $\#\{i,\,j,\,k,\,\ell\}=4$. 
We note that there are 15 pairs of these matrices. 
We denote the eigenvalues of a square matrices $A$ and 
the simultaneous eigenvalues of commuting matrices $A$ and $A'$ by
\begin{align}
\label{eq:simeigen}
\begin{split}
[A]&=\{[\lambda_1]_{m_1},[\lambda_2]_{m_2},\ldots\},\\
[A:A']&=\{[\lambda_1:\lambda'_1]_{m_1},[\lambda_2:\lambda'_2]_{m_2},\ldots\}.
\end{split}
\end{align}
The second line means that the dimension of
the simultaneous eigenspace with the eigenvalues $\lambda_1$ of $A$ and  $\lambda'_1$ of $A'$
equals $m_1$.  
We will simply  denote $[\lambda_1:\lambda'_1]_1$ by  $[\lambda_1:\lambda'_1]$.

From the data $[A_{i,j},A_{k.\ell}]\   (0<i<j\le 4,\ i<k<\ell\le 4,\ j\ne k,\ j\ne\ell)$, we obtain the Riemann scheme of the equation, which is
the data  $[A_{i,j}]\ (0\le i<j\le 4)$ and the Riemann schemes and the spectral type of the ordinary differential equations on a singular 
line defined by $x_i=x_j$ which is satisfied by the boundary value of the solution with respect to an eigenvalue of $A_{ij}$.

\bigskip
A KZ-type equation $\mathcal M$ is 
\begin{align}
 \mathcal M\, &:\,\frac{\partial u}{\partial x_i}=\sum_{\substack{0\le\nu\le n\\ \nu\ne i}}
\frac{A_{i,\nu}}{x_i-x_\nu}u
 \quad(i=0,\dots,n),\label{eq:KZ}\\
 A_{i,j}&=A_{j,i}\in M(N,\mathbb C)\quad(i,\,j\in\{0,1,\dots,n+1\})\notag
\intertext{with}
  A_{i,i}&=A_\emptyset=A_i=0,\quad A_{i,n+1}:=-\sum_{\nu=0}^n A_{i,\nu},\notag\\
 A_{i_1,i_2,\dots,i_k}&:=\sum_{1\le \nu<\nu'\le k}A_{i_\nu,i_{\nu'}}\quad
(\{i_1,\dots,i_k\}\subset\{0,\dots,n+1\}).\notag 
\end{align}
Here, $A_{ij}$ is called the residue matrix along the hypersurface defined by $x_i=x_j$
and $u$ is a column $N$-vector of unknown functions. 
We always assume the compatibility  condition
\begin{equation}
 [A_I,A_J]=0\quad\text{if }I\cap J=\emptyset\text{ or }I\subset J\text{ with }
 I,\,J\subset \{0,\dots,n+1\}.
 \label{eq:compatibility}
\end{equation}

\noindent
Moreover, we assume $\mathcal M$ is homogeneous (cf.~\cite{Okz}), namely,
\begin{equation}\label{eq:homog}
  A_I=0\quad(\#I=n+1).
\end{equation}
The symmetric group $S_{n+3}$ acts on the space of KZ-type equations \eqref{eq:KZ}
as the permutation group of the indices of the matrices $A_{i,j}$.

Any rigid Fuchsian ordinary differential equation
\begin{equation}\label{eq:ODE}
\frac{du}{dx}=\sum_{i=1}^n \frac{A_i}{x-x_i}u
\end{equation}
can be extended to a KZ-type equation \eqref{eq:KZ} by putting $x=x_0$ and $A_i=A_{0,i}$,
which is proved by \cite{Ha,Okz}.
Using the fractional linear transformation of $\mathbb P^1$ which maps to $x_{n-1}$, $x_n$ and 
$x_{n+1}$ to $0$, $1$ and $\infty$, respectively, only $n-1$ variables $x_0,\dots,x_{n-2}$ remain.
These variables are treated as independent variables on the level of KZ-type equation \eqref{eq:KZ}.

In this section, we consider the case $n=3$.
Then, by the relation 
\begin{equation}\label{eq:homogeneous_relation}
    A_{01}+A_{02}+A_{03}+A_{12}+A_{13}+A_{23}=0,
\end{equation}
a tuple $(A_{01},A_{02},A_{03},A_{12},A_{13})$ determines $\mathcal M$.

We use a normalization $(x_0,x_1,x_2,x_3,x_4)=(x,y,1,0,\infty)$ so that the equation $\mathcal M$ is
\begin{align}\label{eq:KZxy}
  \mathcal M : 
  \begin{cases}
   \displaystyle\frac{\p u}{\p x}=
   \frac{A_{01}}{x-y}u+\frac{A_{02}}{x-1} u + \frac{A_{03}}{x} u,\\
   \displaystyle\frac{\p u}{\p y}=
   \frac{A_{01}}{y-x}u+\frac{A_{12}}{y-1} u + \frac{A_{13}}{y} u.
  \end{cases}
\end{align}
\begin{rem}\label{rem:blowup}
The involutive coordinate transformations
\begin{align*}
 (x_0,x_1,x_2,x_3,x_4)\leftrightarrow   (x_2,x_1,x_0,x_4,x_3)  \qquad& \to \qquad
  (x,y)\leftrightarrow (x,\tfrac xy)\\
 (x_0,x_1,x_2,x_3,x_4)\leftrightarrow   (x_0,x_2,x_1,x_4,x_3)  \qquad& \to \qquad
  (x,y)\leftrightarrow (\tfrac yx,y)
\end{align*}
corresponding to elements of $S_5$ give 
the coordinates resolving the singularities of \eqref{eq:KZxy} at the origin (cf. Equation \eqref{eq:dynkin}).
The transformation in the above first line gives the following correspondences in $(x,y)$-space.

\medskip
\hspace{1.95cm} 
$\{|x|<\epsilon,\ |y|<C|x|\}\ \leftrightarrow\  \{|x|<\epsilon,\ |y|>C^{-1}\}$

\hspace{3.6cm}
$x=y=0\ \leftrightarrow\ x=0$

\quad
\begin{tikzpicture}
\path[fill=black!40] (0,0)--(1/3,-1/10)--(1/3,1/2) --cycle;
\path[fill=black!40] (0,2.15)--(1/3,2.15)--(1/3,3/4) --(0,3/4) --cycle;
\draw (0,-0.1)--(0,2.3) (1,-0.1)--(1,2.3)  (2,-0.1)--(2,2.3)
      (-0.1,0)-- (2.3,0)  (-0.1,1)-- (2.3,1)  (-0.1,2)-- (2.3,2)
      (-0.1,-0.1)--(2.3,2.3);
\node at (-0.5,0.1) {$y=0$};
\node at (-0.5,1.1) {$y=1$};
\node at (-0.5,2.1) {$y=\infty$};
\node at (0,-0.3) {$x=0$};
\node at (1,-0.3) {$x=1$};
\node at (2,-0.3) {$x=\infty$};
\node at (1.5,1.4) {$x=y$};
\node at (0.45,1.7) {$\downarrow$};
\node at (0.45,0.2) {$\uparrow$};
\end{tikzpicture}\qquad
\raisebox{1.5cm}{$\mathcal M\  \leftrightarrow\ 
\begin{cases}
 \displaystyle
 \frac{\p u}{\p x}=\frac{A_{12}}{x-y}u+\frac{A_{02}}{x-1}u+\frac{A_{23}}x u,\\
 \displaystyle\frac{\p u}{\p y}=\frac{A_{12}}{y-x}u+\frac{A_{01}}{y-1}u+\frac{A_{13}}y u.
\end{cases}
$}
\nolinebreak
\end{rem}

\medskip
The middle convolution $\mathrm{mc}_\mu$ corresponds to an integral transformation
\begin{align*}
   (I_0^\mu u)(x)&:=\frac1{\Gamma(\mu)}\int_0^x t^{\mu-1}u(x-t)dt\\
                 &=\frac{x^{\mu}}{\Gamma(\mu)}\int_0^1 (1-t)^{\mu-1}u(tx)dt
                %  =(x^{\mu}K_x^{\mu,1}u)(x)
.
\end{align*}
The middle convolution $\mathrm{mc}_\mu$ of the KZ-type equation \eqref{eq:KZ}
is defined through the convolution $\tilde{\mathrm{mc}}_\mu$ of \eqref{eq:KZ},
which is the equation satisfied by the vector of functions
\[
 (\tilde{\mathrm{mc}}_\mu u)(x,y):=\begin{pmatrix}
  I_{x,0}^{\mu+1} \frac{u(x,y)}{x-y}\\
  I_{x,0}^{\mu+1} \frac{u(x,y)}{y}\\
  I_{x,0}^{\mu+1} \frac{u(x,y)}{x}
\end{pmatrix}=
\begin{pmatrix}
    \tfrac1{\Gamma(\mu+1)}\int_0^x (1-t)^\mu \frac{u(t,y)}{t-y}d t\\
    \tfrac1{\Gamma(\mu+1)}\int_0^x (1-t)^\mu \frac{u(t,y)}{y}d t\\
    \tfrac1{\Gamma(\mu+1)}\int_0^x (1-t)^\mu \frac{u(t,y)}{t}d t
\end{pmatrix}
\]
for the solution $u(x)$ to \eqref{eq:KZ}.
In view of an equality
$K_x^{\mu,\lambda}=x^{-\mu-\lambda}\circ I_{x,0}^\mu\circ x^\lambda$,
we define
\begin{align}\label{eq:IKZ}
 (\tilde K_x^{\mu,\lambda} u)(x,y)
 &:=\begin{pmatrix} K_x^{\mu+1,\lambda}\frac {xu(x,y)}{x-y}\\
   K_x^{\mu+1,\lambda}\frac {xu(x,y)}{x-1}\\
   K_x^{\mu+1,\lambda} u(x,y)
\end{pmatrix}.
\end{align}
For a solution $u$ to the equation \eqref{eq:KZ}, we put 
$\tilde u=\tilde K_x^{-\tau-\lambda,\lambda} u$.
Then we have a system of equations
\begin{align}
 \frac{\p\tilde u}{\p x_i}&=\sum_{\substack{0\le\nu\le 3\\\nu\ne i}}\frac{\tilde A_{i,\nu}}{x_i-x_\nu}\tilde u\quad(0\le i\le 3)\label{eq:KKZ}
\end{align}
satisfied by $\tilde u$.  
These matrices $\tilde A_{i,j}$ are given by 
{\small%
\begin{align*}\label{eq:KxKZ}
 \tilde A_{01}&=\begin{pmatrix}
  A_{01}-\tau-\lambda & A_{02} & A_{03}+\lambda\\
 0 & 0 & 0\\
 0 & 0 & 0
\end{pmatrix},\qquad\ \ 
\tilde A_{02}=\begin{pmatrix}
 0 & 0 & 0\\
 A_{01} &  A_{02}-\tau-\lambda & A_{03}+\lambda\\
 0 & 0 & 0
\end{pmatrix},\allowdisplaybreaks\\ %\\
\tilde A_{03}&=\begin{pmatrix}
 \tau & 0 & 0\\
 0 & \tau & 0\\
 A_{01} & A_{02} & A_{03}
\end{pmatrix},\qquad\ \ \ \,
\tilde A_{04}=\begin{pmatrix}
 -A_{01}+\lambda & -A_{02} & -A_{03}-\lambda\\
 -A_{01} & -A_{02}+\lambda & -A_{03}-\lambda\\
 -A_{01} & -A_{02} & -A_{03}
\end{pmatrix},\allowdisplaybreaks\\
\tilde A_{12}&=\begin{pmatrix}
 A_{12}+A_{02} & -A_{02} & 0\\
 -A_{01} & A_{12}+A_{01} & 0\\
 0 & 0 & A_{12}
\end{pmatrix},\ \ 
\tilde A_{13}=\begin{pmatrix}
 A_{13}+A_{03}+\lambda & 0 & -A_{03}-\lambda\\
 0 & \!A_{13}\! & 0\\
 -A_{01} & 0 & A_{01}+A_{13}
\end{pmatrix},\\
\tilde A_{14}&=
\begin{pmatrix}
 A_{23}+\tau & 0 & 0\\
 A_{01} & A_{02}+A_{03}+A_{23} & 0\\
 A_{01} & 0 & A_{02}+A_{03}+A_{23}
\end{pmatrix},
\\
\tilde A_{23}&=
\begin{pmatrix}
  A_{23} & 0 & 0\\
 0 & A_{03}+A_{23}+\lambda & -A_{03}-\lambda\\
 0 & -A_{02} & A_{02}+A_{23}
\end{pmatrix},\allowdisplaybreaks\\
 \tilde A_{24}&=\begin{pmatrix}
 A_{01}+A_{13}+A_{03} & A_{02} & 0\\
 0 & A_{13}+\tau&0\\
 0 & A_{02} & A_{01}+A_{13}+A_{03}
\end{pmatrix},\allowdisplaybreaks\\
 \tilde A_{34}&=\begin{pmatrix}
 A_{12}+A_{01}+A_{02}-\tau-\lambda& 0 & A_{03}+\lambda\\
 0 & A_{12}+A_{01}+A_{02}-\tau-\lambda & A_{03}+\lambda\\
 0 & 0 & A_{12}
\end{pmatrix}.%
\end{align*}}
Here we write $A_{ij}$ in place of $A_{i,j}$ for simplicity.

We define a subspace $\mathcal{L}\subset\mathbb C^{3N}$ by
%the equation \eqref{eq:KKZ} may be reducible. In fact 
\begin{align}
\begin{split}
 \mathcal L&:=\begin{pmatrix}\ker A_{01}\\ \ker A_{02}\\ \ker (A_{03}+\lambda)\end{pmatrix}
 +\ker(\tilde A_{04}+\tau)\\
 &=\begin{pmatrix}\ker A_{01}\\ \ker A_{02}\\ \ker (A_{03}+\lambda)\end{pmatrix}
 +\ker\begin{pmatrix}A_{01}-\tau-\lambda&A_{02}&A_{03}+\lambda\\ A_{01}&A_{02}-\tau-\lambda &A_{03}+\lambda\\ A_{01}&A_{02}&A_{03}-\tau\end{pmatrix}.
\end{split}\end{align}
It satisfies  $\tilde A_{i,j}\mathcal L\subset \mathcal L$.
The matrices $\tilde A_{i,j} $ induce linear transformations on
the quotient space $\mathbb  C^{3N}/\mathcal L$.
We fix a basis of the quotient space and write 
$A_{i,j}$ for the matrices corresponding to the induced linear transformations.
Then $\bar A_{i,j}$ are square matrices of size $3N-\dim\mathcal L$.
Thus, we obtain another KZ-type equation
\begin{align}
\bar{\mathcal M} : 
 \frac{\p\bar u}{\p x_i}&=\sum_{\substack{0\le\nu\le 3\\\nu\ne i}}\frac{\bar A_{i,\nu}}{x_i-x_\nu}\bar u\quad(1\le i\le 3).\label{eq:RKKZ}
\end{align}

If $\lambda$ and $\tau$ are generic so that 
\begin{equation}
    \ker (A_{03}+\lambda)=0\ \text{and}\ \ker(\tilde{A}_{04}+\tau)=0,
    \label{eq:genericity_condition}
\end{equation}
we have
\begin{equation}
 \mathcal L=\mathcal L_1+\mathcal L_2,\quad
 \mathcal L_1=\begin{pmatrix}
  \ker A_{01}\\
  0\\
  0
 \end{pmatrix},\quad
 \mathcal L_2=\begin{pmatrix}
  0\\
  \ker A_{02}\\
  0
 \end{pmatrix}.
\label{eq:L1L2}
\end{equation}

\begin{rem}\label{rem:KZ}
{\rm i)}\  
Since the KZ-type equation $\mathcal M$ have holomorphic parameters, we define middle 
convolution in a neighborhood of fixed parameters by defining the above kernels for the generic points in a neighborhood of the fixed parameters. Then the resulting 
system obtained by the middle convolution has also holomorphic parameters in the neighborhood. Then we may examine the reducibility of the resulting equation, which is related with the difference of the kernels 
for the fixed parameter.

{\rm ii)} \ 
Since $\mathrm{ker}\,(A_{03}+\lambda)=0$ in the above, 
we may assume that the residue class of the last component of $\tilde u$
is the last component of $\bar u$ and therefore 
if $\phi(x,y)$ is the last component of a solution to \eqref{eq:KZ},
then $\tilde K_x^{-\tau-\lambda,\lambda}\phi$ 
is the last component of a solution to $\bar{\mathcal M}$.
\end{rem}

We write $\Ad\bigl((x_0-x_3\bigr)^\lambda)$ for the transformation defined by the replacement
\[
 A_{03}\mapsto A_{03}+\lambda,\ A_{04}\mapsto A_{04}-\lambda,\ 
 A_{34}\mapsto A_{34}-\lambda
\]
and $A_{ij}\mapsto A_{ij}$ for the other $A_{ij}$.
Note that $\Ad\bigl((x_0-x_3\bigr)^\lambda)$ maps a KZ-type system to a KZ-type system, namely, the compatibility condition \eqref{eq:compatibility} is preserved.
We have
\begin{equation}
\label{eq:Kmc}
 \tilde K_x^{-\tau-\lambda,\lambda}=
 \Ad((x_0-x_3)^{\tau})\circ \mathrm{mc}_{-\tau-\lambda}\circ \Ad((x_0-x_3)^{\lambda}).
\end{equation}
Here $\Ad((x_0-x_3)^{\lambda})$ is the transformation of the system  \eqref{eq:KZ} corresponding 
to the transformation $u\mapsto (x_0-x_3)^{\lambda}u$ and 
the transformation $\tilde K_x^{-\tau-\lambda,\lambda}$ keeps homogeneity \eqref{eq:homog}.

The following theorem is a direct consequence of \cite[Theorem~7.1]{Okz}.
\begin{thm}\label{thm:TKz}
Retain the above notation. 

Consider the transformation $\mathrm{mc}_\mu$
in place of $\tilde K_x^{-\tau-\lambda,\lambda}$. 
Assume $\mu$ is generic.
For\/ $\{i,j,k\}=\{1,2,3\}$ 
we have (cf.~\eqref{eq:simeigen})
\begin{align*}
[\tilde A_{0,k}:\tilde A_{i,j}]&=[A_{0,k}:A_{i,j}]\cup[-\mu:A_{i,j}]\cup[-\mu:A_{k,4}],\\
[\tilde A_{0,k}:\tilde A_{i,j}]|_{\mathcal L_\nu}&=
\begin{cases}
[-\mu:A_{k,4}]|_{\mathrm{Ker}\,A_{0,\nu}}\ \ \ \ &\quad(\nu=i,\,j),\\
[0:A_{i,j}]|_{\mathrm{Ker}\,A_{0,k}}&\quad(\nu=k),
%\\
%[A_{i,j}:0]|_{\mathrm{Ker}\,(A_{0,4}-\mu)}&\quad(\nu=4),
\end{cases}
\allowdisplaybreaks\\
[\tilde A_{0,k}:\tilde A_{i,4}]&=[A_{0,k};A_{i,4}]\cup[0:A_{i,4}]\cup[0:A_{j,k}-\mu],\\
[\tilde A_{0,k}:\tilde A_{i,4}]|_{\mathcal L_\nu}&=
\begin{cases}
[-\mu:A_{j,k}-\mu]_{\mathrm{Ker}\,A_{0,i}}&\quad(\nu=i),\\
[0:A_{i,4}]|_{\mathrm{Ker}\,A_{0,j}}&\quad(\nu=j),\\
[\mu:A_{i,4}]|_{\mathrm{Ker}\,A_{0,k}}&\quad(\nu=k),
%\\
%[0:A_{j,k}-\mu]|_{\mathrm{Ker}\,(A_{0,4}-\mu)}\hspace{0.4cm}&\quad(\nu=4),
\end{cases}
\allowdisplaybreaks\\
[\tilde A_{0,4}:\tilde A_{i,j}]&=[A_{0,4}:A_{i,j}]\cup[0:A_{i,j}]\cup
[0:A_{k,4}],\\
[\tilde A_{0,4}:\tilde A_{i,j}]|_{\mathcal L_\nu}&=
\begin{cases}
[0:A_{k,4}]|_{\mathrm{Ker}\,A_{0,\nu}}\quad \ \ \ &\quad(\nu=i,\,j),\\
[0:A_{i,j}]|_{\mathrm{Ker}\,A_{0,k}}&\quad(\nu=k),
%\\[0A_{i,j}:]|_{\mathrm{Ker}\,(A_{0,4}-\mu)}&\quad(\nu=4),
\end{cases}
\allowdisplaybreaks\\
[\tilde A_{k,4}:\tilde A_{i,j}\}]&=[A_{i,j}:A_{i,j}]\cup
[A_{k,4}+\mu:A_{i,j}]\cup[A_{k,4}:A_{k,4}+\mu],\\
[\tilde A_{k,4}:\tilde A_{i,j}]|_{\mathcal L_\nu}&=
\begin{cases}
[A_{k,4}+\mu:A_{k,4}]|_{\mathrm{Ker}\,A_{0,\nu}}&\quad(\nu=i,\,j),\\
[A_{i,j}:A_{i,j}|_{\mathrm{Ker}\,A_{0,k}}&\quad(\nu=k)
%,\\[A_{i,j}:A_{i,j}]|_{\mathrm{Ker}\,(A_{0,4}-\mu)}&\quad(\nu=4)
.
\end{cases}
\end{align*}
\end{thm}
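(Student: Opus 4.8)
The plan is to deduce Theorem~\ref{thm:TKz} from \cite[Theorem~7.1]{Okz}, so the genuine work is to place the present middle convolution into the normalized form required there and then transcribe the spectra. First I would specialize the general apparatus developed before the statement---the $3\times 3$ block matrices $\tilde A_{i,j}$ and the subspace $\mathcal L$---to the pure middle convolution $\mathrm{mc}_\mu$: here $\lambda$ is set to $0$ and the role played by $\tau$ is taken by $-\mu$, so that the diagonal block of $\tilde A_{0,3}$ becomes the scalar $-\mu$. The factorization \eqref{eq:Kmc} guarantees that $\mathrm{mc}_\mu$ and the twisted operator $\tilde K_x^{-\tau-\lambda,\lambda}$ differ only by the invertible scalar conjugations $\Ad((x_0-x_3)^{\bullet})$, which shift eigenvalues along $x_0=x_3$ by constants and do not disturb the simultaneous-eigenspace structure; accounting for this shift is exactly the normalization built into the cited theorem. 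This specialization is the source of every eigenvalue $-\mu$ appearing in the convolution direction in each formula.

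Next I would compute, on the full space $\mathbb C^{3N}$, the simultaneous eigenspace decomposition of each commuting pair $(\tilde A_{0,k},\tilde A_{i,j})$, $(\tilde A_{0,k},\tilde A_{i,4})$, $(\tilde A_{0,4},\tilde A_{i,j})$ and $(\tilde A_{k,4},\tilde A_{i,j})$. Because the blocks of the $\tilde A_{i,j}$ are polynomials in the original residue matrices $A_{i,j}$, and because the compatibility condition \eqref{eq:compatibility} forces the relevant pairs to commute, each pair can be simultaneously reduced block by block. The block structure splits $\mathbb C^{3N}$ into a part on which the pair acts through the \emph{original} matrices---contributing the term $[A_{0,k}:A_{i,j}]$---and two complementary parts on which the convolution direction acquires the new eigenvalue $-\mu$, contributing $[-\mu:A_{i,j}]$ and $[-\mu:A_{k,4}]$. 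Here genericity of $\mu$ is precisely what keeps these three contributions disjoint, so that the union $\cup$ in the statement is a genuine decomposition with no resonant collapse.

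I would then descend to the quotient $\mathbb C^{3N}/\mathcal L$ on which $\bar{\mathcal M}$ lives. Under the genericity \eqref{eq:genericity_condition} the space $\mathcal L$ splits as $\mathcal L_1\oplus\mathcal L_2$ in \eqref{eq:L1L2}, with $\mathcal L_\nu$ supported in the $\nu$-th block and equal to $\ker A_{0,\nu}$ there. The restriction-to-$\mathcal L_\nu$ lines of the theorem record which simultaneous eigenvalue each kernel direction carries, so subtracting them off the full-space decomposition produces the decomposition of $[\bar A_{0,k}:\bar A_{i,j}]$ on the quotient. Each restricted spectrum, e.g.\ $[-\mu:A_{k,4}]|_{\ker A_{0,\nu}}$ for $\nu=i,j$ and $[0:A_{i,j}]|_{\ker A_{0,k}}$ for $\nu=k$, is a short direct computation obtained by evaluating the appropriate block of the pair on $\ker A_{0,\nu}$.

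The bulk of the argument is bookkeeping rather than new mathematics, and the main obstacle is keeping the index conventions straight. One must verify that the distinguished convolution index $0$, the cyclic roles of $\{i,j,k\}=\{1,2,3\}$, and the infinity index $4$ are matched exactly to the conventions of \cite[Theorem~7.1]{Okz}, and that the constant eigenvalue shifts introduced by the two $\Ad((x_0-x_3)^{\bullet})$ conjugations in \eqref{eq:Kmc} have been correctly absorbed (trivially here, since $\lambda=0$). Once this dictionary is fixed, every unrestricted line of the statement is a transcription of the corresponding line of the cited theorem, and the only genuine verification left is the handful of restricted spectra on $\mathcal L_1$ and $\mathcal L_2$, which follow from the explicit block forms of the matrices $\tilde A_{i,j}$.
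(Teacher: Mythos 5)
Your proposal follows the same route as the paper, which establishes this theorem simply by declaring it a direct consequence of \cite[Theorem~7.1]{Okz}; your elaboration --- specializing the displayed block matrices $\tilde A_{i,j}$, computing the simultaneous eigenspaces on $\mathbb C^{3N}$ pair by pair, and then descending modulo $\mathcal L=\mathcal L_1\oplus\mathcal L_2$ --- is a reasonable filling-in of that citation. The one caution is your normalization $\lambda=0$, $\tau=-\mu$: with this choice $\mathrm{mc}_\mu$ agrees with $\tilde K_x^{-\tau-\lambda,\lambda}$ only up to the nontrivial twist $\Ad((x_0-x_3)^{-\mu})$ in \eqref{eq:Kmc}, so the resulting constant shifts of $\tilde A_{0,3}$, $\tilde A_{0,4}$ and $\tilde A_{3,4}$ are not absorbed ``trivially'' and must be tracked explicitly, though you do flag this as a point to verify.
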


If a function $u(x,y)$ is a component of a solution to a certain 
KZ-type equation, we simply say that $u(x,y)$ is realized as a solution to a KZ-type equation.
Then Theorem~\ref{thm:IntegralRepresentation} and 
the following remark  assure that the hypergeometric function 
$F^{p,q,r}_{p',q',r'}(x,y)$ with the condition (F1) or (F2) is realized as a solution to a
KZ-type equation.

\begin{rem} {\rm i)} \ 
Suppose $u(x,y)$ is realized as a solution to a KZ-type equation at the origin.
Then $K_x^{\mu,\alpha}u$, $K_y^{\mu,\alpha}u$ and $\tilde K_{x,y}^{\mu,\alpha}u$
have the same property.
Hence  $F^{p,q,r}_{p,q,r}(x,y)$ is a solution to a KZ-type equation.

{\rm ii)} \ 
Suppose $u(x,1-y)$ is a component of a solution to a KZ-type equation at the origin.
Then $K_x^{\mu,\alpha}u$ and  $K_y^{\mu,\alpha}u$ have the same property.
\end{rem}
%%%%%%%%%%%%%%%
%%%%%%%%%%%%%%%%%%%%%%%%%%%%%%%%%%%%%%%%%%%%%
%%%%%%%%%%%%%%%
\subsection{Example: Generalization of Appell's $F_1$}\label{sec:F1}
In this section we examine the hypergeometric function
\begin{equation}
 F_{p,q,r}
 \left(\begin{smallmatrix}
  \alpha&\beta&\gamma\\\alpha'&\beta'&\gamma'
\end{smallmatrix};x,y\right):=
F_{p,q,r}^{p,q,r} \left(\begin{smallmatrix}
  \alpha&\beta&\gamma\\\alpha'&\beta'&\gamma'
\end{smallmatrix};x,y\right)\text{ \ with \ } 
\alpha'_p=\beta'_q=0.
\end{equation}

Then this hypergeometric function is realized as a solution to the KZ-type equation \eqref{eq:KZxy} or \eqref{eq:KZ} 
whose construction is given in the last section.

\begin{thm}\label{thm:F1sim}
The simultaneous eigenvalues with their multiplicities of the pair of commuting residue matrices 
at the 15 normally crossing points (cf.~\cite[\S6]{Okz}) are give by  
\begin{align*}
[A_{01}:A_{23}]
       &=\{[0:\alpha_i+\beta_j],\,[0:\gamma_k]_{p+q-1},\,[-\alpha''-\beta'':\gamma_k]\}
%         \quad pq+(p+q-1)r+r
,\allowdisplaybreaks\\
[A_{01}:A_{24}]
       &=\{[0:\alpha'_i+\beta'_j],\,[0:\gamma'_k]_{p+q-1},\,[-\alpha''-\beta'':\gamma'_k]\}
%      ,\quad pq+(p+q-1)r+r
,\allowdisplaybreaks\\
[A_{01}:A_{34}]
        &=\{[0:0]_{R-(p+q+r)+1},\,[0:-\alpha''-\beta''-\gamma''],\\
        &\!\qquad[-\alpha''-\beta'':-\alpha''-\beta''-\gamma''],\,
      [0:-\beta''-\gamma'']_{p-1},\\
      &\!\qquad [0:-\alpha''-\gamma'']_{q-1},\,
      [-\alpha''-\beta'':-\alpha''-\beta'']_{r-1}\}
%,\quad(R-(p+q+r)+1)+1+1+(p-1)+(q-1)+(r-1)
,\allowdisplaybreaks\\
[A_{02}:A_{13}]
       &=\{[-\alpha''-\gamma'':\beta'_j],\,[0:\beta'_j]_{p+r-1},\,[0:\alpha_i+\gamma'_k]\}
%\quad q+q(p+r-1)+pr
,\allowdisplaybreaks\\
[A_{02}:A_{14}]
   &=\{[-\alpha''-\gamma'':\beta_j],\,[0:\beta_j]_{p+r-1},\,[0:\alpha'_i+\gamma_k]\}
%\quad q+q(p+r-1)+pr
,\allowdisplaybreaks\\
[A_{02}:A_{34}]
 &=\{[0:0]_{R-(p+q+r)+1},\,[-\alpha''-\gamma'':-\alpha''-\beta''-\gamma''],\\
 &\!\qquad [0:-\alpha''-\beta''-\gamma''],\,
  [0:-\beta''-\gamma'']_{p-1},\\
  &\!\qquad [-\alpha''-\gamma'':-\alpha''-\gamma'']_{q-1},\,[0:-\alpha''-\beta'']_{r-1}\},\allowdisplaybreaks\\
[A_{03}:A_{12}]
 &=\{[\alpha'_i:-\beta''-\gamma''],\,[\alpha'_i:0]_{q+r-1},\,[\beta_j+\gamma'_k:0]\}
%\quad p+p(q+r-1)+qr
,\allowdisplaybreaks\\ 
[A_{03}:A_{14}]
  &=\{[\alpha'_i:\beta_j],\,[\beta_j+\gamma'_k:\beta_j],\,[\alpha'_i:\alpha'_i+\gamma_k]\}
%\quad pq+qr+pr,
,\allowdisplaybreaks\\
[A_{03}:A_{24}]
 &=\{[\alpha'_i:\alpha'_i+\beta'_j],\,[\alpha'_i:\gamma'_k],\,[\beta_j+\gamma'_k:\gamma'_k]\}
%\quad pq+pr+qr
,\allowdisplaybreaks\\
[A_{04}:A_{12}]
 &=\{[\alpha_i:-\beta''-\gamma''],\,[\alpha_i:0]_{q+r-1},\,[\beta'_j+\gamma_k:0]\}
%\quad p+p(q+r-1)+qr
,\allowdisplaybreaks\\
[A_{04}:A_{13}]
 &=\{[\alpha_i:\beta'_j],\,[\alpha_i:\alpha_i+\gamma'_k],\,[\beta'_j+\gamma_k:\beta'_j]\}
%\quad pq+pr+qr
,\allowdisplaybreaks\\
[A_{04}:A_{23}]
 &=\{[\alpha_i:\alpha_i+\beta_j],\,[\alpha_i:\gamma_k],\,[\beta'_j+\gamma_k:\gamma_k]\}
%\quad pq+pr+qr
,\allowdisplaybreaks\\
[A_{12}:A_{34}]
 &=\{[0:0]_{R-(p+q+r)+1},\,[-\beta''-\gamma'':-\alpha''-\beta''-\gamma''],\\
 &\!\qquad [0:-\alpha''-\beta''-\gamma''],\,
[-\beta''-\gamma'':-\beta''-\gamma'']_{p-1},\\
&\!\qquad [0:-\alpha''-\gamma'']_{q-1},\,[0:-\alpha''-\beta'']_{r-1}\},\\
[A_{13}:A_{24}]
&=\{[\beta'_j:\alpha'_i+\beta'_j],\,[\beta'_j:\gamma'_k],\,[\alpha_i+\gamma'_k:\gamma'_k]\}
% \quad pq+qr+pr
,\allowdisplaybreaks\\
[A_{14}:A_{23}]
 &=\{[\beta_j:\alpha_i+\beta_j],\,[\beta_j:\gamma_k],\,[\alpha'_i+\gamma_k:\gamma_k]\}
% \quad pq+qr+pr%
.
\numberthis\label{eq:list_of_F1}
\end{align*}
Here
\begin{align}
R&:=pq+qr+rp,\quad 
1\le i\le p,\ 1\le j\le q,\ 1\le k\le r, \label{eq:rankF1}\\
\alpha''&:=\sum_{i=1}^p(\alpha_i+\alpha'_i),\  \beta'':=\sum_{j=1}^q(\beta_j+\beta'_j),\ 
 \gamma'':=\sum_{k=1}^r(\gamma_k+\gamma'_k).
\label{eq:doubleprime}
\end{align}
\end{thm}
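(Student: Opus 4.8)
The plan is to exploit the explicit construction of $F_{p,q,r}$ by middle convolutions furnished by the integral representation of Theorem~\ref{thm:IntegralRepresentation}, and then to propagate the simultaneous eigenspace data through that construction by repeated application of Theorem~\ref{thm:TKz}. Under condition (F1) with $\alpha'_p=\beta'_q=0$ the representation specializes (the tail product $\prod_{k=1}^{r'-r}$ being empty) to
\[
F_{p,q,r}=\prod_{i=1}^{p-1}K_x^{1-\alpha'_i-\alpha_i,\alpha_i}\prod_{j=1}^{q-1}K_y^{1-\beta'_j-\beta_j,\beta_j}\prod_{k=1}^{r}\tilde K_{x,y}^{1-\gamma'_k-\gamma_k,\gamma_k}\,(1-x)^{-\alpha_p}(1-y)^{-\beta_q},
\]
and by \eqref{eq:Kmc} each factor $K_x^{\mu,\lambda}$, and likewise $K_y$ and $\tilde K_{x,y}$, is a middle convolution conjugated by two additions. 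Hence $F_{p,q,r}$ is, by Remark~\ref{rem:KZ}~ii), the last component of a solution to a KZ-type equation obtained from a rank-one seed by $p+q+r-2$ successive middle convolutions interleaved with additions, carried out in three different directions. I would run an induction on the number of operations (innermost factor first), maintaining at each stage the complete list of the fifteen pairs $[A_{i,j}:A_{k,\ell}]$.

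For the base case, the seed $(1-x)^{-\alpha_p}(1-y)^{-\beta_q}$ satisfies a rank-one KZ-type system whose residues are the scalars fixed by $A_{02}=-\alpha_p$, $A_{12}=-\beta_q$ together with homogeneity \eqref{eq:homogeneous_relation} (so that $A_{04}=\alpha_p$, $A_{14}=\beta_q$, $A_{23}=\alpha_p+\beta_q$ and the remaining residues vanish); every one of the fifteen simultaneous eigenspaces is then one-dimensional with an immediately computable eigenvalue pair. The inductive step rests on Theorem~\ref{thm:TKz}, which describes exactly how each $[\,\cdot:\cdot\,]$ is transformed under $\mathrm{mc}_\mu$ in the $x_0$-direction: every old block is either retained, shifted, or split off as a new $[-\mu:\,\cdot\,]$-block, while the additions $\mathrm{Ad}((x_i-x_j)^\lambda)$ act by the transparent shift of the eigenvalues of $A_{i,j}$ and of $A_{i,4},A_{j,4}$.

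To apply Theorem~\ref{thm:TKz} to the $K_y$- and $\tilde K_{x,y}$-factors, which are middle convolutions in other directions, I would conjugate by the element of $S_5$ realizing the relevant coordinate transformation: $K_x,K_y,\tilde K_{x,y}$ are middle convolutions in the directions of the indices $0,1,2$ respectively, the latter two obtained from the $x_0$-version by the transpositions $(0\,1)$ and $(0\,2)(3\,4)$ (the involution of Remark~\ref{rem:blowup}). This amounts to permuting the index set $\{0,1,2,3,4\}$ throughout the transformation rules. Choosing the parameters $\lambda,\mu$ of each factor to match the exponents $\alpha_i,\beta_j,\gamma_k$, the accumulated shifts produce precisely the sums $\alpha'',\beta'',\gamma''$ of \eqref{eq:doubleprime}, while the retained blocks supply the multiplicities $p-1,q-1,r-1$ and the large trivial block of size $R-(p+q+r)+1$ appearing in \eqref{eq:list_of_F1}.

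The main obstacle is the bookkeeping. One must check that across all $p+q+r-2$ steps and all three directions the fifteen entries evolve consistently, that the $[-\mu:\,\cdot\,]$-blocks created by one convolution are correctly tracked by the subsequent ones (retained, shifted by later additions, or absorbed into the kernels $\mathcal L_1,\mathcal L_2$ of \eqref{eq:L1L2}), and that the genericity hypotheses \eqref{eq:genericity_condition} hold at each generic-parameter stage so that Theorem~\ref{thm:TKz} is applicable; the formula for fixed parameters then follows by continuity. I would organize this by recording, after each operation, only the directions still touched by the remaining factors, using the homogeneity \eqref{eq:homog} and compatibility \eqref{eq:compatibility} as running consistency checks. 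Finally, the hexagonal symmetry of Theorem~\ref{thm:CoordSym} substantially reduces the labor: many of the fifteen lines of \eqref{eq:list_of_F1} are images of one another under the $S_3\times S_2$-action, so only a few need be computed from scratch and the rest obtained by the corresponding permutations of $(\boldsymbol\alpha,\boldsymbol\beta,\boldsymbol\gamma)$ and the indices.
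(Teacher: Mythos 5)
Your proposal follows essentially the same route as the paper: an induction starting from the rank-one seed at $(p,q,r)=(1,1,0)$, with each integral-transform factor realized as an addition--middle-convolution--addition composite via \eqref{eq:Kmc}, the fifteen simultaneous-eigenvalue lists propagated through each step by Theorem~\ref{thm:TKz}, and the $K_y$- and $\tilde K_{x,y}$-directions reduced to the $x_0$-direction by the symmetry permuting the roles of $x_0,x_1,x_2$ (equivalently of $p,q,r$). The paper's proof is exactly this induction, carried out explicitly for the single step $p\mapsto p+1$ with the kernel dimensions $\dim\mathcal L_1=pq+qr+rp-r$, $\dim\mathcal L_2=pq+qr+rp-q$, so your plan is correct and matches it.
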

%%%%%%%%%%%%%%%%%%%%%%%%%%%%%%%%%%%%%%%%
\begin{proof}
When $p=q=1$ and $r=0$, the claim is obvious because the rank of 
the system equals 1 and 
characterized by the solution
\[
F_{1,1,0}\left(\begin{smallmatrix}\alpha&\beta&0\\\alpha'&\beta'&0\end{smallmatrix};
\alpha',\beta';x,y\right)
=x^{\alpha'}(1-x)^{-\alpha-\alpha'}y^{\beta}(1-y)^{-\beta-\beta'}.
\] 
Assuming the above list, we get the resulting list under the convolution and addition corresponding to 
$p\mapsto p+1$
which is realized by 
\begin{equation}\label{eq:ptop1}
\begin{split}
 K_x^{-\alpha_0-\alpha'_0,\alpha_0}
 &=C\Ad(x^{\alpha_0})\circ\mathrm{mc}_{-\alpha''_0}\circ\Ad(x^{\alpha'_0}
  )%\quad \text{with \ }\alpha_0=\alpha_{p+1}
,\\
\alpha_0,\ \alpha'_0&\in\mathbb C,\ \alpha''_0=\alpha_0+\alpha'_0\text{ and a constant }C.
\end{split}
\end{equation}
To avoid confusion of the notation, we assume the condition (T) and introduce parameters 
$\alpha_0$ and $\alpha'_0$ which will be $\alpha_{p+1}$ and $\alpha'_{p+1}$, respectively.
Hence $\alpha''=\sum_{i=1}^p(\alpha'_i+\alpha'_i)$ etc. 
Then Theorem~\ref{thm:TKz} and \eqref{eq:ptop1} show
\begin{align*}
%\alpha''_0&:=\alpha_0+\alpha'_0\\
[\tilde A_{01}:\tilde A_{23}]&=[A_{01}-\alpha''_0:A_{23}]\cup[0:A_{23}]\cup
[0,A_{14}+\alpha_0],\\
%\intertext{and}
[A_{01}-\alpha''_0:A_{23}]&=\{[-\alpha''_0,\alpha_i+\beta_j],\,
 [-\alpha''_0:\gamma_k]_{p+q-1},\,[-\alpha''-\beta''-\alpha''_0:\gamma_k]\}.\\
[0:A_{23}]&=\{[0:\alpha_i+\beta_j],\,[0:\gamma_k]_{p+q}\},\\
[0,A_{14}+\alpha_0]&=\{[0:\alpha_0+\beta_j]_{p+r},\,[0:\alpha'_i+\gamma_k+\alpha_0]\}
\intertext{and}
[\tilde A_{01}:\tilde A_{23}]|_{\mathcal L_1}&=[A_{01}-\alpha''_0:A_{23}]|_{\ker A_{01}}
\\&=\{[-\alpha''_0:\alpha_i+\beta_j],\,[-\alpha''_0:\gamma_k]_{p+q-1}\},\\
[\tilde A_{01}:\tilde A_{23}]|_{\mathcal L_2}&=[0,A_{14}+\alpha_0]|_{\ker A_{02}}
\\&
=\{[0:\alpha_0+\beta_j]_{p+r-1},\,[0:\alpha_0+\alpha'_i+\gamma_k]\}.
\end{align*} 
Here we note that $\dim\mathcal L_1=pq+qr+rp-r$ and $\dim\mathcal L_2=pq+qr+rp-q$. 
Since $\bar A_{ij}$ are matrices induced from $A_{ij}$ on the quotient space 
$\mathbb C^{3N}/(\mathcal L_1\oplus\mathcal L_2)$, we have the required result
\begin{align*}
[\bar A_{01}:\bar A_{23}]
       &=\{[0:\alpha_i+\beta_j],\,[0:\gamma_k]_{p+q},\,[-\alpha''-\beta'':\gamma_k]\}
\end{align*}
by putting $\alpha_{p+1}=\alpha_0$ and  $\alpha'_{p+1}=\alpha'_0$.

This result can be read from the first part of the following table.
The part without an underline means it is contained in $\mathcal L_1\oplus\mathcal L_2$.
The multiplicity without an underline means that $\mathcal L_1\oplus\mathcal L_2$ contains it with 
a smaller multiplicity, which is obtained comparing it with the original list.

The other 14 cases are similarly obtained and the result is as follows and hence we get the required result under the transformation corresponding to $p\mapsto p+1$.
\scriptsize
\begin{align*}
[\tilde A_{01}:\tilde A_{23}]
&=\{[-\alpha''_0:\alpha_i+\beta_j],\,[-\alpha''_0:\gamma_k]_{p+q-1},\,\underline{[-\alpha''-\beta''-\alpha''_0:\gamma_k]},\,\underline{[0:\gamma_k]_{p+q}},\,
\\&\qquad
\underline{[0:\alpha_i+\beta_j]},\,\underline{[0:\alpha_0+\beta_j]}_{p+r},\,[0:\alpha_0+\gamma_k+\alpha'_i]\},\\
[\tilde A_{01}:\tilde A_{24}]
&=\{[-\alpha''_0:\alpha'_i+\beta'_j],\,[-\alpha''_0:\gamma'_k]_{p+q-1},\,\underline{[-\alpha''-\beta''-\alpha''_0:\gamma'_k]},\,
\underline{[0:\gamma'_k]_{p+q}},\,
\\&\qquad 
\underline{[0:\alpha'_i+\beta'_j]},\,\underline{[0:\alpha'_0+\beta'_j]}_{p+r},\,[0:\alpha_i+\gamma'_k+\alpha'_0]\},
\allowdisplaybreaks\\
[\tilde A_{01}:\tilde A_{34}]
&=\{[-\alpha''_0:-\beta''-\gamma''-\alpha''_0]_{p-1},\,[-\alpha''_0:-\alpha''-\beta''-\alpha''_0],\,
\\&\qquad[-\alpha''_0:-\alpha''-\gamma''-\alpha''_0]_{q-1},\,
[-\alpha''_0:-\alpha''_0]_{R-(p+q+r)+1},\,
\\&\qquad
\underline{[-\alpha''-\beta''-\alpha''_0:-\alpha''-\beta''-\gamma''-\alpha''_0]},\,
\underline{[-\alpha''-\beta''-\alpha''_0:-\alpha''-\beta''-\alpha''_0]_{r-1}},\,
\\&\qquad
\underline{[0:-\alpha''-\beta''-\gamma''-\alpha''_0]}_{2}
,\,
\underline{[0:-\alpha''-\gamma''-\alpha''_0]_{q-1}},\,
\underline{[0:0]_{R-p}},\,
\underline{[0:-\beta''-\gamma'']_{p}},
\\&\qquad[0:-\alpha''_0]_{R-(p+q+r)+1},\,
[0:-\alpha''-\beta''-\alpha''_0]_{r-1},\,[0:-\beta''-\gamma''-\alpha''_0]_{p-1}
\},\allowdisplaybreaks\\
[\tilde A_{02}:\tilde A_{13}]
 &=\{
   \underline{[-\alpha''-\gamma''-\alpha''_0:\beta'_j]},\,[-\alpha''_0:\beta'_j]_{p+r-1},\,[-\alpha''_0:\alpha_i+\gamma'_k],\,\underline{[0:\beta'_j]_{p+r}},\,
  \\&\qquad\underline{[0:\alpha_i+\gamma'_k]},\,
   \underline{[0:\alpha_0+\gamma'_k]}_{p+q},\,[0:\alpha_0+\alpha'_i+\beta'_j]\},
\allowdisplaybreaks\\
[\tilde A_{02}:\tilde A_{14}]
&=\{\underline{[-\alpha''-\gamma''-\alpha''_0:\beta_j]},\,[-\alpha''_0:\beta_j]_{p+r-1},\,[-\alpha''_0:\gamma_k+\alpha'_i],\,\underline{[0:\beta_j]_{p+r}},\,
\\&\qquad 
 \underline{[0:\alpha'_i+\gamma_k]},\,\underline{[0:\alpha'_0+\gamma_k]}_{p+q},\,[0:\alpha_i+\beta_j+\alpha'_0]\},\\
[\tilde A_{02}:\tilde A_{34}]
&=\{\underline{[-\alpha''-\gamma''-\alpha''_0,-\alpha''-\beta''-\gamma''-\alpha''_0]},\,
 \underline{[-\alpha''-\gamma''-\alpha''_0:-\alpha''-\gamma''-\alpha''_0]_{q-1}},\,
\\&\qquad
  [-\alpha''_0:-\alpha''-\beta''-\gamma''-\alpha''_0],\,
  [-\alpha''_0:-\beta''-\gamma''-\alpha''_0]_{p-1},\,
\\&\qquad
  [-\alpha''_0:-\alpha''_0]_{R-(p+q+1)+1},\,
  [-\alpha''_0:-\alpha''-\beta''-\alpha''_0],\,
\\&\qquad
 [0:-\alpha''_0]_{R-(p+q+1)+1},\,
\underline{[0:-\alpha''-\beta''-\gamma''-\alpha''_0]}_{2},\,
  \underline{[0:-\alpha''-\beta''-\alpha''_0]_{r-1}},\,
\\&\qquad\underline
 [0:-\beta''-\gamma''-\alpha''_0]_{p-1},\,
 [0:-\alpha''-\gamma''-\alpha''_0]_{q-1},\,
 {[0:0]_{R-p}},\,
  \underline{[0:-\beta''-\gamma'']_{p}}\}
\allowdisplaybreaks\\
%%%%%
[\tilde A_{03}:\tilde A_{12}]
&=
\{\underline{[\alpha'_i:-\beta''-\gamma'']},\,
 \underline{[\alpha'_i:0]_{q+r-1}},\,
 \underline{[\beta_j+\gamma'_k:0]},\,
 \underline{[\alpha'_0:0]}_{R-p},\,
 \underline{[\alpha'_0:-\beta''-\gamma'']}_{p},\,
\\&\qquad
 [\alpha'_0:0]_{R-(p+q+r)+1},\,
 [\alpha'_0:-\alpha''-\beta''-\gamma'']_{2},\,
 [\alpha'_0:-\alpha''-\beta]_{r-1},\,
\\&\qquad
 [\alpha'_0:-\beta''-\gamma'']_{p-1},\,[\alpha'_0:-\alpha''-\gamma'']_{q-1}\},
\\
%%%
[\tilde A_{03}:\tilde A_{14}]&=
\{\underline{[\alpha'_i:\beta_j]},\,\underline{[\beta_j+\gamma'_k:\beta_j]},\,
\underline{[\alpha'_i:\alpha'_i+\gamma_k]},\,[\alpha'_0:\beta_j]_{p+r},\,
%[\alpha'_0:\gamma_i],\,
[\alpha'_0:\alpha'_i+\gamma_k],\,
\\&\qquad
\underline{[\alpha'_0:\alpha'_0+\gamma_k]}_{p+q},\,
[\alpha'_0:\beta_j+\alpha_i+\alpha'_0]\}
\allowdisplaybreaks\\
[\tilde A_{03}:\tilde A_{24}]
&=\{
 \underline{[\alpha'_i:\alpha'_i+\beta'_j]},\,
 \underline{[\alpha'_i:\gamma'_k]},\,
 \underline{[\beta_j+\gamma'_k:\gamma'_k]},\,
 \underline{[\alpha'_0:\gamma'_k]}_{p+q},\,
% [\alpha'_0:\gamma'_k+\beta'_j],\,
 [\alpha'_0:\alpha'_i+\beta'_j],\,
\\&\qquad
  \underline{[\alpha'_0:\alpha'_0+\beta'_j]}_{p+r},\,
[\alpha'_0:\alpha_i+\gamma'_k+\alpha'_0]\}
\\
[\tilde A_{04}:\tilde A_{12}]
&=\{
 \underline{[\alpha_i:-\beta''-\gamma'']},\,
 \underline{[\alpha_i:0]_{q+r-1}},\,
 \underline{[\beta'_j+\gamma_k:0]},\,
 \underline{[\alpha_0:0]}_{R-p},\,
 \underline{[\alpha_0:-\beta''-\gamma'']}_{p},\,
\\&\qquad
 [\alpha_0:0]_{R-(p+q+r)+1},\,
 [\alpha_0:-\alpha''-\beta''-\gamma'']_{2},\,
 [\alpha_0:-\alpha''-\beta]_{r-1},\,
\\&\qquad
 [\alpha_0:-\beta''-\gamma'']_{p-1},\,
 [\alpha_0:-\alpha''-\gamma'']_{q-1}\}.
\allowdisplaybreaks\\
[\tilde A_{04}:\tilde A_{13}]
 &=
\{\underline{[\alpha_i:\beta'_j]},\,
 \underline{[\alpha_i:\alpha_i+\gamma'_k]},\,
 \underline{[\beta'_j+\gamma_k:\beta'_j]},\,
 \underline{[\alpha_0:\beta'_j]}_{p+r},\,
 [\alpha_0:\alpha_i+\gamma'_k],\,
\\&\qquad
  \underline{[\alpha_0:\alpha_0+\gamma'_k]}_{p+q},\,
 [\alpha_0:\alpha_0+\alpha'_i+\beta'_j]\}
\allowdisplaybreaks\\
%%%
[\tilde A_{04}:\tilde A_{23}]
 &=\{
 \underline{[\alpha_i:\alpha_i+\beta_j]},\,
 \underline{[\alpha_i:\gamma_k]},\,
 \underline{[\beta'_j+\gamma_k:\gamma_k]},\,
 \underline{[\alpha_0:\gamma_k]}_{p+q},\,
 \underline{[\alpha_0:\alpha_i+\beta_j]},\,
\\&\qquad
 [\alpha_0:\beta_j+\alpha_0]_{p+r},\,
 [\alpha_0:\alpha_0+\gamma_k+\alpha'_i]\}
\allowdisplaybreaks\\
[\tilde A_{12}:\tilde A_{34}]
&=\{[-\beta''-\gamma:-\alpha''-\beta''-\gamma''-\alpha''_0],\,
[-\beta''-\gamma:-\beta''-\gamma''-\alpha_0-\alpha'_0]_{p-1},\,
\\&\qquad
\underline{[0:-\alpha''-\beta''-\gamma''-\alpha''_0]},\,
\underline{[0:-\alpha''-\gamma''-\alpha''_0]_{q-1}},\,
[0:-\alpha_0-\alpha'_0]_{R-(p+q+r)+1},\,
\\&\qquad
\underline{[0:-\alpha''-\beta''-\alpha''_0]_{r-1}},\,
[0:0]_{R-p},\,
\underline{[-\beta''-\gamma:-\beta''-\gamma'']_{p}},\,
\\&\qquad
[0:-\alpha_0-\alpha'_0]_{R-(p+q+r)+1},\,
[-\alpha''-\beta''-\gamma:-\alpha''-\beta''-\gamma''-\alpha''_0]_{2},\,
\\&\qquad
[-\alpha''-\beta: -\alpha''-\beta''-\alpha''_0]_{r-1},\,
\underline{[-\beta''-\gamma: -\alpha''-\beta''-\gamma''-\alpha''_0]}_{p-1},\,
\\&\qquad
[-\alpha''-\gamma: -\alpha''-\gamma''-\alpha''_0]_{q-1}\}
\allowdisplaybreaks\\
%%%
[\tilde A_{13}:\tilde A_{24}]
 &=\{[0:\alpha'_i+\beta'_j],\,[\beta'_j+\gamma'_k:\gamma'_k],\,\underline{[\beta'_j:\alpha'_i+\beta'_j]},\,\underline{[\alpha_i+\gamma'_k:\gamma'_k]},\,\underline{[\beta'_j:\gamma'_k]},\,
\\&\qquad
\underline{[\beta'_j:\alpha'_0+\beta'_j]}_{p+r},\,
[\alpha_i+\gamma'_k:\alpha_i+\gamma'_k+\alpha'_0],\,
\\&\qquad
\underline{[\alpha_0+\gamma'_k:\gamma'_k]}_{p+q},\,
%\\&\qquad
[\alpha'_i+\alpha_0+\beta'_j:\alpha'_i+\beta'_j]\},\\
[\tilde A_{14}:\tilde A_{23}]&=\{
 \underline{[\beta_j:\alpha_i+\beta_j]},\,
 \underline{[\beta_j:\gamma_k]},\,
 \underline{[\alpha'_i+\gamma_k:\gamma_k]},\,
 \underline{[\alpha'_0+\gamma_k:\gamma_k]}_{p+q},\,
 [\alpha_i+\beta_j+\alpha'_0:\alpha_i+\beta_j],\,\\
&\qquad
 \underline{[\beta_j:\alpha_0+\beta_j]}_{p+r},\,
 [\alpha'_i+\gamma_k:\alpha'_i+\alpha_0+\gamma_k]\}.
\end{align*}
\normalsize
By the symmetry of the numbers $\{p,q,r\}$ of the points $\{x_0,x_1,x_2\}$, we have the list given in Theorem~\ref{thm:F1sim} by the induction with respect to $(p,q,r)$ starting from 
$(p,q,r)=(1,1,0)$.
\end{proof}

Note that the Riemann scheme is easily obtained from the list in Theorem~\ref{thm:F1sim}.
Moreover 
the list above already contains the spectral type of the boundary equation.
For example, the boundary equation for the diagonal line $x_0=x_1$ with respect to the characteristic exponent $0$ is
\begin{equation}\label{eq:boundary_equation}
    \frac{d u}{d x}
    =\frac{A_{013}|_{{\rm Ker}A_{01}}}{x}u+\frac{A_{012}|_{{\rm Ker}A_{01}}}{x-1}u
    =-\frac{A_{24}|_{{\rm Ker}A_{01}}}{x}u-\frac{A_{34}|_{{\rm Ker}A_{01}}}{x-1}u
\end{equation}
The residue matrix at infinity is
\begin{equation}
    -A_{013}|_{{\rm Ker}A_{01}}-A_{012}|_{{\rm Ker}A_{01}}=-A_{23}|_{{\rm Ker}A_{01}}=A_{014}|_{{\rm Ker}A_{01}}
\end{equation}
in view of the relation \eqref{eq:homogeneous_relation}.
The first three lines tell us the spectral type of the boundary equation.
It is
\begin{equation*}
    (p+q-1)^{r}1^{pq},(R-p-q-r+1)(p-1)(q-1)1,(p+q-1)^r1^{pq}
\end{equation*}
with the rigidity index $- 2 \left(p^2 q-p^2+p q^2-3 p q+2 p-q^2+2 q-2\right)$, which is strictly smaller than $2$ if and only if $p,q>1$.
Thus, the equation \eqref{eq:boundary_equation} may not be rigid.
However, the accessory parameters are suitably specialized so that it is globally analyzable in the following sense:
a basis of local solutions near the origin of \eqref{eq:boundary_equation} can be identified with a subspace of the space of local solutions at $(0,0^2)$ which is invariant by the monodromy along a loop $\gamma_{01}$ going around the divisor $\{x=y\}$ once with the base point $b$ near $(0,0^2)$.
The monodromy of \eqref{eq:boundary_equation} around $x=1$ can be computed from the monodromy of the system $\mathcal{M}^{\boldsymbol{\alpha},\boldsymbol{\beta},\boldsymbol{\gamma}}_{\boldsymbol{\alpha}',\boldsymbol{\beta}',\boldsymbol{\gamma}'}$ along a loop $\Gamma$ going around the divisors $\{ x=0\},\{y=1\},\{x=y\}$ all at once with the base point $b$.
Under the notation of \S\ref{sec:braid}, $\Gamma=\gamma_{01}*\gamma_{02}*\gamma_{12}$ where $*$ is the product in the fundamental group.
The discussion of \S\ref{sec:braid} shows that we can compute the monodromy along $\Gamma$.

\begin{center}
\begin{tikzpicture}[scale=1.5]
\draw (0,-0.1)--(0,1.7) (1,-0.1)--(1,1.7) (-0.1,0)-- (1.7,0)  (-0.1,1)-- (1.7,1)  (-0.1,-0.1)--(1.7,1.7);
\node at (-0.5,0.1) {$y=0$};
\node at (-0.5,1.1) {$y=1$};
\node at (0,-0.3) {$x=0$};
\node at (1,-0.3) {$x=1$};
\node at (1.8,1.8) {$x=y$};
\draw[->] (0.3,0.1) to [out=15, in=30] (1.2,1.4);
\draw[dashed] (1.2,1.4) to [out=210, in=195] (0.3,0.1);
\node at (0.3,0.1) {$\bullet$};
\node at (0.3,-0.1){$b$};
\node at (1.3,0.8){$\Gamma$};
\end{tikzpicture}
\end{center}

On the other hand, the boundary equation with respect to the characteristic exponent $-\alpha''-\beta''$ is given as above after the application of the homogenized addition $\Ad\bigl((\frac{x_0-x_1}{x_2-x_3})^{\alpha''+\beta''}\bigr)$ which keeps the condition \eqref{eq:homog} and the equation has the rigid spectral type $1^r,1^r,(r-1)1$.

\begin{rem}\label{ewm:F1sim}
The calculation of the list for given integers $p$, $q$ and $r$ is 
supported by the Library \cite{Or} of the computer algebra system {\rm\texttt{Risa/Asir}} (\cite{noro1992risa}).
If $(p,q,r)=(4,3,2)$, it can be done as follows.
\begin{verbatim}
 [0] K432=os_md.mc2grs(0,["K",[4,3,2]]);
 [1] os_md.mc2grs(K432,"show"|dviout=-1");
 [2] os_md.mc2grs(K432,"show0"|dviout=-1");
 [3] os_md.mc2grs(K432,"get"|dviout=-1");
 [4] os_md.mc2grs(K432,"spct"|dviout=-1,div=5);
 [5] os_md.mc2grs(K432,"rest"|dviout=-1);
 [6] os_md.mc2grs(K432,"rest0"|dviout=-1);
\end{verbatim}
Here we get the list {\rm\texttt{K432}} in {\rm\texttt{[0]}}.  By {\rm\texttt{[1]}}, {\rm\texttt{[2]}}, {\rm\texttt{[3]}}, 
{\rm \texttt{[4]}}, {\rm \texttt{[5]}} and  {\rm \texttt{[6]}},  
we get the list as in Theorem~\ref{thm:F1sim}, the multiplicities of eigenvalues for 
15 normally crossing points, the Riemann scheme (cf.~\cite{Okz}), 
the spectral type of 10 residue matrices (cf.~\cite{Okz}), 
the Riemann scheme of the boundary values with respect to 10 boundaries and  
the spectral types of the boundary values with respect to 10 boundaries, 
respectively, in a \TeX\ format.
Here the boundary values are defined with respect to the characteristic exponents of the regular 
singularities along the boundaries (cf.~\cite{KO}).
The option {\rm\texttt{div=5}} indicate that the Riemann scheme is divided into two parts so that 
the first part contains 5 columns as is given in this section. 
If {\rm\texttt{dviout=-1}} is replaced by  {\rm\texttt{dviout=1}} in the above, 
the result is transformed into a \texttt{PDF} file and displayed.

%%%
\end{rem}

\begin{rem}\label{rem:F1_01}
The boundary equation  of 
$\mathcal M
 ^{\boldsymbol \alpha, \boldsymbol \beta, \boldsymbol \gamma}
 _{\boldsymbol \alpha',\boldsymbol \beta',\boldsymbol \gamma'}$
on $x=0$ 
with respect to the characteristic exponent $\alpha'_i$ 
equals
$\mathcal M
 ^{\boldsymbol \beta,\boldsymbol \gamma+\alpha'_i}
 _{\boldsymbol \beta',\boldsymbol \gamma'-\alpha'}$, 
which has
the characteristic exponent $q+r-\beta''-\gamma''-1$ with the free 
multiplicity at $y=1$ and the connection coefficients between 
the local solutions at $y=0$ and the local solution with the characteristic exponent
$q+r-\beta''-\gamma''-1$ at $y=1$.
They equal
the connection coefficients of $\mathcal M
 ^{\boldsymbol \alpha, \boldsymbol \beta, \boldsymbol \gamma}
 _{\boldsymbol \alpha',\boldsymbol \beta',\boldsymbol \gamma'}$
between the local solutions  at $(0^2,0)$ and the local solutions at $(0,1)$
with the characteristic exponents $(\alpha'_i,q+r-\beta''-\gamma''-1)$ and the corresponding
connection problem of $\mathcal M
 ^{\boldsymbol \alpha, \boldsymbol \beta, \boldsymbol \gamma}
 _{\boldsymbol \alpha',\boldsymbol \beta',\boldsymbol \gamma'}$ is solved.
By the symmetry of the system we can solve the similar connection problems for 6 edges in \textbf{Fig.1} in \S\ref{sec:connection}.
\end{rem}

The KZ-type system $\mathcal M$ defined in \eqref{eq:KZxy} corresponds to a $W(x,y)$-module.
The coefficient matrices $A_{i,j}$ are $R\times R$ matrices acting on $\C^R$ by left multiplication.
For any $i=1,\dots,R$, let $v_i=(0,\dots,1,\dots,0)\in\C^R$ be the $i$-th unit column vector and we regard it as an element of $\C(x,y)^R$.
Then, we define an action of partial differential operators on an element of the form $a(x,y)v_i$ by
\begin{align}\begin{split}
    \frac{\partial}{\partial x}\left(a(x,y)v_i\right)&:=\frac{\partial a}{\partial x}(x,y)v_i+a(x,y)\left( \frac{A_{01}}{x-y}+\frac{A_{02}}{x-1} + \frac{A_{03}}{x}\right)v_i,\\
    \frac{\partial}{\partial y}\left(a(x,y)v_i\right)&:=\frac{\partial a}{\partial y}(x,y)v_i+
   a(x,y)\left(
   \frac{A_{01}}{y-x}+\frac{A_{12}}{y-1}  + \frac{A_{13}}{y} \right)v_i
\end{split}\label{eq:action}\end{align}
for any $a(x,y)\in \C(x,y)$ and $i=1,\dots, R$.
We extend \eqref{eq:action} to an action of $W(x,y)$ on $\C(x,y)^R$ by linearity. 
By abuse of notation, we write $\mathcal M$ for the $W(x,y)$-module $\C(x,y)^R$.
On the other hand, we write $\mathcal M
 ^{\boldsymbol \alpha, \boldsymbol \beta, \boldsymbol \gamma}
 _{\boldsymbol \alpha',\boldsymbol \beta',\boldsymbol \gamma'}$ for the $W(x,y)$-module  $W(x,y)/(W(x,y)P_1+W(x,y)P_2+W(x,y)P_{12})$.

\begin{thm}
Suppose that none of 
$\alpha_i+\alpha'_{i'},\ \beta_j+\beta'_{j'},\ \gamma_k+\gamma'_{k'},\   \alpha_i+\beta_j+\gamma'_k,\ \alpha'_i+\beta'_j+\gamma_k$
is integral.
Then, 
$\mathcal M
 ^{\boldsymbol \alpha, \boldsymbol \beta, \boldsymbol \gamma}
 _{\boldsymbol \alpha',\boldsymbol \beta',\boldsymbol \gamma'}$ is isomorphic to $\mathcal M$ as a $W(x,y)$-module through the correspondence $\mathcal M
 ^{\boldsymbol \alpha, \boldsymbol \beta, \boldsymbol \gamma}
 _{\boldsymbol \alpha',\boldsymbol \beta',\boldsymbol \gamma'}\ni[1]\mapsto v_R\in\mathcal{M}$.    
\end{thm}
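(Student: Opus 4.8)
The plan is to build the comparison map in the only possible way, $[1]\mapsto v_R$, and then upgrade it to an isomorphism using irreducibility together with equality of holonomic ranks. Concretely, the assignment $P\mapsto P\cdot v_R$, where $W(x,y)$ acts on $\mathcal{M}=\C(x,y)^R$ through \eqref{eq:action}, is tautologically a left $W(x,y)$-module homomorphism $W(x,y)\to\mathcal{M}$. To factor it through $\mathcal M^{\boldsymbol \alpha, \boldsymbol \beta, \boldsymbol \gamma}_{\boldsymbol \alpha',\boldsymbol \beta',\boldsymbol \gamma'}=W(x,y)/(W(x,y)P_1+W(x,y)P_2+W(x,y)P_{12})$ the only thing to verify is that the three defining operators kill $v_R$, i.e.
\[
 P_1\cdot v_R=P_2\cdot v_R=P_{12}\cdot v_R=0\quad\text{in }\mathcal{M}.
\]
Granting this, one obtains a $W(x,y)$-homomorphism $\bar\phi\colon\mathcal M^{\boldsymbol \alpha, \boldsymbol \beta, \boldsymbol \gamma}_{\boldsymbol \alpha',\boldsymbol \beta',\boldsymbol \gamma'}\to\mathcal{M}$ with $\bar\phi([1])=v_R$, and this is the candidate isomorphism.

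The main obstacle is precisely this vanishing, and I would establish it by passing to the solution side. The connection $\mathcal{M}$ has rank $R$, so its local solution space has dimension $R$, and the evaluation pairing $\langle m,\psi\rangle=\psi(m)$ between $\mathcal{M}$ and its solutions is non-degenerate and satisfies $\langle P m,\psi\rangle=P\langle m,\psi\rangle$ for $P\in W(x,y)$. Under this pairing $\langle v_R,\psi\rangle$ is the last component $u_R$ of the corresponding solution vector, so $P_i\cdot v_R=0$ is equivalent to $P_iu_R=0$ for \emph{every} solution of $\mathcal{M}$. Now the construction of $\mathcal{M}$ in \S\ref{sec:KZ} as an iterated middle convolution and addition, read through Remark~\ref{rem:KZ}~ii) and Theorem~\ref{thm:IntegralRepresentation}, shows that the last component of any solution of $\mathcal{M}$ is itself a solution of $\mathcal M^{\boldsymbol \alpha, \boldsymbol \beta, \boldsymbol \gamma}_{\boldsymbol \alpha',\boldsymbol \beta',\boldsymbol \gamma'}$; indeed the transformation $\tilde K_x^{-\tau-\lambda,\lambda}$ carries last components to last components, and inductively from the rank-one starting equation these last components are exactly the functions of Theorem~\ref{thm:LocalSols}, which span $\mathcal Sol(\mathcal M^{\boldsymbol \alpha, \boldsymbol \beta, \boldsymbol \gamma}_{\boldsymbol \alpha',\boldsymbol \beta',\boldsymbol \gamma'})$ for generic parameters (cf.\ Corollary~\ref{cor:spansol}). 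Hence $P_iu_R=0$ for all solutions, giving the required identities and the well-definedness of $\bar\phi$. The delicate point here is to assert the statement for \emph{all} solutions rather than for the single function $F_{p,q,r}$: this is what forces the use of the last-component/pairing duality together with the inductive structure of the convolution construction.

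With $\bar\phi$ in hand the rest is formal. The hypotheses that none of $\alpha_i+\alpha'_{i'}$, $\beta_j+\beta'_{j'}$, $\gamma_k+\gamma'_{k'}$, $\alpha_i+\beta_j+\gamma'_k$, $\alpha'_i+\beta'_j+\gamma_k$ be integral are exactly the irreducibility conditions, so by Theorem~\ref{thm:irreducibility} the module $\mathcal M^{\boldsymbol \alpha, \boldsymbol \beta, \boldsymbol \gamma}_{\boldsymbol \alpha',\boldsymbol \beta',\boldsymbol \gamma'}$ is simple. Since $\bar\phi([1])=v_R\neq 0$, the homomorphism $\bar\phi$ is nonzero, so its kernel is a proper submodule of a simple module and therefore $0$; thus $\bar\phi$ is injective. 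Finally $\bar\phi$ is $K$-linear for $K=\C(x,y)$, with $\dim_K\mathcal{M}=R$ by definition and $\dim_K\mathcal M^{\boldsymbol \alpha, \boldsymbol \beta, \boldsymbol \gamma}_{\boldsymbol \alpha',\boldsymbol \beta',\boldsymbol \gamma'}=\rank\mathcal M^{\boldsymbol \alpha, \boldsymbol \beta, \boldsymbol \gamma}_{\boldsymbol \alpha',\boldsymbol \beta',\boldsymbol \gamma'}=R$ by Theorem~\ref{thm:rank}. An injective $K$-linear map between $K$-vector spaces of equal finite dimension is bijective, so $\bar\phi$ is an isomorphism of $W(x,y)$-modules sending $[1]$ to $v_R$, as claimed.
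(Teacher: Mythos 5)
Your proof is correct, and it reaches the conclusion by a genuinely different route from the paper's. The paper fixes one nontrivial solution vector $u=(u_1,\dots,u_R)$ of $\mathcal M$, notes (as you do, via Remark~\ref{rem:KZ}) that $u_R$ solves $\mathcal M^{\boldsymbol \alpha, \boldsymbol \beta, \boldsymbol \gamma}_{\boldsymbol \alpha',\boldsymbol \beta',\boldsymbol \gamma'}$, and then uses the irreducibility hypothesis to run an explicit elimination in a generic direction $\p_{gen}=a\p_x+b\p_y$: at each stage some coefficient must be nonzero (otherwise $u_R$ restricted to a generic line would satisfy a too-low-order equation, contradicting irreducibility), so $u_{R-1},\dots,u_1$ are successively expressed as $\C(x,y)$-combinations of derivatives of $u_R$. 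This exhibits $v_R$ as a cyclic vector, i.e.\ proves \emph{surjectivity} of $[1]\mapsto v_R$ directly, and then concludes by the rank equality of Theorem~\ref{thm:rank}. You instead prove \emph{injectivity} directly --- the kernel is a proper submodule of the simple module $\mathcal M^{\boldsymbol \alpha, \boldsymbol \beta, \boldsymbol \gamma}_{\boldsymbol \alpha',\boldsymbol \beta',\boldsymbol \gamma'}$, hence zero --- and get surjectivity from the same rank equality. Your route is shorter and avoids the elimination argument, at the price of having to establish well-definedness of $[1]\mapsto v_R$ on the abstract module $\mathcal M$ via the solution pairing, whereas the paper gets well-definedness essentially for free once $\mathcal M$ is identified with the function span of $u_1,\dots,u_R$. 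Both arguments rest on the same two external inputs: irreducibility from Theorem~\ref{thm:irreducibility}, and the fact that last components of solutions of $\mathcal M$ solve the scalar system; your use of the latter for \emph{all} solutions rather than one is covered by Remark~\ref{rem:KZ}~ii) together with Corollary~\ref{cor:spansol}, so it is not a gap. One small point, affecting your pairing and the paper's identification $v_i\mapsto u_i$ equally: with the action \eqref{eq:action}, an element of $\mathcal Hom_{W(x,y)}(\mathcal M,\mathcal O)$ corresponds to a vector satisfying the \emph{transposed} KZ system, so identifying $\langle v_R,\psi\rangle$ with the last entry of a solution of \eqref{eq:KZxy} involves a transpose convention; it is harmless here but deserves a sentence.
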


\begin{proof}
    The assumption ensures that $\mathcal M
 ^{\boldsymbol \alpha, \boldsymbol \beta, \boldsymbol \gamma}
 _{\boldsymbol \alpha',\boldsymbol \beta',\boldsymbol \gamma'}$ is irreducible (cf. Theorem \ref{thm:irreducibility}).
    We consider a partial derivative $\p_{gen}:=a\frac{\p}{\p x}+b\frac{\p}{\p y}$ for generic complex numbers $a,b$.
    Let $u(x,y)=(u_1(x,y),\dots,u_R(x,y))$ be a non-trivial solution vector to the system $\mathcal M$.
    By the construction of $\mathcal M$, the last entry $u_R(x,y)$ is a solution to $\mathcal M
 ^{\boldsymbol \alpha, \boldsymbol \beta, \boldsymbol \gamma}
 _{\boldsymbol \alpha',\boldsymbol \beta',\boldsymbol \gamma'}$ (see Remark \ref{rem:KZ}).
    There exist $R^2$ rational functions $a_{ij}(x,y)$ $(1\leq i\leq j\leq R)$ such that the following equations hold true:
    \begin{equation}
        \begin{cases}
            \p_{gen}u_1(x,y)&=a_{11}(x,y)u_1(x,y)+\cdots+a_{1R}(x,y)u_R(x,y)\\
            &\vdots\\
            \p_{gen}u_R(x,y)&=a_{R1}(x,y)u_1(x,y)+\cdots+a_{RR}(x,y)u_R(x,y).
        \end{cases}
    \end{equation}
    Suppose that $a_{R1}(x,y),\dots,a_{R(R-1)}(x,y)$ are all equal to zero.
    Then, the restriction of $u_R(x,y)$ to a line $\{ bx-ay+c=0\}$ for some generic $c$ is subject to a first order ODE.
    This contradicts the irreducibility of $\mathcal M
 ^{\boldsymbol \alpha, \boldsymbol \beta, \boldsymbol \gamma}
 _{\boldsymbol \alpha',\boldsymbol \beta',\boldsymbol \gamma'}$ as $u_R(x,y)$ is a solution to the system $\mathcal M
 ^{\boldsymbol \alpha, \boldsymbol \beta, \boldsymbol \gamma}
 _{\boldsymbol \alpha',\boldsymbol \beta',\boldsymbol \gamma'}$.
 Without loss of generality, we may assume that $a_{R(R-1)}(x,y)\neq 0$.
 We can express $u_{R-1}(x,y)$ as a linear combination of $u_1,u_2,\dots,u_{R-2},u_{R},\p_{gen}u_R$ and we obtain a system of equations of the following form
 \begin{equation}
    \begin{cases}
    \p_{gen}u_1(x,y)&=b_{11}(x,y)u_1(x,y)+\cdots+b_{1(R-2)}u_{R-2}(x,y)\\
    &\quad+b_{1(R-1)}(x,y)\p_{gen} u_{R}(x,y)+b_{1R}(x,y)u_R(x,y)\\
    &\vdots\\
    \p_{gen}u_{R-2}(x,y)&=b_{(R-2)1}(x,y)u_1(x,y)+\cdots+b_{(R-2)(R-2)}u_{R-2}(x,y)\\
    &\quad+b_{(R-2)(R-1)}(x,y)\p_{gen} u_{R}(x,y)+b_{(R-2)R}(x,y)u_R(x,y)\\
    \p_{gen}^2u_R(x,y)&=b_{(R-1)1}(x,y)u_1(x,y)+\cdots+b_{(R-1)(R-2)}(x,y)u_{R-2}(x,y)\\
    &\quad+b_{(R-1)(R-1)}(x,y)\p_{gen}u_R(x,y)+b_{(R-1)R}(x,y)u_R(x,y).
    \end{cases}
\end{equation}
Again by the irreducibility of $\mathcal M
 ^{\boldsymbol \alpha, \boldsymbol \beta, \boldsymbol \gamma}
 _{\boldsymbol \alpha',\boldsymbol \beta',\boldsymbol \gamma'}$, we may assume that $b_{(R-1)(R-2)}(x,y)\neq 0$.
Repeating this argument, we can conclude that $u_1(x,y),\dots, u_{R-1}(x,y)$ are linear combinations of  partial derivatives of $u_R(x,y)$.
%Let us set $u_i(x,y)=P_iu_R(x,y)$ where $P_i\in W(x,y)$.
It is readily seen that $\mathcal M$ is isomorphic to the $\C(x,y)$-span of $u_1(x,y),\dots,u_R(x,y)$ in the space of local meromorphic functions through the correspondence $v_i\mapsto u_i(x,y)$.
Now the correspondence $\mathcal M
 ^{\boldsymbol \alpha, \boldsymbol \beta, \boldsymbol \gamma}
 _{\boldsymbol \alpha',\boldsymbol \beta',\boldsymbol \gamma'}\ni [1]\mapsto u_R(x,y)\in\mathcal M$ is well-defined and surjective.
 Since $\dim_{\C(x,y)}\mathcal M
 ^{\boldsymbol \alpha, \boldsymbol \beta, \boldsymbol \gamma}
 _{\boldsymbol \alpha',\boldsymbol \beta',\boldsymbol \gamma'}=\dim_{\C(x,y)}\mathcal M=R$, we are done.
\end{proof}

\begin{rem} {\rm i) }\ 
We constructed a $W(x,y)$ homomorphism between $\mathcal M
 ^{\boldsymbol \alpha, \boldsymbol \beta, \boldsymbol \gamma}
 _{\boldsymbol \alpha',\boldsymbol \beta',\boldsymbol \gamma'}$ and $\mathcal M$, which gives an isomorphism 
when the parameters are generic.  Since $\mathcal M$ and $\mathcal M
 ^{\boldsymbol \alpha, \boldsymbol \beta, \boldsymbol \gamma}
 _{\boldsymbol \alpha',\boldsymbol \beta',\boldsymbol \gamma'}$ have the same rank, 
it follows from \cite[Lemma~2.1]{Oir} that the condition for the irreducibility of $\mathcal M$ 
equals that of $\mathcal M
 ^{\boldsymbol \alpha, \boldsymbol \beta, \boldsymbol \gamma}
 _{\boldsymbol \alpha',\boldsymbol \beta',\boldsymbol \gamma'}$.
 
 {\rm ii)\ }
 Suppose, for example, $\mathcal M
 ^{\boldsymbol \alpha, \boldsymbol \beta, \boldsymbol \gamma}
 _{\boldsymbol \alpha',\boldsymbol \beta',\boldsymbol \gamma'}$ is reducible when $\alpha_{i}+\alpha'_{i'}=c$ with a suitable $c\in\mathbb C$. Then 
 it also follows from \cite[Lemma~2.1]{Oir} and Proposition~\ref{prop:shift} that the system 
 is reducible when $\alpha_{i}+\alpha'_{i'}-c\in\mathbb Z$.
\end{rem}

\medskip
Owing to Theorem~\ref{thm:F1sim}, we get the Riemann scheme and the spectral type of the corresponding KZ-type equation $\mathcal M$  (cf.~\cite{Oi}) as follows.
%under the notation \eqref{eq:rankF1} and \eqref{eq:doubleprime}.
% For a given $(p,q,r)$ the calculation of them is supported by \cite{Or}.

\begin{align}
\begin{split}
&\left\{\begin{matrix}
A_{01}&A_{02}&A_{03}&A_{04}&A_{12}\\
[0]_{pq+(p+q-1)r} & [0]_{pr+(p+r-1)q} & [\alpha'_i]_{q+r} & [\alpha_i]_{q+r} & [0]_{qr+(q+r-1)p} \\
[-\alpha''-\beta'']_r & [-\alpha''-\gamma'']_q & \beta_j+\gamma'_k &  \beta'_j+\gamma_k& [-\beta''-\gamma'']_p 
%&  & & &  \\
%&  & & &  
\end{matrix}\right.\\
&\qquad\left.\begin{matrix}
A_{13}&A_{23}&A_{14}&A_{24}&A_{34}\\
 [\beta'_j]_{p+r} & [\gamma_k]_{p+q} & [\beta_j]_{p+r} & [\gamma'_k]_{p+q} & [0]_{pq+qr+rp-(p+q+r)+1}\\
\alpha_i+\gamma'_k & \alpha_i+\beta_j & \alpha_i'+\gamma_k & \alpha'_i+\beta'_j& [-\alpha''-\beta''-\gamma'']_2\\
 & & & & [-\alpha'' - \beta'']_{r-1}\\
 & & & &[-\beta''-\gamma'']_{p-1}\\
 & & & & [-\alpha''-\gamma'']_{q-1}\\
\end{matrix}\right\}\\
\end{split}
\end{align}

\smallskip
\begin{tabular}{|r|rrrrr|} 
\multicolumn{6}{c}{Spectral type of the KZ-type equation satisfied by $F_{p,q,r}(x,y)$}\\
\hline
& $x_0=x$& $x_1=y$& $x_2=1$& $x_3=0$& $x_4=\infty$\\ \hline
$x_0$& & $(R-r)r$& $(R-q)q$& $(q+r)^p1^{qr}$& $(q+r)^p1^{qr}$\\
$x_1$& $(R-r)r$& &$(R-p)p$ & $(p+r)^q1^{pr}$& $(p+r)^q1^{pr}$\\
$x_2$& $(R-q)q$& $(R-p)p $& & $(p+q)^r1^{pq}$ & $(p+q)^r1^{pq}$\\
$x_3$& $(q+r)^p1^{qr}$&$(p+r)^q1^{pr}$& $(p+q)^r1^{pq} $ & & $S$\\
$x_4$& $(q+r)^p1^{qr}$& $(p+r)^q1^{pr}$ & $(p+q)^r1^{pq}$& $S$& \\ \hline
\end{tabular}

rank $:R:=pq+qr+rp,\ S:=(R-p-q-r+1)(p-1)(q-1)(r-1)2$
\medskip

This table shows that, for example, the spectral type of the residue matrix $A_{0,3}$ is $(q+r)^p1^{qr}$, which represents the partition of $R: 
\overbrace{q+r,\cdots,q+r}^p,\overbrace{1,\dots,1}^{qr}$,
namely, $A_{0,3}$ has $p$ eigenvalues with multiplicity $q+r$ and $qr$ multiplicity free eigenvalues.
The symmetry $S_{x,y,0}\times S_{0,\infty}$ acts on this table.
For example, the spectral type at $x_1=x_4$ is obtained from that at $x_0=x_4$ by exchanging $p$ and $q$.
This symmetry corresponds to $x_0\leftrightarrow x_1$.

The next table shows the spectral type of the equation when $p=q=r=2$ together with the index of rigidity for each variable.

\smallskip
\centerline{
\begin{tabular}{|r|rrrrr|r|} 
\multicolumn{7}{c}{$p=q=r=2\ ($rank:$\,12$)}\\
\hline
& $x_0$& $x_1$& $x_2$& $x_3$& $x_4$& idx\\ \hline
$x_0$& & 2& (10)2& $4^21^4$ & $4^21^4$ &$-8$\\
$x_1$& (10)2& & (10)2& $4^21^4$& $4^21^4$ & $-8$\\
$x_2$& (10)2& (10)2& & $4^21^4$& $4^21^4$& $-8$\\
$x_3$& $4^21^4$ & $4^21^4$ & $4^21^4$& & $721^3$ & $-124$\\
$x_4$& $4^21^4$& $4^21^4$& $4^21^4$& $721^3$& & $-124$\\ \hline
\end{tabular}}

\bigskip
Focusing on one variable, for example $x_0$, we have an ordinary differential equation \eqref{eq:ODE} of rank $R$ which has $n+1=4$ singular points including $\infty$ in $\mathbb P^1$.  
 The rigidity index of the equation equals the sum of squares of multiplicities of the eigenvalues of the residue matrices minus $(n-1)R^2$. 
It is
\begin{align}
\begin{split}
\idx_{x_0}\mathcal M&=(R-q)^2+q^2+(R-r)^2+r^2+2(p(q+r)^2+qr)-2R^2\\
&=2-2(q-1)(r-1)(q+r+1).
\end{split}
\end{align}
Then the number $2-
\idx_{x_0}\mathcal M$ coincides with the maximal possible number of the accessory parameters of the equation which has the same local structure  as the equation \eqref{eq:ODE}.
An ordinary differential equation is rigid if and only if the rigidity index equals 2.
Then the KZ-type equation satisfied by $F_{p,q,r}$ has a rigid variable if and only if
\begin{equation}
 (p-1)(q-1)(r-1)=0.
\end{equation}

If the KZ-type equation has a rigid variable, the condition of the irreducibility of the monodromy group of its solution space is obtained by \cite{Oir} and a precise analysis is possible for the reducibility.
In general,  
the necessary and sufficient condition for the irreducibility is given by Theorem~\ref{thm:irreducibility}.

The transformation given by additions and middle convolutions are invertible. 
$d(\mathbf m_x)$ gives the maximal number of the change of the rank of the equation by one middle convolution with additions for the variable $x$. Namely, the rank decreases by $d(\mathbf m_x)$, which equals the sum of maximal multiplicities of eigenvalues of residue matrices minus $(n-1)R$ (see $d_{max}(\mathbf m)$ in \cite[(5.36)]{Ow}).  
\begin{equation}
    d(\mathbf m_x)\,=(R-r)+(R-q)+(q+r)+(q+r)-2R=q+r.
\end{equation}
The number $d(\mathbf m_x)$ is always positive if $\mathbf m_x$ is a rigid spectral type. Moreover these transformations do not change the rigidity index. Hence there is an invertible algorithm due to Katz, Dettweiler-Reiter and Haraoka transforming the rigid system to the trivial system (cf.~\cite{DR, Ha, katz1996rigid}).
In fact, the change of the spectral type is described by decreasing the maximal multiplicities of the residue matrices by $d(\mathbf m_x)$.
\[\mathbf m_x:\underline{(R-r)}r,\underline{(R-q)}r,\underline{(q+r)^p}1^{qr},\underline{(q+r)^p}1^{qr}\]
In the above, the maximal multiplicities are indicated by underlines and they decrease by $d(\mathbf m_x)$, which leads to the change $(p,q,r)\mapsto(p-1,q,r)$ since $R=(q+r)p+qr$. We can repeat this algorithm until $(p,q,r)$ changes into $(1,p,q)$.
Note that $\idx_x(\mathcal M)$ does not depend on $p$.
If $p=1$, the equation is rigid with respect to $y$ variable and we have the above reduction until we get the trivial equation.  Inverting these algorithm, we get the original equation $\mathbf m$ from the trivial equation. 

Owing to \cite[Theorem~7.1]{Okz}, the above procedure can be constructed only by the data describing the simultaneous eigenvalues with their multiplicities for commuting pairs of residue matrices of the equation.

\subsection{Appell's $F_1$}\label{sec:AppellF1}
Appell's $F_1$ series is defined by
\begin{align*}
  &F_1(a;b,c;d;x,y):=F^{1,1,1}_{1,1,1}
 \left(\begin{smallmatrix} \alpha & \beta& \gamma\\ 0 & 0 &\gamma'\end{smallmatrix};x,y\right)\\
 &\qquad \text{with }
  \alpha=b,\ \beta=c,\ \gamma= a,\ \gamma'=1-d.
\end{align*}
The Riemann scheme and the spectral type of the corresponding KZ-type equation  
are
\begin{align}\begin{split}
 &\left\{\begin{matrix}
A_{01}&A_{02}&A_{03}&A_{04}&A_{12}\\
[0]_2 & [0]_2 & [0]_2 & [\alpha]_2 & [0]_2 \\
-\alpha-\beta&-\alpha-\gamma-\gamma' & \beta+\gamma'&  \gamma & -\beta-\gamma-\gamma'  
\end{matrix}\right.\\
%%%
&\qquad\left.\begin{matrix}
A_{13}&A_{23}&A_{14}&A_{24}&A_{34}\\
 [0]_2 & [\gamma]_{2} & [\beta]_{2} & [\gamma']_{2} &[-\alpha-\beta-\gamma-\gamma']_2\\
\alpha+\gamma' & \alpha+\beta & \gamma & 0 & 0
\end{matrix}\right\},
\end{split}
\end{align}
\centerline{\begin{tabular}{|r|rrrrr|r|}
\multicolumn{7}{c}{$F_1:p=q=r=1\ ($rank:$\,3$)}
\\\hline
& $x_0$& $x_1$& $x_2$& $x_3$& $x_4$& idx\\ \hline
$x_0$& & 21& 21& 21& 21& $2$\\
$x_1$& 21& & 21& 21& 21& $2$\\
$x_2$& 21& 21& & 21& 21& $2$\\
$x_3$& 21& 21& 21& & 21& $2$\\
$x_4$& 21& 21& 21& 21& & $2$\\ \hline
\end{tabular}}

\bigskip
It follows from Remark.~\ref{rem:red0} that 
$F_1(\gamma;\alpha,\beta;1-\gamma';x,y)$ is a solution to
\begin{align*}
\bar{\mathcal M}^{\boldsymbol\alpha,\boldsymbol\beta,\boldsymbol\gamma}
 _{0,0,\boldsymbol\gamma'}:
\begin{cases}
   \p_x(\vartheta_x+\vartheta_y-\gamma')u&=
   (\vartheta_x+\alpha)(\vartheta_x+\vartheta_y+\gamma)u,\\
   \p_y(\vartheta_x+\vartheta_y-\gamma')u&=
   (\vartheta_y+\beta)(\vartheta_x+\vartheta_y+\gamma)u,\\
   (\vartheta_x+\alpha)\p_yu&=(\vartheta_y+\beta)\p_xu.
\end{cases}
\end{align*}
Put $(X,Y)=(1-x,1-y)$. Then  
\[(\vartheta_X+\alpha)\p_Yu=(\vartheta_Y+\beta)\p_Xu\]
and the first line of the equations $\bar{\mathcal M}^{\boldsymbol\alpha,\boldsymbol\beta,\boldsymbol\gamma}
 _{0,0,\boldsymbol\gamma'}$ with this equation says
\begin{align*}
0&=-\p_X(\vartheta_X+\vartheta_Y-\p_X-\p_Y-\gamma')u\\
  &\quad{}+(\vartheta_X-\p_X+\alpha)(\vartheta_X+\vartheta_Y-\p_X-\p_Y+\gamma)u\\
 &=(\vartheta_X+\alpha+\gamma+\gamma')\p_Xu+(\vartheta_X+\alpha)\p_Yu
   -(\vartheta_X+\alpha)(\vartheta_X+\vartheta_Y+\gamma)u\\
 &=\p_X(\vartheta_X+\alpha+\beta+\gamma+\gamma'-1)u -
   (\vartheta_X+\alpha)(\vartheta_X+\vartheta_Y+\gamma)u.
\end{align*}
Hence we have
\begin{equation*}
T_{(x,y)\to(1-x,1-y)}{\mathcal M}^{\alpha,\beta,\gamma}
 _{0,0,\gamma'}={\mathcal M}^{\alpha,\beta,\gamma}
 _{0,0,1-\alpha-\beta-\gamma-\gamma'}.
\end{equation*}
and 
\begin{align*}
&{\mathcal M}^{\alpha,\beta,\gamma}
 _{\alpha',\beta',\gamma'}
    \xrightarrow{\Ad(x^{-\alpha'}y^{-\beta'})}
{\mathcal M}^{\alpha+\alpha',\beta+\beta',\gamma+\alpha'+\beta'}
 _{0,0,\gamma'-\alpha'-\beta'}
  \xrightarrow{T_{(x,y)\to(1-x,1-y)}}\\
&\quad
{\mathcal M}^{\alpha+\alpha',\beta+\beta',\gamma+\alpha'+\beta'}
 _{0,0,1-\alpha-\alpha'-\beta-\beta'-\gamma-\gamma'}
 \xrightarrow{\Ad(x^{\alpha'}y^{\beta'})}
{\mathcal M}^{\alpha,\beta,\gamma}
 _{\alpha',\beta',1-\alpha-\beta-\gamma-\gamma'}.
\end{align*}

\begin{prop}\label{prop:F101}
One has
\[
T^{F_1}_{0\leftrightarrow1}{\mathcal M}^{\alpha,\beta,\gamma}
 _{\alpha',\beta',\gamma'}={\mathcal M}^{\alpha,\beta,\gamma}
 _{\alpha',\beta',1-\alpha-\beta-\gamma-\gamma'}
 \]
by putting $T^{F_1}_{0\leftrightarrow1}=
 \Ad(x^{\alpha'}y^{\beta'})\circ
 T_{(x,y)\to(1-x,1-y)}\circ 
 \Ad(x^{-\alpha'}y^{-\beta'})$. 
\end{prop}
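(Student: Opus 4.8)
The plan is to read off the action of $T^{F_1}_{0\leftrightarrow1}$ on the parameters by composing its three constituent operations and tracking the index data of $\mathcal M^{\alpha,\beta,\gamma}_{\alpha',\beta',\gamma'}$ through each one. The two outer operations $\Ad(x^{\mp\alpha'}y^{\mp\beta'})$ are gauge transformations by power functions (additions), whose effect on the parameters is dictated by the commutation rule $P(\vartheta)x^\lambda=x^\lambda P(\vartheta+\lambda)$ together with the solution formula \eqref{eq:Sols}; the middle operation is the involution $(x,y)\mapsto(1-x,1-y)$, whose action has already been computed in the special case $\alpha'=\beta'=0$ in the paragraph preceding the statement. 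Thus the whole argument reduces to chaining three already-available facts and doing one bookkeeping computation on the exponents.

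First I would record the effect of the inner gauge. Since by \eqref{eq:Sols} multiplying a solution of $\mathcal M^{\alpha,\beta,\gamma}_{\alpha',\beta',\gamma'}$ by $x^{\alpha'}y^{\beta'}$ shifts $\boldsymbol\alpha,\boldsymbol\beta,\boldsymbol\gamma$ by $\alpha',\beta',\alpha'+\beta'$ and $\boldsymbol\alpha',\boldsymbol\beta',\boldsymbol\gamma'$ by $-\alpha',-\beta',-\alpha'-\beta'$, the inverse gauge gives
\[
\Ad(x^{-\alpha'}y^{-\beta'})\,\mathcal M^{\alpha,\beta,\gamma}_{\alpha',\beta',\gamma'}
=\mathcal M^{\alpha+\alpha',\,\beta+\beta',\,\gamma+\alpha'+\beta'}_{0,\,0,\,\gamma'-\alpha'-\beta'}.
\]
The point of this step is that it lands in a system with $\alpha'_{p'}=\beta'_{q'}=0$, which by Remark~\ref{rem:red0} is exactly the reduced form $\bar{\mathcal M}$ to which the coordinate computation applies. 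Next I would invoke the established relation $T_{(x,y)\to(1-x,1-y)}\mathcal M^{\alpha,\beta,\gamma}_{0,0,\gamma'}=\mathcal M^{\alpha,\beta,\gamma}_{0,0,1-\alpha-\beta-\gamma-\gamma'}$ with $(\alpha,\beta,\gamma,\gamma')$ replaced by $(\alpha+\alpha',\beta+\beta',\gamma+\alpha'+\beta',\gamma'-\alpha'-\beta')$, and finally apply $\Ad(x^{\alpha'}y^{\beta'})$, which shifts the parameters back by the same rule. Collecting the exponents, the bottom $\gamma'$-slot becomes
\[
\bigl(1-(\alpha+\alpha')-(\beta+\beta')-(\gamma+\alpha'+\beta')-(\gamma'-\alpha'-\beta')\bigr)+\alpha'+\beta'
=1-\alpha-\beta-\gamma-\gamma',
\]
while the first three superscripts return to $\alpha,\beta,\gamma$ and the subscripts return to $\alpha',\beta'$; this is precisely $\mathcal M^{\alpha,\beta,\gamma}_{\alpha',\beta',1-\alpha-\beta-\gamma-\gamma'}$, which is the claim.

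The one genuinely substantive ingredient is the middle relation for $(x,y)\mapsto(1-x,1-y)$, and this is where the main obstacle lies. Unlike the transformations $(x,y)\mapsto(1/x,1/y)$ and $(\epsilon x/y,1/y)$ treated in Proposition~\ref{prop:transformations}, the involution $x\mapsto 1-x$ does not act simply on the Euler operator: with $X=1-x$ one finds $\vartheta_x=\vartheta_X-\partial_X$, so the operators $P_1,P_2,P_{12}$ cannot be transported termwise. Instead one must pass to the reduced form $\bar{\mathcal M}$ of Remark~\ref{rem:red0} and combine its equations: after substituting $\vartheta_x=\vartheta_X-\partial_X$, $\vartheta_y=\vartheta_Y-\partial_Y$ into $\bar P_1$, one uses the transformed $\bar P_{12}$, namely $(\vartheta_X+\alpha)\partial_Y u=(\vartheta_Y+\beta)\partial_X u$, to recombine the two first-order terms into the single factor $\partial_X(\vartheta_X+\vartheta_Y+\alpha+\beta+\gamma+\gamma'-1)$, thereby recovering $\bar P_1$ for the target system with $\gamma'$ replaced by $1-\alpha-\beta-\gamma-\gamma'$.

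In the write-up of the proposition itself this substantive computation is already displayed immediately before the statement, so I would simply cite it; the residual content of the proof is the routine parameter bookkeeping of the first and third paragraphs above, which assembles the three arrows into the single symmetry $T^{F_1}_{0\leftrightarrow1}$ exchanging the singular points $0$ and $1$.
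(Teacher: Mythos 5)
Your proposal is correct and follows essentially the same route as the paper: the paper's own justification is exactly the displayed chain $\Ad(x^{-\alpha'}y^{-\beta'})$, then $T_{(x,y)\to(1-x,1-y)}$ on the reduced system $\bar{\mathcal M}^{\alpha,\beta,\gamma}_{0,0,\gamma'}$ (using $\bar P_{12}$ to recombine the first-order terms), then $\Ad(x^{\alpha'}y^{\beta'})$, with the same parameter bookkeeping you carry out. Your recombined factor $\p_X(\vartheta_X+\vartheta_Y+\alpha+\beta+\gamma+\gamma'-1)$ is in fact the correct form of the operator (the paper's displayed last line appears to drop a $\vartheta_Y$), so no changes are needed.
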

%%%%%%%%%%%%%%%%%%%%%%%%%%%
\begin{rem} [Symmetry of $F_1$]\ {\rm i)}\ 
If $u(x.y)$ is a solution to ${\mathcal M}^{\alpha,\beta,\gamma}
 _{\alpha',\beta',\gamma'}$, then  
$(\frac{x}{1-x})^{\alpha'}(\frac{y}{1-y})^{\beta'}u(1-x,1-y)$ is a solution to 
$T^{F_1}_{0\leftrightarrow1}{\mathcal M}^{\alpha,\beta,\gamma}
 _{\alpha',\beta',\gamma'}$.

{\rm ii)} \ In the above, 
we may replace $\gamma$ and $\gamma'$ by operators which commute with $x,\,y,\,\p_x$ and $\p_y$.

{\rm iii)} \ 
The Riemann scheme of the ordinary differential equation of the variable $x$ satisfied 
by $F^{1,1,1}_{1,1,1}
 \left(\begin{smallmatrix} \alpha & \beta& \gamma\\ 0 & 0 &\gamma'\end{smallmatrix};x,y\right)$
is
\[
\begin{Bmatrix}
x=y & x=1 & x=0 & x=\infty\\
[0]_{(2)} & [0]_{(2)} & [0]_{(2)} & [\alpha]_{(2)}\\
-\alpha-\beta+1&-\alpha-\gamma-\gamma'+1 & \beta+\gamma'&  \gamma 
\end{Bmatrix},
\]
which is rigid.
Hence, we can recover Proposition \ref{prop:F101} by using this.
\end{rem}

%%%%
As is shown in \S\ref{sec:KZ}, the group $S_{\{x,y,0,1,\infty\}} \simeq 
S_{\{x_0,x_1,x_2,x_3,x_4\}}\simeq S_{\{0,1,2,3,4\}}\simeq S_5$ 
acts on KZ-type equation \eqref{eq:KZ} with $n=2$ as a group of coordinate transformations, 
which
corresponds to the group of coordinate transformations under the $(x,y)$ coordinate 
given in \eqref{eq:dynkin}.
The transformations of $\mathcal M^{\alpha,\beta,\gamma}_{\alpha',\beta,\gamma'}$
,$T_{(x,y)\to(y,x)}$, $T_{(x,y)\to(\tfrac yx,\tfrac 1y)}$,
$T_{(x,y)\to(\tfrac 1x,\tfrac 1y)}$ and $T^{F_1}_{0\leftrightarrow1}$
correspond to the transposition $(0,1)$, $(1,2)$, $(3,4)$ and $(2,3)$ in $S_{\{0,1,2,3,4\}}$,
respectively. They generate $S_{\{0,1,2,3,4\}}$. 

In this section, $\tilde X$ denotes the space obtained by resolving 
the singularities 
$\mathrm{Sing}(\mathcal M^{\alpha,\beta,\gamma}_{\alpha',\beta,\gamma'})\subset
\mathbb P^1\times\mathbb P^1$ at 
$(0,0)$, $(1,1)$ and $(\infty,\infty)$.
In  $\tilde X$ there are 10 singular lines parametrized by $\{i,j\}$ $(0\le i<j\le 4)$
which corresponds to $x_i=x_j$. Here $\{2,4\}$ corresponds to the exceptional fiber
at $(0,0)$. There are 
15 normally crossing singular points parametrized by
$\bigl\{\{i,j\},\{k.\ell\}\bigr\}$ with $\{i,j,k,\ell\}\subset\{0,1,2,3,4\}$ and
$\#\{i,j,k,\ell\}=4$.
In each singular line $\{i,j\}$ there are three normally crossing singular points 
by the intersection of lines $\{k,\ell\}$ with $\{k,\ell\}\subset \{0,1,2,3,4\}\setminus 
\{i,j\}$. 

The connection formula from $(0^2,0)$ to $(0,\infty)$ given 
in Theorem~\ref{thm:conenct2} corresponds to the ordered pair
$\bigl(\{0,3\},\{2,4\}\},\{\{0,3\},\{1,4\}\} \bigr)$.
The group $S_{\{0,1,2,3,4\}}$ simply and transitively acts on the
set of singular lines with ordered two singular points in the lines.
Hence applying suitable coordinate transformations corresponding to elements
in $S_{\{0,1,2,3,4\}}$, 
we get the connection formula between normally crossing points
belonging to any singular line in $\tilde X$.

The subgroup of $S_{\{0,1,2,3,4\}}$ which keeps the exceptional line at $(0,0)$ corresponding
to $x_2=x_4$ is generated by transpositions $(0,1)$, $(1,3)$ and $(2,4)$, 
which correspond to $(x,y)\mapsto (y,x)$, 
$(\frac {y-x}{y-1},\frac y{y-1})$ and  
$(\frac x{x-1},\frac y{y-1})$, respectively. 
The subgroup is isomorphic to $S_3\times S_2$, 
Applying this subgroup to 
$F
 \Bigl(\begin{smallmatrix}\alpha&\beta&\gamma\\0&0&\gamma'
\end{smallmatrix};x,y\bigr)$
and $F_1(a;b,c;d;x,y)$, we obtain the identity between 
transformed 12 functions as in the case of Kummer's formula of Gauss hypergeometric
function.

For example, 
since $(2\ 4)=(3\ 4)(2\ 3)(3\ 4)$, the corresponding transformation of the system is given by
$T_{(x,y)\to(\frac1x,\frac1y)}\circ T^{F_1}_{0\leftrightarrow 1}\circ T_{(x,y)\to(\frac1x,\frac1y)}$ and therefore it follows from Theorem~\ref{prop:transformations} and 
Proposition~\ref{prop:F101} that
$\mathcal M^{\alpha,\beta,\gamma}_{\alpha',\gamma',\beta'}$ and its solution $u(x,y)$ 
are transformed into 
$\mathcal M^{\alpha,\beta,1-\alpha'-\beta'-\gamma-\gamma'}_{\alpha',\gamma',\beta'}$
and $(1-x)^{-\alpha}(1-y)^{-\beta}u(\tfrac x{x-1},\tfrac y{y-1})$.
Hence we have
\begin{equation}
 F\Bigl(
\begin{smallmatrix}
 \alpha & \beta & \gamma\\
 0 & 0 & \gamma'
\end{smallmatrix}
;x,y\Bigr)=(1-x)^{-\alpha}(1-y)^{-\beta}
 F\Bigl(
\begin{smallmatrix}
 \alpha & \beta & 1-\gamma-\gamma'\\
 0 & 0 & \gamma'
\end{smallmatrix}
;\frac x{x-1},\frac y{y-1}\Bigr).
\end{equation}
Using the transformation $T_{(x,y)\to(\frac xy,\frac 1y)}\circ T^{F_1}_{0\leftrightarrow1}\circ T_{(x,y)\to(\frac xy,\frac1y)}$ corresponding to $(1\ 3)=(1\ 2)(2\ 3)(1\ 2)$, we have 
\begin{equation}
 F\Bigl(
\begin{smallmatrix}
 \alpha & \beta & \gamma\\
 0 & 0 & \gamma'
\end{smallmatrix}
;x,y\Bigr)=(1-y)^{-\alpha}
F\Bigl(
\begin{smallmatrix}
 \alpha & 1-\alpha-
 \beta-\gamma'& \gamma\\
 0 & 0 & \gamma'
\end{smallmatrix}
;\frac{y-x}{y-1},\frac y{y-1}\Bigr).
\end{equation}

The subgroup of $S_{\{0,1,2,3,4\}}$ fixing the point $(0,\infty)$ in $\tilde X$,
which corresponds to $\bigl\{\{0,3\},\{1,4\}\bigr\}$, is generated by
$(0\ 3)$, $(1\ 4)$ and $(0\ 1)(3\ 4)$.  
The subgroup is
isomorphic to $W\!_{B_2}$, the reflection group of the root space of type $B_2$.
Hence we have identities among 8 functions obtained by transforming 
$G_2(a,a';b,b';x,y)$ by the corresponding coordinate transformations
(the formula is in \cite[\S1.3]{Mi2}).

%%%%%%%%%%%%%%%%%%%%%%%%%%%%%%%%%%%%%%%%%%%%%%%%%%%%%%%%%%%%%%%%%%%%%%%%%%%
\subsection{Example: Generalization of Appell's $F_2$ and $F_3$}\label{sec:F2}
In this section we study the hypergeometric functions under the condition (F2) or (F3). 
We put
\begin{align*}
I_{p,q}\left(\begin{smallmatrix}
\boldsymbol \alpha&\boldsymbol \beta&\gamma\\
 \boldsymbol \alpha'&\boldsymbol \beta'
\end{smallmatrix};x,y\right)&:=
F^{p-1,q-1,1}_{p,q,0}\left(
\begin{smallmatrix}
\boldsymbol \alpha&\boldsymbol \beta&\gamma\\
 \boldsymbol \alpha'&\boldsymbol \beta'
\end{smallmatrix};x,y\right),\\
J_{p,q}\left(\begin{smallmatrix}
\boldsymbol \alpha&\boldsymbol \beta\\ 
 \boldsymbol \alpha'&\boldsymbol \beta'&\gamma
\end{smallmatrix};x,y\right)&:=
F^{p,q,0}_{p-1,q-1,1}\left(
\begin{smallmatrix}
\boldsymbol \alpha&\boldsymbol \beta&\\
 \boldsymbol \alpha'&\boldsymbol \beta'&\gamma
\end{smallmatrix};x,y\right). 
\end{align*}
Note that the transformation $(x,y)\mapsto (\frac1x,\frac1y)$
induces 
$\mathcal M^{\boldsymbol \alpha,\boldsymbol \beta,\boldsymbol \gamma}_{\boldsymbol \alpha',\boldsymbol \beta',\boldsymbol \gamma'}\mapsto
\mathcal M_{\boldsymbol \alpha,\boldsymbol \beta,\boldsymbol \gamma}^{\boldsymbol \alpha',\boldsymbol \beta',\boldsymbol \gamma'}$.

A KZ-type equation satisfied by 
$I_{p,q}\left(\begin{smallmatrix}
\boldsymbol\alpha&\boldsymbol\beta&\gamma\\\boldsymbol\alpha',0 &\boldsymbol\beta',0&\emptyset 
\end{smallmatrix}
;x,1-y\right)$
is constructed by transformations of KZ-type equations 
according to the integral transformation of $(y-x)^{-\gamma_{q-1}}$ in a neighborhood of $(0,1)$.
Then the residue matrices $A_{i,j}$ of the KZ-type equation have the following property.

\begin{thm}[F2]\label{thm:F2sim}
The simultaneous eigenvalues with their multiplicities of the pair of commuting residue matrices 
at the 15 normally crossing points are give by \begin{align*}
[A_{01}:A_{23}]&=\left\{[-\alpha''-\beta''-\gamma:\gamma],\,[0:\gamma]_{p+q-1},\,[0:\alpha_i+\beta_j]\right\},\allowdisplaybreaks\\
[A_{01}:A_{24}]&=\left\{[-\alpha''-\beta''-\gamma:-\beta''-\gamma],\,[0:-\beta''-\gamma]_{p-1},\,[0:\alpha'_i]_{q-1}\right\},\allowdisplaybreaks\\
[A_{01}:A_{34}]&=\left\{[-\alpha''-\beta''-\gamma:-\alpha''-\gamma],\,[0:\beta'_j]_{p-1},\,[0:-\alpha''-\gamma]_{q-1}\right\},\allowdisplaybreaks\\
[A_{02}:A_{13}]&=\left\{[0:\alpha_i-\beta''-\gamma],\,[0:0]_{pq-p-q+2},\,[\beta_j-\alpha''-\gamma:0]\right\},\allowdisplaybreaks\\
[A_{02}:A_{14}]&=\left\{[0:\alpha'_i+\gamma],\,[0:\beta_j]_{p-1},\,[\beta_j-\alpha''-\gamma:\beta_j]\right\},\allowdisplaybreaks\\
[A_{02}:A_{34}]&=\left\{[0:-\alpha''-\gamma],\,[0:\beta'_j]_{p-1},\,[\beta_j-\alpha''-\gamma:-\alpha''-\gamma]\right\},\allowdisplaybreaks\\
[A_{03}:A_{12}]&=\left\{[\alpha'_i:\beta'_j]\right\},\allowdisplaybreaks\\
[A_{03}:A_{14}]&=\left\{[\alpha'_i:\alpha'_i+\gamma],\,[\alpha'_i:\beta_j]\right\},\allowdisplaybreaks\\
[A_{03}:A_{24}]&=\left\{[\alpha'_i:-\beta''-\gamma],\,[\alpha'_i:\alpha'_i]_{q-1}\right\},\allowdisplaybreaks\\
[A_{04}:A_{12}]&=\left\{[[\alpha_i:\beta'_j],\,[\beta'_j+\gamma:\beta'_j]\right\},\allowdisplaybreaks\\
[A_{04}:A_{13}]&=\left\{[\alpha_i:\alpha_i-\beta''-\gamma],\,[\alpha_i:0]_{q-1},\,[\beta'_j+\gamma:0]\right\},\allowdisplaybreaks\\
[A_{04}:A_{23}]&=\left\{[\alpha_i:\gamma],\,[\beta'_j+\gamma:\gamma],\,[\alpha_i:\alpha_i+\beta_j]\right\},\allowdisplaybreaks\\
[A_{12}:A_{34}]&=\left\{[\beta'_j:-\alpha''-\gamma],\,[\beta'_j:\beta'_j]_{p-1}\right\},\allowdisplaybreaks\\
[A_{13}:A_{24}]&=\left\{[\alpha_i-\beta''-\gamma:-\beta''-\gamma],\,[0:-\beta''-\gamma],\,[0:\alpha'_i]_{q-1}\right\},\allowdisplaybreaks\\
[A_{14}:A_{23}]&=\left\{[\alpha'_i+\gamma:\gamma],\,[\beta_j:\gamma],\,[\beta_j:\alpha_i+\beta_j]\right\}.%
\end{align*}
\end{thm}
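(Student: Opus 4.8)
The plan is to mirror the proof of Theorem~\ref{thm:F1sim}: realize the KZ-type equation for $I_{p,q}$ by successive middle convolutions and additions applied to a rank-one seed, and propagate the $15$ pairs of simultaneous eigenvalue data through each step using Theorem~\ref{thm:TKz}. The seed is $I_{1,1}=F^{0,0,1}_{1,1,0}=(1-x-y)^{-\gamma}$, of rank one; by Theorem~\ref{thm:IntegralRepresentation} (the case $r>r'$) the function $I_{p,q}$ is obtained from it by applying $p-1$ transformations $K_x^{1-\alpha'_i-\alpha_i,\alpha_i}$ and $q-1$ transformations $K_y^{1-\beta'_j-\beta_j,\beta_j}$, and by \eqref{eq:Kmc} each such $K$ is an addition, a middle convolution, and another addition, so the construction never leaves the class of KZ-type equations (with $I_{p,q}$ the last component of a solution, cf.\ Remark~\ref{rem:KZ}). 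To read off the residue matrices it is convenient to work after the substitution $y\mapsto 1-y$, which by Theorem~\ref{thm:Sing} carries the singular line $V_1=\{x+y=1\}$ onto the diagonal $\{x=y\}$; in these coordinates the seed is $(y-x)^{-\gamma}$, satisfying \eqref{eq:KZxy} with $A_{01}=-\gamma$ and all other free residue matrices zero, so that $A_{23}=\gamma$ by \eqref{eq:homogeneous_relation} and every pair $[A_{ij}:A_{kl}]$ is a single scalar at the seed.

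I would then induct on $(p,q)$, the base case $(1,1)$ being the seed just described, where the lists degenerate to the stated scalars (e.g.\ $[A_{01}:A_{23}]=\{[-\gamma:\gamma]\}$). For the step $I_{p,q}\rightsquigarrow I_{p+1,q}$ I use the $x$-direction transformation $K_x^{1-\alpha'_p-\alpha_p,\alpha_p}=C\,\Ad(x^{\alpha_p})\circ\mathrm{mc}_{-\alpha_p-\alpha'_p}\circ\Ad(x^{\alpha'_p})$ exactly as in \eqref{eq:ptop1}. Applying Theorem~\ref{thm:TKz} to the inductive lists yields, for each of the $15$ pairs, a threefold union of simultaneous eigenvalue families; for a generic convolution parameter one has $\mathcal L=\mathcal L_1\oplus\mathcal L_2$ as in \eqref{eq:L1L2}, and passing to the quotient $\C^{3N}/(\mathcal L_1\oplus\mathcal L_2)$ deletes precisely those eigenspaces contained in $\mathcal L_1\oplus\mathcal L_2$. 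Reindexing the two new parameters as $\alpha_p,\alpha'_p$ reproduces the claimed list with $p$ replaced by $p+1$. The companion step $I_{p,q}\rightsquigarrow I_{p,q+1}$ then follows without further computation from the symmetry $T_{(x,y)\to(y,x)}$ of Proposition~\ref{prop:transformations}(1), which exchanges $(\boldsymbol\alpha,\boldsymbol\alpha')$ with $(\boldsymbol\beta,\boldsymbol\beta')$ and $p$ with $q$ while fixing $\gamma$; together with the base case, these two steps reach every $(p,q)$ with $p,q\ge1$.

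The main obstacle is the same bookkeeping that dominates the proof of Theorem~\ref{thm:F1sim}: for all $15$ pairs one must determine exactly which simultaneous eigenspaces are absorbed into $\mathcal L_1\oplus\mathcal L_2$ and with what residual multiplicity. This is more delicate here than in the $F_1$ case, because the asymmetry $r=1$, $r'=0$ leaves only the symmetry $S_{\{x,y\}}$ rather than the full $S_3$ acting on $\{p,q,r\}$; in particular the single parameter $\gamma$ (which has no dual $\gamma'$) is never created by the induction but is carried along from the seed, so its multiplicities in each list must be checked directly---this is the source of the special multiplicities such as $pq-p-q+2$ appearing in $[A_{02}:A_{13}]$. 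Finally, since the entries of the lists are meromorphic in the parameters, the identities obtained for generic $\mu$ extend to all parameters, just as in the proof of Theorem~\ref{thm:rank}; and, as noted in Remark~\ref{ewm:F1sim}, the entire finite computation can be verified for small $(p,q)$ with the \texttt{Risa/Asir} library of \cite{Or}.
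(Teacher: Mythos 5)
Your proposal follows essentially the same route as the paper, which omits the proof precisely because it is the induction on $(p,q)$ you describe: start from the rank-one seed $(y-x)^{-\gamma}$ (that is, $I_{1,1}$ after $y\mapsto 1-y$), increase $p$ by one via $K_x^{1-\alpha'_p-\alpha_p,\alpha_p}$ and Theorem~\ref{thm:TKz} with $\dim\mathcal L_1=pq-1$ and $\dim\mathcal L_2=pq-q+1$, and obtain the $q$-step from the $x\leftrightarrow y$ symmetry. The one bookkeeping point worth making explicit is that in the normalization $(x_0,x_1,x_2,x_3,x_4)=(x,1-y,1,0,\infty)$ this symmetry acts on the indices as $(0\,1)(2\,3)$ rather than $(0\,1)$ when you transfer the fifteen lists.
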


The proof is omitted since it is an induction on $(p,q)$ 
which is similar to the proof of Theorem~\ref{thm:F1sim}. 
We only remark that in one step of the induction we increase $p$ by one by applying
Theorem~\ref{thm:TKz} to our setting with $\dim\mathcal L_1=pq-1$ and $\dim\mathcal L_2=pq-q+1$.

The following Riemann scheme and spectral type of the corresponding KZ-type equation
are obtained from Theorem~\ref{thm:F2sim}.

\bigskip
\noindent
\begin{tabular}{|r|rrrrr|c|} 
\multicolumn{7}{c}{Spectral type of $I_{p,q}$ (rank: $pq$)}\\
\hline
& $x_0=x$& $x_1=1-y$& $x_2=1$& $x_3=0$& $x_4=\infty$& idx\\ \hline
$x_0$& & $(pq-1)1$& $S_2$& $q^p$& $q^{p-1}1^q$& $2$\\
$x_1$& $(pq-1)1$& & $p^q$& $S_1$ & $p^{q-1}1^p$& $2$\\
$x_2$& $S_2$& $p^q$& & $S$ & $(q-1)^pp$ & \\
$x_3$& $q^p$&$S_1$& $S$ & & $(p-1)^qq$& \\
$x_4$& $q^{p-1}1^q$& $q^{p-1}1^q$ & $(q-1)^pp$& $(p-1)^qq$& & \\ \hline
\multicolumn{7}{c}{$S=(p+q-1)1^{(p-1)(q-1)},\ S_1=(pq-p+1)1^{p-1},\ 
S_2=(pq-q+1)1^{q-1}$}
\end{tabular}
\begin{equation*}\label{eq:RSF2ext}\begin{split}
&\left\{\begin{matrix}
A_{01}&A_{02}&A_{03}&A_{04}&A_{12}\\
[0]_{pq-1} & [0]_{pq-q+1} & [\alpha'_i]_q & [\alpha_i]_q & [\beta'_j]_p \\
-\alpha''-\beta''-\gamma& 
\beta_j  -\alpha''-\gamma&  &\beta'_j+\gamma   & 
\end{matrix}\right.\\
&\quad\left.\begin{matrix}
A_{13}&A_{23}&A_{14}&A_{24}&A_{34}\\
 [0]_{pq-p+1} & [\gamma]_{p+q-1} & [\beta_j]_p & [\alpha'_i]_{q-1} & [\beta'_j]_{p-1}\\
 \alpha_i-\beta''-\gamma& \alpha_i+\beta_j & \alpha'_i+\gamma & [-\beta'' -\gamma]_p & [-\alpha''-\gamma]_q\\
\end{matrix}\right\}
\end{split}\end{equation*}
\centerline{\begin{tabular}{|r|rrrrr|r|}
\multicolumn{7}{c}{$F_2:p=q=2\,($rank$:4)$}\\\hline
& $x_0$& $x_1$& $x_2$& $x_3$& $x_4$& idx\\ \hline
$x_0$& & 31& 31& 22& 211& $2$\\
$x_1$& 31& & 22& 31& 211& $2$\\
$x_2$& 31& 22& & 31& 211& $2$\\
$x_3$& 22& 31& 31& & 211& $2$\\
$x_4$& 211& 211& 211& 211& & $- 8$\\ \hline
\end{tabular}}

\medskip
\centerline{\begin{tabular}{|r|rrrrr|r|} 
\multicolumn{7}{c}{$I_{4,3}:p=4,\,q=3\,($rank$:12)$}\\\hline
& \!{\scriptsize$x_0=x$}\!& \!\!\!{\scriptsize$x_1=1-y$\!\!\!\!}
& \!{\scriptsize$x_2=1$}\!& \!\!\!{\scriptsize$x_3=0$}\!& 
\scriptsize{$x_4=\infty$}& idx\\ \hline
$x_0$& & $(11)1$& $(10)1^2$ & $3^4$& $3^31^3$& $2$\\
$x_1$& $(11)1$& & $4^3$ & $91^3$ & $4^21^3$& $2$\\
$x_2$& $(10)1^2$& $4^3$ & & $61^6$ &$42^4$& $-64$ \\
$x_3$&$3^4$ & $91^3$& $61^6$ & & $3^4$ &  $-90$ \\
$x_4$& $3^31^3$ & $4^21^4$ & $42^4$ & $3^4$& & $-154$\\ \hline
\end{tabular}}

\medskip
%{\color{blue}Rank} : $pq$

We note that the the transformation $(x,y)\mapsto (y,x)$ of the coordinate of
$I_{p,q}\left(\begin{smallmatrix}
\boldsymbol \alpha&\boldsymbol \beta&\gamma\\
 \boldsymbol \alpha'&\boldsymbol \beta'
\end{smallmatrix};x,y\right)
$ corresponds to the element $(0\ 1)(2\ 3)\in S_{0,1,2,3,4}$
because of the 
correspondence $(x_0,x_1,x_2,x_3,x_4)=(x,1-y,1,0,\infty)$.

The ordinary differential equation for the variable $x_0$ is rigid and its rigid spectral type with direct decompositions are
\begin{align*}
 &(pq-1)1,((p-1)q+1)1^{q-1},q^p,q^{p-1}1^q\\
 &\quad=(p(q-1)-1)1,((p-1)(q-1)+1)1^{q-1},(q-1)^p,(q-1)^{p-1}1^{q-1}\\
 &\qquad \oplus p0,(p-1)1,1^p,1^{p-1}1\\
 &\quad=10,10,1,01\oplus (pq-2)1,((p-1)q)1^{q-1},(q-1)q^{p-1},q^{p-2}1^{q-1}\\
 &\quad=q(10,10,1,10)\oplus((p-1)q-1)1,((p-2)q+1)1^{q-1},q^{p-1},q^{p-2}1^q
\end{align*}
The direct decomposition is a certain decomposition of spectral type into two parts (cf.~\cite[\S10.1]{Ow}) which corresponds to $\Delta(\mathbf m)$ introduced in Preface of \cite{Ow}.
From the decomposition we get the connection formula and the reducible condition decomposing into the corresponding subsystems etc.
These are explained in \cite{Ow} and \cite{Oir}.  In particular,
the algorithm to get the direct decomposition and the condition for the irreducibility of the corresponding rigid KZ-type equation are explained in \cite{Oir}.

For example, 
the second part of the first decomposition in the above is
$p0,(p-1)1,1^p,1^{p-1}1$
and it corresponds to the following $q(q-1)$ subschemes of the original Riemann scheme:
\begin{align*}
\begin{Bmatrix}
x_0=x_1 & x_0=x_2 & x_0=x_3 & x_0=x_4\\
[0]_p & [0]_{p-1} &\alpha'_{i'} & \alpha_i\\
\emptyset &\!\!\beta_\nu-\alpha''-\gamma\!\!&&\!\!\beta_{\nu'}+\gamma
\end{Bmatrix}\ \ (1\le\nu\le q,\ 1\le\nu'\le q-1).
\end{align*} 
Then, in the subscheme,  if the sum of all the exponents counting with their multiplicities is an integer, the system is reducible.  Namely, the condition
\begin{align*}
&p\cdot 0+(p-1)\cdot 0+(\beta_\nu-\alpha''-\gamma)+\sum\alpha'_{i'}+\sum\alpha_i+\beta_{\nu'}+\gamma\\
&\quad=\beta_\nu+\beta'_{\nu'}\in\mathbb Z
\end{align*}
means the reducibility of the system.
We get the same condition from the first part of the first decomposition. 
Then the necessary and sufficient condition for the reducibility is obtained from the direct 
decompositions.  Here we choose a part without multiplicity such as $10,10,1,10$ in the above
$q(10,10,1,10)$.
Thus we have the condition for the irreducibility:
\[ 
\alpha_i+\alpha'_{i'},\ 
\beta_j+\beta'_{j'},\ \alpha'_i+\beta'_j+\gamma\  \notin\mathbb Z,
\]
which coincides with Theorem~\ref{thm:irreducibility}.

Since the spectral type of the system for the variable $x$ is 
\[
 \mathbf m_x:(pq-1)1, (pq-q+1)1^{q-1},q^p,q^{p-1}1^q,
\]
one step of Katz-Haraoka's reduction algorithm by an addition and a middle convolution reduces the rank by   
\[d(\mathbf m_x)=(pq-1)+(pq-q+1)+q+q-2pq=q,\]
which means $p\mapsto p-1$.
When $p=1$, the spectral type of the 
differential equation for $x$-variable is
\[
 \underline{(q-1)}1,\underline{1}1^{q-1},\underline{q},\underline{0}1^q
\]
which equals the spectral type of the equation satisfied by ${}_qF_{q-1}(x)$
\bigskip

\begin{tikzpicture}
\draw
(0,2) circle[radius=0.6]
(2,0) circle[radius=0.6]
(4,4) circle[radius=0.6];
\draw[densely dotted]
(-1,2)--(5,2)  (2,-1)--(2,5) (-1,3)--(3,-1) (3,5)--(5,3);
\draw
(-1,0)--(5,0) (-1,4)--(5,4) (0,-1)--(0,5) (4,-1)--(4,5)
;
\draw
node at (0,5.2) {$x=0$}
node at (2,5.2) {$x=1$}
node at (4,5.2) {$x=\infty$}
node at (5.6,0) {$y=0$}
node at (5.6,2) {$y=1$}
node at (5.6,4) {$y=\infty$}
node at (-0.35,-0.2) {\tiny$4I_{p,q}$}
node at (0,4) {$\circ$}
node at (4,0) {$\circ$}
node at (0,1.4) {$\circ$}
node at (1.4,0) {$\circ$}
node at (2,4) {$\circ$}
node at (4,2) {$\circ$}
node at (3.4,4) {$\bullet$}
node at (4,3.4) {$\bullet$}
node at (0.6,2) {$\star$}
node at (2,0.6) {$\star$}
node at (0.42,1.58) {$\star$}
node at (1.58,0.42) {$\star$}
node at (2,2) {$\diamond$}
node at (3.4,4) {$\bullet$}
node at (3.58,4.42) {$\diamond$}
node at (3.4,4) {$\bullet$}
node at (0,0){$\star$}
node at (1.3,3.8){\tiny$\leftrightarrow\!2P_{p,q}$}
node at (3.1,3.8) {\tiny$Q_{q,p}$}
node at (5,4.4){\tiny${J_{p,q}\circlearrowright}$}
node at (-0.4,3.8) {\tiny$2H_{p,q}$}
;
\end{tikzpicture}
\ \ 
\raisebox{1cm}{\begin{tikzpicture}
\node at (0.23,0) {$I_{p,q}=F^{p-1,q-1,1}_{p,q,0}(x,y)$};
\node at (0.29,-0.7) {$J_{p,q}=F_{p-1,q-1,1}^{p,q,0}(\tfrac 1x,\tfrac 1y)$}; 
\node at (0.05,-1.4) {$H_{p,q}=G^{p-1,q,1}_{p,q-1,0}(x,-\tfrac1y)$};
\node at (0.3,-2.1) {$P_{p,q}=F^{p-1,0,q}_{p,1,q-1}(-\tfrac xy,\tfrac 1y)$};
\node at (0.3,-2.8) {$Q_{q,p}=G^{0,q,p}_{1,q-1,p-1}(\tfrac 1x,\tfrac xy)$};
\end{tikzpicture}}
%$F_2$: $p=q=2$

%%%%%%%%%%%%%%%%%%%%%%%%%%%%%%%%%
%%%%%%%  F_2 & F_3 %%%%%%%%%%%%%%
%%%%%%%%%%%%%%%%%%%%%%%%%%%%%%%%%
\subsection{Appell's $F_2$ and $F_3$}
\label{sec:AppellF2}
Appell's $F_2$ is given by
\begin{gather*}
F_2(a;b,b';c,c';x,y)=F^{1,1,1}_{2,2,0}\left(
\begin{smallmatrix}\alpha&\beta&\gamma\\ \alpha',0 & \beta',0 & \emptyset\end{smallmatrix};x,y
\right),\\
 \quad \alpha=b,\ \beta=b',\ \gamma=a,\ \alpha'=1-c,\ \beta'=1-c'
\end{gather*}
and it satisfies $\mathcal M^{\alpha,\beta,\gamma}_{(\alpha',0),(\beta',0),\emptyset}$.
The solution 
$F^{2,2,0}_{1,1,1}\left(
\begin{smallmatrix}
\alpha',0 & \beta',0 & \emptyset\\
\alpha&\beta&\gamma
\end{smallmatrix};\alpha,\beta;\frac1x,\frac1y
\right)
$
to this system
at $(\infty,\infty)$ 
corresponds to Appell's $F_3$.
Note that 
%%%
\begin{equation}\label{eq:F2yinf}
  T_{(x,y)\to(-\frac xy,\frac 1y)}
  \mathcal M^{\alpha,\beta,\gamma}_{\boldsymbol\alpha',\boldsymbol\beta',\emptyset}
 = \mathcal M^{\alpha,\emptyset,\boldsymbol\beta'}_{\boldsymbol\alpha',\gamma,\beta}.
\end{equation}
%%%
\begin{lem}\label{lem:specialF2coord}
 $T_{(x,y)\to(-x,1-y)}\bar{\mathcal M}^{\alpha,\emptyset,(\gamma_1,\gamma_2)}
 _{(\alpha',0),0,\gamma'}
 = \bar{\mathcal M}^{\alpha,\emptyset,(\gamma_1,\gamma_2)}_{(\alpha',0),0,1-\gamma_1-\gamma_2-\gamma'}$.
\end{lem}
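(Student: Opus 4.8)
The plan is to work directly with the three reduced operators $\bar P_1,\bar P_2,\bar P_{12}$ attached to $\bar{\mathcal M}^{\alpha,\emptyset,(\gamma_1,\gamma_2)}_{(\alpha',0),0,\gamma'}$, read off from Remark~\ref{rem:red0} by specializing $p=1,\,q=0,\,r=2,\,p'=2,\,q'=1,\,r'=1$: explicitly $\bar P_1=\p_x(\vartheta_x-\alpha')(\vartheta_x+\vartheta_y-\gamma')-(\vartheta_x+\alpha)(\vartheta_x+\vartheta_y+\gamma_1)(\vartheta_x+\vartheta_y+\gamma_2)$, $\bar P_2=\p_y(\vartheta_x+\vartheta_y-\gamma')-(\vartheta_x+\vartheta_y+\gamma_1)(\vartheta_x+\vartheta_y+\gamma_2)$ and $\bar P_{12}=\p_y(\vartheta_x+\alpha)-\p_x(\vartheta_x-\alpha')$. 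The first step is to record the operator identity $\bar P_1=(\vartheta_x+\alpha)\bar P_2-\bar P_{12}(\vartheta_x+\vartheta_y-\gamma')$, valid in the Weyl algebra because $\p_y(\vartheta_x+\alpha)=(\vartheta_x+\alpha)\p_y$; this is special to the present parameter shape (it uses $q=0$). It shows $\bar P_1$ lies in the left ideal generated by $\bar P_2$ and $\bar P_{12}$, so it suffices to track these two generators, and the same identity with $\gamma'$ replaced by $1-\gamma_1-\gamma_2-\gamma'$ shows the analogous fact for the target system.

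Next I compute the coordinate change $(X,Y)=(-x,1-y)$ on the basic operators: from $x=-X,\,y=1-Y$ one gets $\p_x=-\p_X$, $\p_y=-\p_Y$, $\vartheta_x=\vartheta_X$, $\vartheta_y=\vartheta_Y-\p_Y$, hence $\vartheta_x+\vartheta_y=\vartheta_X+\vartheta_Y-\p_Y$. Applying this to $\bar P_{12}$ yields $-\bigl[\p_Y(\vartheta_X+\alpha)-\p_X(\vartheta_X-\alpha')\bigr]$, i.e. minus the mixed operator of the target system, with $\alpha,\alpha'$ unchanged. Here the sign flip $x\mapsto-x$, which is exactly $\epsilon=(-1)^L=-1$ for this level-one system, is what keeps the two terms of $\bar P_{12}$ in balance; with $x\mapsto x$ the relative sign would be wrong. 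This explains why the transformation in the statement is $(-x,1-y)$ and not $(x,1-y)$.

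The heart of the proof is the transform of $\bar P_2$, which I claim equals, on the nose, the second operator $\p_Y(\vartheta_X+\vartheta_Y-\gamma'_{\mathrm{new}})-(\vartheta_X+\vartheta_Y+\gamma_1)(\vartheta_X+\vartheta_Y+\gamma_2)$ of $\bar{\mathcal M}^{\alpha,\emptyset,(\gamma_1,\gamma_2)}_{(\alpha',0),0,\gamma'_{\mathrm{new}}}$, where $\gamma'_{\mathrm{new}}=1-\gamma_1-\gamma_2-\gamma'$. Writing $\Theta:=\vartheta_X+\vartheta_Y$ and $\nu:=\Theta-\p_Y$ (the transform of $\vartheta_x+\vartheta_y$), the key commutation is $\p_Y\nu=(\nu+1)\p_Y$, identical in form to $\p_Y\Theta=(\Theta+1)\p_Y$. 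Expanding $-\p_Y(\nu-\gamma')-(\nu+\gamma_1)(\nu+\gamma_2)$ in $\Theta$ and $\p_Y$ (keeping $\p_Y$ to the right), the $\p_Y^2$ contributions cancel automatically and the $\p_Y$-coefficient collapses to $\Theta+\gamma_1+\gamma_2+\gamma'$, leaving $(\Theta+\gamma_1+\gamma_2+\gamma')\p_Y-\Theta^2-(\gamma_1+\gamma_2)\Theta-\gamma_1\gamma_2$; the target operator produces the same expression since $1-\gamma'_{\mathrm{new}}=\gamma_1+\gamma_2+\gamma'$. Note that $\alpha,\alpha'$ never enter, so $\gamma'_{\mathrm{new}}$ depends only on $\gamma_1,\gamma_2,\gamma'$, in contrast to the $F_1$ computation of Proposition~\ref{prop:F101} where the first equation was used and all exponents appeared.

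Finally I conclude by comparing left ideals. Since the pushforward (an affine substitution, introducing no denominators) sends $\bar P_2$ to the target's second operator and $\bar P_{12}$ to minus the target's mixed operator, and since the identity of the first paragraph places both $\bar P_1$ and the target's $\bar P_1$ inside the ideal generated by the respective second and mixed operators, the image ideal and the target ideal coincide; this is the asserted equality of $D$-modules. The only point demanding care is the non-commutative expansion of $\bar P_2$: one must order $\Theta$ and $\p_Y$ consistently and apply $\p_Y\Theta=(\Theta+1)\p_Y$ throughout, but there is no genuine obstacle, as the troublesome $\p_Y^2$-terms vanish on their own.
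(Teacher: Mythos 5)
Your argument is correct, and it reorganizes the paper's computation in a genuinely tidier way. The paper's own proof works on a solution $u$: it keeps all three transformed equations of the source system as rewriting rules and verifies the two nontrivial equations of the target system directly, the verification of the first equation being the longest step (it invokes both the second and the third transformed equations). You instead prove two \emph{exact} operator identities under the substitution $(x,y)=(-X,1-Y)$ -- the transform of $\bar P_{12}$ is $-\bar P_{12}^{\mathrm{new}}$ and the transform of $\bar P_2$ is $\bar P_2^{\mathrm{new}}$ on the nose -- and then dispose of $\bar P_1$ entirely via the syzygy $\bar P_1=(\vartheta_x+\alpha)\bar P_2-\bar P_{12}(\vartheta_x+\vartheta_y-\gamma')$, which holds precisely because $q=0$ kills the factor $\prod_j(\vartheta_y+\beta_j)$ that would otherwise sit in front of $\bar P_1$. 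The core cancellation (the $\p_Y^2$ terms dropping out and the $\p_Y$-coefficient collapsing to $\Theta+\gamma_1+\gamma_2+\gamma'$) is the same as in the paper's treatment of the second equation, but your syzygy replaces the paper's longest computation and makes the value $1-\gamma_1-\gamma_2-\gamma'$ visibly come only from $\bar P_2$, as you point out. One small item to tidy: as written, $\bar P_{12}(\vartheta_x+\vartheta_y-\gamma')$ has the multiplier on the \emph{right}, so the identity does not literally exhibit $\bar P_1$ in the left ideal $W\bar P_2+W\bar P_{12}$; you should add the one-line commutation $[\bar P_{12},\vartheta_x+\vartheta_y]=\bar P_{12}$, giving $\bar P_{12}(\vartheta_x+\vartheta_y-\gamma')=(\vartheta_x+\vartheta_y-\gamma'+1)\bar P_{12}$, after which the left-ideal membership (and hence your final comparison of ideals, for both the source and the target systems) is complete.
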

\begin{proof} Note that 
$\bar{\mathcal M}^{\alpha,\emptyset,(\gamma_1,\gamma_2)}_{(\alpha',0),0,\gamma'}$ is 
\begin{align*}
\begin{cases}
 \p_x(\vartheta_x-\alpha')(\vartheta_x+\vartheta_y-\gamma')u
  =(\vartheta_x+\alpha)(\vartheta_x+\vartheta_y+\gamma_1)(\vartheta_x+\vartheta_y+\gamma_2)u,\\
 \p_y(\vartheta_x+\vartheta_y-\gamma')u
  =(\vartheta_x+\vartheta_y+\gamma_1)(\vartheta_x+\vartheta_y+\gamma_2)u,\\
  (\vartheta_x+\alpha)\p_yu=\p_x(\vartheta_x-\alpha')u.
\end{cases}
\end{align*}
Putting $(x,y)=(-X, 1-Y)$, we have $(\vartheta_X+\alpha)\p_Yu=\p_X(\vartheta_X-\alpha')u$ and
%$\p_x=-\p_X$, $\p_Y=-\p_Y$, $\vartheta_x=\vartheta_X-\p_X$ and
\begin{align*}
&(\vartheta_X+\alpha)(\vartheta_X+\vartheta_Y+\gamma_1)
(\vartheta_X+\vartheta_Y+\gamma_2)u+\p_X(\vartheta_X-\alpha')(\vartheta_X+\vartheta_Y-\gamma')u
\\&\quad=(\vartheta_X+\alpha)\bigl((\vartheta_X+\vartheta_Y+\gamma_2+1)\p_Y
 +(\vartheta_X+\vartheta_Y+\gamma_1)\p_Y \bigr)u\\
&\quad=(2\vartheta_X+2\vartheta_Y+\gamma_1+\gamma_2+1)\p_X(\vartheta_X-\alpha')u\\
&\quad=\p_X(\vartheta_X-\alpha')(2\vartheta_X+2\vartheta_Y+\gamma_1+\gamma_2-1)u,\\
&(\vartheta_X+\vartheta_Y+\gamma_1)(\vartheta_X+\vartheta_Y+\gamma_2)u
+\p_Y(\vartheta_X+\vartheta_Y-\gamma')u\\
&\quad=(\vartheta_X+\vartheta_Y+\gamma_1)\p_Yu+\p_Y(\vartheta_X+\vartheta_Y+\gamma_2)u\\
&\quad=\p_Y(2\vartheta_X+2\vartheta_Y+\gamma_1+\gamma_2-1)u,
\end{align*}
which implies the lemma.
\end{proof}

\begin{rem}
The above lemma is
valid in the case $\alpha,\,\alpha'\in\mathbb C^{p-1}$ for $p\ge 2$.
Moreover  
$\gamma$, $\gamma_1$ and $\gamma_2$ can be operators 
which commute with $x$, $\p_x$, $y$ and  $\p_y$.
\end{rem}

\begin{prop}\label{prop:specialF2}
One has a relation
\[
 T^{F_2}_{y:0\to1}\mathcal M^{\alpha,\beta,\gamma}_{\boldsymbol\alpha',\boldsymbol\beta',\emptyset}
 = \mathcal M^{\alpha,\emptyset,\boldsymbol\beta'}
 _{\boldsymbol\alpha',\gamma,1-\beta-\beta'_1-\beta'_2},
\]
where
\[
 T^{F_2}_{y:0\to1}=\Ad(y^{\gamma})\circ T_{(x,y)\to(-x,1-y)}\circ \Ad(y^{-\gamma})\circ
 T_{(x,y)\to(-\frac xy,\frac 1y)}.
\]
\end{prop}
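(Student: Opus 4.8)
The plan is to read $T^{F_2}_{y:0\to1}$ as the composite of its four factors and to track how each factor acts on the parameters of the system, so that the whole statement decomposes into three ingredients already available: the coordinate identity \eqref{eq:F2yinf}, the effect of the additions $\Ad(y^{\mp\gamma})$, and the reflection of Lemma~\ref{lem:specialF2coord}. Throughout I abbreviate $\boldsymbol\beta'=(\beta'_1,\beta'_2)$ and keep $\boldsymbol\alpha'=(\alpha',0)$, so that both $\alpha'_{p'}=0$ and, at the appropriate stage, the relevant entry of $\boldsymbol\beta'$ can be made to vanish; this is what will allow me to pass between $\mathcal M$ and its reduced form $\bar{\mathcal M}$ via Remark~\ref{rem:red0}.

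First I would record the parameter action of the additions. Since $\Ad(y^{-\gamma})$ corresponds to $u\mapsto y^{-\gamma}u$, it conjugates $P_1,P_2,P_{12}$ by $y^{-\gamma}$, i.e.\ it substitutes $\vartheta_y\mapsto\vartheta_y+\gamma$ while commuting with multiplication by $x$ and $y$ and with $\vartheta_x$. Hence it leaves $\boldsymbol\alpha,\boldsymbol\alpha'$ untouched (they sit only in the $\vartheta_x$-factors) and sends
\[
\mathcal M^{\boldsymbol\alpha,\boldsymbol\beta,\boldsymbol\gamma}_{\boldsymbol\alpha',\boldsymbol\beta',\boldsymbol\gamma'}\longmapsto
\mathcal M^{\boldsymbol\alpha,\boldsymbol\beta+\gamma,\boldsymbol\gamma+\gamma}_{\boldsymbol\alpha',\boldsymbol\beta'-\gamma,\boldsymbol\gamma'-\gamma},
\]
with the convention that adding $\gamma$ to the empty tuple gives the empty tuple; $\Ad(y^{\gamma})$ is the inverse substitution $\vartheta_y\mapsto\vartheta_y-\gamma$.

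With this bookkeeping in hand the computation is a chain. Applying the rightmost factor via \eqref{eq:F2yinf} gives $\mathcal M^{\alpha,\emptyset,\boldsymbol\beta'}_{\boldsymbol\alpha',\gamma,\beta}$. Applying $\Ad(y^{-\gamma})$ then shifts the third superscript to $(\beta'_1+\gamma,\beta'_2+\gamma)$, turns the subscript $\gamma$ (the $\boldsymbol\beta'$-slot) into $0$, and turns $\beta$ (the $\boldsymbol\gamma'$-slot) into $\beta-\gamma$; since now both $\alpha'_{p'}=0$ and the $\boldsymbol\beta'$-slot equals $0$, this system is the reduced $\bar{\mathcal M}^{\alpha,\emptyset,(\beta'_1+\gamma,\beta'_2+\gamma)}_{(\alpha',0),0,\beta-\gamma}$. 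Lemma~\ref{lem:specialF2coord}, applied with $(\gamma_1,\gamma_2,\gamma')=(\beta'_1+\gamma,\beta'_2+\gamma,\beta-\gamma)$, replaces the last subscript by $1-(\beta'_1+\gamma)-(\beta'_2+\gamma)-(\beta-\gamma)=1-\beta'_1-\beta'_2-\beta-\gamma$. Finally $\Ad(y^{\gamma})$ restores the third superscript to $(\beta'_1,\beta'_2)$, restores the $\boldsymbol\beta'$-slot to $\gamma$, and sends the last subscript to $(1-\beta'_1-\beta'_2-\beta-\gamma)+\gamma=1-\beta-\beta'_1-\beta'_2$, producing exactly $\mathcal M^{\alpha,\emptyset,\boldsymbol\beta'}_{\boldsymbol\alpha',\gamma,1-\beta-\beta'_1-\beta'_2}$.

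The only genuine obstacle I anticipate is justifying the addition rule at the level of the operators $P_1,P_2,P_{12}$ rather than merely on solutions, and checking that the intermediate system really meets the hypotheses of Lemma~\ref{lem:specialF2coord}; in particular that $\Ad(y^{-\gamma})$ produces the vanishing $\boldsymbol\beta'$-entry that licenses the reduction to $\bar{\mathcal M}$. This is exactly why the factor $y^{-\gamma}$ carries the same $\gamma$ that appears in the original $\boldsymbol\gamma$-slot. Once the substitution $\vartheta_y\mapsto\vartheta_y+\gamma$ is verified to commute past the multiplications by $x$ and $y$ as claimed, the remaining content is the elementary arithmetic above, and the equality of systems follows termwise.
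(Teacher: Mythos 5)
Your proposal is correct and follows essentially the same route as the paper: unwind $T^{F_2}_{y:0\to1}$ factor by factor, use \eqref{eq:F2yinf}, track the parameter shifts under the additions, and invoke Lemma~\ref{lem:specialF2coord} for the middle reflection. The only (cosmetic) difference is that the paper's chain additionally conjugates by $x^{\mp\alpha'_1}$, which cancels up to a constant and is not needed since $\boldsymbol\alpha'=(\alpha',0)$ already has a vanishing entry, exactly as you observe.
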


\begin{proof}
The proposition follows from the following sequence of transformations.
    \begin{align*}
& \mathcal M^{\alpha,\beta,\gamma}_{\boldsymbol\alpha',\boldsymbol\beta',\emptyset}
\xrightarrow{T_{(x,y)\to(-\frac xy,\frac 1y)}}
 \mathcal M^{\alpha,\emptyset,\boldsymbol\beta'}_{\boldsymbol\alpha',\gamma,\beta}
\xrightarrow{\Ad(x^{-\alpha'_1}y^{-\gamma})}
 \mathcal M^{\alpha+\alpha'_1,\emptyset,\beta'+\alpha'_1+\gamma}
 _{\boldsymbol\alpha'-\alpha'_1,0,\beta-\alpha'_1-\gamma}\\
&\quad \xrightarrow{T_{(x,y)\to(-x,1-y)}}
 \mathcal M^{\alpha+\alpha'_1,\emptyset,\boldsymbol\beta'+\alpha'_1+\gamma}
 _{\boldsymbol\alpha'-\alpha'_1,0,1-\alpha'_1-\beta-\beta'_1-\beta'_2-\gamma}\\
&\quad\xrightarrow{\Ad(x^{\alpha'_1}y^{\gamma})}
 \mathcal M^{\alpha,\emptyset,\boldsymbol\beta'}
 _{\boldsymbol\alpha',\gamma,1-\beta-\beta'_1-\beta'_2}.
\end{align*}
\end{proof}

\begin{rem} {\rm i)} \ 
Note that 
$T^{F_2}_{y:0\to1}$ acts only on the system
$\mathcal M
 ^{\boldsymbol \alpha, \boldsymbol \beta, \boldsymbol \gamma}
 _{\boldsymbol \alpha',\boldsymbol \beta',\boldsymbol \gamma'} 
$
satisfied by $F^{p-1,1,1}_{p,2,0}\Bigl(
\begin{smallmatrix}
 \boldsymbol \alpha & \boldsymbol \beta & \boldsymbol \gamma\\
 \boldsymbol \alpha'& \boldsymbol \beta'& \boldsymbol \gamma'
\end{smallmatrix}
 x,y\Bigr)$. 

{\rm ii)}
$T_{(x,y)\to(-x,1-y)}\circ T_{(x,y)\to(-\frac xy,\frac 1y)}=T_{(x,y)\to(\frac xy,1-\frac 1y)}$.
Hence  
$y^{-\gamma}u(\frac xy,1-\frac1y)$ is a solution to 
$
\mathcal M
 ^{\alpha, \beta, \gamma}
 _{(\alpha',0),(\beta',0),\emptyset}
$
if $u(x,y)$ is a solution to 
$
\mathcal M
 ^{\alpha, \emptyset, (\beta'+\gamma,\gamma)}
 _{(\alpha',0),0,1-\beta-\beta'-\gamma}
$.
In particular, putting $u(x,y)=F^{1,0,2}_{2,0,1}
\left(\begin{smallmatrix}
\alpha & \emptyset & \beta'+\gamma,\gamma\\
\alpha',0& 0 & 1-\beta-\beta'-\gamma
\end{smallmatrix};x,y\right)$, 
we have a solution to
$
\mathcal M
 ^{\alpha, \beta, \gamma}
 _{(\alpha',0),(\beta',0),\emptyset}
$ around $(0,1)$.
\end{rem}

Olsson \cite{Ol} gives the solution to $\mathcal M
 ^{\alpha, \beta, \gamma}
 _{(\alpha',0),(\beta',0),\emptyset}
$ around $(0,1)$ :
\begin{equation}
F_P(a,b,b',c,c';x,y):=\sum\frac{(a)_{m+n}(a-c'+1)_m(b)_m(b')_n}
 {(a+b'-c'+1)_{m+n}(c)_mm!n!}x^m(1-y)^n.
\end{equation}
Owing to the above remark, we have the identity
\begin{equation}
\begin{split}
F_P(a,b,b',c,c';x,y)&:=
y^{-a}F^{1,0,2}_{2,1,1}\left(\begin{smallmatrix}
b_1& \emptyset & a-c'+1,b'\\ 1-c,0&0&c'-a-b'
\end{smallmatrix};\tfrac xy,1-\tfrac 1y\right),
\end{split}
\end{equation}
which is also proved by the following direct calculation
\begin{align*}
&\sum\frac{(a)_{m+n}(f)_m(g)_n}{(f+g)_{m+n}m!n!}x^my^n
 =\sum_m \frac{(a)_m(f)_m}{(f+g)_mm!}x^m
 \sum_n\frac {(a+m)_n(g)_n}{(f+g+m)_nn!}y^n\\
 &=\sum_m \frac{(a)_m(f)_m}{(f+g)_mm!}x^m(1-y)^{-a-m}\sum_n\frac {(a+m)_n(f+m)_n}{(f+g+m)_nn!}
   \Bigl(\frac y{y-1}\Bigr)^n\\
 &=(1-y)^{-a}\sum \frac{(a)_{m+n}(f)_{m+n}}{(f+g)_{m+n}m!n!}\Bigl(\frac x{1-y}\Bigr)^m \Bigl(\frac y{y-1}\Bigr)^n.
\end{align*}
Here the second equality in the above follows from Kummer's formula for Gauss' hypergeometric 
series.
Applying the integral transformation $K_x^{1-\mu-\lambda,\lambda}$, we have
\begin{align}
&F^{p,1,1}_{p,1,1}\left(\begin{smallmatrix}
 \boldsymbol\alpha,a & \beta & \gamma\\
 \boldsymbol\alpha',0 & 0 & 1-a-\beta
\end{smallmatrix};x,y
\right)=(1-y)^{-\gamma}F^{p-1,0,2}_{p,1,1}\left(\begin{smallmatrix}
 \boldsymbol\alpha & \emptyset &a
,\gamma\\
  \boldsymbol\alpha',0 & 0& 1-a-\beta
\end{smallmatrix};\frac x{1-y},\frac y{y-1}\right).
\end{align}
\begin{rem}\label{rem:symF2}\ {\rm i) [Symmetry of $I_{p,2}$]}
\ 
Since \[T_{(x,y)\to(-\frac xy,\frac 1y)} \circ T_{(x,y)\to(-x,1-y)}\circ T_{(x,y)\to(-\frac xy,\frac 1y)}=T_{(x,y)\to(\frac x{1-y},\frac y{y-1})}\]
and 
$T_{(x,y)\to(-\frac xy,\frac 1y)}\circ T^{F_2}_{y:0\leftrightarrow1}\circ
T_{(x,y)\to(-\frac xy,\frac 1y)}
\mathcal M
 ^{\boldsymbol\alpha, \beta, \gamma}
 _{(\boldsymbol\alpha',0),(\beta',0),\emptyset}=
\mathcal M
 ^{\boldsymbol\alpha, 1-\beta-\gamma,\gamma}
 _{(\boldsymbol\alpha',0),(\beta',0),\emptyset}$, we have
\[F^{p-1,1,1}_{p,2,0}\Bigl(
\begin{smallmatrix}
 \boldsymbol \alpha & \beta & \gamma\\
 \boldsymbol \alpha',0& \beta',0& \emptyset
\end{smallmatrix}
 x,y\Bigr)=(1-y)^{-\gamma}
F^{p-1,1,1}_{p,2,0}\Bigl(
\begin{smallmatrix}
 \boldsymbol \alpha & 1-\beta-\gamma
 & \gamma\\
 \boldsymbol \alpha',0& \beta',0&\emptyset
\end{smallmatrix}
 ;\frac x{1-y},\frac y{y-1}\Bigr).
\]

{\rm ii)\ [Symmetry of $I_{2,2}$]}\ When $p=q=2$, $T_{(x,y)\to(y,x)}$ naturally acts on $\mathcal M
 ^{\alpha, \beta, \gamma}
 _{(\alpha',0),(\beta',0),\emptyset}$ and the function 
$F_2(a;b,b';c.c';x,y)$ has a symmetry of the reflection group  $W\!_{B_2}$ of type $B_2$ since the coordinate transformations 
$T_{(x,y)\to(\frac x{1-y},\frac y{y-1})}$ and $T_{(x,y)\to(y,x)}$ generate $W\!_{B_2}$.
For example,
\begin{align*}
&T_{(x,y)\to(y,x)}\circ T_{(x,y)\to(\frac x{1-y},\frac y{y-1})}\circ T_{(x,y)\to(x,y)}\circ
T_{(x,y)\to(\frac x{1-y},\frac y{y-1})}\\&\qquad =T_{(x,y)\to(\frac x{x+y-1},\frac y{x+y-1})}
\end{align*}
and we have (cf.~\cite[\S5.11]{Er})
\begin{align*}
&F_2(a;b,b';c,c';x,y)=(1-y)^{-a}F_2(a;b,c'-b';c,c';\frac x{1-y},\frac y{y-1})\\
&\qquad=(1-x)^{-a}F_2(a;c-b,b,b';c,c';\frac x{x-1},\frac y{1-x})\\
&\qquad=(1-x-y)^{-a}F_2(a;c-b;c-b';c,c';\frac x{x+y-1},\frac y{x+y-1}).
\end{align*}
\end{rem}
Resolving the singularities $(0,1)$, $(1,0)$ and $(\infty,\infty)$ of 
$\mathcal M^{\alpha,\beta,\gamma}_{(\alpha',0),(\beta',0),\emptyset}$ 
in $\mathbb P^1\times\mathbb P^1$, we have a manifold $\tilde X$, which is isomorphic to 
the manifold constructed in \S\ref{sec:F1} by the correspondence 
$(x,y)\mapsto(x,1-y)$.
We denote  
the exceptional lines in $\tilde X$
corresponding to $(0,1)$, $(1,0)$ and $(\infty,\infty)$ by $L_{(0,1)}$, $L_{(1,0)}$ and $L_{(\infty,\infty)}$, respectively. 

Under the coordinate $(X,Y)=(\frac xy, 1-\frac 1y)$, the original system has a solution
$F^{p-1,0,2}_{p,1,1}(X,Y)$ and we apply the result in \S\ref{sec:connection}.
Then  we have solutions at the vertices of the pentagon with the edges 
$L_{(0,1)}$, $L_{y=1}$, 
$L_{x=\infty}$, $L_{y=0}$ and  $L_{x=0}$ and solve the connection problem between them. We have already solved the connection problem between the 
vertices of the pentagon with the edges $L_{y=0}$, $L_{x=\infty}$, $L_{(\infty,\infty)}$,
$L_{y=\infty}$ and $L_{x=\infty}$. 
Here $L_{x=0}$ means the line in $\tilde X$ and the pentagon is obtained by shrinking the edge $L_{(0,0)}$ of a hexagon to a vertex.
Hence we can solve the connection problem among 8 points when $p>2$ and $q=2$.

If $p=q=2$, $T_{(x,y)\to(y,x)}$ can be applied and we can add other three points,
which include $(1,\infty)$.
The transformation $T_{(x,y)\to(\frac x{1-y},\frac y{y-1})}$ maps $y=\infty$ to $(0,1)$ and $(1,\infty)$ to $L_{(0,1)}\cap L_{x+y=1}$.
Using the isomorphism corresponding to this transformation, we can analyze the local solutions at $L_{(0,1)}\cap L_{x+y=1}$.
 Thus, we have local solutions and connection formula 
among them at 13 normally crossing singular points in $\tilde X$.
The remaining 2 points are $(1,1)$ and $L_{(\infty,\infty)}\cap L_{x+y=1}$.

%\noindent
\begin{tikzpicture}
\draw
(-1,0)--(5,0) (-1,2)--(5,2) (-1,4)--(5,4)
(0,-1)--(0,5) (2,-1)--(2,5) (4,-1)--(4,5)
(-1,3)--(3,-1) (3,5)--(5,3)
;
\draw
node at (0,5.2) {$x=0$}
node at (2,5.2) {$x=1$}
node at (4,5.2) {$x=\infty$}
node at (-0.6,0.2) {$y=0$}
node at (3.5,1.8)  {$y=1$}
node at (-0.6,4.2) {$y=\infty$}
node at (0.3,4.7) {22}
node at (2.3,4.7) {31}
node at (4.3,4.7) {211}
node at (4.5,0.2) {22}
node at (4.5,2.2) {31}
node at (4.5,4.2) {211}
node at (3,-0.7) {31}
node at (-0.25,-0.15) {\tiny$4F_2$}
node at (0.25,1.25) {\tiny$2H_2$}
node at (0.85,2.4) {\tiny$\circlearrowleft 2F_P$}
node at (2.6,2.15) {\tiny$(2),\ F_3\!\leftrightarrow$}
node at (0.8,3.8) {\tiny$2H_2,\ 2F_P\!\leftrightarrow$}
node at (2.25,4.2) {\tiny$G_Q$}
node at (3.25,4.2) {\tiny$F_Q$}
node at (3.25,3.65) {\tiny$F_3\!\!\circlearrowleft$}
node at (2.5,3.8) {\tiny$2F_P\!\leftrightarrow$}
node at (2.3,3.4) {\tiny$F_3\!\updownarrow$}
%node at (1.3,0.95) {\color{blue}$\downarrow$}
%node at (3.85,3.85) {\tiny$31$}
%node at (2.25,0.15) {\tiny$211$}
node at (0,4) {$\circ$}
node at (4,0) {$\circ$}
node at (0,1.4) {$\circ$}
node at (1.4,0) {$\circ$}
node at (2,4) {$\bullet$}
node at (4,2) {$\bullet$}
node at (3.4,4) {$\bullet$}
node at (4,3.4) {$\bullet$}
node at (0.6,2) {$\bullet$}
node at (2,0.6) {$\bullet$}
node at (0.42,1.58) {$\bullet$}
node at (1.58,0.42) {$\bullet$}
node at (2,2) {$\diamond$}
node at (3.4,4) {$\bullet$}
node at (3.58,4.42) {$\diamond$}
node at (0,0) {$\star$}
%node at (2,5.8) {$F_2(a;b_1;b_2;1-c'_1,1-c'_2;x,y)\ \ {\color{gray}\raisebox{2.5mm}{\rotatebox{-90}{$\curvearrowleft$}}\, B_2}$}
;
\draw
%[green]
(0,2) circle[radius=0.6]
(2,0) circle[radius=0.6]
(4,4) circle[radius=0.6]
node at (-0.3,2.7) {\scriptsize$211$}
node at (2.5,0.6) {\scriptsize$211$}
%node at (3.7,4.7) {\scriptsize$31$}
node at (4.7,3.7) {\scriptsize$31$}
;
\end{tikzpicture}\quad
\raisebox{3cm}{$\begin{cases}
  1:{\star\,}22\wedge 22 : 4F_2\\
  4:{\circ}22\,\wedge 211: 2H_2+2F_P\\
  8:{\bullet\,}31\wedge 211: G_Q+2F_P+F_3\\
  2:{\diamond\,}31\wedge 31 : 2F_3+(2)\\
  F_2=F^{1,1,1}_{2,2,0}\\
  F_3=F^{2,2,0}_{1,1,1}\\
  F_P=F^{1,0,2}_{2,1,1}\\
  G_Q=G^{0,2,2}_{1,1,1}\\
  H_2=G^{1,2,1}_{2,1,0}
\end{cases}$}

\begin{rem}
As a consequence we have a base of local solutions expressed by our hypergeometric functions
at every normally crossing singular point in 
$\tilde X$ whose multiplicities of simultaneous eigenvalues are free.
Moreover the calculation of the  connection coefficients between two such points along 
a singular line is given.  
Note that all such points are connected by singular lines.

Some connection formulas for $F_2$ are obtained in \cite{Mi}.
\end{rem}

\section{Many variables and GKZ system}\label{sec:GKZ}
In this section, we introduce a multivariate extension $\mathcal{M}^{\boldsymbol{\alpha}_1,\dots,\boldsymbol{\alpha}_n,\boldsymbol{\gamma}}_{\boldsymbol{\alpha}'_1,\dots,\boldsymbol{\alpha}'_n,\boldsymbol{\gamma}'}$ of the system $\mathcal M^{\boldsymbol\alpha,\boldsymbol\beta,\boldsymbol\gamma}
_{\boldsymbol\alpha',\boldsymbol\beta',\boldsymbol\gamma'}$.
To analyze this system, the viewpoint of GKZ system will be useful.

\subsection{The system $\mathcal{M}^{\boldsymbol{\alpha}_1,\dots,\boldsymbol{\alpha}_n,\boldsymbol{\gamma}}_{\boldsymbol{\alpha}'_1,\dots,\boldsymbol{\alpha}'_n,\boldsymbol{\gamma}'}$}\label{sec:SystemM}
Let $n$ be a positive integer, $x=(x_1,\dots,x_n)$ be $n$ complex variables.
We consider non-negative integers $p_1,\dots,p_n,r,p'_1,\dots,p'_n,r'$ and parameters $\boldsymbol\alpha_i\in\C^{p_i},\boldsymbol\gamma\in\C^{r},\boldsymbol\alpha'_i\in\C^{p'_i},$ and $\boldsymbol\gamma'\in\C^{r'}$.
Then, for $i=1,\dots,n$ and $1\leq i<j\leq n$, we set
\begin{align}
    P_i&:=(\vartheta_{x_i}-\boldsymbol\alpha'_{i})(\vartheta_{x_1}+\cdots+\vartheta_{x_n}-\boldsymbol\gamma')-x_i(\vartheta_{x_i}+\boldsymbol\alpha_{i})(\vartheta_{x_1}+\cdots+\vartheta_{x_n}+\boldsymbol\gamma),\nonumber\\
    P_{ij}&:=x_i(\vartheta_{x_i}+\boldsymbol\alpha_{i})(\vartheta_{x_j}-\boldsymbol\alpha'_{j})-x_j(\vartheta_{x_j}+\boldsymbol\alpha_{j})(\vartheta_{x_i}-\boldsymbol\alpha'_{i}),
    \label{eq:P_iP_ij}
\end{align}
where we use the convention as \eqref{eq:abbrev} and $\theta_{x_i}:=x_i\frac{\partial}{\partial x_i}$.
For an unknown function $u(x)$, the system $\mathcal{M}^{\boldsymbol{\alpha}_1,\dots,\boldsymbol{\alpha}_n,\boldsymbol{\gamma}}_{\boldsymbol{\alpha}'_1,\dots,\boldsymbol{\alpha}'_n,\boldsymbol{\gamma}'}$ is defined by
\begin{equation}
\begin{cases}
    P_i\cdot u(x)=0&(i=1,\dots,n),\\
    P_{ij}\cdot u(x)=0&(1\leq i<j\leq n).
\end{cases}
\end{equation}

 For a multi-index $m=(m_1,\dots,m_n)\in\Z^n_{\geq 0}$ and $\epsilon=(\epsilon_1,\dots,\epsilon_n)\in\{-1,+1\}^n$, we set $|m|:=m_1+\cdots+m_n$ and $\epsilon\cdot m:=\epsilon_1m_1+\cdots+\epsilon_nm_n$. 
 We write $F^{p_1,\dots,p_n,r}_{p'_1,\dots,p_n',r'}\left(\substack{{\boldsymbol \alpha}_1,\dots,{\boldsymbol \alpha}_n,{\boldsymbol \gamma}\\ {\boldsymbol \alpha}'_1,\dots,{\boldsymbol \alpha}'_n,{\boldsymbol \gamma}'};x\right)$ for the following multi-variate series:
\begin{equation}
   \sum_{m=(m_1,\dots,m_n)\in\Z^n_{\geq 0}}\frac{({\boldsymbol \alpha}_1)_{m_1}\cdots ({\boldsymbol \alpha}_n)_{m_n}({\boldsymbol \gamma})_{|m|}}{(\boldsymbol{1-\alpha}'_1)_{m_1}\cdots (\boldsymbol{1-\alpha}'_n)_{m_n}(\boldsymbol{1-\gamma}')_{|m|}}x^m.
   \label{eq:multivariate_F}
\end{equation}
Instead of $F^{p_1,\dots,p_n,r}_{p'_1,\dots,p_n',r'}\left(\substack{{\boldsymbol \alpha}_1,\dots,{\boldsymbol \alpha}_n,{\boldsymbol \gamma}\\ {\boldsymbol \alpha}'_1,\dots,{\boldsymbol \alpha}'_n,{\boldsymbol \gamma}'};x\right)$
we often use a symbol $F\left(\substack{{\boldsymbol \alpha}_1,\dots,{\boldsymbol \alpha}_n,{\boldsymbol \gamma}\\ {\boldsymbol \alpha}'_1,\dots,{\boldsymbol \alpha}'_n,{\boldsymbol \gamma}'};x\right)$
in the following.

Proposition \ref{prop:transformations} is generalized as follows.

\begin{prop}\label{prop:transformations2}
One has the following transformations of the system $\mathcal M
 ^{\boldsymbol \alpha, \boldsymbol \beta, \boldsymbol \gamma}
 _{\boldsymbol \alpha',\boldsymbol \beta',\boldsymbol \gamma'}$.
\begin{itemize}
\item[\rm (1)] $T_{(x_1,\dots,x_n)\mapsto(x_{\s(1)},\dots,x_{\s(n)})}
\mathcal{M}^{\boldsymbol{\alpha}_1,\dots,\boldsymbol{\alpha}_n,\boldsymbol{\gamma}}_{\boldsymbol{\alpha}'_1,\dots,\boldsymbol{\alpha}'_n,\boldsymbol{\gamma}'}
=
\mathcal{M}^{\boldsymbol{\alpha}_{\s(1)},\dots,\boldsymbol{\alpha}_{\s(n)},\boldsymbol{\gamma}}_{
\boldsymbol{\alpha}'_{\s(1)},\dots,\boldsymbol{\alpha}'_{\s(n)},\boldsymbol{\gamma}'}
.
$
\item[\rm (2)] $T_{(x_1,\dots,x_n)\to(\tfrac1x_1,\dots,\tfrac1x_n)}
\mathcal{M}^{\boldsymbol{\alpha}_1,\dots,\boldsymbol{\alpha}_n,\boldsymbol{\gamma}}_{\boldsymbol{\alpha}'_1,\dots,\boldsymbol{\alpha}'_n,\boldsymbol{\gamma}'}
=
\mathcal{M}^{\boldsymbol{\alpha}'_1,\dots,\boldsymbol{\alpha}'_n,\boldsymbol{\gamma}'}_{
\boldsymbol{\alpha}_{1},\dots,\boldsymbol{\alpha}_n,\boldsymbol{\gamma}}.$
 \item[\rm (3)] $T_{(x_1,\dots,x_n)\to\left(\epsilon\frac{x_1}{x_n},\dots,\epsilon\frac{x_{n-1}}{x_n},\frac{1}{x_n}\right)}
\mathcal{M}^{\boldsymbol{\alpha}_1,\dots,\boldsymbol{\alpha}_n,\boldsymbol{\gamma}}_{\boldsymbol{\alpha}'_1,\dots,\boldsymbol{\alpha}'_n,\boldsymbol{\gamma}'}
=
\mathcal{M}^{\boldsymbol{\alpha}_1,\dots,\boldsymbol{\alpha}_{n-1},\boldsymbol{\gamma}',\boldsymbol{\alpha}'_n}_{
\boldsymbol{\alpha}'_{1},\dots,\boldsymbol{\alpha}'_{n-1},\boldsymbol{\gamma},\boldsymbol{\alpha}_{n}}$.
\end{itemize}
\end{prop}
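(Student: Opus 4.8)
The plan is to prove Proposition \ref{prop:transformations2} by direct computation, exactly in the spirit of the two-variable Proposition \ref{prop:transformations}, treating each of the three transformations separately. In all three cases the strategy is the same: perform the coordinate change, express the operators $\vartheta_{x_i}$ in the new variables, and recognize the resulting system as $\mathcal{M}$ with permuted and relabeled parameters. Item (1) is essentially tautological: the operators $P_i$ and $P_{ij}$ in \eqref{eq:P_iP_ij} are built symmetrically from the data $(\boldsymbol\alpha_i,\boldsymbol\alpha_i',\boldsymbol\gamma,\boldsymbol\gamma')$, with $P_i$ attached to the $i$-th variable and $P_{ij}$ to the pair $(i,j)$. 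Relabeling $x_i\mapsto x_{\sigma(i)}$ simply permutes the indices, since $\vartheta_{x_{\sigma(i)}}$ transforms correctly and $\vartheta_{x_1}+\cdots+\vartheta_{x_n}$ is permutation-invariant; hence the system is carried to the one with parameters $\boldsymbol\alpha_{\sigma(i)},\boldsymbol\alpha'_{\sigma(i)}$ and unchanged $\boldsymbol\gamma,\boldsymbol\gamma'$.

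For item (2) I would set $(x_1,\dots,x_n)=(1/X_1,\dots,1/X_n)$ and use $\vartheta_{x_i}=-\vartheta_{X_i}$, so that $\vartheta_{x_1}+\cdots+\vartheta_{x_n}=-(\vartheta_{X_1}+\cdots+\vartheta_{X_n})$. Substituting into $P_i$ turns $(\vartheta_{x_i}-\boldsymbol\alpha'_i)$ into $(-1)^{p_i'}(\vartheta_{X_i}+\boldsymbol\alpha'_i)$ and $(\vartheta_{x_1}+\cdots+\vartheta_{x_n}-\boldsymbol\gamma')$ into $(-1)^{r'}(\vartheta_{X_1}+\cdots+\vartheta_{X_n}+\boldsymbol\gamma')$, while the factor $x_i=1/X_i$ lets one multiply through by $X_i$ and rearrange so that the product $x_i(\vartheta_{x_i}+\boldsymbol\alpha_i)(\cdots)$ becomes the leading term of the transformed operator. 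The net effect, after clearing the overall sign and the factor $X_i$, is precisely the operator $P_i$ with $(\boldsymbol\alpha_i,\boldsymbol\gamma)$ and $(\boldsymbol\alpha'_i,\boldsymbol\gamma')$ interchanged; the same bookkeeping applied to $P_{ij}$ yields the operator with $\boldsymbol\alpha_i,\boldsymbol\alpha_j$ swapped with $\boldsymbol\alpha'_i,\boldsymbol\alpha'_j$. This gives $\mathcal{M}^{\boldsymbol{\alpha}'_1,\dots,\boldsymbol{\alpha}'_n,\boldsymbol{\gamma}'}_{\boldsymbol{\alpha}_{1},\dots,\boldsymbol{\alpha}_n,\boldsymbol{\gamma}}$ as claimed, and this is the $n$-variable analogue of part (2) of Proposition \ref{prop:transformations}.

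Item (3) is the one requiring genuine care and is where I expect the main obstacle to lie. Here I would set $X_i=\epsilon x_i/x_n$ for $i<n$ and $X_n=1/x_n$, and compute that $\vartheta_{x_i}=\vartheta_{X_i}$ for $i<n$ while $\vartheta_{x_n}=-\vartheta_{X_1}-\cdots-\vartheta_{X_n}$, mirroring the relation $\vartheta_y=-\vartheta_X-\vartheta_Y$ used in the proof of Proposition \ref{prop:transformations}(3). The key identities to verify are $\vartheta_{x_1}+\cdots+\vartheta_{x_n}=-\vartheta_{X_n}$ and that $\vartheta_{x_n}-\boldsymbol\gamma'$ combines with the total-theta operators to produce the new $\boldsymbol\gamma,\boldsymbol\gamma'$ in the indicated slots. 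The delicate point is tracking the sign factor $\epsilon=(-1)^L$ through the monomials $x_i$ and through the substitution in $P_n$ and the $P_{in}$ with $i<n$: one must check that the powers of $\epsilon$ accumulated from $x_n=1/X_n$ and from $x_i=\epsilon X_i/X_n$ cancel correctly so that the transformed system has genuine (sign-free) leading coefficients. I would carry out the computation for $P_n$ and for a representative $P_{in}$ (the operators involving the last variable, whose roles $\boldsymbol\alpha_n,\boldsymbol\alpha'_n$ get exchanged with $\boldsymbol\gamma',\boldsymbol\gamma$) in detail, and note that $P_i$ and $P_{ij}$ with $i,j<n$ transform more simply since $\vartheta_{x_i}=\vartheta_{X_i}$ there. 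Once the sign $\epsilon$ is confirmed to be exactly the factor needed, the result reads off as $\mathcal{M}^{\boldsymbol{\alpha}_1,\dots,\boldsymbol{\alpha}_{n-1},\boldsymbol{\gamma}',\boldsymbol{\alpha}'_n}_{\boldsymbol{\alpha}'_{1},\dots,\boldsymbol{\alpha}'_{n-1},\boldsymbol{\gamma},\boldsymbol{\alpha}_{n}}$, completing the proof.
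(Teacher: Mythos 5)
Your proposal is correct and follows exactly the route the paper itself indicates: the paper omits the proof of Proposition~\ref{prop:transformations2}, stating only that it is parallel to the two-variable Proposition~\ref{prop:transformations}, and your direct computation with $\vartheta_{x_i}=-\vartheta_{X_i}$ in case (2) and $\vartheta_{x_i}=\vartheta_{X_i}$ ($i<n$), $\vartheta_{x_n}=-(\vartheta_{X_1}+\cdots+\vartheta_{X_n})$ in case (3) is precisely that parallel argument, with the sign bookkeeping via $\epsilon=(-1)^{L}$ and condition (F) handled correctly. One small imprecision: in case (3) the operators $P_i$ with $i<n$ do not transform ``more simply'' --- each contains $\vartheta_{x_1}+\cdots+\vartheta_{x_n}=-\vartheta_{X_n}$ and the $\boldsymbol\gamma,\boldsymbol\gamma'$ factors and becomes the new $P_{in}$ --- but this is covered by the same computation you outline for the last-variable operators.
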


\noindent
The proof of Proposition \ref{prop:transformations2} is parallel to that of Proposition \ref{prop:transformations} and is omitted.

As in \S\ref{sec:Cond}, we always assume the following condition
\begin{gather}
p_i+r=p'_i+r'\quad (i=1,\dots,n)
\tag{F}
\end{gather}
for the system $\mathcal{M}^{\boldsymbol{\alpha}_1,\dots,\boldsymbol{\alpha}_n,\boldsymbol{\gamma}}_{\boldsymbol{\alpha}'_1,\dots,\boldsymbol{\alpha}'_n,\boldsymbol{\gamma}'}$.
We call the number
\begin{equation}
    L:=r-r'
\end{equation}
the level of the system $\mathcal{M}^{\boldsymbol{\alpha}_1,\dots,\boldsymbol{\alpha}_n,\boldsymbol{\gamma}}_{\boldsymbol{\alpha}'_1,\dots,\boldsymbol{\alpha}'_n,\boldsymbol{\gamma}'}$ and set
$$
\epsilon:=(-1)^L.
$$
Moreover, we assume 
\begin{equation}
(\boldsymbol\alpha'_1)_1=(\boldsymbol\alpha'_2)_1=\cdots=(\boldsymbol\alpha'_n)_1=0,\tag{T}
\end{equation}
$p_1+p_1',\cdots,p_n+p_n'>1$ and $r+r'>0$
in the following discussion.
It is easy to see that $F^{p_1,\dots,p_n,r}_{p'_1,\dots,p_n',r'}\left(\substack{{\boldsymbol \alpha}_1,\dots,{\boldsymbol \alpha}_n,{\boldsymbol \gamma}\\ {\boldsymbol \alpha}'_1,\dots,{\boldsymbol \alpha}'_n,{\boldsymbol \gamma}'};x\right)$ is a solution to the system $\mathcal{M}^{\boldsymbol{\alpha}_1,\dots,\boldsymbol{\alpha}_n,\boldsymbol{\gamma}}_{\boldsymbol{\alpha}'_1,\dots,\boldsymbol{\alpha}'_n,\boldsymbol{\gamma}'}$ under the condition (T).

As in \S\ref{sec:IntegRep}, 
 the series \eqref{eq:multivariate_F} admits an integral representation as follows.

\begin{prop}
Assume $\alpha'_{ip'_i}=0$ for any $i=1,\dots,n$.
Then, the following identity holds.
\begin{align*}
&F^{p_1,\dots,p_n,r}_{p'_1,\dots,p_n',r'}\left(\substack{{\boldsymbol \alpha}_1,\dots,{\boldsymbol \alpha}_n,{\boldsymbol \gamma}\\ {\boldsymbol \alpha}'_1,\dots,{\boldsymbol \alpha}'_n,{\boldsymbol \gamma}'};x\right)\\
%&F^{p,q,r}_{p',q',r'}\left(
%\begin{smallmatrix}
%\boldsymbol\alpha&\boldsymbol\beta&\boldsymbol\gamma \\
%\boldsymbol\alpha'&\boldsymbol\beta'&\boldsymbol\gamma'    
%\end{smallmatrix}
%;x,y
%\right)\\
&=\prod_{i=1}^n \prod_{\nu=1}^{p'_i-1}K_{x_i}^{1-\alpha'_{i,\nu}-\alpha_{i,\nu},\alpha_{i,\nu}}
%\prod_{j=1}^{q'-1}K_y^{1-\beta'_j-\beta_j,\beta_j}
\prod_{k=1}^r \tilde K_{\mathbf x}^{1-\gamma'_k-\gamma_k,\gamma_k}\\
&\quad\prod_{k=1}^{r'-r}K_{\mathbf x}^{1-\gamma'_{r+k}-\sum_{i=1}^n\alpha_{p'_i+k-1},
(\alpha_{1,p'_1+k-1},\dots,\alpha_{n,p'_n+k-1})}\prod_{i=1}^{n}\bigl(1-x_i\bigr)^{-\alpha_{p_i}}\\
&\qquad \text{if }r\le r',
\\
&=\prod_{i=1}^n\prod_{\nu=1}^{p_i}K_{x_i}^{1-\alpha'_{i,\nu}-\alpha_{i,\nu},\alpha_{i,\nu}}
\prod_{k=1}^{r'} \tilde K_{\mathbf x}^{1-\gamma'_k-\gamma_k,\gamma_k}\\
&\quad\prod_{k=1}^{r-r'-1}L_{\mathbf x}^{1-\gamma_{r+k}-\sum_{i=1}^n\alpha'_{i,p+k},
(\alpha'_{1,p+k},\dots,\alpha'_{n,q+k})}\bigl(1-\sum\limits_{i=1}^nx_i\bigr)^{-\gamma_r}\quad \text{ if }r>r'.
\end{align*}
\end{prop}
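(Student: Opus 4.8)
The plan is to imitate the proof of Theorem~\ref{thm:IntegralRepresentation} verbatim; the only new ingredient is the $n$-variable version of the monomial-action table that precedes that theorem. All three operators $K_{\mathbf x}^{\mu,\boldsymbol\lambda}$, $L_{\mathbf x}^{\mu,\boldsymbol\lambda}$ and $\tilde K_{\mathbf x}^{\mu,\lambda}$ are already defined for arbitrary $n$ in Definition~\ref{def:integraltransform}, and $K_{x_i}^{\mu,\lambda}$ denotes the one-variable transform acting in the $i$-th slot with the remaining variables frozen. So I would first record how each operator acts on a monomial $x^m=x_1^{m_1}\cdots x_n^{m_n}$, and then expand the seed function, apply the composite operator term by term, and match Pochhammer symbols against \eqref{eq:multivariate_F}.

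The monomial actions follow from Euler-type integrals. Substituting $u=x^m$ into $K_{\mathbf x}^{\mu,\boldsymbol\lambda}$ gives $u(t_1x_1,\dots,t_nx_n)=x^m\prod_i t_i^{m_i}$, so the integral reduces to the Dirichlet integral
\[
\int_{\substack{t_i>0\\ |\mathbf t|<1}}\prod_{i=1}^n t_i^{\lambda_i+m_i-1}(1-|\mathbf t|)^{\mu-1}\,dt_1\cdots dt_n
=\frac{\Gamma(\mu)\prod_{i=1}^n\Gamma(\lambda_i+m_i)}{\Gamma(|\boldsymbol\lambda|+\mu+|m|)},
\]
whence $K_{\mathbf x}^{\mu,\boldsymbol\lambda}x^m=\dfrac{\prod_{i=1}^n(\lambda_i)_{m_i}}{(|\boldsymbol\lambda|+\mu)_{|m|}}\,x^m$. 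The same computation in one variable gives $K_{x_i}^{1-\alpha'-\alpha,\alpha}x^m=\frac{(\alpha)_{m_i}}{(1-\alpha')_{m_i}}x^m$, and a one-dimensional beta integral gives $\tilde K_{\mathbf x}^{1-\gamma'-\gamma,\gamma}x^m=\frac{(\gamma)_{|m|}}{(1-\gamma')_{|m|}}x^m$. For $L_{\mathbf x}^{\mu,\boldsymbol\lambda}$ the substitution $u(\tfrac{x_1}{t_1},\dots,\tfrac{x_n}{t_n})=x^m\prod_i t_i^{-m_i}$ turns the defining Mellin--Barnes integral into a product of Gamma factors whose evaluation yields the reciprocal ratio $L_{\mathbf x}^{\mu,\boldsymbol\lambda}x^m=\frac{(|\boldsymbol\lambda|+\mu)_{|m|}}{\prod_{i=1}^n(\lambda_i)_{m_i}}x^m$, the $n$-variable form of the second column of the table. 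As in the Remark following Definition~\ref{def:integraltransform}, each identity holds first on the domain where the integral converges and then extends by meromorphic continuation in $(\mu,\boldsymbol\lambda)$.

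Next I would expand the seed. When $r\le r'$ one has $\prod_{i=1}^n(1-x_i)^{-\alpha_{i,p_i}}=\sum_m\prod_i\frac{(\alpha_{i,p_i})_{m_i}}{m_i!}x^m$, and when $r>r'$ the multinomial series gives $(1-\sum_i x_i)^{-\gamma_r}=\sum_m\frac{(\gamma_r)_{|m|}}{m_1!\cdots m_n!}x^m$. Applying the composite operator term by term multiplies the coefficient of $x^m$ by the product of all the Pochhammer ratios above. In the case $r\le r'$ the bookkeeping closes cleanly using the balance condition (F), $p_i+r=p'_i+r'$: the $K_{x_i}$-factors ($\nu=1,\dots,p'_i-1$), the $K_{\mathbf x}$-factors ($k=1,\dots,r'-r$, contributing the indices $p'_i,\dots,p_i-1$) and the seed (contributing the last index $p_i$) together exhaust $\prod_i(\boldsymbol\alpha_i)_{m_i}$ exactly once; the $\tilde K$- and $K_{\mathbf x}$-denominators produce $(1-\boldsymbol\gamma')_{|m|}=\prod_{k=1}^{r'}(1-\gamma'_k)_{|m|}$; the $\tilde K$-numerators produce $(\boldsymbol\gamma)_{|m|}$; and, invoking condition (T) so that one component $\alpha'_{i,p'_i}=0$, the factorials $m_i!=(1)_{m_i}=(1-\alpha'_{i,p'_i})_{m_i}$ from the seed supply the missing denominator factor, yielding $(1-\boldsymbol\alpha'_i)_{m_i}$. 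This reproduces \eqref{eq:multivariate_F}.

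The main obstacle is precisely this index bookkeeping in the case $r>r'$, where the inverse transform $L_{\mathbf x}$ contributes \emph{reciprocal} Pochhammer symbols: one must verify that the numerator factors $(|\boldsymbol\lambda|+\mu)_{|m|}$ it produces cancel the excess denominator factors generated by running $K_{x_i}$ over the full range $\nu=1,\dots,p_i$ (together with the auxiliary parameters introduced there), leaving exactly $\prod_i(\boldsymbol\alpha_i)_{m_i}(\boldsymbol\gamma)_{|m|}$ over $\prod_i(1-\boldsymbol\alpha'_i)_{m_i}(1-\boldsymbol\gamma')_{|m|}$; the ranges of all products are controlled by $L=r-r'$, and one must confirm that the indices run without gap or overlap. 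Finally, interchanging the infinite series with the integral operators is justified on their common domain of absolute convergence, and the resulting identity of holomorphic functions propagates to all parameters by the same meromorphic continuation used for the operators. Alternatively, one may apply Theorem~\ref{thm:IntegralRepresentation} in each successive pair of variables and induct on $n$, but the direct monomial computation is cleaner.
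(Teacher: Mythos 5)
Your proposal follows exactly the route the paper intends: the paper gives no separate argument for this proposition beyond the phrase ``As in \S\ref{sec:IntegRep}'', and Theorem~\ref{thm:IntegralRepresentation} itself is established there by nothing more than the table of monomial actions, so your $n$-variable monomial computations (all of which are correct) together with the term-by-term Pochhammer matching, which you carry out completely for $r\le r'$, are precisely the intended proof. One correction for when you finish the $r>r'$ bookkeeping: the $L_{\mathbf x}$-factors do not cancel excess factors produced by running $K_{x_i}$ over $\nu=1,\dots,p_i$ (those already yield exactly the numerators $(\alpha_{i,\nu})_{m_i}$ and the denominators $(1-\alpha'_{i,\nu})_{m_i}$ for $\nu\le p_i$, all of which are wanted); rather, since $p'_i=p_i+L$, the $r-r'-1$ factors of $L_{\mathbf x}$ must \emph{supply} the missing denominator factors $(1-\alpha'_{i,\nu})_{m_i}$ for $\nu=p_i+1,\dots,p'_i-1$ and the missing numerator factors $(\gamma_k)_{|m|}$ for $k=r'+1,\dots,r-1$ (the seed and condition (T) accounting for $\gamma_r$ and $\nu=p'_i$), and matching this against your formula $L_{\mathbf x}^{\mu,\boldsymbol\lambda}x^m=\frac{(|\boldsymbol\lambda|+\mu)_{|m|}}{\prod_i(\lambda_i)_{m_i}}x^m$ is what pins down the intended reading of the indices in the $L$-superscripts.
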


\subsection{The singular set}\label{sec:Singularset}
We describe the singular set of the system $\mathcal{M}^{\boldsymbol{\alpha}_1,\dots,\boldsymbol{\alpha}_n,\boldsymbol{\gamma}}_{\boldsymbol{\alpha}'_1,\dots,\boldsymbol{\alpha}'_n,\boldsymbol{\gamma}'}$.
As in \S\ref{sec:Sing}, we define the singular set $\mathrm{Sing}(\mathcal{M}^{\boldsymbol{\alpha}_1,\dots,\boldsymbol{\alpha}_n,\boldsymbol{\gamma}}_{\boldsymbol{\alpha}'_1,\dots,\boldsymbol{\alpha}'_n,\boldsymbol{\gamma}'})$ of the system $\mathcal{M}^{\boldsymbol{\alpha}_1,\dots,\boldsymbol{\alpha}_n,\boldsymbol{\gamma}}_{\boldsymbol{\alpha}'_1,\dots,\boldsymbol{\alpha}'_n,\boldsymbol{\gamma}'}$ by 
\begin{align*}
 &\{x\in\mathbb C^n\mid \exists \xi\in\mathbb C^n\setminus\{0\}
 \text{ such that }\\
 &\quad \sigma(P_i)(x,\xi)=0\ (i=1,\dots,n)\ \ \text{and}  \ \sigma(P_{ij})(x,\xi)=0\ (1\leq i<j\leq n)\}.
\end{align*}
Note that $\mathrm{Sing}(\mathcal{M}^{\boldsymbol{\alpha}_1,\dots,\boldsymbol{\alpha}_n,\boldsymbol{\gamma}}_{\boldsymbol{\alpha}'_1,\dots,\boldsymbol{\alpha}'_n,\boldsymbol{\gamma}'})$ is a closed subvariety of $\C^n$.
To state the theorem, we prepare a notation.
Let $f(x)$ be a Laurent polynomial in the variables $x_1,\dots,x_n$.
When $f(x)$ contains a negative power in $x_i$, we define $m_{f,i}$ as the smallest integer such that $x_i^{m_{f,i}}f(x)$ is a polynomial in $x_i$.
When $f(x)$ is a polynomial in $x_i$, we set $m_{f,i}:=0$.
We define the polynomial part of $f$ by
$$
{\rm p.p.}f(x)
:=\prod_{i=1}^nx_i^{m_{f,i}}f(x).
$$

\begin{thm}\label{thm:Singn}
The singular set $\mathrm{Sing}(\mathcal{M}^{\boldsymbol{\alpha}_1,\dots,\boldsymbol{\alpha}_n,\boldsymbol{\gamma}}_{\boldsymbol{\alpha}'_1,\dots,\boldsymbol{\alpha}'_n,\boldsymbol{\gamma}'})$ is given by the union of the following irreducible subvarieties:
\begin{itemize}
 \item[\rm (1)] Suppose $L\neq 0$. Then, the subvarieties are as follows
 \begin{equation}
     x_i=0\ \ \ \ (i=1,\dots,n),
\end{equation}
\begin{equation}
         x_i=1\ \ \ \ \left(i=1,\dots,n,\ \displaystyle\prod_{j\neq i}p_j> 0\right),
         \label{eq:singular_set1}
\end{equation}
\begin{equation}
     {\rm p.p.}\displaystyle\prod_{i\in I}\prod_{\omega_i\in U_L}\left(\sum_{i\in I}\omega_ix_i^{\frac{1}{L}}\right)=0 \ \ \ \left(\text{ if }r'>0,\ \ \prod\limits_{i\notin I}p_i>0\right),\label{eq:singular_set2}    
\end{equation}
\begin{equation}
     {\rm p.p.}\displaystyle\prod_{i\in I}\prod_{\omega_i\in U_L}\left(1-\sum_{i\in I}\omega_ix_i^{\frac{1}{L}}\right)=0\ \ \ \ \left(\displaystyle\prod_{i\notin I}p_i>0\right).
 \label{eq:singular_set3}    
\end{equation}

\noindent
Here, $I\subset\{1,\dots,n\}$ in \eqref{eq:singular_set2} and \eqref{eq:singular_set3} runs over subsets with cardinality greater than or equal to two and $U_L$ is the set of $L$-th roots of unity.
Note that the symbol ${\rm p.p.}$ can be omitted when $L>0$. 
 \item[\rm (2)] Suppose $L=0$. Then, the subvarieties are all linear:
 \begin{equation}
     x_i=0,\ \ x_i=1,\ \ x_i=x_j\ \ \ (i=1,\dots,n,\ i<j). 
     \label{eq:braid_arrangement}
 \end{equation}
\end{itemize}
\end{thm}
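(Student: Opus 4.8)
The plan is to compute the characteristic variety directly from the principal symbols and to read off its projection to the base, following the strategy of Theorem~\ref{thm:Sing} but organised combinatorially over subsets $I\subseteq\{1,\dots,n\}$. First I would reduce to the case $L\geq 0$: the transformation $x_i\mapsto 1/x_i$ of Proposition~\ref{prop:transformations2}(2) interchanges the system of level $L$ with that of level $-L$, and it carries a defining polynomial for $L>0$ into the Laurent polynomial whose polynomial part ${\rm p.p.}$ yields the corresponding statement for $L<0$; this is precisely the role of the ${\rm p.p.}$ operator and explains why it may be dropped when $L>0$. Writing $\zeta_i:=x_i\xi_i$ and $S:=\zeta_1+\cdots+\zeta_n$, the condition (F), which gives $p'_i=p_i+L$ and $r=r'+L$, yields after cancelling the common order the factored symbols
\[
 \sigma(P_i)=\zeta_i^{\,p_i}S^{r'}\bigl(\zeta_i^L-x_iS^L\bigr),\qquad
 \sigma(P_{ij})=\zeta_i^{\,p_i}\zeta_j^{\,p_j}\bigl(x_i\zeta_j^L-x_j\zeta_i^L\bigr).
\]
These specialise for $n=2$ to the expressions in the proof of Theorem~\ref{thm:Sing}.

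Next I would fix $x$ and ask for which $x$ there is $\xi\neq 0$ annihilating all symbols; the key bookkeeping device is the active set $I:=\{i:\zeta_i\neq 0\}$, together with the dichotomy $S\neq 0$ versus $S=0$. In the case $S\neq 0$, each index $i\in I$ forces $\zeta_i^L=x_iS^L$ from $\sigma(P_i)=0$, hence $x_i=(\zeta_i/S)^L$, and the equations $\sigma(P_{ij})=0$ are then automatic; eliminating the $\zeta_i$ through $\sum_{i\in I}(\zeta_i/S)=1$ gives $\sum_{i\in I}\omega_i x_i^{1/L}=1$ for suitable $L$-th roots of unity $\omega_i$, and symmetrising over all such $\omega_i$ produces exactly the divisor \eqref{eq:singular_set3} attached to $I$. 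In the case $S=0$ and $r'>0$ the factor $S^{r'}$ already kills every $\sigma(P_i)$, so only $\sigma(P_{ij})=0$ survives; for $i,j\in I$ this reads $x_i\zeta_j^L=x_j\zeta_i^L$, i.e.\ $x_i=c\,\zeta_i^L$ for a common constant $c$, and combining with $\sum_{i\in I}\zeta_i=0$ and symmetrising gives $\sum_{i\in I}\omega_i x_i^{1/L}=0$, which is the divisor \eqref{eq:singular_set2}. The hypotheses $r'>0$ in \eqref{eq:singular_set2} and $|I|\geq 2$ in both families emerge naturally: a singleton $I$ in the case $S\neq 0$ collapses to $x_i=1$, i.e.\ \eqref{eq:singular_set1}, while in the case $S=0$ a single active index cannot meet $\sum_{i\in I}\zeta_i=0$ with $\zeta_i\neq 0$.

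For each chosen $I$ I would then account for the inactive indices $i\notin I$: at a generic point of the divisor one has $x_i\neq 0$, so $\zeta_i=0$ can only be arranged by taking $\xi_i=0$, and then $\sigma(P_i)=0$ and the cross terms $\sigma(P_{ij})=0$ hold precisely because the factors $\zeta_i^{\,p_i}$ resp.\ $\zeta_j^{\,p_j}$ vanish, which requires $p_i>0$ for all $i\notin I$; this is the origin of the conditions $\prod_{i\notin I}p_i>0$ in \eqref{eq:singular_set1}, \eqref{eq:singular_set2} and \eqref{eq:singular_set3}. The coordinate divisors $\{x_i=0\}$ are treated separately and unconditionally by taking $\xi=e_i$, whence $\zeta=0=S$ and every symbol vanishes for $L>0$. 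The case $L=0$ is handled on its own: there $\sigma(P_i)=\zeta_i^{\,p_i}S^{r'}(1-x_i)$ and $\sigma(P_{ij})=\zeta_i^{\,p_i}\zeta_j^{\,p_j}(x_i-x_j)$, and since the standing assumption $r+r'>0$ forces $r'>0$, a short direct analysis gives exactly the braid-type arrangement \eqref{eq:braid_arrangement}.

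Finally I would establish that the listed sets are irreducible and that I have found them all. Containment of the singular set in the union follows by exhausting the cases above; the reverse containment is the construction of $\xi$ at a generic point of each divisor already indicated. Irreducibility of the non-linear components is obtained from a rational parametrisation generalising the one for $V_L$: setting $\zeta_{i_1}=1$ and $\zeta_{i_\nu}=t_\nu$ for $I=\{i_1,\dots,i_k\}$ exhibits the $I$-relation as the image of a rational map in the $t_\nu$, and the full divisor in $\C^n$ is a cylinder over the free coordinates $x_j$ with $j\notin I$, hence irreducible. I expect the main obstacle to be the passage from the existential description ``there exist $L$-th roots of unity $\omega_i$ and a vector $\zeta$ with the above relations'' to a single, genuinely irreducible polynomial equation: this requires showing that the symmetrisation over all $\omega_i$ coincides with ${\rm p.p.}$ of the stated products and introduces no spurious factorisation, the delicate point being the interplay between the fractional powers $x_i^{1/L}$, the Galois action of the roots of unity, and the clearing of denominators encoded by ${\rm p.p.}$ when $L<0$.
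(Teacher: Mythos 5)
Your proposal is correct and follows essentially the same route as the paper's proof: reduce to $L\ge 0$ by the inversion of Proposition~\ref{prop:transformations2}, factor the symbols in terms of $\zeta_i=x_i\xi_i$ and $S=\sum_i\zeta_i$, stratify by the active set $I=\{i:\zeta_i\neq 0\}$ (whence $\prod_{i\notin I}p_i>0$) and by $S=0$ versus $S\neq 0$, and eliminate $\zeta$ to obtain $\sum_{i\in I}\omega_ix_i^{1/L}=0$ or $=1$, with the $L=0$ case treated separately. The only point where you go slightly beyond the paper is in flagging the identification of the symmetrised product with ${\rm p.p.}$ of the stated polynomial and its irreducibility, which the paper likewise does not argue inside the proof but defers to the remark on rational parametrisations following the theorem.
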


\begin{proof}
In view of Proposition \ref{prop:transformations2}, we may assume $L\geq 0$.
The singular set $\mathrm{Sing}(\mathcal{M}^{\boldsymbol{\alpha}_1,\dots,\boldsymbol{\alpha}_n,\boldsymbol{\gamma}}_{\boldsymbol{\alpha}'_1,\dots,\boldsymbol{\alpha}'_n,\boldsymbol{\gamma}'})$ is contained in the vanishing locus of the symbols of the operators \eqref{eq:P_iP_ij}.
The equations are given by
\begin{align}
    (x_i\xi_i)^{p_i}(x_1\xi_1+\cdots+x_n\xi_n)^{r'}\left((x_i\xi_i)^L-x_i(x_1\xi_1+\cdots+x_n\xi_n)^L\right)=0\label{eq:i}
\end{align}
for $i=1,\dots,n$ and
\begin{align}
    (x_i\xi_i)^{p_i}(x_j\xi_j)^{p_j}\left( x_i(x_j\xi_j)^L-x_j(x_i\xi_i)^L\right)
    \label{eq:ij}
\end{align}
for $1\leq i<j\leq n$.
A point $x\in\C^n$ lies on the singular locus if there exists a vector $\xi=(\xi_1,\dots,\xi_n)\in\C^n\setminus\{0\}$ such that $(x,\xi)$ satisfies \eqref{eq:i} and \eqref{eq:ij}.
Since $x_i=0$ satisfies  \eqref{eq:i} and \eqref{eq:ij} with $\xi_i=1$ and $\xi_j=0$ for any $j\neq i$, we assume $x_1\cdots x_n\neq 0$ below.
Let $(x,\xi)$ satisfy \eqref{eq:i} and \eqref{eq:ij}.
We may assume that there is an index set $\emptyset\subsetneq I\subset \{1,\dots,n\}$ so that $\xi_i\neq 0$ if and only if $i\in I$.
Such a condition is consistent if and only if $p_j>0$ for any $j\notin I$ because otherwise \eqref{eq:ij} implies that $\xi_i=0$ for any $i\in I$.
To simplify the notation, we assume that $1\in I$.
Since the conditions \eqref{eq:i} and \eqref{eq:ij} are homogeneous with respect to $\xi$, we may assume $\xi_1=1$.

\noindent
(1)We consider the case $L>0$.
The condition \eqref{eq:ij} is equivalent to
\begin{equation}
    \xi_i^L=\left(\frac{x_1}{x_i}\right)^{L-1}\ \ (i\in I)
    \label{eq:xi_i^L}
\end{equation}
or equivalently
\begin{equation}
    \xi_i=\left(\frac{x_1}{x_i}\right)^{\frac{L-1}{L}}\ \ (i\in I)
    \label{eq:xi_i}
\end{equation}
for a choice of an $L$-th root.
When $I=\{1\}$, \eqref{eq:i} is reduced to $x_1=1$.
Thus, the variety \eqref{eq:singular_set1} appears.

Now, we consider the case when $|I|\geq 2$.
Assume $r'=0$ and $\sum\limits_{i\in I}x_i\xi_i=0$.
Then, it is inconsistent with \eqref{eq:i}.
Assume $r'>0$ and $\sum\limits_{i\in I}x_i\xi_i=0$.
Then, it follows from \eqref{eq:xi_i} that
\begin{equation}
    x_1+\sum_{i\in I,i\neq 1}x_i\left(\frac{x_1}{x_i}\right)^{\frac{L-1}{L}}=0
\end{equation}
or equivalently
\begin{equation}
    \sum_{i\in I}x_i^{\frac{1}{L}}=0.
\end{equation}
Thus, the variety \eqref{eq:singular_set2} appears.

Finally, we assume $\sum\limits_{i\in I}x_i\xi_i\neq 0$.
By \eqref{eq:i} and \eqref{eq:xi_i^L}, we obtain
\begin{equation}
    (x_1)^{L-1}=x_i\left(\sum_{i\in I}x_i\xi_i\right)^L\ \ (i\in I).
    \label{eq:6.13}
\end{equation}
By taking $L$-th root of \eqref{eq:6.13} and substituting \eqref{eq:xi_i}, we obtain a relation
\begin{equation}
    \sum_{i\in I}x_i^{\frac{1}{L}}=1.
    \label{eq:the_curved_divisor}
\end{equation}
Thus, the variety \eqref{eq:singular_set3} appears.

\noindent
(2)We consider the case $L=0$.
The condition \eqref{eq:ij} implies that 
\begin{equation}
    x_i=x_j\ \ \ \ (i,\,j\in I).
    \label{eq:diagonal}
\end{equation}
Assume $I=\{1\}$.
Then, the condition \eqref{eq:diagonal} is an empty condition and \eqref{eq:i} implies that $x_1=1$.
Assume $|I|= 2$ and set $I=\{ 1,j\}$.
Then, the condition \eqref{eq:diagonal} is just $x_1=x_j$.
Setting $\xi_j=-\frac{x_1}{x_j}$, we see that the condition \eqref{eq:i} is also satisfied.
Finally, assume $|I|>2$.
Then, the condition \eqref{eq:diagonal} defines a subvariety of codimension higher or equal to $2$, which does not contribute to the singular locus.

\end{proof}

\begin{rem}
    As in Theorem \ref{thm:Sing}, the varieties \eqref{eq:singular_set2} and \eqref{eq:singular_set3} admit rational parametrizations.
    Let $I\subset\{1,\dots,n\}$ be a subset such that $\prod\limits_{i\notin I}p_i>0$ and $|I|\geq 2$.
    The varieties \eqref{eq:singular_set2} and \eqref{eq:singular_set3} are regarded as subvarieties of $\C^I$.
    Let us take an element $i_0\in I$.
    When $r'>0$, a rational parametrization of \eqref{eq:singular_set2} is given by 
    \begin{equation}
        \C^{I\setminus\{ i_0\}}\ni (t_i)_{i\in I\setminus\{ i_0\}}\mapsto \Biggl( \epsilon\biggl( \sum_{i\in I,i\neq i_0}\!t_i\biggr)^{\!L},(t_i^L)_{i\in I\setminus\{ i_0\}}\Biggr)\in\C^I.
    \end{equation}
    On the other hand, a rational parametrization of \eqref{eq:singular_set3} is given by
    \begin{equation}
        \C^{I\setminus\{ i_0\}}\ni (t_i)_{i\in I\setminus\{ i_0\}}\mapsto \left( \frac{1}{\Bigl(1+\!\sum\limits_{i\in I, i\neq i_0}\!t_i\Bigr)^{\!L}},\Biggl(\biggl(\frac{t_i}{1+\!\sum\limits_{i\in I, i\neq i_0}\!t_i}\biggr)^{\!L}\Biggr)_{\! i\in I\setminus\{ i_0\}}\!\right)\in\C^I.\label{eq:parametrization}
    \end{equation}
    Note that the parametrization \eqref{eq:parametrization} takes a symmetric form 
    \begin{equation}
        [t_i:i\in I]\mapsto \Biggl(\biggl( \frac{t_i}{\sum\limits_
        {i\in I}t_i}\biggr)^L\Biggr)_{i\in I}
    \end{equation}
    where $[t_i:i\in I]$ is a homogeneous coordinate.
\end{rem}

\subsection{GKZ system}\label{sec:GKZsystem}

We first recall the definition of GKZ system (\cite{gel1989hypergeometric}).
Our description follows that of \cite{gel1992general}.
We fix a natural number $N$ and set $M_{\geq 0}:=\Z_{\geq 0}^N$ viewed as a free abelian monoid.
Let $M$ be the associated abelian group $\Z^N$.
Let $L$ be an abelian subgroup of $M$ such that the quotient $M/L$ has no torsion.
We write $M^\vee$ for the dual lattice of $M$.
If $\{e_1,\dots,e_N\}$ is a free generator of $M_{\geq 0}$, the coordinate ring of $M^\vee_{\C}:=M^\vee\otimes_{\Z}\C$ is a polynomial ring $\C[M_{\geq 0}]$ and each element $e_i$ defines a differential operator: for a pair of elements $v\in M$ and $\psi\in M$, we set $\langle\psi,v\rangle:=\psi(v)$. 
For a differentiable function $f$ on $M^\vee_{\C}$, we set
\begin{equation}
    (\partial_{e_i}f)(z):=\frac{d}{dt}f(e^{t\langle e_i,z\rangle}\cdot z)|_{t=0}.
\end{equation}
In general, an element $v=\sum_{i=1}^Nv_ie_i\in M_{\geq 0}$ defines a differential operator $\partial^v:=\prod_{i=1}^N\partial_{e_i}^{v_i}$.
In the same way, we define the Euler operator: for an element $z\in M^\vee_{\C}$, $\psi\in M^\vee$, and $t\in\C^\times$, we define $t^\psi\cdot z$ by
\begin{equation}
    \langle t^\psi\cdot z,v\rangle=t^{\langle \psi,v\rangle}\langle z,v\rangle\ \ \ \ (v\in M).
\end{equation}
Then, for a differentiable function $f$ on $M^\vee_{\C}$, we set
\begin{equation}
    (\vartheta_\psi f)(z):=\frac{d}{dt}f(t^\psi\cdot z)|_{t=1}.
\end{equation}
Let $\varphi:M_{\geq 0}\to M/L$ be the composition of the natural inclusion $M_{\geq 0}\to M$ and the quotient map $A:M\to M/L$.
The map $\varphi$ induces a ring homomorphism $\C[M_{\geq 0}]\to\C[M/L]$ which is denoted by the same symbol $\varphi$.
We set $I_A:={\rm Ker}\varphi$ and $L^\perp:=\{ \psi\in M^\vee\mid\psi\equiv 0\text{ on }L\}$.
For any complex vector $v\in M_{\C}$, we set $c:=-Av$.
The GKZ system $M_A(c)$ is defined by
\begin{equation}\label{eqn:Oshima_system}
M_A(c):
    \begin{cases}
        P\cdot u(z)=0 & (P\in I_A),\\
        \vartheta_{\psi}u(z)+\langle\psi,c\rangle=0 & (\psi\in L^\perp).
    \end{cases}
\end{equation}

We identify the projection $A:M\to M/L$ with the image of a set $\{ e_1,\dots,e_N\}$ by $A$.
We set $a_i:=Ae_i$ for each $i$.
Let ${\rm Cone}(A)$ be the cone spanned by $A$.
Given a subset $I\subset\{ 1,\dots,N\}$, the cone $C(I)$ spanned by $\{ a_i\mid i\in I\}$ defines a face of ${\rm Cone}(A)$ if there is a dual vector $\phi\in(M/L)^\vee$ such that
\begin{equation}
\begin{cases}
    \langle\phi,a_i\rangle=0& (i\in I),\\
    \langle\phi,a_i\rangle<0& (i\notin I).
\end{cases}  
\label{eq:face}
\end{equation}
Conversely, any face $F$ of ${\rm Cone}(A)$ has a form $C(I)$ for some $I\subset\{ 1,\dots,N\}$.
We say $c\in (M/L)\otimes_{\Z}\C$ is non-resonant if it does not lie on a set ${\rm span}_{\C}(F)+\Z^n$ for any face $F$ of ${\rm Cone}(A)$.
Here, ${\rm span}_{\C}$ is the spanning subspace over $\C$.
Note that $c\in (M/L)\otimes_{\Z}\C$ does not lie on a set ${\rm Cone}(I)+\Z^n$ if and only if the condition
\begin{equation}
    \langle\phi,c\rangle\notin\Z
    \label{eq:non-resonance}
\end{equation}
holds for any $\phi\in(M/L)^\vee$ with \eqref{eq:face}.

We briefly recall the definition of a regular polyhedral subdivision {(\cite[Chapter 7]{GKZbook}, \cite[Chapter 8]{SturmfelsLecture})}.
A collection {$S$} of subsets of $\{1,\dots,N\}$ is called a {polyhedral subdivision} if $\{ {\rm Cone}(I)\mid I\in S\}$ is a set of cones in a {polyhedral} fan whose support equals ${\rm Cone}(A)$.
For any choice of a vector $\omega\in\R^N$ we define a polyhedral subdivision $S(\omega)$ as follows: a subset $I\subset\{1,\dots,N\}$ belongs to $S(\omega)$ if there exists a dual vector $\phi\in (M/L)^\vee_{\R}$ such that $\langle\phi,{a}_i\rangle=\omega_i$ if $i\in I$ and $\langle\phi,{a}_i\rangle<\omega_j$ if $ j\notin I$.
A polyhedral subdivision $S$ is called a {regular polyhedral subdivision} if $S=S(\omega)$ for some $\omega.$
Given a regular polyhedral subdivision $S$, we write $C_S\subset \R^N$ for the cone consisting of vectors $\omega$ such that $S(\omega)=S$.
If any maximal (with respect to inclusion) element $I$ of a regular polyhedral subdivision $T$ is a simplex, we call $T$ a {regular triangulation}.
A collection of polyhedral cones $\{ C_S\mid S\text{ is a regular subdivision}\}$ is a complete fan in $\R^N$, which we call the secondary fan.
By identifying $\R^N$ with $M^\vee_{\R}$ via dot product with respect to $\{e_1,\dots,e_N\}$, it is readily seen that each cone $C_S$ contains a linear subspace $L^\perp_{\R}:=L^\perp\otimes_{\Z}\R$.
Therefore, we often regard secondary fan as a fan in $M^\vee_{\R}/L^\perp_{\R}\simeq L^\vee_{\R}$.

\subsection{The system $\mathcal{M}^{\boldsymbol{\alpha}_1,\dots,\boldsymbol{\alpha}_n,\boldsymbol{\gamma}}_{\boldsymbol{\alpha}'_1,\dots,\boldsymbol{\alpha}'_n,\boldsymbol{\gamma}'}$ from the view point of GKZ system} \label{sec:GKZM}

Next, we consider GKZ extension of the system $\mathcal{M}^{\boldsymbol{\alpha}_1,\dots,\boldsymbol{\alpha}_n,\boldsymbol{\gamma}}_{\boldsymbol{\alpha}'_1,\dots,\boldsymbol{\alpha}'_n,\boldsymbol{\gamma}'}$.
We set
$$M_{\geq 0}:=\Z^{p_1}_{\geq 0}\oplus\cdots\oplus\Z^{p_n}_{\geq 0}\oplus\Z^r_{\geq 0}\oplus\Z^{p'_1}_{\geq 0}\oplus\cdots\oplus\Z^{p'_n}_{\geq 0}\oplus\Z^{r'}_{\geq 0}. $$
An element of $M_{\geq 0}$ has $p_1+\cdots+p_n+r+p_1'+\cdots+p_n'+r'$ entries.
As an index set, we use a set
\begin{equation}
    \{\alpha_{ij}\}_{\substack{i=1,\dots,n\\ j=1,\dots,p_i}}\cup\{ \gamma_k\}_{k=1,\dots,r}\cup\{\alpha'_{ij}\}_{\substack{i=1,\dots,n\\ j=1,\dots,p'_i}}\cup\{ \gamma'_k\}_{k=1,\dots,r'}.
    \label{eq:indices}
\end{equation}
Namely, any element $u\in M$ is specified by its entries 
\begin{align*}
    u(\alpha_{ij})&\ \ \ \  (i=1,\dots,n,\ j=1,\dots,p_i),\\
    u(\gamma_k)&\ \ \ \ (k=1,\dots,r),\\
    u(\alpha'_{ij})&\ \ \ \  (i=1,\dots,n,\ j=1,\dots,p'_i),\\
    u(\gamma'_k)&\ \ \ \ (k=1,\dots,r').
\end{align*}
We also set
$$
u(\boldsymbol\alpha_i):=\bigl(u(\alpha_{i1}),\dots,u(\alpha_{ip_i})\bigr)
$$
and use analogous notation for $u(\boldsymbol{\gamma}),u(\boldsymbol{\alpha}_i'),u(\boldsymbol{\gamma}')$.
The index set \eqref{eq:indices} is denoted by $\{\boldsymbol\alpha,\boldsymbol\gamma,\boldsymbol\alpha',\boldsymbol\gamma'\}$.
We define a sub-lattice $L\subset M$ by
\begin{equation}
    L=\left\{ u\in M\ {\Biggm |} \ \scalebox{1.3}{$\begin{smallmatrix}
    u(\alpha_{i1})=\cdots=u(\alpha_{ip_i})=-u(\alpha'_{i1})=\cdots=-u(\alpha'_{ip'_i})\ (1\leq i\leq n),\\ u(\gamma_1)=\cdots=u(\gamma_r)=-u(\gamma'_1)=\cdots=-u(\gamma'_{r'})=\sum\limits_{i=1}^nu(\alpha_{i1})
    \end{smallmatrix}$}
    \right\}.
    \label{eq:L}
\end{equation}
Let $v$ be an element of $M_{\C}$ and write it as $v=(\boldsymbol{\alpha}_1,\dots,\boldsymbol{\alpha}_n,\boldsymbol{\gamma},\boldsymbol{\alpha}'_1,\dots,\boldsymbol{\alpha}'_n,\boldsymbol{\gamma}')$.
Then, the GKZ system with these data $M,L,c:=-Av$ corresponds to the system $\mathcal{M}^{\boldsymbol{\alpha}_1,\dots,\boldsymbol{\alpha}_n,\boldsymbol{\gamma}}_{\boldsymbol{\alpha}'_1,\dots,\boldsymbol{\alpha}'_n,\boldsymbol{\gamma}'}$ as we will see below.

Let us first find a set of generators of the toric ideal $I_A$.
For any $1\leq i\leq n$ and $1\leq j\leq p_i$, we consider a vector $u$ of which all the entries are zero but $u(\alpha_{ij})=1$.
We write $z(\alpha_{ij})$ for the corresponding element of $\C[M_{\geq 0}]$.
Similarly, we define elements $z(\gamma_k),z(\alpha'_{ij}),z(\gamma'_k)\in\C[M_{\geq 0}]$.
For $i=1,\dots,n$, we set
\begin{equation}\label{eq:g_i}
g_i(z):=\underline{\left(\prod_{j=1}^{p'_i}z(\alpha'_{ij})\right)\left(\prod_{j=1}^{r'}z(\gamma'_{j})\right)}-        \left(\prod_{j=1}^{p_i}z(\alpha_{ij})\right)\left(\prod_{j=1}^{r}z(\gamma_{j})\right).
\end{equation}
For $1\leq i<j\leq n$, we set
\begin{equation}\label{eq:g_ij}
g_{ij}(z):=\underline{\left(\prod_{k=1}^{p'_i}z(\alpha'_{ik})\right)        \left(\prod_{k=1}^{p_j}z(\alpha_{jk})\right)}-\left(\prod_{k=1}^{p'_j}z(\alpha'_{jk})\right)        \left(\prod_{k=1}^{p_i}z(\alpha_{ik})\right).
\end{equation}
Let $<$ be any term order on the polynomial ring $\C[M_{\geq 0}]$ such that the monomials with underlines in the equations \eqref{eq:g_i} and \eqref{eq:g_ij} are leading terms of $g_i$ and $g_{ij}$ with respect to $<$.
Then, we obtain the following description of the toric ideal.

\begin{prop}
The followings are true.
\begin{itemize}
    \item[\rm (1)] $\mathcal{G}=\{ g_i\}_{i=1}^n\cup\{ g_{ij}\}_{1\leq i<j\leq n}$ is a reduced $<$-Gr\"obner basis of $I_A$.
    \item[\rm (2)] Suppose $p_i=p_i'$ for any $i=1,\dots,n$ and $r=r'$. Then, $\mathcal{G}$ is a universal Gr\"obner basis of $I_A$. 
\end{itemize}    
\end{prop}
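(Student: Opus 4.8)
The plan is to regard $I_A$ as the lattice ideal of $L$ and argue combinatorially. Throughout write $A_i:=\prod_{k=1}^{p_i}z(\alpha_{ik})$, $A_i':=\prod_{k=1}^{p_i'}z(\alpha_{ik}')$, $C:=\prod_{k=1}^{r}z(\gamma_k)$ and $C':=\prod_{k=1}^{r'}z(\gamma_k')$, so that $g_i=A_i'C'-A_iC$ and $g_{ij}=A_i'A_j-A_j'A_i$. Let $\ell_i\in L$ be the generator with $u(\alpha_{ik})=1$, $u(\alpha'_{ik})=-1$, $u(\gamma_k)=1$, $u(\gamma'_k)=-1$; then $L=\bigoplus_{i=1}^n\Z\ell_i$, and one checks directly that $g_i$ is the binomial attached to $-\ell_i$ and $g_{ij}$ the one attached to $\ell_j-\ell_i$, so $\mathcal G\subset I_A$. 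Since $M/L$ is the cokernel of an integer matrix with $\pm1$ entries, it is torsion free; hence $I_A$ coincides with the lattice ideal of $L$, and its primitive binomials are exactly the primitive vectors of $L$. The two parts will then follow from Buchberger's criterion and from a Graver-basis computation, respectively.

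For (1) I would verify Buchberger's criterion for the given $<$. First I would discard every S-pair whose leading monomials are coprime, by Buchberger's first criterion. Inspecting the underlined leading terms $\mathrm{in}_{<}(g_i)=A_i'C'$ and $\mathrm{in}_{<}(g_{ij})=A_i'A_j$, the only non-coprime pairs are $(g_i,g_{i'})$ for $i\neq i'$ (common factor $C'$), $(g_i,g_{i\ell})$ with $\ell>i$ (common factor $A_i'$), $(g_{ij},g_{i\ell})$ (common factor $A_i'$) and $(g_{ij},g_{kj})$ (common factor $A_j$). For each family the S-polynomial is a monomial multiple of a single other generator, namely
\begin{align*}
S(g_i,g_{i'})&=C\,g_{ii'}, & S(g_i,g_{i\ell})&=A_i\,g_\ell,\\
S(g_{ij},g_{i\ell})&=-A_i\,g_{j\ell}, & S(g_{ij},g_{kj})&=A_j'\,g_{ik},
\end{align*}
so each reduces to $0$ in one division step. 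Hence $\mathcal G$ is a $<$-Gr\"obner basis, and in particular a generating set of $I_A$. Reducedness is then checked by comparing supports: every leading term $A_i'C'$ or $A_i'A_j$ is built from the primed variables $z(\alpha'_{\bullet})$ (together with $z(\gamma'_{\bullet})$ for $g_i$), while the trailing terms $A_iC$ and $A_j'A_i$ never contain the full set of primed factors of another leading term, so no monomial of one generator is divisible by the leading term of another, provided the relevant blocks are nonempty.

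For (2), under $p_i=p_i'$ and $r=r'$ I would identify $\mathcal G$ with the Graver basis of $L$ and then invoke that the Graver basis is a universal Gr\"obner basis. Writing $u=\sum_i s_i\ell_i$ and $\Sigma=\sum_i s_i$, the coordinatewise sign-and-magnitude pattern of $u$ is governed only by the integers $s_i$ and $\Sigma$. A short conformal-decomposition argument then shows that $u$ is primitive precisely when $u\in\{\pm\ell_i\}\cup\{\pm(\ell_i-\ell_j)\}$: if some $|s_i|\ge 2$, or two of the $s_i$ share a sign, or $|\Sigma|\ge 2$, one splits off a conformal summand equal to $\pm\ell_i$ or $\ell_i-\ell_j$, contradicting primitivity. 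Since the binomials attached to $\pm\ell_i$ and $\pm(\ell_i-\ell_j)$ are exactly $g_i$ and $g_{ij}$, this gives $\mathcal G=\mathrm{Graver}(L)$, and the Graver basis is a Gr\"obner basis with respect to every term order, whence $\mathcal G$ is universal.

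The main obstacle is the bookkeeping in the two combinatorial cores. In (1), the coupling of all $n$ blocks through the shared variables $C'$ is what produces the $(g_i,g_{i'})$ overlaps, and one must organize the four S-pair families so that the displayed identities cover all index ranges; the degenerate cases where some block is empty (for instance $r'=0$, forcing $A_i'$ to be large) require a separate direct check of reducedness, since the clean support-disjointness argument then degrades. In (2), the delicate point is proving that the enumeration of primitive vectors is \emph{complete}, i.e.\ that the conformal decompositions genuinely close up on the $\gamma,\gamma'$ coordinates under the symmetry hypothesis so that no further primitive vector is overlooked; this is where the assumptions $p_i=p_i'$ and $r=r'$ enter.
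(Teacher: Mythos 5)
Your S-polynomial computations are correct and the list of non-coprime leading-term pairs is complete, but there is a genuine gap in part (1): Buchberger's criterion only certifies that $\mathcal{G}$ is a Gr\"obner basis of the ideal $J:=\langle\mathcal{G}\rangle$ that it generates, not of $I_A$. Since $\mathcal{G}$ is the set of binomials attached to a lattice \emph{basis} $\{\ell_1,\dots,\ell_n\}$ of $L$, the ideal $J$ is a lattice-basis ideal, and in general one only has $I_A=\bigl(J:(\prod_i z_i)^\infty\bigr)\supseteq J$, possibly strictly (for the twisted cubic, the two binomials coming from a basis of the kernel lattice do not generate the toric ideal). Your sentence ``hence $\mathcal{G}$ is a $<$-Gr\"obner basis, and in particular a generating set of $I_A$'' inverts the logical order: generation must be proved, and it does not follow from the reduction of S-pairs to zero. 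The paper closes exactly this hole by a different route: it invokes the criterion that a set of binomials $\mathcal{G}\subseteq I_A$ is a Gr\"obner basis of $I_A$ if and only if the map $A$ is injective on the monomials outside ${\rm in}_<(\mathcal{G})$, and then runs a combinatorial argument on standard monomials $u_1,u_2$ with $Au_1=Au_2$. To salvage your approach you would need to supplement Buchberger with such an argument (or show directly that every variable is a nonzerodivisor modulo $J$, so that $J$ equals its saturation $I_A$); as written, the proof of (1) is incomplete, and since (2) quotes (1) for the fact that $\mathcal{G}\subseteq I_A$ is the full set of primitive binomials \emph{of the ideal in question}, the gap propagates.

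Two smaller remarks. First, your reducedness discussion correctly flags that the support-disjointness argument degrades when a block is empty (e.g.\ $r'=0$ makes ${\rm in}_<(g_i)=\prod_j z(\alpha'_{ij})$ divide ${\rm in}_<(g_{ij})$), but you leave that case unresolved; it genuinely affects the ``reduced'' claim and deserves either an explicit hypothesis or a separate check. Second, your treatment of (2) --- computing the Graver basis of $L$ by conformal decomposition and showing it equals $\{\pm\ell_i\}\cup\{\pm(\ell_i-\ell_j)\}$, then using that the Graver basis contains every reduced Gr\"obner basis --- is a legitimate and essentially self-contained alternative to the paper's one-line appeal to the Lawrence-lifting theorem; it is in effect an unwinding of the proof of that theorem, and the case analysis you sketch does close up once the sign pattern of the $\gamma,\gamma'$ coordinates (governed by $\Sigma=\sum_i s_i$) is tracked as you indicate.
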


\begin{proof}
The second claim (2) follows from (1), the fact that $A$ is a Lawrence lifting by \eqref{eq:L} and \cite[Theorem 7.1]{SturmfelsLecture}.
In the following, we prove (1).
In view of \cite[Lemma 1.1]{aramova2000finite}, it is enough to prove the following claim: if $u_1,u_2\in M_{\geq 0}$ satisfy conditions $z^{u_1},z^{u_2}\notin {\rm in}_<(\mathcal{G})$ and $Au_1=Au_2$, then, $u_1=u_2$.

The condition that $u\notin {\rm in}_<(\mathcal{G})$ implies that for any indices $i<j$, either $u(\boldsymbol{\alpha}'_i)$ or $u(\boldsymbol{\alpha}_j)$ has at least one zero-entry.
Without loss of generality, we may assume that there is an index $1\leq i_0\leq n+1$ such that $u_1(\boldsymbol{\alpha}'_{1}),\dots,u_1(\boldsymbol{\alpha}'_{(i_0-1)})$ all have a zero entry and $u_1({\alpha}'_{i_0k})>0$ for $k=1,\dots,p'_{i_0}$.
It follows that $u_1(\boldsymbol{\alpha}_{(i_0+1)}),\dots,u_1(\boldsymbol{\alpha}_{n})$ all have at least one zero-entry because otherwise $z^{u_1}$ is divisible by some ${\rm in}_<(g_{i_0j})$.
Similarly, we may assume that there is an index $i_0\leq j_0\leq n+1$ such that $u_2(\boldsymbol{\alpha}'_{1}),\dots,u_2(\boldsymbol{\alpha}'_{(j_0-1)})$ all have at least one zero-entry and $u_2({\alpha}'_{j_0k})>0$ for $k=1,\dots,p'_{j_0}$.
It follows that $u_1(\boldsymbol{\alpha}_{(j_0+1)}),\dots,u_1(\boldsymbol{\alpha}_{n})$ all have at least one zero-entry.

We assume $u_1\neq u_2$ and derive a contradiction.
Let us first consider the case $i_0<j_0$.
For any $j<j_0$, there is an index $k=1,\dots,p'_j$ such that $u_2(\alpha'_{jk})=0$.
Since $Au_1=Au_2$ and $u_1\in M_{\geq 0}$, we obtain $u_1(\alpha'_{j1})-u_2(\alpha'_{j1})=u_1(\alpha'_{jk})-u_2(\alpha'_{jk})\geq 0$.
In the same way, for any $i>i_0$, there is an index $k=1,\dots,p_i$ such that $u_1(\alpha_{ik})=0$ and $u_1(\alpha'_{i1})-u_2(\alpha'_{i1})=u_2(\alpha_{ik})-u_1(\alpha_{ik})\geq 0$.
We conclude that $u_1(\gamma'_k)-u_2(\gamma'_k)=\sum_{i=1}^n(u_1(\alpha'_{i1})-u_2(\alpha'_{i1}))\geq 0$ for any $k=1,\dots,r'$.
Since $u_1\neq u_2$, there is an index $i=1,\dots,n$ such that $u_1(\alpha'_{i1})-u_2(\alpha'_{i1})>0$, which implies that $z^{u_1}$ is divisible by ${\rm in}_<(g_i)$.
This is a contradiction.

We consider the case $i_0=j_0$.
We may assume that $u_1(\alpha'_{i_01})-u_2(\alpha'_{i_01})\geq 0$.
As in the previous case, we can prove that $u_1(\alpha'_{i1})-u_2(\alpha'_{i1})\geq 0$ for any $i=1,\dots,n$, which implies the same contradiction.
\end{proof}

For any subset $I\subset\{\boldsymbol{\alpha},\boldsymbol{\gamma},\boldsymbol{\alpha}',\boldsymbol{\gamma}'\}$, we write $I^c$ for its complement.
The primary decomposition of the initial ideal ${\rm in}_<(I_A)$ determines a regular triangulation of $A$ (cf. \cite[Corollary 8.4]{SturmfelsLecture}).
For the system $\mathcal{M}^{\boldsymbol{\alpha}_1,\dots,\boldsymbol{\alpha}_n,\boldsymbol{\gamma}}_{\boldsymbol{\alpha}'_1,\dots,\boldsymbol{\alpha}'_n,\boldsymbol{\gamma}'}$, we obtain an explicit description.

\begin{cor}\label{cor:regular_triangulation}
    The term order $<$ defines a regular triangulation
    \begin{equation}
    \begin{split}
        &\bigl\{ \{\alpha'_{1i_1},\dots,\alpha'_{ni_n}\}^c\bigr\}_{\substack{i_j=1,\dots,p'_j\\ j=1,\dots,n}}\cup\\
        &\quad
        \bigcup_{\ell=1}^n
        \bigl\{ \{\alpha_{1i_1},\dots,\alpha_{(\ell-1)i_{\ell-1}},\gamma'_{k},\alpha'_{(\ell+1)i_{\ell+1}},\dots,\alpha'_{ni_n}\}^c\bigr\}_{\substack{k=1,\dots ,r'\\ i_j=1,\dots,p'_j\\ j=1,\dots,n, j\neq \ell}}
    \end{split}
    \end{equation}
\end{cor}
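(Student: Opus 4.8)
The plan is to read the triangulation off the initial ideal via the standard dictionary between squarefree initial ideals and regular triangulations. By the preceding proposition, $\mathcal{G}=\{g_i\}\cup\{g_{ij}\}$ is a reduced $<$-Gr\"obner basis of $I_A$, so $\mathrm{in}_<(I_A)$ is the monomial ideal minimally generated by the underlined leading terms of \eqref{eq:g_i} and \eqref{eq:g_ij}. First I would observe that each of these leading terms is \emph{squarefree}: $\mathrm{in}_<(g_i)$ is the product of the distinct variables indexed by $\boldsymbol{\alpha}'_i\cup\boldsymbol{\gamma}'$, and $\mathrm{in}_<(g_{ij})$ is the product of the distinct variables indexed by one primed and one unprimed $\alpha$-block (for $i<j$, as displayed). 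Consequently $\mathrm{in}_<(I_A)$ is a radical monomial ideal, equal to its own radical, so by \cite[Corollary 8.4]{SturmfelsLecture} it coincides with the Stanley--Reisner ideal of the regular triangulation $\Delta_<$ induced by $<$; regularity is automatic since $\Delta_<$ comes from a generic weight representing the term order.

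Under this correspondence the facets of $\Delta_<$ are exactly the complements of the minimal transversals (minimal vertex covers) of the hypergraph $H$ on the index set $\{\boldsymbol{\alpha},\boldsymbol{\gamma},\boldsymbol{\alpha}',\boldsymbol{\gamma}'\}$ whose edges are the supports $E_i:=\boldsymbol{\alpha}'_i\cup\boldsymbol{\gamma}'$ $(1\le i\le n)$ and $E_{ij}$ $(1\le i<j\le n)$ of the leading monomials. Indeed, the minimal primes of the squarefree ideal $\mathrm{in}_<(I_A)$ are precisely the ideals $\langle z_k : k\in T\rangle$ with $T$ a minimal transversal of $\{E_i,E_{ij}\}$, and the associated facet is $T^{c}$. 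Thus the corollary reduces to the purely combinatorial statement that the minimal transversals of $H$ are exactly the index sets $I$ appearing in the two displayed families.

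The enumeration I would carry out splits according to whether a minimal transversal $T$ meets the block $\boldsymbol{\gamma}'$. If $T\cap\boldsymbol{\gamma}'=\emptyset$, then covering every edge $E_i$ forces $T$ to meet each block $\boldsymbol{\alpha}'_i$; since one variable from each $\boldsymbol{\alpha}'_i$ already meets every $E_{ij}$, minimality yields exactly one variable per block and nothing else, which is the first family $\{\alpha'_{1i_1},\dots,\alpha'_{ni_n}\}$. If instead $T$ contains some $\gamma'_k$, then all of the $E_i$ are already covered, and $T\setminus\{\gamma'_k\}$ must be a minimal transversal of the ``staircase'' subhypergraph $\{E_{ij}\}_{i<j}$. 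The key lemma is that the minimal transversals of this staircase are parametrized by a threshold $\ell\in\{1,\dots,n\}$, picking one variable from the appropriate $\alpha$-block for each index on either side of $\ell$ and skipping block $\ell$; this is exactly the second family. I expect the classification of the staircase transversals to be the main obstacle: one must show the threshold families are exhaustive, arguing that the condition ``$T$ meets $\alpha$- or $\alpha'$-block for every pair $i<j$'' forces the blocks met by $T$ into the monotone pattern dictated by the order on indices, and that minimality then removes all redundancy.

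Finally I would record the bookkeeping checks confirming that these transversals are the facets of an honest triangulation. Each transversal in both families has exactly $n$ elements, so each candidate facet $I^{c}$ has $N-n=\dim\mathrm{Cone}(A)$ elements, using $\rank L=n$; hence each is a full-dimensional simplicial cone. Since $\mathrm{in}_<(I_A)$ is radical, the complex $\Delta_<$ is pure of this dimension, so the maximal transversals are precisely its facets and the two families together exhaust them. Collecting both families then gives the asserted regular triangulation.
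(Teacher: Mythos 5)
Your overall route is the same as the paper's: the printed proof consists of nothing but the primary decomposition of $\mathrm{in}_<(\mathcal G)$, which is exactly the statement that the facets are the complements of the minimal transversals of the supports of the leading monomials. Your expansion of this — squarefree initial ideal equals the Stanley--Reisner ideal of $\Delta_<$ by \cite[Corollary 8.4]{SturmfelsLecture}, minimal primes correspond to minimal vertex covers, the case split on whether the cover meets $\boldsymbol{\gamma}'$, the threshold classification of the staircase covers via monotonicity plus minimality, and the dimension count using $\operatorname{rank}L=n$ — is correct in structure and fills in precisely what the paper leaves implicit.

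The one point you must not leave vague is the phrase ``the appropriate $\alpha$-block,'' because that is where the content sits and where the paper itself is internally inconsistent. With the leading term of $g_{ij}$ as underlined in \eqref{eq:g_ij}, namely $z(\boldsymbol{\alpha}'_i)z(\boldsymbol{\alpha}_j)$ for $i<j$, the staircase edge is $\boldsymbol{\alpha}'_i\cup\boldsymbol{\alpha}_j$, and your monotonicity argument yields minimal transversals taking one element of $\boldsymbol{\alpha}'_m$ for $m<\ell$ and one element of $\boldsymbol{\alpha}_m$ for $m>\ell$ — the \emph{transpose} of the second family in the Corollary, which places the unprimed blocks below $\ell$ and the primed ones above. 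Concretely, for any $i<j$ with $i\le\ell\le j$ the set $\{\alpha_{1i_1},\dots,\alpha_{(\ell-1)i_{\ell-1}},\gamma'_k,\alpha'_{(\ell+1)i_{\ell+1}},\dots,\alpha'_{ni_n}\}$ meets neither $\boldsymbol{\alpha}'_i$ nor $\boldsymbol{\alpha}_j$, so it fails to cover $\mathrm{in}_<(g_{ij})$ and its ideal cannot be an associated prime. Hence either the underline in \eqref{eq:g_ij} or the statement of the Corollary (and the displayed primary decomposition) has the primes swapped; the resulting triangulations are combinatorially isomorphic, but your proof only closes once you commit to one orientation and carry the threshold lemma through with it, at which point this discrepancy must be resolved rather than absorbed into the word ``appropriate.''
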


\begin{proof}
    The corollary follows from the primary decomposition
    \begin{align}
        {\rm in}_<(\mathcal{G})&=
        \bigcap_{\substack{i_j=1,\dots,p'_j\\ j=1,\dots,n}}\!\bigl\langle z(\alpha'_{1i_1}),\dots,z(\alpha'_{ni_n})\bigr\rangle\cap
        \bigcap_{\substack{i_1=1,\dots ,r'\\ i_j=1,\dots,p'_j\\ j=2,\dots,n}}\!\bigl\langle z(\gamma'_{i_1}),z(\alpha'_{2i_2}),\dots,z(\alpha'_{ni_n})\bigr\rangle\nonumber\\
        &\quad\cap\dots\cap\bigcap_{\substack{i_j=1,\dots,p_j\\ j=1,\dots,n-1\\ i_n=1,\dots,r'}} \bigl\langle z(\alpha_{1i_1}),\dots,z(\alpha_{n-1i_{n-1}}),z(\gamma'_{i_n})\bigr\rangle.
    \end{align}
\end{proof}

The second equation of the GKZ system \eqref{eqn:Oshima_system} can be used to reduce the number of variables.
Namely, it is equivalent to an identity
\begin{equation}\label{eq:reduction}
    u(z)=t^{-\langle\psi,c\rangle}u(t^\psi\cdot z)\quad (t\in\C^\times,\ \psi\in L^\perp).
\end{equation}
The identity \eqref{eq:reduction} shows that the value of a solution $u$ to \eqref{eqn:Oshima_system} on $T:={\rm Spec}\ \C[M]\subset M^\vee_{\C}$ is determined by that on a suitable subspace of $T$.
Let us assume $\alpha'_{i1}=0$ for any $i=1,\dots,n$.
For a function $u(z)$ on $T$, we define a function $u_{\rm red}$ as a function depending only on $n$-complex variables $z(\alpha'_{11}),\dots,z(\alpha'_{n1})$ obtained from $u$ by its restriction to
\begin{equation}
\begin{cases}
        z(\alpha'_{ij})=1&(i=1,\dots,n,\ j=2,\dots,p_i'),\\
        z(\gamma'_k)=1&(k=1,\dots,r'),\\
        z(\alpha_{ij})=1&(i=1,\dots,n,\ j=1,\dots,p_i),\\
        z(\gamma_k)=1&(k=1,\dots,r).
\end{cases}
\label{eq:restriction}
\end{equation}
Then, \eqref{eq:reduction} is equivalent to
\begin{equation}
u(z)=z(\boldsymbol{\alpha})^{\boldsymbol{\alpha}}z(\boldsymbol{\gamma}')^{\boldsymbol{\gamma}}z(\boldsymbol{\alpha}')^{\boldsymbol{\alpha}'}z(\boldsymbol{\gamma}')^{\boldsymbol{\gamma}'}
    u_{\rm red}\left( \frac{z(\boldsymbol{\alpha}'_1)z(\boldsymbol{\gamma}')}{z(\boldsymbol{\alpha}_1)z(\boldsymbol{\gamma})},\dots,\frac{z(\boldsymbol{\alpha}'_n)z(\boldsymbol{\gamma}')}{z(\boldsymbol{\alpha}_n)z(\boldsymbol{\gamma})}\right),
    \label{eq:u_red}
\end{equation}
where we use notation
\begin{equation}
z(\boldsymbol{\alpha})^{\boldsymbol{\alpha}}:=\prod_{i=1}^n\prod_{j=1}^{p_i}z({\alpha}_{ij})^{\alpha_{ij}},\quad\quad z(\boldsymbol{\alpha}'_i):=\prod_{j=1}^{p_i}z({\alpha}_{ij})^{\alpha_{ij}}
\end{equation}
and so on.
Using the identity \eqref{eq:u_red}, it can be shown that a holomorphic function $u(z)$ on $T$ is annihilated by $g_i(\partial),g_{ij}(\partial)$ if and only if its restriction to \eqref{eq:restriction} and 
\begin{equation}
            z(\alpha'_{i1})=(-1)^{p_i+r}x_i \quad i=1,\dots,n
\end{equation}
is a solution to the system $\mathcal{M}^{\boldsymbol{\alpha}_1,\dots,\boldsymbol{\alpha}_n,\boldsymbol{\gamma}}_{\boldsymbol{\alpha}'_1,\dots,\boldsymbol{\alpha}'_n,\boldsymbol{\gamma}'}$.
Thus, we obtain the following proposition
\begin{prop}\label{prop:solution_correspondence}
Let $u(x_1,\dots,x_n)$ be a holomorphic function defined on a domain in $\C^n$.
Then, $u$ is a solution to the system $\mathcal{M}^{\boldsymbol{\alpha}_1,\dots,\boldsymbol{\alpha}_n,\boldsymbol{\gamma}}_{\boldsymbol{\alpha}'_1,\dots,\boldsymbol{\alpha}'_n,\boldsymbol{\gamma}'}$ if and only if 
$$
z(\boldsymbol{\alpha})^{\boldsymbol{\alpha}}z(\boldsymbol{\gamma}')^{\boldsymbol{\gamma}}z(\boldsymbol{\alpha}')^{\boldsymbol{\alpha}'}z(\boldsymbol{\gamma}')^{\boldsymbol{\gamma}'}
    u\left( (-1)^{p_1+r}\frac{z(\boldsymbol{\alpha}'_1)z(\boldsymbol{\gamma}')}{z(\boldsymbol{\alpha}_1)z(\boldsymbol{\gamma})},\dots,(-1)^{p_n+r}\frac{z(\boldsymbol{\alpha}'_n)z(\boldsymbol{\gamma}')}{z(\boldsymbol{\alpha}_n)z(\boldsymbol{\gamma})}\right)
$$
is a solution to the GKZ system $M_A(c)$.
\end{prop}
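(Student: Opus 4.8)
The plan is to test the two families of equations defining $M_A(c)$ in \eqref{eqn:Oshima_system} separately against the explicit lift
$U(z):=z^{v}\,u(\xi_1(z),\dots,\xi_n(z))$, where $z^{v}$ is the monomial prefactor of \eqref{eq:u_red} (so $v$ is the parameter vector with $c=-Av$) and $\xi_i(z)=(-1)^{p_i+r}z^{w_i}$, with $w_i\in M$ the exponent of the Laurent monomial $z(\boldsymbol\alpha'_i)z(\boldsymbol\gamma')/\bigl(z(\boldsymbol\alpha_i)z(\boldsymbol\gamma)\bigr)$. Since $U$ is by construction of the quasi-homogeneous shape \eqref{eq:u_red}, the Euler equations will come for free, so the whole content is that the toric (binomial) operators annihilate $U$ precisely when $P_i u=P_{ij}u=0$.

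First I would dispose of the Euler operators. A direct comparison with \eqref{eq:L} shows that each $w_i$ lies in $L$; hence for every $\psi\in L^\perp$ one has $\langle\psi,w_i\rangle=0$, so each $\xi_i$ is $\psi$-invariant and $\vartheta_\psi$ kills every function of $\xi_1,\dots,\xi_n$. Consequently $\vartheta_\psi U=\langle\psi,v\rangle U$, and because $\psi$ factors through $M/L$ and $c=-Av$ we get $\langle\psi,v\rangle=-\langle\psi,c\rangle$, i.e. $\vartheta_\psi U+\langle\psi,c\rangle U=0$. Thus $U$ satisfies the second line of \eqref{eqn:Oshima_system} unconditionally.

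Next I would treat the binomials \eqref{eq:g_i} and \eqref{eq:g_ij}. Each is squarefree, so for its leading monomial $z^{a}$ one has the elementary identity $z^{a}\partial^{a}=\prod_{s\in\operatorname{supp}(a)}\vartheta_{z_s}$. Writing $U=z^{v}\phi$ with $\phi=u$, the prefactor $z^{v}$ shifts each $\vartheta_{z_s}$ by $v_s$, while the chain rule $\vartheta_{z_s}\phi=\sum_i (w_i)_s\,\vartheta_{x_i}\phi$ yields on functions of $\xi$ the sign-definite dictionary $\vartheta_{z(\alpha'_{ij})}\mapsto \vartheta_{x_i}$, $\vartheta_{z(\alpha_{ij})}\mapsto -\vartheta_{x_i}$, $\vartheta_{z(\gamma'_k)}\mapsto \vartheta_{x_1}+\cdots+\vartheta_{x_n}$, $\vartheta_{z(\gamma_k)}\mapsto -(\vartheta_{x_1}+\cdots+\vartheta_{x_n})$. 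Carrying this through $g_i(\partial)=\partial^{a_i}-\partial^{b_i}$, the two monomial prefactors differ by $z^{a_i-b_i}=z^{w_i}=(-1)^{p_i+r}x_i$, and the sign $(-1)^{p_i+r}$ built into $\xi_i$ is exactly what cancels the sign produced by the $p_i+r$ factors of the $b_i$-term, leaving the single power $x_i$. One then reads off $g_i(\partial)U=(\text{invertible monomial})\cdot (P_i u)(\xi)$, and by the same computation $g_{ij}(\partial)U=(\text{invertible monomial})\cdot (P_{ij}u)(\xi)$. Because the prefactors are units on $T$, annihilation of $U$ by $g_i(\partial),g_{ij}(\partial)$ is equivalent to $P_iu=P_{ij}u=0$.

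Finally I would upgrade from the generators to all of $I_A$: the toric operators lie in the commutative subalgebra $\C[\partial]$ of the Weyl algebra, and $\mathcal G=\{g_i\}\cup\{g_{ij}\}$ generates $I_A$ (established above), so every $g(\partial)$ with $g\in I_A$ is a $\C[\partial]$-combination of the $g_i(\partial),g_{ij}(\partial)$ and therefore annihilates $U$ as soon as these do. Combined with the Euler step, this gives the asserted equivalence. The hard part is entirely the sign and normalization bookkeeping of the third paragraph: one must verify that the dictionary reproduces $P_i$ and $P_{ij}$ \emph{on the nose}, with the $(-1)^{p_i+r}$ in the substitution and the exponent vector $v$ conspiring to produce exactly the factors $x_i$, $x_j$ and the parameter signs $\pm\boldsymbol\alpha_i,\pm\boldsymbol\alpha'_i,\pm\boldsymbol\gamma,\pm\boldsymbol\gamma'$ of \eqref{eq:P_iP_ij}, rather than merely an operator of the same shape.
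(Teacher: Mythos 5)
Your overall route is the same as the paper's: the Euler operators of $M_A(c)$ are equivalent to the quasi\-/homogeneity \eqref{eq:u_red} because the arguments $\xi_i=(-1)^{p_i+r}z^{w_i}$ are monomials with exponents $w_i\in L$, the toric part is checked on the two generating binomials via the identity $z^a\partial^a=\prod_{s\in\mathrm{supp}(a)}\vartheta_{z_s}$, and your extra remark that generation of $I_A$ by $\mathcal G$ inside the commutative subring $\C[\partial]$ lets you pass from the generators to all of $I_A$ correctly fills a step the paper leaves implicit.

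The one step you defer --- ``the sign and normalization bookkeeping of the third paragraph'' --- is exactly where the argument does not close as you assert. Carrying out your own dictionary with the prefactor $z^{v}$, $v=(\boldsymbol\alpha_1,\dots,\boldsymbol\alpha_n,\boldsymbol\gamma,\boldsymbol\alpha'_1,\dots,\boldsymbol\alpha'_n,\boldsymbol\gamma')$ as printed in the Proposition, one finds $\vartheta_{z(\alpha'_{ij})}\bigl(z^{v}\phi(\xi)\bigr)=z^{v}\,(\vartheta_{x_i}+\alpha'_{ij})\phi(\xi)$ and $\vartheta_{z(\gamma'_k)}\bigl(z^{v}\phi(\xi)\bigr)=z^{v}\,(\vartheta_{x_1}+\cdots+\vartheta_{x_n}+\gamma'_k)\phi(\xi)$, so the underlined monomial of $g_i$ produces $\prod_j(\vartheta_{x_i}+\alpha'_{ij})\prod_k(\vartheta_{x_1}+\cdots+\vartheta_{x_n}+\gamma'_k)$, whereas $P_i$ in \eqref{eq:P_iP_ij} begins with $\prod_j(\vartheta_{x_i}-\alpha'_{ij})\prod_k(\vartheta_{x_1}+\cdots+\vartheta_{x_n}-\gamma'_k)$. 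The part of your claim concerning $(-1)^{p_i+r}$ is right: it cancels against the factor $(-1)^{p_i+r}$ coming from the $p_i+r$ terms $(v_{\alpha_{ij}}-\vartheta_{x_i})$ and $(v_{\gamma_k}-\sum_l\vartheta_{x_l})$ of the second monomial, and this is what produces the clean factor $x_i$ (and $x_i/x_j$ for $g_{ij}$). But it cannot repair the parameter signs: with the printed prefactor one obtains $g_i(\partial)U=(\text{unit})\cdot\bigl(P_i\big|_{\text{all parameters negated}}\,u\bigr)(\xi)$, not $(\text{unit})\cdot(P_iu)(\xi)$; a one\-/variable check ($n=1$, $p_1=p_1'=r=r'=1$) against the $\Gamma$\-/series of $M_A(c)$ confirms this. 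So either the exponent of the prefactor must be $-(\boldsymbol\alpha,\boldsymbol\gamma,\boldsymbol\alpha',\boldsymbol\gamma')$ (equivalently $c=+Av$), or the identification $v=(\boldsymbol\alpha_1,\dots,\boldsymbol\gamma')$ must be replaced by its negative. This sign slip is already present in \eqref{eq:u_red} and is harmless for the rank and irreducibility applications, but your proof must either perform the computation and record the corrected exponent, or it proves a statement about the system with negated parameters rather than the one asserted.
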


\begin{prop}
    The rank of the system $\mathcal{M}^{\boldsymbol{\alpha}_1,\dots,\boldsymbol{\alpha}_n,\boldsymbol{\gamma}}_{\boldsymbol{\alpha}'_1,\dots,\boldsymbol{\alpha}'_n,\boldsymbol{\gamma}'}$ is 
\begin{align}
   & p'_1\cdots p'_n+r'\sum_{k=1}^n
p_1\cdots p_{k-1}p'_{k+1}\cdots p'_n\label{eq:rankGKZ}\\
   &\quad
    =\begin{cases}
 p_1\cdots p_n+r\displaystyle\sum_{k=1}^np_1\cdots p_{k-1}p_{k+1}\cdots p_n&(L=0),\\
 \displaystyle\frac1{L}(p'_1\cdots p'_n r-p_1\cdots p_nr')&(L\ne 0).
 \end{cases}
\notag
\end{align}
\end{prop}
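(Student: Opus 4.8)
The plan is to read off the rank from the Gelfand--Kapranov--Zelevinsky realization established just above. By Proposition~\ref{prop:solution_correspondence} the solution spaces of $\mathcal{M}^{\boldsymbol{\alpha}_1,\dots,\boldsymbol{\alpha}_n,\boldsymbol{\gamma}}_{\boldsymbol{\alpha}'_1,\dots,\boldsymbol{\alpha}'_n,\boldsymbol{\gamma}'}$ and of the GKZ system $M_A(c)$ are identified at a generic point, so the two holonomic ranks agree. The fundamental theorem on GKZ systems then identifies this rank, for non-resonant $c$ in the sense of \eqref{eq:non-resonance}, with the normalized volume $\mathrm{vol}_{\Z}(A)$ of the configuration $A$. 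Thus the first task is to compute $\mathrm{vol}_{\Z}(A)$, and the second is to match it with the two closed forms in the statement.

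For the volume I would use the explicit regular triangulation of Corollary~\ref{cor:regular_triangulation}. The leading monomials of the Gr\"obner basis $\mathcal{G}=\{g_i\}\cup\{g_{ij}\}$ underlined in \eqref{eq:g_i} and \eqref{eq:g_ij} are squarefree, so $\mathrm{in}_<(I_A)$ is a squarefree (hence radical) monomial ideal; by the standard dictionary between squarefree initial ideals and unimodular triangulations this forces the triangulation to be unimodular, whence $\mathrm{vol}_{\Z}(A)$ equals the number of its maximal cells. Counting those cells is then pure bookkeeping: the family of complements $\{\alpha'_{1i_1},\dots,\alpha'_{ni_n}\}^c$ contributes $p'_1\cdots p'_n$ cells, while for each $\ell=1,\dots,n$ the family in which $\gamma'_k$ occupies slot $\ell$, non-primed indices occupy the slots before $\ell$ and primed indices the slots after, contributes $r'\,p_1\cdots p_{\ell-1}p'_{\ell+1}\cdots p'_n$ cells. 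Summing over $\ell$ reproduces the first displayed expression
\[
 p'_1\cdots p'_n+r'\sum_{k=1}^n p_1\cdots p_{k-1}p'_{k+1}\cdots p'_n .
\]

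The passage to the case-dependent closed form is an elementary telescoping once condition (F) is used in the form $p'_i-p_i=r-r'=L$. For $L=0$ we have $p_i=p'_i$ and $r=r'$, so the first expression is literally the $L=0$ branch. For $L\neq 0$ I would set $P_k:=p_1\cdots p_k\,p'_{k+1}\cdots p'_n$, so that $P_0=\prod_i p'_i$, $P_n=\prod_i p_i$ and
\[
 P_{k-1}-P_k=(p'_k-p_k)\,p_1\cdots p_{k-1}p'_{k+1}\cdots p'_n=L\,p_1\cdots p_{k-1}p'_{k+1}\cdots p'_n .
\]
Telescoping gives $\sum_{k=1}^n p_1\cdots p_{k-1}p'_{k+1}\cdots p'_n=\tfrac1L\bigl(\prod_i p'_i-\prod_i p_i\bigr)$; substituting this together with $r=r'+L$ into the first expression collapses it to $\tfrac1L\bigl(r\prod_i p'_i-r'\prod_i p_i\bigr)$, the $L\neq0$ branch.

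The delicate point I expect to be the main obstacle is upgrading the identity from generic (non-resonant) parameters to \emph{all} parameters, as was achieved for $n=2$ in Theorem~\ref{thm:rank}; the GKZ volume theorem gives equality only for non-resonant $c$. I would handle it exactly as in the two-variable case: the principal symbols $\sigma(P_i)$ and $\sigma(P_{ij})$ do not involve the parameters, so $\rank(\mathcal{M})\le\dim_K K[\xi_1,\dots,\xi_n]/J$, where $J$ is generated by these symbols, is a parameter-independent upper bound, and the same point count that yields Theorem~\ref{thm:Singn} (a direct generalization of Lemma~\ref{lem:dim}) shows $\dim_K K[\xi_1,\dots,\xi_n]/J=\mathrm{vol}_{\Z}(A)$. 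Since the holonomic rank of a GKZ system is never smaller than its normalized volume for any $c$, these two bounds sandwich the rank to the constant value $\mathrm{vol}_{\Z}(A)$ for all parameters, ruling out rank jumps. Verifying that $J$ really is the full characteristic ideal, so that the upper bound is attained, is where the genuine work lies; it proceeds through a syzygy analysis analogous to Lemma~\ref{lem:res}.
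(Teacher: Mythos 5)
Your proposal reaches the stated formula by the same essential route as the paper: identify solutions with those of the GKZ system via Proposition~\ref{prop:solution_correspondence}, recognize that the triangulation of Corollary~\ref{cor:regular_triangulation} is unimodular (your derivation from the squarefree leading terms of $\mathcal G$ is a legitimate variant of the paper's appeal to total unimodularity of the Lawrence-type configuration \eqref{eq:L}), count its maximal cells to get \eqref{eq:rankGKZ}, and telescope using $p'_k-p_k=L$ to obtain the two closed forms; all of that bookkeeping is correct. Where you diverge is on the only delicate point, the validity of the formula for \emph{all} parameters. The paper settles this in one stroke: unimodularity gives normality of $\C[M_{\geq 0}]/I_A$, normality gives Cohen--Macaulayness by \cite[Theorem 1]{hochster1972rings}, and then \cite[Corollary 5.21]{adolphson1994hypergeometric} asserts that for a Cohen--Macaulay toric ring the holonomic rank equals the normalized volume for every $c$, resonant or not. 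Your proposed substitute --- sandwiching the rank between the normalized volume (a lower bound for all $c$) and the dimension of the quotient by the principal-symbol ideal --- is sound in principle, but the upper half of the sandwich is exactly the $n$-variable analogue of Lemmas~\ref{lem:res} and \ref{lem:dim}, i.e.\ a syzygy analysis of the $n+\binom{n}{2}$ symbols $\sigma(P_i),\sigma(P_{ij})$ showing their quotient has parameter-independent dimension equal to the volume; this is genuinely harder than in the $n=2$ case (the symbols no longer form anything close to a complete intersection and the point count in $\mathbb P^{n-1}$ with multiplicity-freeness must be justified), and you leave it unexecuted. So your argument as written proves the formula only for generic parameters; to close it for all parameters you should either carry out that syzygy analysis or, more efficiently, replace it with the normality/Cohen--Macaulay argument, which is what the paper does.
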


\begin{proof}
    By \eqref{eq:L} and \cite[Theorem 8.1.5]{de2010triangulations}, $A$ is totally unimodular, i.e., any simplex has a normalized volume one.
    In view of \cite[Proposition 13.5]{SturmfelsLecture}, the toric ring $\C[M_{\geq 0}]/I_A$ is normal and hence Cohen-Macaulay by \cite[Theorem 1]{hochster1972rings}.
    Thus, \cite[Corollary 5.21]{adolphson1994hypergeometric} and Corollary \ref{cor:regular_triangulation} of this paper proves that the holonomic rank of the GKZ system $M_A(c)$ is given by \eqref{eq:rankGKZ}.
    The Proposition follows from Proposition \ref{prop:solution_correspondence}.
\end{proof}

\subsection{The structure of the secondary fan and irreducibility}\label{sec:irreducible}

In this section, we provide a combinatorial description of the secondary fan of the GKZ system $M_A(c)$.
We define the following elements of $\Z^n$:
\begin{equation}
\begin{cases}
    \boldsymbol{b}(\alpha'_{ij})=(0,\dots,0,\overset{i}{\breve{1}},0,\dots,0) &(j=1,\dots,p'_i),\\
    \boldsymbol{b}(\gamma'_{j})\ =(1,\dots,1) &(j=1,\dots,r' ),\\
    \boldsymbol{b}(\alpha_{ij})=(0,\dots,0,\overset{i}{\breve{-1}},0,\dots,0)& (j=1,\dots,p_i),\\
    \boldsymbol{b}(\gamma_{j})\ =(-1,\dots,-1) &(j=1,\dots,r).
\end{cases}
\label{eq:b-vectors}
\end{equation}
The vectors \eqref{eq:b-vectors} define a Gale dual of the lattice configuration $A$.
It determines the secondary fan.
Below, we only consider the case $p_ip'_i\neq 0$ for any $i=1,\dots,n$.

\medskip

\noindent
\underline{Case 1: $rr'\neq 0$}

The secondary fan consists of $(n+1)!$ simplicial cones.
We set $y_0:=0$.
Then, for a permutation $\s$ of $\{ 0,\dots,n\}$, we set
\begin{equation}
    C_{\s}:=\{ y=(y_1,\dots,y_n)\in\R^n\mid y_{\s(0)}\leq y_{\s(1)}\leq \cdots \leq y_{\s(n)}\}.
\end{equation}
Each $C_{\s}$ defines an affine space $U_{\s}$ near a torus fixed point.
We set $x_0:=1$.
Then, the local coordinate ring of $U_{\s}$ specified by $\s$ is given by 
\begin{equation}
    \C\left[\frac{x_{\s(1)}}{x_{\s(0)}},\frac{x_{\s(2)}}{x_{\s(1)}},\dots,\frac{x_{\s(n)}}{x_{\s(n-1)}}\right].
\end{equation}
The secondary fan is given by $\{ C_{\s}\}_{\s\in S_{n+1}}$.
We introduce a convention $[0^0:0^1:\dots:0^n]$ as the unique torus fixed point contained in $U_{\rm id}$.
We allow to scale it by $0$ as $[0^0:0^1:\dots:0^n]=[0^{-1}:0^0:\dots:0^{n-1}]=[\infty:0^0:\dots,0^{n-1}]$.
Similarly, we denote by $p_{\s}:=[0^{\s^{-1}(0)}:\cdots:0^{\s^{-1}(n)}]$ the unique torus fixed point in $U_{\s}$.
Since $\{ C_{\s}\}_{\s}$ is a polyhedral fan, it defines a toric variety $X$.
It turns out that $X$ is isomorphic to an iterated blowing-up of a product of projective lines: for a subset $I\subset\{1,\dots,n\}$, we set $D_I:=\left(\cap_{i\in I}\{x_i=0\}\right)\cup\left(\cap_{i\in I}\{x_i=\infty\}\right)$.
We write ${\rm Bl}_{D_I}$ for the blowing-up along $D_I$.
$$X=\prod_{\substack{I\subset\{1,\dots,n\}\\ |I|=2}}{\rm Bl}_{D_I}\cdots\prod_{\substack{I\subset\{1,\dots,n\}\\ |I|=n}}{\rm Bl}_{D_I}\left( \mathbb{P}^1\times \cdots\times\mathbb{P}^1\right).$$
The proof of the above identity is a direct comparison of local coordinates.

Given a pair of torus fixed points $p_1,p_2$ on $X$, we say $p_1$ is adjacent to $p_2$ if there is a one-dimensional torus orbit on $X$.
In our set-up, $p_{\s_1}$ is adjacent to $p_{\s_2}$ if $\s_1^{-1}\s_2$ is a consecutive transposition.
Then, for a pair of cones $C_{\s_1}$ and $C_{\s_2}$, the intersection $C_{\s_1}\cap C_{\s_2}$ is a common facet if and only if $\s_1$ is adjacent to $\s_2$.

One can also recognize a polytope embedded in $X$ which is a generalization of the hexagon appeared in {\bf Fig 1} in \S\ref{sec:coord trans}.
Indeed, by a well-known theorem on toric moment map \cite[\S4.2]{fulton1993introduction}, the Euclidean closure $X_{\geq 0}$ of the first quadrant $(0,\infty)^n$ in $X$ is homeomorphic to a polytope which is dual to the secondary fan $\{ C_{\s}\}_{\s\in S_{n+1}}$.
This polytope is called a permutohedron (\cite[\S7.3.C]{GKZbook}), which is a polytope whose edge graph is isomorphic to the Cayley graph of the permutation group $S_{n+1}$ with respect to the set of generators given by consecutive transpositions.

\medskip

\noindent
\underline{Case 2: $rr'= 0$}

Since the argument is symmetric, we may assume $r=0$.
Let $I\subset\{ 1,\dots,n\}$ be a subset.
We write $S_I$ for the set of bijections from $I$ to itself.
$S_I$ is a group whose product is given by the composition of maps.
For a given subset $I=\{ i_1,\dots,i_a\}\subset\{ 1,\dots,n\}$ and $\s\in S_I$, we set
\begin{equation}
    C_{I,\s}:=\{ y=(y_1,\dots,y_n)\in\R^n\mid y_i\leq 0\ (i\notin I),\ y_{\s(i_1)}\leq  \cdots \leq y_{\s(i_a)}\}.
\end{equation}
Each $C_{I,\s}$ defines an affine space $U_{I,\s}$ near a torus fixed point.
Then, the local coordinate ring of $U_{I,\s}$ specified by $(I,\s)$ is given by 
\begin{equation}
    \C\left[\frac{1}{x_i}\ (i\notin I),x_{\s(i_1)},\frac{x_{\s(i_2)}}{x_{\s(i_1)}},\dots,\frac{x_{\s(i_a)}}{x_{\s(i_{a-1})}}\right].
\end{equation}
The secondary fan is given by $\{ C_{I,\s}\}_{I\subset\{ 1,\dots,n\},\s\in S_{I}}$.
The number of maximal cones is $\sum_{i=0}^n\frac{n!}{i!}$.
The first few terms are $2,5,16,65,326,...$.
Since $\{ C_{I,\s}\}_{I,\s}$ is a polyhedral fan, it defines a toric variety $X$.
It turns out that $X$ is isomorphic to an iterated blow-up of a product of projective lines: for a subset $I\subset\{1,\dots,n\}$, we set $D_I:=\left(\cap_{i\in I}\{x_i=0\}\right)$.
$$X=\prod_{\substack{I\subset\{1,\dots,n\}\\ |I|=2}}{\rm Bl}_{D_I}\cdots\prod_{\substack{I\subset\{1,\dots,n\}\\ |I|=n}}{\rm Bl}_{D_I}\left( \mathbb{P}^1\times \cdots\times\mathbb{P}^1\right).$$

Now, let us determine the irreducibility.

\begin{thm}\label{thm:irreducibility}
    $M_A(c)$ is irreducible if and only if none of $\alpha_{1i_1}+\cdots+\alpha_{ni_n}+\gamma'_{k},\,\alpha'_{1i_1}+\cdots+\alpha'_{ni_n}+\gamma_{k},\alpha_{ij_1}+\alpha'_{ij_2},\,\gamma_{k_1}+\gamma'_{k_2}$ is an integer.    
\end{thm}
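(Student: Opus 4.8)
The plan is to invoke the theorem of Schulze--Walther \cite{schulze2012resonance} that, for an $A$-hypergeometric system, $D$-module reducibility is equivalent to resonance of the parameter. Thus $M_A(c)$ is irreducible if and only if $c$ is non-resonant in the sense recalled after \eqref{eq:face}. First I would observe that it suffices to test resonance against the \emph{facets} (codimension-one faces) of $\mathrm{Cone}(A)$: every face $F$ is contained in some facet $\tau$, so $\mathrm{span}_\C(F)+\Z^n\subseteq \mathrm{span}_\C(\tau)+\Z^n$, and hence non-resonance along all facets already forces non-resonance along all faces. For a facet $\tau=C(I)$ with supporting functional $\phi$ as in \eqref{eq:face}, the non-resonance condition \eqref{eq:non-resonance} reads $\langle\phi,c\rangle\notin\Z$, and because $c=-Av$ with $v=(\boldsymbol{\alpha}_1,\dots,\boldsymbol{\gamma}')$ we have $\langle\phi,c\rangle=-\langle A^{\!\top}\phi,v\rangle$. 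The whole proof therefore reduces to enumerating the facets of $\mathrm{Cone}(A)$ together with their functionals.

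Here the Gale-dual description is decisive (cf.~\cite[\S6.3]{ziegler2012lectures}). The vectors $\boldsymbol{b}(\cdot)$ of \eqref{eq:b-vectors} form a Gale dual of $A$: arranging them as the columns of an $n\times N$ matrix $B$, the rows of $B$ span the relation lattice $L$ of \eqref{eq:L}, so that $\ker B=L^\perp$. Under this identification a facet of $\mathrm{Cone}(A)$ corresponds to a minimal positively-dependent subset (a \emph{positive circuit}) $S$ of the configuration $\{\boldsymbol{b}(\cdot)\}$: a relation $\sum_{i\in S}\lambda_i\boldsymbol{b}_i=0$ with all $\lambda_i>0$ is precisely a minimal-support nonnegative vector $\lambda\in\ker B=L^\perp$, which is the vector $(\langle\phi,a_i\rangle)_i$ attached to the facet $C(S^c)$. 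For such a circuit one gets $\langle\phi,c\rangle=-\sum_{i\in S}\lambda_i v_i$, so I only need to read off $S$ and $\lambda$.

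The combinatorial heart is then to list all positive circuits. Each $\boldsymbol{b}(\cdot)$ lies in $\{\pm e_1,\dots,\pm e_n,\pm(1,\dots,1)\}$, with $e_i$ coming from the $\alpha'_{i*}$, with $-e_i$ from the $\alpha_{i*}$, with $(1,\dots,1)$ from the $\gamma'_*$ and with $-(1,\dots,1)$ from the $\gamma_*$. Writing a nonnegative dependence as $\sum_j(a_j-b_j)e_j+(c-d)(1,\dots,1)=0$ gives $a_j-b_j=d-c$ for every $j$, and a short case analysis on the sign of $c-d$ shows the minimal solutions are exactly $\{\alpha_{ij_1},\alpha'_{ij_2}\}$ (from $-e_i+e_i=0$), $\{\gamma_{k_1},\gamma'_{k_2}\}$ (from $-(1,\dots,1)+(1,\dots,1)=0$), $\{\alpha_{1i_1},\dots,\alpha_{ni_n},\gamma'_k\}$ (from $-e_1-\dots-e_n+(1,\dots,1)=0$), and $\{\alpha'_{1i_1},\dots,\alpha'_{ni_n},\gamma_k\}$ (from $e_1+\dots+e_n-(1,\dots,1)=0$), in all cases with all $\lambda_i=1$. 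The standing hypothesis $p_ip_i'\neq0$ guarantees the requisite indices exist, and the families involving $\gamma'_k$ (resp.\ $\gamma_k$) become vacuous when $r'=0$ (resp.\ $r=0$), matching the two cases of the secondary fan in \S\ref{sec:irreducible}.

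Finally, evaluating $\langle\phi,c\rangle=-\sum_{i\in S}v_i$ on these four circuits yields precisely $-(\alpha_{ij_1}+\alpha'_{ij_2})$, $-(\gamma_{k_1}+\gamma'_{k_2})$, $-(\alpha_{1i_1}+\dots+\alpha_{ni_n}+\gamma'_k)$ and $-(\alpha'_{1i_1}+\dots+\alpha'_{ni_n}+\gamma_k)$; since $x\in\Z\iff -x\in\Z$, non-resonance is exactly the requirement that none of the four listed sums be an integer, which is the assertion. The main obstacle I anticipate is the facet enumeration: establishing the Gale-duality dictionary with the correct orientation so that \emph{positive} circuits (rather than arbitrary circuits) index the facets, and proving that the four families are exhaustive and each genuinely minimal, rather than faces of higher codimension; the degenerate configurations $r=0$ or $r'=0$ must also be checked separately to confirm that no extra facets appear.
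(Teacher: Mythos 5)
Your proposal is correct and follows essentially the same route as the paper: both reduce irreducibility to non-resonance via Schulze--Walther, pass to the Gale dual \eqref{eq:b-vectors} to enumerate the facets of $\mathrm{Cone}(A)$, and evaluate the supporting functionals on $c$. Your "minimal positive circuit" criterion is the same fact the paper cites from Gr\"unbaum ($0$ lying in the relative interior of the complementary Gale vectors), and the resulting four families of facets and the final integrality conditions agree.
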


\begin{rem}
    Some of these conditions may be empty. For example, if $r'=0$, the condition is that none of $\alpha'_{1i_1}+\cdots+\alpha'_{ni_n}+\gamma_k,\,\alpha_{ij_1}+\alpha'_{ij_2}$ is an integer.
\end{rem}

\begin{proof}
In view of \cite{schulze2012resonance}, we need to write down the condition \eqref{eq:non-resonance}.
To do this, we note that \eqref{eq:b-vectors} is a dual configuration of $A$ (\cite[\S6.4]{ziegler2012lectures}).
By \cite[p88, item 1]{grunbaum2003convex}, for a subset $I\subset\{ \boldsymbol{\alpha},\boldsymbol{\gamma},\boldsymbol{\alpha}',\boldsymbol{\gamma}'\}$, $C(I)$ defines a face of ${\rm Cone}(A)$ if and only if $I=\{ \boldsymbol{\alpha},\boldsymbol{\gamma},\boldsymbol{\alpha}',\boldsymbol{\gamma}'\}$ or $0\in {\rm rel.int.}({\bf b}(i)\mid i\notin I)$ where ${\rm rel.int.}$ is the relative interior.
Therefore, an index set $I\subset\{ \boldsymbol{\alpha},\boldsymbol{\gamma},\boldsymbol{\alpha}',\boldsymbol{\gamma}'\}$ defines a facet of ${\rm Cone}(A)$ if and only if it is minimal with respect to inclusion.
The complements of such $I$ are listed as follows:
\begin{equation}
    \{\alpha_{1i_1},\dots,\alpha_{ni_n},\gamma'_{k}\},\ \{\alpha'_{1i_1},\dots,\alpha'_{ni_n},\gamma_{k}\},\ \{\alpha_{ij_1},\alpha'_{ij_2}\},\ \{\gamma_{k_1},\gamma'_{k_2}\}.
    \label{eq:cofacets}
\end{equation}
Let us identify the dual lattice $M^\vee$ as a set of integral vectors 
$$
\Z^{p_1}\oplus\cdots\oplus\Z^{p_n}\oplus\Z^r\oplus\Z^{p'_1}\oplus\cdots\oplus\Z^{p'_n}\oplus\Z^{r'}
$$
so that the duality pairing $\langle\psi,v\rangle$ for $\psi\in M^\vee$ and $v\in M$ is given by the dot product of integral vectors.
In this sense, we write $\psi(\delta)\in \Z$ for the $\delta$-th entry of $\psi$ for any $\delta\in \{\boldsymbol{\alpha},\boldsymbol{\gamma},\boldsymbol{\alpha}',\boldsymbol{\gamma}'\}$.
In view of the identification $(M/L)^\vee=L^\perp$, a dual vector $\phi\in (M/L)^\vee$ defining a face $I$ is represented by a vector $\psi\in M^\vee$ such that
\begin{equation}
\psi|_L\equiv 0\ \ \ \text{and}\ \ \ 
    \begin{cases}
        \psi(\delta)=0&(\delta\notin I),\\
        \psi(\delta)<0&(\delta\in I).
    \end{cases}
    \label{eq:equation_B}
\end{equation}
To the complement of each of \eqref{eq:cofacets}, we solve \eqref{eq:equation_B} to obtain the condition of Theorem \ref{thm:irreducibility} from \eqref{eq:non-resonance}.
For example, let us derive the condition $\alpha_{1i_1}+\cdots+\alpha_{ni_n}+\gamma'_k\notin \Z$.
We take $I=\{ \alpha_{1i_1},\dots,\alpha_{ni_n},\gamma'_k\}$.
The condition \eqref{eq:equation_B} reads 
\[
\psi(\gamma_k')=\psi(\alpha_{ip_i})\ (i=1,\dots,n)\ \ \text{and}\ \ \psi(\delta)=0\ (\delta\notin I).
\]
It follows that the primitive vector $\psi$ defining the facet $\{ \boldsymbol{\alpha},\boldsymbol{\gamma},\boldsymbol{\alpha}',\boldsymbol{\gamma}'\}\setminus I$ is given by 
\[
\psi(\delta)=
\begin{cases}
    -1&(\delta=\alpha_{1i_1},\dots,\alpha_{ni_n},\gamma_k'),\\
    0&(\text{otherwise}).
\end{cases}
\]
The condition \eqref{eq:non-resonance} for the facet $\{ \boldsymbol{\alpha},\boldsymbol{\gamma},\boldsymbol{\alpha}',\boldsymbol{\gamma}'\}\setminus I$ reads $\alpha_{1i_1}+\cdots+\alpha_{ni_n}+\gamma'_k\notin \Z$.

\end{proof}

\subsection{Local solutions and the connection problem}\label{subsec:connection}
In this section, we provide a general formula for the analytic continuation of the system $\mathcal M
 ^{\boldsymbol \alpha_1,\dots, \boldsymbol \alpha_n, \boldsymbol \gamma}
 _{\boldsymbol \alpha'_1,\dots,\boldsymbol \alpha'_n,\boldsymbol \gamma'}.
$
Let $(ij)$ denotes the transposition of $i$ and $j$.
We take the base point $b$ from the region $\{ |x_n|\ll|x_{n-1}|\ll\cdots\ll|x_1|\ll1\}$.
We first consider a path 
\begin{equation}
C_0:(0,\infty)\ni t\mapsto (-t,c_2(t),\dots,c_n(t))\in(-\infty,0)\times (0,\infty)^{n-1}
\label{eq:pathn}
\end{equation}
with small $c_i(t)$ so that $c_n(t)\ll c_{n-1}(t)\ll\cdots\ll c_2(t)$ and the system $\mathcal M
 ^{\boldsymbol \alpha_1,\dots, \boldsymbol \alpha_n, \boldsymbol \gamma}
 _{\boldsymbol \alpha'_1,\dots,\boldsymbol \alpha'_n,\boldsymbol \gamma'} 
$
has no singularities along the path.
Let $p_{\s_1}$ be adjacent to $p_{\s_2}$.
The coordinate transformations 
$$(x_1,\dots,x_n)\mapsto (x_{\s(1)},\dots,x_{\s(n)}),\left(\frac{1}{x_1},\dots,\frac{1}{x_n}\right),\left(\frac{x_1}{x_n},\dots,\frac{x_{n-1}}{x_n},\frac{1}{x_n}\right)$$
all induce automorphisms on $X$.
These automorphisms induce an action of $S_{n+1}\times S_2$ onto $X$.
For any element $g\in S_{n+1}\times S_2$, we write 
\begin{equation}
\varphi_g:X\to X    
\end{equation}
for the corresponding automorphism.
Then, there exists an element $g\in S_{n+1}\times S_2$ so that   $\varphi_g(p_{\s_1})=p_{\rm id}$ and $\varphi_g(p_{\s_2})=p_{(01)}$.
Then, we write $C(\s_2\s_1)$ for the path $\varphi_g\circ C_0$.
In view of Proposition \ref{prop:transformations2}, the connection problem between $p_{\s_1}$ and $p_{\s_2}$ is reduced to that between $p_{\rm id}$ and $p_{(01)}$.

Therefore, we discuss the connection problem between $p_{\rm id}$ and $p_{(01)}$ below.
For any $\varepsilon=(\varepsilon_1,\dots,\varepsilon_n)\in\{-1,+1\}^n$ such that $p_j+\varepsilon_jr=p'_j+\varepsilon_jr'$, we write $G^{p_1,\dots,p_n,r}_{p'_1,\dots,p_n',r',\varepsilon}\left(\substack{{\boldsymbol \alpha}_1,\dots,{\boldsymbol \alpha}_n,{\boldsymbol \gamma}\\ {\boldsymbol \alpha}'_1,\dots,{\boldsymbol \alpha}'_n,{\boldsymbol \gamma}'};x\right)$ for a series
\begin{equation}\label{eq:multivariate_G}
\sum_{m=(m_1,\dots,m_n)\in\Z^n_{\geq 0}}\frac{({\boldsymbol \alpha}_1)_{m_1}\cdots ({\boldsymbol \alpha}_n)_{m_n}({\boldsymbol \gamma})_{\varepsilon\cdot m}}{({\boldsymbol 1-\alpha}'_1)_{m_1}\cdots ({\boldsymbol 1-\alpha}'_n)_{m_n}({\boldsymbol 1-\gamma}')_{\varepsilon\cdot m}}x^m.
\end{equation}
Note that \eqref{eq:multivariate_F} is a special case of \eqref{eq:multivariate_G} with $\varepsilon=(+1,\dots,+1)$.
For simplicity, we write $G_\varepsilon\left(\substack{{\boldsymbol \alpha}_1,\dots,{\boldsymbol \alpha}_n,{\boldsymbol \gamma}\\ {\boldsymbol \alpha}'_1,\dots,{\boldsymbol \alpha}'_n,{\boldsymbol \gamma}'};x\right)$ for $G^{p_1,\dots,p_n,r}_{p'_1,\dots,p_n',r',\varepsilon}\left(\substack{{\boldsymbol \alpha}_1,\dots,{\boldsymbol \alpha}_n,{\boldsymbol \gamma}\\ {\boldsymbol \alpha}'_1,\dots,{\boldsymbol \alpha}'_n,{\boldsymbol \gamma}'};x\right)$.

\medskip

\noindent
\underline{A basis of solutions at $p_{\rm id}$:} it consists of the following two kinds of series.
The first $p_1'\cdots p_n'$ solutions to the system are
\begin{equation}
    x^{\boldsymbol{\alpha'_I}} F^{p_1,\dots,p_n,r}_{p'_1,\dots,p'_n,r'}\left(\begin{smallmatrix}
\boldsymbol{\alpha_1}+\alpha_{1i_1}'&\dots&\boldsymbol{\alpha_n}+\alpha_{ni_n}'&\boldsymbol{\gamma+|\boldsymbol{\alpha'_I}|}\\
\boldsymbol{\alpha'_1}-\alpha_{1i_1}'&\dots&\boldsymbol{\alpha'_n}-\alpha_{ni_n}'&\boldsymbol{\gamma'-|\alpha'_I|}\\
\end{smallmatrix};
x_1,\dots,x_n\right)
\end{equation}
with $\boldsymbol{\alpha'_I}:=(\alpha'_{1i_1},\dots,\alpha'_{ni_n})$ and $|\boldsymbol{\alpha'_I}|:=\alpha'_{1i_1}+\cdots+\alpha'_{ni_n}$.
To simplify the notation below, for any $\beta\in\C^n$, we set
\begin{equation}
    F\left(\begin{smallmatrix}
\boldsymbol{\alpha_1}&\dots&\boldsymbol{\alpha_n}&\boldsymbol{\gamma}\\
\boldsymbol{\alpha'_1}&\dots&\boldsymbol{\alpha'_n}&\boldsymbol{\gamma}'
\end{smallmatrix};
\beta;
x\right)
:=
x^{\beta} F\left(\begin{smallmatrix}
\boldsymbol{\alpha_1}+\beta_{1}&\dots&\boldsymbol{\alpha_n}+\beta_{n}&\boldsymbol{\gamma}+|\beta|\\
\boldsymbol{\alpha'_1}-\beta_{1}&\dots&\boldsymbol{\alpha'_n}-\beta_{n}&\boldsymbol{\gamma}'-|\beta|
\end{smallmatrix};
x\right)
\end{equation}
and
\begin{equation}
G_{\varepsilon}\left(
\begin{smallmatrix}
{\boldsymbol \alpha}_1&\dots&{\boldsymbol \alpha}_n&{\boldsymbol \gamma}\\ {\boldsymbol \alpha}'_1&\dots&{\boldsymbol \alpha}'_n&{\boldsymbol \gamma}'    
\end{smallmatrix}
;\beta; x\right)
:=
x^\beta G_{\varepsilon}\left(
\begin{smallmatrix}
{\boldsymbol \alpha}_1+\beta_1&\dots&{\boldsymbol \alpha}_n+\beta_n&{\boldsymbol \gamma}+\varepsilon\cdot \beta\\ 
{\boldsymbol \alpha}'_1-\beta_1&\dots&{\boldsymbol \alpha}'_n-\beta_n&{\boldsymbol \gamma}'-\varepsilon\cdot\beta
\end{smallmatrix}
; x\right).
\end{equation}

\noindent

For any $k=1,\dots,n$, the $r'p_1\cdots p_{k-1}p'_{k+1}\cdots p'_n$ solutions are

\begin{align}
    G_{\varepsilon(k)}
    \left(
    \begin{smallmatrix}
    \boldsymbol{\alpha'_1}&\dots&\boldsymbol{\alpha'_{k-1}}&\boldsymbol{\gamma}&\boldsymbol{\alpha_{k+1}}&\dots&\boldsymbol{\alpha_{n}}&\boldsymbol{\alpha}_k\\
    \boldsymbol{\alpha_1}&\dots&\boldsymbol{\alpha_{k-1}}&\boldsymbol{\gamma'}&\boldsymbol{\alpha'_{k+1}}&\dots&\boldsymbol{\alpha'_{n}}&\boldsymbol{\alpha}'_k
    \end{smallmatrix};
    \beta(k)
    ;\xi(k)
    \right)
    ,\label{eq:G solutions2}
\end{align}

\noindent
where we set 
\begin{align}
\varepsilon(k)&:=(\overset{k}{\overbrace{+1,\dots,+1}},\overset{n-k}{\overbrace{-1,\dots,-1}}),\\    
\beta(k)&:=({\alpha_{1i_1}},\dots,{\alpha_{(k-1)i_{k-1}}},{\gamma'_{i_k}},{\alpha'_{(k+1)i_{k+1}}},\dots,{\alpha'_{ni_{n}}}),\\
\xi(k)&:=(\epsilon x_1^{-1}x_k,\dots,\epsilon x_{k-1}^{-1}x_k,x_k,x_k^{-1}x_{k+1},\dots,x_k^{-1}x_{n}).
\end{align}

\medskip

\noindent
\underline{A basis of solutions at $p_{(01)}$:}

In view of Proposition \ref{prop:transformations2}, we obtain a relation
$$
T_{(x_1,\dots,x_n)\to \left(\frac{1}{x_1},\epsilon\frac{x_2}{x_1},\dots,\epsilon\frac{x_n}{x_1}\right)}\mathcal M
 ^{\boldsymbol \alpha_1,\dots, \boldsymbol \alpha_n, \boldsymbol \gamma}
 _{\boldsymbol \alpha'_1,\dots,\boldsymbol \alpha'_n,\boldsymbol \gamma'}=\mathcal M
 ^{\boldsymbol{\gamma}',\boldsymbol \alpha_2,\dots, \boldsymbol \alpha_n, \boldsymbol \alpha'_1}
 _{\boldsymbol{\gamma},\boldsymbol \alpha'_2,\dots,\boldsymbol \alpha'_n,\boldsymbol \alpha_1}.
$$
Thus, a basis of local solutions at $p_{(01)}$ is a transformations of that at $p_{\rm id}$ via the transformation $(x_1,\dots,x_n)\mapsto \left(\frac{1}{x_1},\epsilon\frac{x_2}{x_1},\dots,\epsilon\frac{x_n}{x_1}\right)$.
The first $rp'_2\cdots p'_n$ solutions are

\begin{equation}
F\left(\begin{smallmatrix}
\boldsymbol{\gamma}',\boldsymbol{\alpha_2}&\dots&\boldsymbol{\alpha_n}&\boldsymbol{\alpha}'_1\\
\boldsymbol{\gamma},\boldsymbol{\alpha'_2}&\dots&\boldsymbol{\alpha'_n}&\boldsymbol{\alpha}_1
\end{smallmatrix};
(\gamma_k,\alpha'_{2i_2},\dots,\alpha'_{ni_n});
\frac{1}{x_1},\epsilon\frac{x_2}{x_1},\dots,\epsilon\frac{x_n}{x_1}\right).
\end{equation}

In the same manner, we obtain $p_1p'_2\cdots p'_n$ solutions 
\begin{align}
    G_{\varepsilon(1)}
    \left(
    \begin{smallmatrix}
\boldsymbol{\alpha'_1}&\boldsymbol{\alpha}_2&\dots&\boldsymbol{\alpha}_{n}&\boldsymbol{\gamma}'\\
\boldsymbol{\alpha_1}&\boldsymbol{\alpha}'_2&\dots&\boldsymbol{\alpha}'_{n}&\boldsymbol{\gamma}    
\end{smallmatrix};
(\alpha_{1i_1},\alpha'_{2i_2},\dots,\alpha'_{ni_n})
    ;\eta(1)
    \right)
    \label{eq:G_solution_at_p}
\end{align}
with 
$$\eta(1)=\left(\frac{1}{x_1},\epsilon x_2,\dots,\epsilon x_n\right).$$
The other $r'\sum\limits_{k=2}^np_1\dots p_{k-1}p'_{k+1}\dots p'_n$ solutions are given by \eqref{eq:G solutions2} for $k=2,\dots,n$.
These solutions are holomorphic along the path $C_0$ of analytic continuation.

To state the formula, for any $\varepsilon,\tilde{\varepsilon}=(\tilde{\varepsilon}_1,\dots,\tilde{\varepsilon}_n)\in\{-1,+1\}^n$ and $\beta\in\C^n$, we put
\begin{equation}
    F\left(\begin{smallmatrix}
\boldsymbol{\alpha_1}&\dots&\boldsymbol{\alpha_n}&\boldsymbol{\gamma}\\
\boldsymbol{\alpha'_1}&\dots&\boldsymbol{\alpha'_n}&\boldsymbol{\gamma}'
\end{smallmatrix};
\beta;
\tilde{\varepsilon};
x\right)
:=
(\tilde{\varepsilon}_1x_1)^{\beta_1}\cdots (\tilde{\varepsilon}_nx_n)^{\beta_n} F\left(\begin{smallmatrix}
\boldsymbol{\alpha_1}+\beta_{1}&\dots&\boldsymbol{\alpha_n}+\beta_{n}&\boldsymbol{\gamma}+|\beta|\\
\boldsymbol{\alpha'_1}-\beta_{1}&\dots&\boldsymbol{\alpha'_n}-\beta_{n}&\boldsymbol{\gamma}'-|\beta|
\end{smallmatrix};
x\right),
\end{equation}

\begin{equation}
G_{\varepsilon}\left(
\begin{smallmatrix}
{\boldsymbol \alpha}_1&\dots&{\boldsymbol \alpha}_n&{\boldsymbol \gamma}\\ {\boldsymbol \alpha}'_1&\dots&{\boldsymbol \alpha}'_n&{\boldsymbol \gamma}'    
\end{smallmatrix}
;\beta
;\tilde{\varepsilon}
;x\right)
:=
(\tilde{\varepsilon}_1x_1)^{\beta_1}\cdots (\tilde{\varepsilon}_nx_n)^{\beta_n} G_{\varepsilon}\left(
\begin{smallmatrix}
{\boldsymbol \alpha}_1+\beta_1&\dots&{\boldsymbol \alpha}_n+\beta_n&{\boldsymbol \gamma}+\varepsilon\cdot \beta\\ 
{\boldsymbol \alpha}'_1-\beta_1&\dots&{\boldsymbol \alpha}'_n-\beta_n&{\boldsymbol \gamma}'-\varepsilon\cdot\beta
\end{smallmatrix}
; x\right).
\end{equation}
and
$$
\delta_{i_2\dots i_n}:=\sum_{j=2}^n\alpha'_{ji_j}.
$$

%\label{thm:conenct2}
\begin{thm}\label{thm:connection}
One has the following connection formula along the path $C_0$.
\begin{align*}
&F\left(\begin{smallmatrix}
\boldsymbol{\alpha_1}&\dots&\boldsymbol{\alpha_n}&\boldsymbol{\gamma}\\
\boldsymbol{\alpha'_1}&\dots&\boldsymbol{\alpha'_n}&\boldsymbol{\gamma}'
\end{smallmatrix};
(\alpha'_{1i_1},\dots,\alpha'_{ni_n});
-\varepsilon(1);
x\right)\\
=&
\sum_{k=1}^r a^{i_2\dots i_n}_{i_1k}F\left(\begin{smallmatrix}
\boldsymbol{\gamma}',\boldsymbol{\alpha_2}&\dots&\boldsymbol{\alpha_n}&\boldsymbol{\alpha}'_1\\
\boldsymbol{\gamma},\boldsymbol{\alpha'_2}&\dots&\boldsymbol{\alpha'_n}&\boldsymbol{\alpha}_1
\end{smallmatrix};
(\gamma_k,\alpha'_{2i_2},\dots,\alpha'_{ni_n});
(-1,\epsilon,\dots,\epsilon);
\frac{1}{x_1},\epsilon\frac{x_2}{x_1},\dots,\right.\\
&\ \left.\epsilon\frac{x_n}{x_1}\right)
+\sum_{\ell=1}^{p_1}
b^{i_2\dots i_n}_{i_1\ell}
G_{\varepsilon(1)}
    \left(
    \begin{smallmatrix}
\boldsymbol{\alpha'_1}&\boldsymbol{\alpha}_2&\dots&\boldsymbol{\alpha}_{n}&\boldsymbol{\gamma}'\\
\boldsymbol{\alpha_1}&\boldsymbol{\alpha}'_2&\dots&\boldsymbol{\alpha}'_{n}&\boldsymbol{\gamma}    
\end{smallmatrix};
(\alpha_{1\ell},\alpha'_{2i_2},\dots,\alpha'_{ni_n})
;-\varepsilon(1)
    ;\eta(1)
    \right),
\allowdisplaybreaks\\
&a^{i_2\dots i_n}_{i_1k}=\frac
{ \prod\limits_{\nu\neq i_1} \!\Gamma(1+\alpha'_{1i_1}-\alpha'_{1\nu})
  \prod\limits_{\nu=1}^{r'}  \!\Gamma(1+\alpha'_{1i_1}+\gamma'_\nu+\delta_{i_2\dots i_n})
  }
  {
  \prod\limits_{\nu\ne i_1}   \!\Gamma(1-\gamma_k-\delta_{i_2\dots i_n}-\alpha'_{1\nu})
  \prod\limits_{\nu=1}^{r'}  \!\Gamma(1-\gamma_k-\gamma'_\nu)
}\\
&\qquad\qquad{}\times\frac
{
  \prod\limits_{\nu\ne k}    \!\Gamma(\gamma_\nu-\gamma_k)
  \prod\limits_{\nu=1}^{p_1}  \!\Gamma(\alpha_{1\nu}-\delta_{i_2\dots i_n}-\gamma_k)
  }
{
  \prod\limits_{\nu\ne k}   \!\Gamma(\alpha'_{1,i_1}+\gamma_\nu
  +\delta_{i_2\dots i_n}) 
  \prod\limits_{\nu=1}^{p_1} \!\Gamma(\alpha'_{1i_1}+\alpha_{1\nu})
},
\allowdisplaybreaks\\
&b^{i_2\dots i_n}_{i_1\ell}=
\frac{\prod\limits_{\nu\ne i_1}  \!\Gamma(1+\alpha'_{1i_1}-\alpha'_{1\nu})
  \prod\limits_{\nu=1}^{r'}     \!\Gamma(1+\alpha'_{1i_1}-\gamma'_\nu+\delta_{i_2\dots i_n})}
{\prod\limits_{\nu\ne i_1}    \!\Gamma(1-\alpha'_{1\nu}-\alpha_{1\ell})
  \prod\limits_{\nu=1}^{r'}     \!\Gamma(1-\gamma'_\nu+\delta_{i_2\dots i_n}-\alpha_{1\ell})}
\\
&\qquad\qquad{}\times 
 \frac{\prod\limits_{\nu\ne\ell}  \!\Gamma(\alpha_{1\nu}-\alpha_{1\ell})
  \prod\limits_{\nu=1}^r     \!\Gamma(\gamma_\nu+\delta_{i_2\dots i_n}-\alpha_{1\ell})}
  {\prod\limits_{\nu\ne\ell}  \!\Gamma(\alpha'_{1i_1}+\alpha_{1\nu})
  \prod\limits_{\nu=1}^r     \!\Gamma(\alpha'_{1i_1}+\gamma_\nu+\delta_{i_2\dots i_n})}
%%%%%%%%%%%%%%%%%%%%%%%%%%%%%%%%%%%%%%%%%%
\intertext{and}
&G_{\varepsilon(1)}
    \left(
    \begin{smallmatrix}
    \boldsymbol{\gamma}&\boldsymbol{\alpha_{2}}&\dots&\boldsymbol{\alpha_{n}}&\boldsymbol{\alpha}_1\\
    \boldsymbol{\gamma'}&\boldsymbol{\alpha'_{2}}&\dots&\boldsymbol{\alpha'_{n}}&\boldsymbol{\alpha}'_1
    \end{smallmatrix};
    (\gamma'_{i_1},\alpha'_{2i_2},\dots,\alpha'_{ni_n});
    -\varepsilon(n);
    \xi(1)
    \right)\\
=&
\sum_{\ell=1}^r c^{i_2\dots i_n}_{i_1\ell}
F\left(\begin{smallmatrix}
\boldsymbol{\gamma}',\boldsymbol{\alpha_2}&\dots&\boldsymbol{\alpha_n}&\boldsymbol{\alpha}'_1\\
\boldsymbol{\gamma},\boldsymbol{\alpha'_2}&\dots&\boldsymbol{\alpha'_n}&\boldsymbol{\alpha}_1
\end{smallmatrix};
(\gamma_\ell,\alpha'_{2i_2},\dots,\alpha'_{ni_n});
(-1,\epsilon,\dots,\epsilon);
\frac{1}{x_1},\epsilon\frac{x_2}{x_1},\dots,
\right.\\
&
\left.\epsilon\frac{x_n}{x_1}\right)
+\sum_{k=1}^{p_1} 
d^{i_2\dots i_n}_{i_1k}
G_{\varepsilon(1)}
    \left(
    \begin{smallmatrix}
\boldsymbol{\alpha'_1}&\boldsymbol{\alpha}_2&\dots&\boldsymbol{\alpha}_{n}&\boldsymbol{\gamma}'\\
\boldsymbol{\alpha_1}&\boldsymbol{\alpha}'_2&\dots&\boldsymbol{\alpha}'_{n}&\boldsymbol{\gamma}    
\end{smallmatrix};
(\alpha_{1k},\alpha'_{2i_2},\dots,\alpha'_{ni_n})
;-\varepsilon(1)
    ;\eta(1)
    \right),
\allowdisplaybreaks\\
&c^{i_2\dots i_n}_{i_1\ell}=
\frac
{     \prod\limits_{\nu\ne i_1}   \!\Gamma(1+\gamma'_{i_1}-\gamma'_\nu)
     \prod\limits_{\nu=1}^{p_1'} \!\Gamma(1+\gamma'_{i_1}-\delta_{i_2\dots i_n}-\alpha'_{1\nu})
}
{  \prod\limits_{\nu\ne i_1} \!\Gamma(1-\gamma_\ell-\gamma'_\nu)
   \prod\limits_{\nu=1}^{p'_1} \!\Gamma(1-\gamma_\ell-\delta_{i_2\dots i_n}-\alpha'_{1\nu})
 }\\
&\qquad\qquad\times
\frac
{  
   \prod\limits_{\nu\ne\ell} \!\Gamma(\gamma_\nu-\gamma_\ell) 
   \prod\limits_{\nu=1}^{p_1}    \!\Gamma(\alpha_{1\nu}-\gamma_\ell-\delta_{i_2\dots i_n})
}
{
  \prod\limits_{\nu\neq \ell} \!\Gamma(\gamma'_{i_1}+\gamma_\nu) 
  \prod\limits_{\nu=1}^{p_1} \!\Gamma(\gamma'_{i_1}-\delta_{i_2\dots i_n}+\alpha_{1\nu})
 }
,
\allowdisplaybreaks\\
&d^{i_2\dots i_n}_{i_1k}=
\frac
{   \prod\limits_{\nu\ne i_1}   \!\Gamma(1+\gamma'_{i_1}-\gamma'_\nu)
    \prod\limits_{\nu=1}^{p'_1} \!\Gamma(1+\gamma'_{i_1}-\delta_{i_2\dots i_n}-\alpha'_{1\nu})
}{
  \prod\limits_{\nu\ne i_1}  \!\Gamma(1-\alpha_{1k}- \gamma'_\nu+\delta_{i_2\dots i_n})
  \prod\limits_{\nu=1}^{p'_1} \!\Gamma(1-\alpha_{1k}-\alpha'_{1\nu})
}
\\
&\qquad\qquad\times
\frac
{
  \prod\limits_{\nu\ne k}   \!\Gamma(\alpha_{1\nu}-\alpha_{1k})
  \prod\limits_{\nu=1}^r    \!\Gamma(\gamma_\nu+\delta_{i_2\dots i_n}-\alpha_{1k})
}
{
 \prod\limits_{\nu\ne k}   \!\Gamma(\gamma'_{i_1}-\delta_{i_2\dots i_n}+\alpha_{1\nu})
  \prod\limits_{\nu=1}^r  \!\Gamma(\gamma'_{i_1}+\gamma_\nu)
}.
\end{align*}
\end{thm}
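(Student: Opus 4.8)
The plan is to reduce the multivariate connection problem (Theorem~\ref{thm:connection}) to the two-variable case (Theorem~\ref{thm:conenct2}) by the same \emph{boundary value} technique used in \S\ref{sec:connection}, exploiting the fact that the relevant connection happens transversally to a single divisor. Concretely, the path $C_0$ in \eqref{eq:pathn} connects the two torus-fixed points $p_{\rm id}$ and $p_{(01)}$, which lie on a common edge of the permutohedron; this edge is a one-dimensional torus orbit and corresponds to the analytic continuation $x_1:0\rightsquigarrow\infty$ while the remaining coordinates stay in the regime $|x_n|\ll\cdots\ll|x_2|\ll 1$. The system $\mathcal M^{\boldsymbol\alpha_1,\dots,\boldsymbol\alpha_n,\boldsymbol\gamma}_{\boldsymbol\alpha'_1,\dots,\boldsymbol\alpha'_n,\boldsymbol\gamma'}$ has regular singularities along the divisor $\{x_1=0\}$, and along the path the only singular crossing is the one between the regions near $p_{\rm id}$ and $p_{(01)}$. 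So the first step is to set up, exactly as in the commutative diagram of \S\ref{sec:connection}, the boundary value map onto $\{x_1=0\}$ and identify the induced equation on this divisor.

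First I would compute the restriction (induced equation) of the system along $\{x_1=0\}$ with respect to the characteristic exponent $\alpha'_{1i_1}$. By the structure of the operators $P_i, P_{ij}$ in \eqref{eq:P_iP_ij}, setting $x_1\to 0$ and extracting the leading behavior $x_1^{\alpha'_{1i_1}}$ forces the remaining data to assemble into a lower system in the variables $x_2,\dots,x_n$; the exponent shift $\alpha'_{1i_1}$ propagates into the $\boldsymbol\gamma,\boldsymbol\gamma'$ slots exactly through the combination $\delta_{i_2\dots i_n}=\sum_{j=2}^n\alpha'_{ji_j}$ together with $\alpha'_{1i_1}$, which is why these quantities appear throughout the Gamma-factor formulas. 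Because the nontrivial continuation is only in the single variable $x_1$, the induced boundary equation is an ordinary hypergeometric equation of type $\mathcal M^{\boldsymbol\alpha}_{\boldsymbol\alpha'}$ of one variable (in $x_1$), whose connection problem between $x_1=0$ and $x_1=\infty$ is solved in \S\ref{sec:HG1}. The parameters of this one-variable equation are read off from the $k$-indexed and $\ell$-indexed data: the numerator/denominator parameters are built from $\gamma_\nu,\gamma'_\nu,\alpha_{1\nu},\alpha'_{1\nu}$ shifted by $\delta_{i_2\dots i_n}$, precisely matching the arguments of the Gamma functions in $a^{i_2\dots i_n}_{i_1k}, b^{i_2\dots i_n}_{i_1\ell}, c^{i_2\dots i_n}_{i_1\ell}, d^{i_2\dots i_n}_{i_1k}$.

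The second step is to transfer the one-variable connection coefficient $c(0:\lambda\rightsquigarrow\infty:\mu)$ of \S\ref{sec:HG1} to the multivariate setting via the commutative diagram: the boundary value map is an isomorphism, the horizontal analytic continuation along $C_0$ commutes with taking boundary values, and the boundary value of $F(\dots;(\alpha'_{1i_1},\dots,\alpha'_{ni_n});-\varepsilon(1);x)$ with respect to the exponent $\alpha'_{1i_1}$ is (up to the explicit monomial in $x_2,\dots,x_n$) a one-variable $F_p$-type function in $x_1$. Matching the local solutions \eqref{eq:G solutions2} and \eqref{eq:G_solution_at_p} at $p_{(01)}$ with the $x_1=\infty$ solutions of the induced equation, the connection coefficients become the one-variable $c(0:\alpha'_i\rightsquigarrow\infty:\alpha_j)$ of \S\ref{sec:HG1}, which is a ratio of Gamma functions; substituting the shifted parameters yields the stated $a,b,c,d$ coefficients. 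This reduction is the direct analogue of the passage from Theorem~\ref{thm:conenct2}: the $n=2$ formulas there are recovered by specializing $n=2$ and reading $\delta_{i_2}=\alpha'_{2i_2}$.

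The main obstacle will be the bookkeeping in the induced-equation step: one must verify carefully that restricting to $\{x_1=0\}$ really produces a \emph{one-variable} Fuchsian equation in $x_1$ (with the other variables surviving only as a holomorphic prefactor), and that the other operators $P_{1j}$ and $P_j$ do not obstruct the ideal analyticity needed to apply the boundary value formalism of \cite{Ob,OS}. This requires checking that the regular-singularity hypothesis (the analogue of condition (5.3) in \cite{Ob}) holds along the edge connecting $p_{\rm id}$ to $p_{(01)}$, i.e.\ that the relevant stratum is an edge (one-dimensional orbit) of the permutohedron so that the boundary has the correct codimension. Once this is established, the remaining computation is a mechanical substitution of shifted parameters into the one-variable connection coefficient, and the symmetry of $\mathcal M$ under $S_{n+1}\times S_2$ (Proposition~\ref{prop:transformations2}) together with the adjacency description of the secondary fan extends the formula from the single edge $(p_{\rm id},p_{(01)})$ to all adjacent pairs of vertices of the permutohedron.
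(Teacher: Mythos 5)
Your overall strategy coincides with the paper's: the printed proof consists precisely of the remark that the argument of Theorem~\ref{thm:conenct2} carries over, followed by the list of boundary asymptotics as $x_2,\dots,x_n\to 0$ of the four families of local solutions, after which the one-variable connection formula of \S\ref{sec:HG1} with parameters shifted by $\delta_{i_2\dots i_n}$ produces the coefficients $a,b,c,d$. However, your first step has the roles of the divisors reversed, and followed literally it would derail the reduction. Along the path $C_0$ of \eqref{eq:pathn} it is $x_1$ that travels from $0$ to $\infty$ while $x_2,\dots,x_n$ stay small; the boundary value must therefore be taken on the stratum $\{x_2=\cdots=x_n=0\}$ (the edge joining $p_{\rm id}$ and $p_{(01)}$), with respect to the characteristic exponents $(\alpha'_{2i_2},\dots,\alpha'_{ni_n})$, and the induced equation is the \emph{one-variable} equation $\mathcal M^{\boldsymbol\alpha_1,\;\boldsymbol\gamma+\delta_{i_2\dots i_n}}_{\boldsymbol\alpha'_1,\;\boldsymbol\gamma'-\delta_{i_2\dots i_n}}$ in $x_1$; the exponent $\alpha'_{1i_1}$ labels a local solution of this induced equation at $x_1=0$, not the exponent with respect to which the boundary value is taken. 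Your proposed restriction to $\{x_1=0\}$ with respect to $\alpha'_{1i_1}$ would instead produce a system in $x_2,\dots,x_n$, which cannot serve a continuation in the $x_1$ direction --- and indeed you contradict this a few lines later when you correctly assert that the induced equation is an ODE in $x_1$ whose $0\rightsquigarrow\infty$ connection problem is the one solved in \S\ref{sec:HG1}. Once that is straightened out, the remainder of your outline (ideal analyticity along the edge, matching the $x_1=\infty$ local solutions of the induced equation with the solutions at $p_{(01)}$, substituting the shifted parameters into $c(0:\lambda\rightsquigarrow\infty:\mu)$, and propagating to all adjacent vertex pairs via Proposition~\ref{prop:transformations2}) is exactly what the paper does.
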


The proof of Theorem \ref{thm:connection} is same as that of Theorem \ref{thm:conenct2}.
Indeed, local behavior of solutions are given as follows. 
\begin{align*}
&F\left(\begin{smallmatrix}
\boldsymbol{\alpha_1}&\dots&\boldsymbol{\alpha_n}&\boldsymbol{\gamma}\\
\boldsymbol{\alpha'_1}&\dots&\boldsymbol{\alpha'_n}&\boldsymbol{\gamma}'
\end{smallmatrix};
(\alpha'_{1i_1},\dots,\alpha'_{ni_n});
-\varepsilon(1);
x\right)\\
&\quad\overset{x_2,\dots,x_n\to0}\sim x_2^{\alpha_{2i_2}'}\cdots x_n^{\alpha_{ni_n}'}F_{p_1+r}\left(\begin{smallmatrix}
    \boldsymbol{\alpha}_1,\boldsymbol{\gamma}+\delta_{i_2\dots i_n}\\
    \boldsymbol{\alpha}'_1,\boldsymbol{\gamma}'-\delta_{i_2\dots i_n}
\end{smallmatrix}
;\alpha_{1i_1}';-;x_1\right),
\allowdisplaybreaks\\
&G_{\varepsilon(1)}
    \left(
    \begin{smallmatrix}
    \boldsymbol{\gamma}&\boldsymbol{\alpha_{2}}&\dots&\boldsymbol{\alpha_{n}}&\boldsymbol{\alpha}_1\\
    \boldsymbol{\gamma'}&\boldsymbol{\alpha'_{2}}&\dots&\boldsymbol{\alpha'_{n}}&\boldsymbol{\alpha}'_1
    \end{smallmatrix};
    (\gamma'_{i_1},\alpha'_{2i_2},\dots,\alpha'_{ni_n});
    -\varepsilon(n);
    \xi(1)
    \right)\\
&\quad
\overset{x_2,\dots,x_n\to0}\sim
 x_2^{\alpha_{2i_2}'}\cdots x_n^{\alpha_{ni_n}'}F_{p_1+r}\left(\begin{smallmatrix}
    \boldsymbol{\gamma}+\delta_{i_2\dots i_n},\boldsymbol{\alpha}_1\\
    \boldsymbol{\gamma}'-\delta_{i_2\dots i_n},\boldsymbol{\alpha}'_1
\end{smallmatrix}
;\gamma'_{i_1};-;x_1\right),
\allowdisplaybreaks\\
&F\left(\begin{smallmatrix}
\boldsymbol{\gamma}',\boldsymbol{\alpha_2}&\dots&\boldsymbol{\alpha_n}&\boldsymbol{\alpha}'_1\\
\boldsymbol{\gamma},\boldsymbol{\alpha'_2}&\dots&\boldsymbol{\alpha'_n}&\boldsymbol{\alpha}_1
\end{smallmatrix};
(\gamma_k,\alpha'_{2i_2},\dots,\alpha'_{ni_n});
(-1,\epsilon,\dots,\epsilon);
\frac{1}{x_1},\epsilon\frac{x_2}{x_1},\dots,\epsilon\frac{x_n}{x_1}\right)\\
&\quad\overset{x_2,\dots,x_n\to0}\sim x_2^{\alpha_{2i_2}'}\cdots x_n^{\alpha_{ni_n}'}F_{p_1+r}\left(\begin{smallmatrix}
    \boldsymbol{\gamma}'-\delta_{i_2\dots i_n},\boldsymbol{\alpha}'_1\\
    \boldsymbol{\gamma}+\delta_{i_2\dots i_n},\boldsymbol{\alpha}_1
\end{smallmatrix}
;\gamma_{k}+\delta_{i_2\dots i_n};-;\frac{1}{x_1}\right),
\allowdisplaybreaks\\
&G_{\varepsilon(1)}
    \left(
    \begin{smallmatrix}
\boldsymbol{\alpha'_1}&\boldsymbol{\alpha}_2&\dots&\boldsymbol{\alpha}_{n}&\boldsymbol{\gamma}'\\
\boldsymbol{\alpha_1}&\boldsymbol{\alpha}'_2&\dots&\boldsymbol{\alpha}'_{n}&\boldsymbol{\gamma}    
\end{smallmatrix};
(\alpha_{1k},\alpha'_{2i_2},\dots,\alpha'_{ni_n})
;-\varepsilon(1)
    ;\eta(1)
    \right)\\
&\quad
\overset{x_2,\dots,x_n\to0}\sim
x_2^{\alpha_{2i_2}'}\cdots x_n^{\alpha_{ni_n}'}F_{p_1+r}\left(\begin{smallmatrix}
    \boldsymbol{\gamma}'-\delta_{i_2\dots i_n},\boldsymbol{\alpha}'_1\\
    \boldsymbol{\gamma}+\delta_{i_2\dots i_n},\boldsymbol{\alpha}_1
\end{smallmatrix}
;\alpha_{1k};-;\frac{1}{x_1}\right).
\end{align*}

\subsection{Braid monodromy}\label{sec:braid}
In this section, we focus on the case $p_i=p'_i$ for any $i=1,\dots,n$ and $r=r'$.
By Theorem \ref{thm:Singn}, the singular locus of the system $\mathcal{M}^{\boldsymbol{\alpha}_1,\dots,\boldsymbol{\alpha}_n,\boldsymbol{\gamma}}_{\boldsymbol{\alpha}'_1,\dots,\boldsymbol{\alpha}'_n,\boldsymbol{\gamma}'}$ is the braid arrangement 
$$\mathcal{A}:=\biggl\{\prod\limits_{i=1}^nx_i(x_i-1)\prod\limits_{1\leq i<j\leq n}\!(x_i-x_j)\biggr\}\subset \C^n.$$
We show that the connection problem among adjacent points generate the global monodromy.
We identify $\C^n\setminus\mathcal{A}$ with the configuration space $\mathcal{M}_{0,n+3}$ of $n+3$ points on $\mathbb{P}^1$.
Any point $(x_1,\dots,x_n)\in \C^n\setminus\mathcal{A}$ corresponds to a configuration $[1,x_1,\dots,x_n,0,\infty]\in\mathcal{M}_{0,n+3}$.
We cite the following well-known description of its fundamental group.
Let $b=[1,x_1,\dots,x_{n},0,\infty]\in\mathcal{M}_{0,n+3}$ be a base point.
For any $i,j=0,\dots,n+3$ with $i<j$ and $(i,j)\neq (0,n+1),(0,n+2),(n+1,n+2)$, we define a loop $\gamma_{ij}$ on $\mathcal{M}_{0,n+2}$ as a move which starts from $b$, goes only around $x_{i}=x_j$ once and comes back to $b$. 
%\textcolor{red}{$\gamma$ はパラメーターとして既出．$\Gamma$？}
Here, we used a convention $x_0=1,x_{n+1}=1,x_{n+2}=\infty$.
The fundamental group $P_{n+3}:=\pi_1(b,\mathcal{M}_{0,n+3})$ is known as the pure braid group.
The following lemma can be found in \cite[Lemma 1.8.2]{birman1974braids}.
\begin{lem}
    $\gamma_{ij}$ generates $P_{n+3}$.
\end{lem}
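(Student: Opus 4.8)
The plan is to prove the lemma by induction on $n$, computing $\pi_1(\mathcal{M}_{0,n+3})$ through the Fadell--Neuwirth fibration. First I would make the identification explicit: freezing the three points $1,0,\infty$ rigidifies the $\mathrm{PGL}_2(\C)$-action, so that
\[
\mathcal{M}_{0,n+3}\;\cong\; F_n:=\{(x_1,\dots,x_n)\in(\C\setminus\{0,1\})^n\mid x_i\neq x_j\ (i\neq j)\}=\C^n\setminus\mathcal{A},
\]
the ordered configuration space of $n$ points in the thrice-punctured sphere $\mathbb{P}^1\setminus\{0,1,\infty\}$. Under this identification the loop $\gamma_{ij}$ is the class that moves the $i$-th and $j$-th points once around one another, where the indices $0,n+1,n+2$ refer to the frozen points $1,0,\infty$, and the excluded pairs $(0,n+1),(0,n+2),(n+1,n+2)$ are precisely the pairs of frozen points, which cannot braid.

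Second, I would use the forgetful map $\pi\colon F_n\to F_{n-1}$ dropping the last coordinate $x_n$. By the Fadell--Neuwirth theorem this is a locally trivial fibration whose fiber over a configuration is $\mathbb{P}^1\setminus\{0,1,\infty,x_1,\dots,x_{n-1}\}$, a sphere with $n+2$ punctures, hence homotopy equivalent to a wedge of $n+1$ circles with free $\pi_1$. The natural free generators of the fiber are the loops in which $x_n$ encircles each of the remaining points once, that is precisely $\gamma_{0,n},\gamma_{(n+1),n},\gamma_{1,n},\dots,\gamma_{(n-1),n}$; the remaining loop $\gamma_{n,(n+2)}$ around $\infty$ is the inverse of their product by the spherical relation, so it lies in the subgroup they generate. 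Since $\pi$ admits a section (e.g.\ placing $x_n$ at a point depending continuously on the base and tending to $\infty$), and both base and fiber are aspherical, the homotopy exact sequence collapses to a split short exact sequence
\[
1\longrightarrow \pi_1(\mathrm{fiber})\longrightarrow \pi_1(F_n)\xrightarrow{\ \pi_*\ }\pi_1(F_{n-1})\longrightarrow 1.
\]

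Third, I would close the induction. The base case $n=1$ is immediate: $F_1=\C\setminus\{0,1\}$ has free $\pi_1$ on the two loops $\gamma_{1,0}$ and $\gamma_{1,(n+1)}$ around its two punctures. For the inductive step, $\pi_1(\mathrm{fiber})$ is generated by the $\gamma_{jn}$ as above, while by the inductive hypothesis $\pi_1(F_{n-1})$ is generated by the $\gamma_{ij}$ with $i,j\le n-1$ together with the frozen indices. Lifting these base generators through the section yields loops that fix $x_n$ and are represented by the very same symbols $\gamma_{ij}$ not involving $x_n$; together with the fiber generators they generate $\pi_1(F_n)$, which is the assertion.

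The main obstacle is the bookkeeping in the inductive step: one must check that the chosen section lifts each base generator $\gamma_{ij}$ with $i,j\le n-1$ to a loop genuinely homotopic to $\gamma_{ij}$ in $F_n$, with no spurious braiding of the spectator strand $x_n$ introduced, which is exactly where the precise placement of the section near $\infty$ matters. A secondary point requiring care is the spherical relation in the fiber: because we work on $\mathbb{P}^1$ rather than $\C$, I would verify that the product of the loops $\gamma_{jn}$ around all finite punctures equals the inverse of the loop around $\infty$, so that no extra generator is needed and no hidden relation is overlooked.
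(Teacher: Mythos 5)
Your argument is correct and is exactly the standard Fadell--Neuwirth induction (forgetting the last point, free fiber $\pi_1$ generated by the $\gamma_{jn}$ with the sphere relation absorbing the loop around $\infty$, lifts of the base generators realized by loops fixing the spectator strand) that underlies the reference the paper cites for this lemma; the paper itself gives no independent proof beyond pointing to Birman's Lemma 1.8.2. The only cosmetic caveats are that the section should place $x_n$ at a large finite modulus depending continuously on the base rather than literally at $\infty$, and that for mere generation the exact sequence $\pi_1(\mathrm{fiber})\to\pi_1(F_n)\to\pi_1(F_{n-1})\to 1$ already suffices without splitting, since keeping $x_n$ fixed lifts each base generator $\gamma_{ij}$ to the loop of the same name in $F_n$.
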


Let $\s_1,\s_2\in S_{n+1}$ be given and assume that $\s_1$ is adjacent to $\s_2$.
Given a pair of indices $0\leq i<j\leq n$ we take a composition of paths of the form $C(\s_2\s_1)$ from $p_{\rm id}$ to some $p_\s$ with $\s(0)=j$ and $\s(1)=i$.
The composed path is denoted by $C$.
Then, the path $C^{-1}\circ \varphi_{\s}(\gamma_{(01)})\circ C$ is homotopic to $\gamma_{ij}$.
In the same way, one can construct $\gamma_{ij}$ for any $i,j=0,\dots,n+2$ with $i<j$ and $(i,j)\neq (0,n+1),(0,n+2),(n+1,n+2)$ using paths of the form $C(\s_2\s_1)$ and $\varphi_g(\gamma_{01})$ for some $g\in S_{n+1}\times S_2$.
By the discussion of \S\ref{subsec:connection} and transformations in Proposition \ref{prop:transformations2}, the analytic continuations along these paths are explicitly computable.
We summarize the discussion of this section as a theorem.

\begin{thm}
Suppose $p_i=p'_i$ for any $i=1,\dots,n$ and $r=r'$.
Then, the monodromy representation of the system $\mathcal{M}^{\boldsymbol{\alpha}_1,\dots,\boldsymbol{\alpha}_n,\boldsymbol{\gamma}}_{\boldsymbol{\alpha}'_1,\dots,\boldsymbol{\alpha}'_n,\boldsymbol{\gamma}'}$ is a representation of the pure braid group $P_{n+3}$.
By taking a basis of local solutions as in \S\ref{subsec:connection}, one can compute monodromy matrices in terms of Gamma functions and trigonometric functions.
\end{thm}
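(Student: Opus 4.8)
The plan is to assemble the global monodromy representation from two ingredients that have already been prepared: the explicit local connection formulas of Theorem~\ref{thm:connection} and the group-theoretic description of how the distinguished paths of \S\ref{subsec:connection} generate the pure braid group. First I would fix the identification $\C^n\setminus\mathcal{A}\simeq\mathcal{M}_{0,n+3}$ and the base point $b$ in the deep region $\{|x_n|\ll\cdots\ll|x_1|\ll 1\}$, together with the fixed basis of local solutions at $p_{\rm id}$ described in \S\ref{subsec:connection} (the $F$-series and the $G_{\varepsilon(k)}$-series). Since the system is regular holonomic with singular locus exactly $\mathcal{A}$ (Theorem~\ref{thm:Singn} in the case $p_i=p_i'$, $r=r'$, where all the curved components degenerate to the hyperplanes $x_i=x_j$), its solution sheaf is a local system on $\mathcal{M}_{0,n+3}$, so the monodromy is automatically a representation $\rho\colon P_{n+3}\to GL(R,\C)$ where $R$ is the rank computed in \eqref{eq:rankGKZ}. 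The content of the theorem is therefore not the mere existence of $\rho$, but that each generator $\gamma_{ij}$ is carried to a matrix expressible through Gamma and trigonometric functions.

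The key steps, in order, are as follows. First, record that by the Lemma cited from \cite{birman1974braids} the loops $\gamma_{ij}$ generate $P_{n+3}$, so it suffices to compute $\rho(\gamma_{ij})$ for each admissible pair. Second, for each such pair choose, as in the paragraph preceding the theorem, a composite path $C$ built from the elementary transport paths $C(\s_2\s_1)$ (which connect adjacent torus-fixed points $p_{\s_1}$, $p_{\s_2}$) so that $C$ ends at a fixed point $p_\s$ with $\s(0)=j$, $\s(1)=i$; then use the homotopy $\gamma_{ij}\simeq C^{-1}\circ\varphi_\s(\gamma_{01})\circ C$. Third, express $\rho(\gamma_{ij})$ as the conjugate $B_C^{-1}\,\rho_\s(\gamma_{01})\,B_C$, where $B_C$ is the product of the connection matrices along the edges making up $C$ and $\rho_\s(\gamma_{01})$ is the local monodromy around the divisor $x_0=x_1$ expressed in the local basis at $p_\s$. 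Fourth, observe that each edge connection matrix is a block matrix whose entries are the coefficients $a^{i_2\dots i_n}_{i_1 k}$, $b^{i_2\dots i_n}_{i_1\ell}$, $c^{i_2\dots i_n}_{i_1\ell}$, $d^{i_2\dots i_n}_{i_1 k}$ of Theorem~\ref{thm:connection}, which are quotients of products of Gamma functions; the single-divisor local monodromy $\rho_\s(\gamma_{01})$ is diagonal with entries $e^{2\pi\sqrt{-1}\lambda}$ determined by the characteristic exponents along $x_0=x_1$ (read off from the series solutions), and its diagonal entries are trigonometric in the exponents. Since $GL(R,\C)$ is closed under products and inverses of such matrices, every $\rho(\gamma_{ij})$ lies in the subgroup generated by Gamma- and trigonometric-valued matrices.

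The transformations of Proposition~\ref{prop:transformations2} supply the automorphisms $\varphi_g$ of $X$ needed to reduce an arbitrary edge and an arbitrary single-divisor loop to the model pair $(p_{\rm id},p_{(01)})$ treated in Theorem~\ref{thm:connection}; concretely, $\varphi_g$ relabels the parameters $\boldsymbol\alpha_i,\boldsymbol\alpha_i',\boldsymbol\gamma,\boldsymbol\gamma'$ by a permutation and possibly the inversion $x\mapsto 1/x$, and the corresponding change of basis conjugates the connection matrix by a monomial (power-of-coordinate) factor, which does not leave the Gamma/trigonometric class. I would then conclude by induction on the number of elementary edges in $C$: the base case is a single application of Theorem~\ref{thm:connection} (or of a $\varphi_g$-transform of it), and the inductive step multiplies one more connection matrix, preserving the class of admissible matrices.

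The main obstacle I anticipate is bookkeeping rather than analysis: one must verify that the abstract homotopy $\gamma_{ij}\simeq C^{-1}\circ\varphi_\s(\gamma_{01})\circ C$ holds with the correct orientations and that the chosen transport paths $C(\s_2\s_1)$ genuinely avoid $\mathcal{A}$ and stay within the regions where the series of \S\ref{subsec:connection} converge, so that the composite connection matrix $B_C$ is the honest analytic continuation and not merely a formal product. A secondary subtlety is ensuring that the local single-divisor monodromy is captured correctly when characteristic exponents collide or differ by integers; but since the formulas of Theorem~\ref{thm:connection} are derived under the genericity (non-resonance) hypotheses that also guarantee a clean eigenbasis, I would state the result for generic parameters and note that the explicit entries depend continuously (meromorphically) on them. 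Apart from these combinatorial verifications, every individual continuation is already supplied by Theorem~\ref{thm:connection}, so the proof is essentially the assembly argument summarized in the paragraph preceding the theorem statement.
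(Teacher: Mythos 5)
Your proposal follows the paper's own argument: reduce to the generators $\gamma_{ij}$ of $P_{n+3}$, realize each as $C^{-1}\circ\varphi_\s(\gamma_{01})\circ C$ using the transport paths $C(\s_2\s_1)$ and the automorphisms from Proposition~\ref{prop:transformations2}, and read off the monodromy matrices as products of the connection matrices of Theorem~\ref{thm:connection} with the diagonal local monodromy along a single divisor. This is exactly the assembly argument the paper gives in \S\ref{sec:braid}, with your added remarks on convergence and genericity being sensible but not changing the route.
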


\section*{Acknowledgements}
The first author was supported by JSPS KAKENHI Grant Number 22K13930, and partially supported by JST CREST Grant Number JP19209317.
The second author was supported by JSPS KAKENHI Grant Number 18K03341.
The second author expresses his sincere gratitude to Jiro Sekiguchi who explains his study on Appell's hypergeometric functions to the author, which helps the author to write this paper.

\bibliographystyle{abbrv}
\bibliography{references}

\end{document}